\documentclass[11pt]{amsart}

\usepackage{amsmath,amsthm,amssymb,mathrsfs,mathtools,color}
\usepackage{bm} \usepackage{stmaryrd}

\usepackage[top=1in, bottom=1in, left=1in, right=1in]{geometry}
\usepackage[skip=4pt, indent=0pt]{parskip}

\usepackage{float}
\usepackage{graphicx}
\usepackage{etoolbox}

\usepackage[utf8]{inputenc}
\usepackage[T1]{fontenc}

\usepackage{microtype} 

\usepackage{enumitem}
\setlist[enumerate]{itemsep=2pt,parsep=2pt,before={\parskip=2pt}}
\setlist[itemize]{itemsep=2pt,parsep=2pt,before={\parskip=2pt}}
\usepackage{longtable}

\usepackage[colorlinks=true,hyperindex, linkcolor=red!60, pagebackref=false, citecolor=cyan, pdfpagelabels, hypertexnames=false, linktoc=all]{hyperref}
\usepackage[capitalize]{cleveref}
\crefname{equation}{}{}
\crefformat{enumi}{#2\textup{\textcolor{black}{(}#1\textcolor{black}{)}}#3}

\usepackage{appendix}
\AtBeginEnvironment{appendices}{\crefalias{section}{appendix}}

\usepackage{url}

\usepackage{tikz}
\usetikzlibrary{calc}
\usetikzlibrary{math}
\usetikzlibrary{positioning}

\usepackage{pdfpages}

\usepackage{thmtools}
\usepackage{thm-restate}

\usepackage{empheq}

\makeatletter
\def\thm@space@setup{\thm@preskip=3.5pt \thm@postskip=3.5pt }
\makeatother

\newtheorem{theorem}{Theorem}[section]
\newtheorem*{theorem*}{Theorem}
\newtheorem*{definition*}{Definition}
\newtheorem{proposition}[theorem]{Proposition}
\newtheorem{lemma}[theorem]{Lemma}
\newtheorem{corollary}[theorem]{Corollary}
\newtheorem{conjecture}[theorem]{Conjecture}

\theoremstyle{definition}
\newtheorem{definition}[theorem]{Definition}

\newtheorem{remark}[theorem]{Remark}
\newtheorem{example}[theorem]{Example}
\newtheorem{notation}[theorem]{Notation}

\renewcommand{\le}{\leqslant}
\renewcommand{\leq}{\leqslant}
\renewcommand{\ge}{\geqslant}
\renewcommand{\geq}{\geqslant}
\renewcommand{\Re}{\mathrm{Re}}
\renewcommand{\Im}{\mathrm{Im}}

\newcommand{\norm}[1]{\left\|#1\right\|}

\newcommand{\mnorm}[1]{\big\|#1\big\|}

\newcommand{\abs}[1]{\left|#1\right|}
\newcommand{\ind}[1]{\mathbf{1}_{#1}}

\newcommand{\eps}{\varepsilon}
\renewcommand{\phi}{\varphi}

\newcommand{\R}{\mathbb{R}}
\newcommand{\C}{\mathbb{C}}
\newcommand{\Z}{\mathbb{Z}}
\newcommand{\N}{\mathbb{N}}

\renewcommand{\Pr}[1]{{\,\mathbb{P}\!\left(#1\right)}}
\newcommand{\E}[1]{\,\mathbb{E}\!\left[#1\right]}
\newcommand{\EE}{\mathbb{E}}
\newcommand{\spmod}[1]{\,(\mathrm{mod}{\, #1})}
\newcommand{\rad}{\mathrm{rad}}

\newcommand{\Q}{Q}
\newcommand{\A}{\mathcal{A}}
\newcommand{\B}{\mathcal{B}}
\renewcommand{\P}{\mathcal{P}}
\newcommand{\T}{\mathcal{T}}
\newcommand{\U}{\mathcal{U}}
\newcommand{\V}{\mathcal{V}}
\newcommand{\CC}{\mathcal{C}}
\newcommand{\RR}{\mathcal{R}}
\newcommand{\NN}{\mathcal{N}}

\newcommand{\dist}[2]{d_{#1}(#2)}

\newcommand{\odist}{\mathrm{dist}}

\newcommand{\None}{N_{\tinyparallelogram}}
\newcommand{\Ntwo}{N_{\tinygraphicon}}

\newcommand{\z}{\mathbf{z}}
\newcommand{\Clift}{C_0}
\newcommand{\Cclusters}{C_1}

\makeatletter

\renewcommand{\tocsection}[3]{\indentlabel{\@ifnotempty{#2}{\makebox[1.75em][l]{#2}}}#3}

\newcommand\@dotsep{4.5}
\def\@tocline#1#2#3#4#5#6#7{\relax
  \ifnum #1>\c@tocdepth \else
    \par \addpenalty\@secpenalty\addvspace{#2}\begingroup \hyphenpenalty\@M
    \@ifempty{#4}{\@tempdima\csname r@tocindent\number#1\endcsname\relax
    }{\@tempdima#4\relax
    }\parindent\z@ \leftskip#3\relax \advance\leftskip\@tempdima\relax
    \rightskip\@pnumwidth plus1em \parfillskip-\@pnumwidth
    #5\leavevmode\hskip-\@tempdima{#6}\nobreak
    \leaders\hbox{$\m@th\mkern \@dotsep mu\hbox{.}\mkern \@dotsep mu$}\hfill
    \nobreak
    \hbox to\@pnumwidth{\@tocpagenum{#7}}\par
    \nobreak
    \endgroup
  \fi}

\def\l@subsection{\@tocline{2}{0pt}{2.5pc}{5pc}{}}
\makeatother

\renewcommand{\thepart}{\Roman{part}} 

\makeatletter
\renewcommand{\part}[1]{\refstepcounter{part}\phantomsection 
  
  \addtocontents{toc}{\protect\vspace{-12pt}} 
  \addcontentsline{toc}{part}{\partname\ \thepart.\ #1}

  \vspace*{0.5cm}\begin{center}\Large\bfseries \partname\ \thepart.\ #1\end{center}\vspace{0.5cm}

  \@afterindentfalse\@afterheading
}
\makeatother

\begin{document}

\title[Improved bounds for the Fourier uniformity conjecture]{Improved bounds for the Fourier uniformity conjecture}

\author{C\'edric Pilatte}
\address{Mathematical Institute, University of Oxford.}
\email{cedric.pilatte@maths.ox.ac.uk}

\begin{abstract}
    Let $\lambda$ denote the Liouville function. We prove that
    $$\sum_{X \leq x < 2X} \sup_{\alpha \in \R/\Z} \bigg\lvert\!\sum_{x \leq n < x+H} \lambda(n) e(n\alpha)\bigg\rvert = o(HX)$$
    as $X\to \infty$, in the regime $H = H(X) \geq \exp({(\log X)^{2/5+\eps}})$. This improves upon a result of Walsh towards the Fourier uniformity conjecture.
\end{abstract}

\maketitle

{
    \hypersetup{linkcolor=black}
    \setlength{\parskip}{0pt} \tableofcontents
}

\part{Introduction}
\label{part:intro}
\section{Introduction}
\subsection{The Fourier uniformity conjecture}
Let $\lambda : \N \to \{+1, -1\}$ be the Liouville function, defined by $\lambda(n) := (-1)^{\Omega(n)}$, where $\Omega(n)$ is the number of prime factors of $n$ counted with multiplicity. The pseudorandom behaviour of $\lambda$ is intimately connected to the distribution of prime numbers. For example, the conjectured square-root cancellation estimate $\sum_{n\leq X} \lambda(n) \ll X^{1/2+\eps}$ is equivalent to the Riemann Hypothesis.

The~celebrated Matomäki-Radziwi{\l\l} theorem~\cite{MR} shows that $\lambda$ exhibits cancellation in almost all short intervals. More precisely, for any function $H = H(X)$ tending to infinity with $X$, we have
\begin{equation*}
    \sum_{X \leq x < 2X} \bigg\lvert\!\sum_{x \leq n < x+H} \lambda(n)\bigg\rvert = o(HX)
\end{equation*}
as $X\to \infty$. Remarkably, this estimate holds for arbitrarily slowly growing $H$.

For applications to Chowla's conjecture and related problems (see \cref{sec:chowla}), a stronger form of pseudorandomness is required: the \emph{Fourier uniformity} of the Liouville function in short intervals.

\begin{conjecture}[Fourier uniformity conjecture]
    \label{conj:FUC}
    For any function $H = H(X)$ tending to infinity with $X$, we have
    \begin{equation}
        \label{eq:FUCstatement}
        \sum_{X \leq x < 2X} \sup_{\alpha \in \R/\Z} \bigg\lvert\!\sum_{x \leq n < x+H} \lambda(n) e(n\alpha)\bigg\rvert = o(HX)
    \end{equation}
    as $X\to \infty$.
\end{conjecture}

Informally, \cref{conj:FUC} asserts that $\lambda$ has negligible correlations with linear phases in almost all short intervals.

A crucial feature of \cref{conj:FUC} is that the frequency $\alpha$ is permitted to depend on the interval $[x, x+H)$ in an arbitrary way. A weaker version of \cref{eq:FUCstatement}, in which the supremum over $\alpha$ is taken outside the sum over $x$, was obtained by Matomäki, Radziwi{\l\l}, and Tao~\cite{MRT} shortly after the original Matomäki-Radziwi{\l\l} theorem.

As one might expect, \cref{conj:FUC} becomes more difficult to establish for shorter interval lengths~$H$. The conjecture was initially proved for $H \geq X^{\eps}$ by Matomäki, Radziwi{\l\l}, and Tao~\cite{MRTinventiones}, with a simpler proof later given by Walsh~\cite{walsh1}. This threshold was lowered to ${H \geq \exp((\log X)^{5/8+\eps})}$ by Matomäki, Radziwi{\l\l}, Tao, Ter\"av\"ainen, and Ziegler~\cite{MRTTZ}, and further improved to $H \geq \exp((\log X)^{1/2+\eps})$ by Walsh~\cite{walsh2}.

As explained in \cite{MRTTZ} and \cite{walsh2}, a severe bottleneck emerges at the scale
\begin{equation}
    \label{eq:barrier}
    H\approx \exp((\log X)^{1/2}).
\end{equation}
Indeed, all known approaches require an iterative argument consisting of $k$ steps, where $k \asymp \frac{\log X}{\log H}$ is the diameter of a specific underlying graph. The most natural ways to perform such an iteration incur $k!$-type losses, which become overwhelming once $H$ drops below the threshold \cref{eq:barrier}.

Assuming the Generalised Riemann Hypothesis, Walsh~\cite{walsh3} was able to bypass this obstacle, proving \cref{conj:FUC} for intervals of length $H \geq (\log X)^{\psi(X)}$, where $\psi(X)$ tends to infinity arbitrarily slowly.

In this paper, we overcome this barrier unconditionally. Our main result is the following.

\begin{theorem}
    \label{thm:main}
    Let $\eps>0$. For any function $H = H(X)$ satisfying $H \geq \exp((\log X)^{2/5+\eps})$, we have
    \begin{equation*}
        \sum_{X \leq x < 2X} \sup_{\alpha \in \R/\Z} \bigg\lvert\!\sum_{x \leq n < x+H} \lambda(n) e(n\alpha)\bigg\rvert = o(HX)
    \end{equation*}
    as $X\to \infty$.
\end{theorem}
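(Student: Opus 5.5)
We argue by contradiction and follow the architecture of \cite{MRTTZ} and \cite{walsh2}. The novelty is confined to the combinatorial iteration step, where the $k!$-type losses arise.

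Suppose the conclusion fails. Then for a positive proportion of $x \in [X,2X)$ there is a frequency $\alpha_x$ with $\bigl|\sum_{x\le n<x+H}\lambda(n)e(n\alpha_x)\bigr| \gg H$.

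\emph{Step 1: correlations between frequencies.} Using the Ramaré/Turán--Kubilius type identity $\lambda(n) \approx \frac{1}{\log P}\sum_{p\in[P,2P],\,p\mid n} \lambda(p)\lambda(n/p)$, with primes $p$ taken from a range of scales $P \le H^{o(1)}$, we transfer the large correlation. We pass from an interval at $x$ to intervals at $x/p$ and $x/q$. A Cauchy--Schwarz/pigeonhole argument then produces many quadruples in which frequencies at nearby scales are related. Concretely, $p\,\alpha_{x/p}\approx q\,\alpha_{x/q} \pmod 1$ up to errors $O(P/H)$. This yields a graph whose vertices are the intervals and whose edges are labelled by prime pairs $(p,q)$, with edge density $\gg 1/\log P$. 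This step is standard and we take it essentially from \cite{MRTTZ}, \cite{walsh2}.

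\emph{Step 2: the combinatorial core.} We must show that the approximate multiplicative relations $\alpha_{x/p}\,p \approx \alpha_{x/q}\,q$ force a global structure. The desired structure is $\alpha_x \approx T/x + a/b$, with a small denominator $b$ and $|T|\le X^{O(1)}$, valid for many $x$. Previous work chains the relations along paths of length $k\asymp \log X/\log H$ and loses a factor comparable to $k!$ from the ordering of primes. Instead, we plan to view the edges as a random-like graph on the vertex set, with typical vertices connected by many short cycles. We will prove an expansion or isoperimetric statement for this "multiplicative" graph, controlling the number of closed walks of length $2k$ in the style of a high-moment or spectral bound. The aim is that cycles force consistency of the lifted frequencies $x\alpha_x$ (or $\alpha_x$ modulo $1/p$) with only $C^k$ losses rather than $k!$ losses. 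Concretely, we will count closed walks via their prime-label multisets and bound the number of "non-backtracking" label patterns by $C^k$ times the trivial count. We compensate with the density $(\log P)^{-k}$ and the cancellation-free count of $k$-tuples of primes. The balance between these quantities gives a condition of the shape $k^2\log\log X \ll \log H$, with some room; optimising the choice of $P$ and the cycle length is where the exponent $2/5$ should emerge. This expansion/short-cycle estimate is the main obstacle. We would first try to prove it using sieve upper bounds for the number of prime tuples with $p_1\cdots p_k\approx q_1\cdots q_k$ in short multiplicative windows.

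\emph{Step 3: rigidity and conclusion.} Once cycles are controlled, we propagate the phase along a spanning structure. This gives $\alpha_x \approx T/x + a/b$ on a positive proportion of $x$ with bounded $b$ and $T\le X^{O(1)}$. Substituting back gives a large value of $\sum_{x\le n<x+H}\lambda(n)\chi(n)n^{-iT}$ for a Dirichlet character $\chi$ of small conductor, for many $x$. This contradicts the Matomäki--Radziwi{\l\l}-type bound for twisted short sums, in the form uniform in $T$ that is available in \cite{MRT}/\cite{MRTTZ} for $H\ge \exp((\log X)^{1/3+\eps})$. That range is wide enough for our regime $H\ge\exp((\log X)^{2/5+\eps})$.
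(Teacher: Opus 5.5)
The overall architecture you describe (Elliott-type relations, then structure, then a contradiction with Matom\"aki--Radziwi{\l\l}) is the right one. However, Step 2, which is the entire content of the theorem, is a programme rather than an argument, and the mechanism it proposes does not address the real obstruction.

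\textbf{The frequency ambiguity is not addressed.} Counting closed walks by their prime-label patterns, or counting tuples with $p_1\cdots p_k\approx q_1\cdots q_k$, controls only the geometric half of a relation ($px\approx qy$). It says nothing about the frequencies. A relation $q\alpha_x\approx p\alpha_y \pmod 1$ determines $\alpha_y$ only modulo $1/p$, that is, up to one of $p$ branches. Chaining along a path of length $k$ with prime products $R,S\approx P^k$ gives only $\norm{S\alpha_{x_0}-R\alpha_{x_k}}\le kR/(HP)$ (\cref{lem:paths}). For $k\asymp\log Y/\log P$ this is of size $kY/(HP)\gg 1$ in the relevant range, so it carries no information. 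Hence ``propagating the phase along a spanning structure'' in Step 3 cannot work with the original frequencies, however well cycles are counted.

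\textbf{The missing idea is a lifting procedure.} The paper's local structure theorem (\cref{thm:structure}) does the following:
\begin{itemize}
\item it picks a prime $p^*$ and consistent branches $\beta_x$ with $p^*\beta_x=\alpha_x$ on $A\setminus A_{L+1}$;
\item it keeps all but $O(\eps|A||\P|^2)$ of the relations avoiding $A_{L+1}$, while replacing $H$ by $Hp^*$;
\item it forces the relations to run only within or between adjacent layers $A_\ell,A_{\ell+1}$, with $L\approx\log P/\log\log Y$.
\end{itemize}
Its proof is where the real work lies: clusters, the Vitali-type covering and regularity lemmas, the rigidity counts of \cref{lem:badquad}, and replication of clusters. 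Iterating it (\cref{prop:tower}) gives one of two outcomes:
\begin{itemize}
\item a tower of height $(\log Y)^{10}$ whose top has $H\ge P^{3\log Y}$. Only there do closed paths with distinct prime products (\cref{lem:nonequivpaths,lem:getfirstformula}) genuinely force $\alpha_x\approx a_x/q_0+\beta_x$;
\item a sub-configuration $B$ with $|B|\ll P^{-c\delta^6/(\log\log Y)^2}|A|$ that still carries $\gg\delta|B||\P|^2$ relations.
\end{itemize}

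\textbf{The arithmetic input and the exponent $2/5$ are absent from your plan.} Both come from ruling out the second alternative. The paper uses a large-values estimate for $\sum_{p\in\P}\chi(p)p^{it}$, derived from the Vinogradov--Korobov bound $|L(\sigma+it,\chi)|\ll q^{1-\sigma}|t|^{5(1-\sigma)^{3/2}}(\log|t|)^{2/3}+q^{1-\sigma}/(1-\sigma)$. It shows that any configuration with many relations has $|A|\ge Y\exp(-(\log Y)^{1-3\theta/2+o(1)})$ when $P=\exp((\log Y)^{\theta})$ (\cref{cor:expansion}). Comparing this with $|B|\le\exp(-(\log Y)^{\theta-o(1)})|A|$ gives a contradiction exactly when $\theta>1-3\theta/2$, that is, $\theta>2/5$. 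With only mean-value-level control of $\int_{-Y}^{Y}|\sum_{p\in\P}p^{it}|^{2k}\,dt$, the same comparison gives only $\theta>1-\theta$, i.e.\ the old $1/2$ threshold; the diagonal part $\asymp k!\,|\P|^k$ of that integral is precisely the $k!$-type loss. Your plan does not say how sieve bounds for prime tuples would beat this. Moreover, your own balance $k^2\log\log X\ll\log H$ with $k\asymp\log X/\log H$ gives $\log H\gg(\log X)^{2/3+o(1)}$, so it does not produce $2/5$ either.

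\textbf{The endpoint of Step 3 is not attainable with these tools.} You claim a positive proportion of $x$ and a bounded denominator. Unconditionally one only gets $\alpha_x\approx a_0/q_0+T_0/x$ on a set of relative density $\ge\kappa^{-1}$, with $q_0\le\kappa$ and $\kappa=\exp((\log Y)^{2/5-\eps})$ (\cref{thm:globalstructuretheorem}). Finishing therefore requires a Matom\"aki--Radziwi{\l\l} variant with a power-saving bound for the exceptional set that tolerates such large $q_0$ (\cref{thm:MR}, which exploits an extra average over characters modulo $q_0$). This is followed by Vinogradov--Korobov non-pretentiousness of $\lambda$. Off-the-shelf positive-density statements do not apply.

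\textbf{A small slip in Step 1.} If $[x,x+H)$ correlates with $e(n\theta)$, the intervals at $x/p$ and $x/q$ carry frequencies $p\theta$ and $q\theta$. The relation therefore reads $q\,\alpha_{x/p}\approx p\,\alpha_{x/q}$, not $p\,\alpha_{x/p}\approx q\,\alpha_{x/q}$.
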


In fact, we prove a more general version of this result for arbitrary non-pretentious multiplicative functions, with explicit quantitative savings on the order of $\sqrt{\log \log X}$; see \cref{thm:maingeneral} for the full technical statement.

\subsection{Relation to Chowla's conjecture}
\label{sec:chowla}
The primary motivation for studying the Fourier uniformity conjecture is its close connection to Chowla's conjecture~\cite{chowla}, which states that for any fixed $k\geq 1$ and distinct positive integers $h_1, \dots, h_k$, we have
\begin{equation}
    \label{eq:chowla}
    \sum_{n \leq X} \lambda(n+h_1) \cdots \lambda(n+h_k) = o(X)
\end{equation}
as $X\to \infty$.

Chowla's conjecture remains wide open for $k\geq 2$. However, significant progress has been made on weaker variants of the conjecture, and these partial results have been essential in resolving problems of independent interest (see e.g.~\cite{discrepancy,TTapplications}). Most notably, the \emph{logarithmic Chowla conjecture}, which asserts that
\begin{equation}
    \label{eq:logchowla}
    \sum_{n \leq X} \frac{\lambda(n+h_1) \cdots \lambda(n+h_k)}{n} = o(\log X)
\end{equation}
as $X\to \infty$, was established for $k=2$ by Tao~\cite{T} and for all \emph{odd} $k\geq 3$ by Tao and Ter\"av\"ainen~\cite{TTduke}.

The proof of \cref{eq:logchowla} for two-point correlations fundamentally relies on the Matomäki-Radziwi{\l\l}-Tao theorem~\cite{MRT} on short exponential sums of the Liouville function. As it turns out, the odd order cases are amenable to a parity trick that avoids such deep number-theoretic inputs~\cite{TTodd,TTduke}.

In~\cite{taoequiv}, Tao proved the equivalence of three different statements:
\begin{enumerate}[label=(\roman*),ref=\roman*]
    \item \label{item:conj1} the logarithmic Chowla conjecture (for all $k\geq 1$);
    \item \label{item:conj2} the logarithmic Sarnak conjecture;
    \item \label{item:conj3} the logarithmic higher-order Fourier uniformity conjecture.
\end{enumerate}
Here, the logarithmic Sarnak conjecture is the assertion that $\lambda$ has negligible logarithmic correlation with any deterministic sequence.\footnote{That is, any sequence arising from a topological dynamical system of zero entropy; see \cite{sarnak} or \cite{taoequiv} for the relevant definitions.} The \emph{higher-order} Fourier uniformity conjecture is a natural generalisation of \cref{conj:FUC} in which the linear phases $e(n\alpha)$ are replaced by nilsequences of a given step, and~\cref{item:conj3} is a logarithmically weighted version of that statement. The deduction of~\cref{item:conj1} from \cref{item:conj3} relies on the \emph{entropy decrement argument}, first introduced by Tao in \cite{T}.

Following this equivalence, a potential path towards the general logarithmic Chowla conjecture would be to prove the higher-order Fourier uniformity conjecture. Crucially, it is not necessary to consider arbitrarily slowly growing $H$: it was shown in \cite[Proposition~1.7]{MRTTZ} that establishing the conjecture for $H \geq (\log X)^{\eps}$ for every $\eps>0$ is sufficient to deduce the logarithmic Chowla conjecture. Walsh's result under GRH~\cite{walsh3} comes remarkably close to this threshold, as it proves the Fourier uniformity conjecture for $H \geq (\log X)^{\psi(X)}$ with $\psi(X)$ tending to infinity arbitrarily slowly. Lowering this bound further, even conditionally, remains a significant open problem.

An alternative approach towards the logarithmic Chowla conjecture proceeds via expander graph techniques. Introduced by Helfgott and Radziwi{\l\l}~\cite{HR} and further developed by the author~\cite{pilatte}, this method has been successfully applied to the $k=2$ case of \cref{eq:logchowla}, yielding substantially stronger quantitative bounds than the entropy method. Furthermore, the required number-theoretic input is less demanding: the proof only uses the Matomäki-Radziwi{\l\l}-Tao theorem for intervals of length ${H = \exp((\log X)^{1/2-c})}$, for an arbitrarily small constant $c>0$.

The expander graph approach relies on trace methods that currently lack suitable generalisations to hypergraphs. Nevertheless, if a higher-order analogue of the two-point decoupling estimate could be established,\footnote{See~\cite[Section~9.5]{HR} for a more extensive discussion.} such a result would reduce the logarithmic Chowla conjecture to the higher-order Fourier uniformity conjecture at the $H = \exp((\log X)^{1/2-c})$ threshold. Prior to the present paper, the best unconditional bounds for the Fourier uniformity conjecture, due to Walsh~\cite{walsh2}, were limited to the range $H \geq \exp((\log X)^{1/2+\eps})$. By lowering this exponent to $2/5$, \cref{thm:main} would now provide the necessary number-theoretic ingredient, at least for linear phases.

It would then remain to generalise \cref{thm:main} to arbitrary nilsequences. However, the main conceptual difficulties are generally expected to manifest already in the linear setting. Although technically intricate, the task of transferring Fourier uniformity results from linear phases to nilsequences has been successfully accomplished in previous works. In particular, Matomäki, Radziwi{\l\l}, Tao, Ter\"av\"ainen, and Ziegler~\cite{MRTTZ} have proved the higher-order Fourier uniformity conjecture in the range $H \geq X^{\eps}$ for any $\eps>0$. See also \cite{higheralmostall,higherall} for further results on higher-order uniformity of arithmetic functions in short intervals.

\subsection{Overview of the proof and comparison with previous work}
The general framework of our proof follows that of all previous approaches to the Fourier uniformity conjecture. Assuming that \cref{thm:main} fails, there are many short intervals $[x, x+H)$ on which $\lambda$ has a large correlation with some linear phase $e(n\alpha_x)$, where $\alpha_x$ depends on $x$ in an uncontrolled manner. Still, by exploiting the multiplicativity of~$\lambda$ at small primes, these frequencies $\alpha_x$ can be shown to exhibit certain weak dependencies. The heart of the proof is to show that, together, these dependencies are rigid enough to force a global structure on the $\alpha_x$. Once this is established, the Matomäki-Radziwi{\l\l} theorem can be applied to obtain a contradiction.

The central task is to analyse the following setup. Let $A\subset [Y, 2Y]$ be a well-spaced set of size $|A| \asymp Y$, where $Y := X/H$, with a frequency $\alpha_x$ assigned to each $x\in A$. Let $\P$ be a set of primes $p\asymp P$, where the scale $P$ is a small power of $H$. In this setting, the weak dependencies amount to the existence of $\gg Y |\P|^2$ quadruples $(x,y,p,q) \in A^2 \times \P^2$ satisfying $px \approx qy$ and ${q\alpha_x \approx p\alpha_y \pmod 1}$.

The graph formed by interpreting these dependencies as edges on the vertex set $A$ is highly sparse, with~$\asymp Y$ vertices and an average degree of $\asymp |\P|^2$. Propagating this local information to extract a global structure therefore requires an iterative argument with $\asymp \frac{\log Y}{\log P}$ steps. As in~\cite{walsh1,walsh2,walsh3}, our iterative procedure involves the construction of auxiliary frequencies that are `lifts' of the original~$\alpha_x$ and satisfy a similar system of relations. In previous unconditional approaches, the density of relations satisfied by the auxiliary frequencies decays by at least a constant factor at every step; such accumulated losses become fatal below the $H \approx \exp((\log X)^{1/2})$ barrier.

We take the conditional approach of Walsh~\cite{walsh3} as our starting point. Working under GRH, Walsh introduced a `single-prime lifting' argument in which the auxiliary frequencies retain almost all of the relations satisfied by the original $\alpha_x$. This is achieved via a \emph{local structure theorem} that partitions~$A$ into a bounded number of components, all but one of which can be successfully lifted. To control this partition, he employs an expansion lemma based on RH, which ensures that no two subsets of $A$ share significantly more edges than the random model predicts. This expansion property forces the exceptional component to be negligibly small. After iterating this procedure $\asymp \frac{\log Y}{\log P}$ times, GRH is applied once more to extract the required global structure from the relations between the constructed frequencies.

Our main new ingredient is a stronger local structure theorem (\cref{thm:structure}) that is robust enough to allow for an unconditional iterative argument. It differs from Walsh's local structure theorem in two key respects:
\begin{itemize}
    \item It is \emph{quantitative}, yielding a decomposition of $A$ where the number of parts can be chosen to be almost as large as $\log P$, rather than a fixed constant.
    \item It is \emph{relative}, requiring only that the number of relations be large relative to the size of $A$ (rather than an absolute density), which allows the theorem to apply even to very sparse sets.
\end{itemize}
Inspired by Walsh's methods, the proof of \cref{thm:structure} adopts a different viewpoint centred on the quantitative analysis of substructures called `clusters', and requires new tools such as a Vitali-type covering lemma for these clusters.

Iterating this local structure theorem allows us to construct the auxiliary frequencies while working with potentially very sparse sets throughout. To analyse the resulting construction, we substitute GRH with an unconditional expansion estimate derived from Vinogradov-Korobov type bounds for Dirichlet $L$-functions. This expansion estimate is significantly weaker than its conditional counterpart, yielding non-trivial information only for very sparse sets. Nonetheless, our combinatorial framework is robust enough that this input suffices to establish the required global structure.

This conclusion is the content of our \emph{global structure theorem} (\cref{thm:globalstructuretheorem}), which roughly states that, on a large subset $A'\subset A$, the frequencies $\alpha_x$ satisfy an approximate formula of the form $\alpha_x \approx a_0/q_0 + T_0/x \pmod 1$ for some fixed $a_0, q_0$ and $T_0$ of controlled size. However, a strong version of the Matomäki-Radziwiłł theorem shows that the Liouville function can only correlate with such linear phases on a set of density at most $H^{-c}$ for some small $c>0$. By contrast, our Vinogradov-Korobov expansion estimate guarantees that $A'$ has density at least $\exp(-(\log X)^{1-3\theta/2+o(1)})$ when $H = \exp((\log X)^{\theta})$. This yields a contradiction for any $\theta > 2/5$.

The above argument requires a variant of the Matomäki-Radziwiłł theorem that allows for large values of $q_0$ while retaining a power-saving bound for the exceptional set. As the precise quantitative statement we need does not appear to be recorded in the literature, we provide a proof in \cref{sec:appendix_mr}, drawing entirely on ideas from \cite{MR,MRII,MRTTZ}.

These methods naturally extend to arbitrary non-pretentious multiplicative functions, yielding the following general result.

\begin{theorem}[Fourier uniformity estimate]
    \label{thm:maingeneral}
    Let $f : \N \to \C$ be a $1$-bounded multiplicative function.

    Let $0 < \eps < 1/5$. Let $X$ be sufficiently large in terms of $\eps$ and let $H := \exp((\log X)^{2/5+\eps})$.

    Assume that
    \begin{equation*}
        \sum_{X \leq x < 2X} \sup_{\alpha \in \R/\Z} \bigg\lvert\!\sum_{x \leq n < x+H} f(n) e(n\alpha)\bigg\rvert \geq \delta H X,
    \end{equation*}
    where ${\delta \geq C_0(\log \log X)^{-1/2}}$ for some sufficiently large absolute constant $C_0 > 0$.

    Then, $f$ is pretentious in the sense that
    \begin{equation*}
        \sum_{p \leq X} \frac{1-\Re(f(p){\chi(p)}p^{it})}{p} \ll \log \delta^{-1}
    \end{equation*}
    for some Dirichlet character $\chi$ of conductor $O(\delta^{-O(1)})$ and some real number $t$ with ${|t| \leq X^2/H^{2-o(1)}}$.
\end{theorem}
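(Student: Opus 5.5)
We argue by contraposition, following the outline sketched above. Suppose the correlation hypothesis holds with $\delta \geq C_0(\log\log X)^{-1/2}$. For each $x$ in a set of $\gg \delta X$ integers $x\in[X,2X)$ we have some $\alpha_x$ with $|\sum_{x\le n<x+H} f(n)e(n\alpha_x)|\gg \delta H$. After restricting to $H$-separated points and rescaling, this gives a well-spaced set $A\subset[Y,2Y]$ with $Y=X/H$ and $|A|\gg\delta Y$, together with frequencies $\alpha_x$.

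\textbf{Step 1: local relations.} Fix a prime scale $P=H^{c}$ and a set $\P$ of primes $p\asymp P$. We use the multiplicativity of $f$ at these primes, via the Elliott/Turán--Kubilius-type argument standard since \cite{MRTinventiones,walsh1}, together with Cauchy--Schwarz and a large-sieve step. The outcome is $\gg \delta^{O(1)} Y|\P|^2$ quadruples $(x,y,p,q)\in A^2\times\P^2$ with $px\approx qy$ and $q\alpha_x\approx p\alpha_y \pmod 1$. This step is routine.

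\textbf{Step 2: iterative lifting.} We apply the local structure theorem (\cref{thm:structure}) repeatedly, about $k\asymp \log Y/\log P \asymp (\log X)^{3/5-\eps}$ times. At each step the current (possibly very sparse) set is partitioned into at most roughly $\log P$ pieces. All but one piece are lifted to auxiliary frequencies that keep almost all relations, and because the theorem is relative, it continues to apply after sparsification.

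The losses must be tracked carefully. The exceptional pieces are controlled by the unconditional expansion estimate coming from Vinogradov--Korobov zero-free regions for Dirichlet $L$-functions. That estimate rules out two sparse subsets sharing too many edges, so the relative loss per step is only about $1/\log P$ rather than a constant factor. This avoids the $k!$-type losses, and the surviving set has density at least $\exp(-(\log X)^{1-3\theta/2+o(1)})$ with $\theta=2/5+\eps$.

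\textbf{Step 3: global structure.} The global structure theorem (\cref{thm:globalstructuretheorem}) then produces a subset $A'\subset A$ of at least that density, together with parameters $a_0,q_0,T_0$ of controlled size ($|T_0|\le X^2/H^{2-o(1)}$ after rescaling), such that $\alpha_x\approx a_0/q_0+T_0/x \pmod 1$ for all $x\in A'$. Taylor-expanding $e(n(a_0/q_0+T_0/x))$ on $[x,x+H)$ shows that, for $x\in A'$, $f(n)$ correlates with $e(an/q_0)\,n^{iT'}$ on the interval $[x,x+H)$.

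\textbf{Step 4: conclusion.} Expand $e(an/q_0)$ into Dirichlet characters modulo divisors of $q_0$, and apply the Matomäki--Radziwi{\l\l} variant proved in \cref{sec:appendix_mr}, which allows large $q_0$ and gives a power-saving exceptional set. It says that if $f$ is non-pretentious in the stated sense for every $\chi$ of conductor $O(\delta^{-O(1)})$ and every $|t|\le X^2/H^{2-o(1)}$, then such correlations can hold on a set of density at most $H^{-c}$.

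Comparing the two densities, $\exp(-(\log X)^{1-3\theta/2+o(1)})$ exceeds $H^{-c}=\exp(-c(\log X)^{\theta})$ exactly when $1-3\theta/2<\theta$, that is, when $\theta>2/5$. This is a contradiction, so $f$ must be pretentious. The factor $\delta^{-1}$ and the condition $\delta\ge C_0(\log\log X)^{-1/2}$ enter through the number of partition pieces permitted in \cref{thm:structure}.

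The main obstacle is Step 2. We need the accumulated density loss over roughly $(\log X)^{3/5}$ iterations to stay within what the weak Vinogradov--Korobov expansion estimate can absorb. This rests on the quantitative, relative form of \cref{thm:structure} and on the expansion estimate in the very sparse regime.
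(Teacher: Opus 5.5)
Your outer skeleton is the paper's: Step 1 is \cref{prop:setup}, Step 3 invokes \cref{thm:globalstructuretheorem}, and Step 4 is \cref{thm:MR}. Read with the global structure theorem as a black box, Steps 1, 3, 4 give a valid deduction.

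The problem is Step 2, which you present as the heart of the argument. The mechanism you describe is Walsh's RH-based one, and it fails unconditionally. The Vinogradov--Korobov input (\cref{prop:GRHsubstitute}, \cref{cor:expansion}) does not say that two subsets share few edges. It only says that a set carrying $\geq(\log Y)^{-100}|A||\P|^2$ relations satisfies $|A|\geq Y\exp(-(\log Y)^{1-3\theta/2+o(1)})$, so it is informative only for extremely sparse sets. It therefore cannot force the exceptional piece $A_{L+1}$ of \cref{thm:structure} to be negligible, and the paper stresses that this cannot be guaranteed.

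Your loss accounting also does not close. A relative-density loss of order $1/\log P$ per lift, compounded over the required number of lifts, destroys the relative density. That number is at least about $3\log Y$, since \cref{lem:largeHcase} needs $H_k\geq P^{3\log Y}$; the paper takes $\gg(\log Y)^{10}$. Even over your $\log Y/\log P$ steps the loss is $\exp(-(\log Y)^{1-2\theta})$, which is fatal for $\theta<1/2$.

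The missing idea is the lift-or-split dichotomy of \cref{lem:liftorsplit} and \cref{prop:tower}. Apply \cref{thm:structure} with $L\asymp\log P/\log\log Y$ and error parameter $(\log Y)^{-20}$.
\begin{itemize}
\item Either $|A_{L+1}|\leq|A|(\log Y)^{-15}$, and you lift losing only $O((\log Y)^{-10})$, so lifts are essentially free.
\item Or a thin layer exists. Relations essentially only join consecutive layers, and $|A_0|\gg\delta^5|A|$, so \cref{lem:realanalysis} produces a thin layer $A_i$. Cutting there passes to a subset of at most half the size and costs only a factor $1-O(\delta^{-6}(\log\log Y)^2/\log P)$.
\end{itemize}
Only these halving steps are costly. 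After $\asymp\delta^6\log P/(\log\log Y)^2$ of them you would hold a set of relative density $\gg\delta$ and density at most $\exp(-(\log Y)^{\theta-o(1)})$. \Cref{cor:expansion} rules this out once $1-3\theta/2<\theta$, so $\theta>2/5$ is already needed here, not only at the final comparison. The lower bound on $|A'|$ comes from applying \cref{cor:expansion} afresh to the top configuration, not from tracking losses down the tower.

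Second, the hypothesis $\delta\geq C_0(\log\log X)^{-1/2}$ does not enter through the number of pieces in \cref{thm:structure}. There $L$ is chosen independently of $\delta$, and \cref{thm:globalstructuretheorem} tolerates $\delta\geq(\log\log Y)^{-10}$. It enters through Elliott's inequality. The exceptional primes only satisfy $\sum_{p\in\mathcal{E}}1/p\ll\delta^{-2}$, so you cannot fix $P=H^{c}$; \cref{prop:setup} only produces some $P\in[H^{e^{-O(\delta^{-2})}},H^{1/10}]$. The lower bound on $\delta$ makes this range available and keeps $P$ of the form $\exp((\log Y)^{2/5+\eps'})$ with $\eps'>0$, so \cref{thm:globalstructuretheorem} applies; this is why the paper takes $\kappa=\exp((\log Y)^{2/5-\eps/2})$. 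The same lower bound also gives the hypothesis \cref{eq:paramassumpt} of \cref{thm:MR}, which here reads $\exp((\log X)^{2/5-\eps/2})^{\delta^{-C}}\leq H\leq X^{\delta^{C}}$.

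Finally, \cref{thm:MR} also absorbs your Step 3 Taylor expansion, which you apply on all of $[x,x+H)$. Because of the $O(\kappa/H)$ error, that expansion only works on subintervals of length $\asymp\delta H/\kappa$ (\cref{lem:taylor}).
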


For simplicity, we have stated \cref{thm:maingeneral} for a specific choice of $H$ near the threshold $\exp((\log X)^{2/5})$; the regime of larger $H$ is treated in \cite{walsh2}. The deduction of \cref{thm:main} from \cref{thm:maingeneral} follows from standard Vinogradov-Korobov type non-pretentiousness estimates for the Liouville function.

We note that, unlike previous results, \cref{thm:maingeneral} is not restricted to the dense regime $\delta \asymp 1$, as it allows for quantitative savings of order $\sqrt{\log \log X}$. This rate of decay is the best possible for an approach based on Elliott's inequality, which serves as the starting point for our approach and all earlier work.

The quantitative methods of this paper allow us to recover a version of Walsh's Fourier uniformity estimate under GRH \cite{walsh3} with explicit savings; we do so in \cref{sec:GRHcase}. The present paper is self-contained, and no prior familiarity with \cite{walsh3} is needed to follow the exposition.

Finally, we remark that the exponent $2/5$ in \cref{thm:maingeneral} represents the natural limit of our unconditional methods. For $H$ below this threshold, we do not know how to rule out the possibility that all connected components in the graph of relations are extremely sparse (say, of density less than~$1/H$). In such a scenario, one cannot hope to establish a global structure without extracting more information from the Liouville correlations than the current Elliott inequality argument provides.

\subsection{Notation}
We write $f = O(g)$ or $f \ll g$ if $|f| \leq C g$ for some absolute constant $C > 0$, and $f\asymp g$ if $f \ll g \ll f$. A set $S\subset \R$ is \emph{$t$-separated} if $|x-y|\geq t$ for all distinct $x,y\in S$. For a finite set $S$, we let $\EE_{x\in S} f(x)$ denote the average $\frac{1}{|S|}\sum_{x\in S} f(x)$. Finally, for $\alpha\in \R$ and $X\subset \R$, we write $\odist(\alpha, X) := \inf_{x\in X}|\alpha-x|$ and $\norm{\alpha} := \odist(\alpha, \Z)$.

\addtocontents{toc}{\protect\setcounter{tocdepth}{-1}}
\section*{Acknowledgements}
\addcontentsline{toc}{section}{Acknowledgements}
\addtocontents{toc}{\protect\setcounter{tocdepth}{1}}The author is supported by the Oxford Mathematical Institute, a Saven European Scholarship, a Jane Street Graduate Research Fellowship and the Vocatio foundation. I am very grateful to my advisors, Ben Green and James Maynard, for their continued support and guidance. \section{Definitions and reductions}
\label{sec:definitions}

In this section, we introduce the main definitions and outline the overall strategy of the proof. We state our \emph{local} and \emph{global} structure theorems, and deduce our main Fourier uniformity estimate from the latter.

\subsection{Configurations and lifts}

We fix the following global parameters for the rest of the article.

\begin{definition}[Global parameters]
    \label{def:globalparam}
    Let $P\geq 10^3$, $Y\geq P^3$ and let $\P$ be the set of primes in $[P, 2P]$.
\end{definition}

The basic objects of study in this paper are discrete subsets $A\subset [Y, 2Y)$, where each point $x\in A$ carries a frequency $\alpha_x\in \R/\Z$. We package this data into the following definition.

\begin{definition}[Configuration]
    \label{def:configuration}
    A \emph{configuration} is a triple $(A, \alpha_{\bullet}, H)$ where:
    \begin{itemize}
        \item $A\subset [Y, 2Y)$ is a non-empty $1$-separated set;\footnote{Contrary to the convention in \cite{walsh1,walsh3}, our setup is organised so that the ambient interval $[Y,2Y)$ of any configuration remains the same throughout the proof, while the new parameter $H$ is permitted to vary.}
        \item $\alpha_{\bullet} := (\alpha_x)_{x\in A}$ is a family of elements in $\R/\Z$ indexed by $A$; and
        \item $H$ is a real number satisfying $H \geq P^3$.
    \end{itemize}
\end{definition}

We refer to the elements $x\in A$ as the \emph{points} of the configuration, and the associated $\alpha_x \in \R/\Z$ as the \emph{frequencies}. The parameter $H$ in the definition of a configuration will be used in \cref{sec:localrelations} to quantify the strength of certain \emph{local relations} between the frequencies $\alpha_x$.

\begin{definition}[Sub-configuration]
    Let $\A = (A, \alpha_{\bullet}, H)$ be a configuration. For any non-empty subset $B\subset A$, we define the restriction
    \begin{equation*}
        \A|_{B} := (B, (\alpha_x)_{x\in B}, H).
    \end{equation*}
    Any configuration of this form is called a \emph{sub-configuration} of $\A$.
    \label{def:subconfig}
\end{definition}

A key part of the proof is to construct, given a configuration $\A$ with many local relations, a new configuration $\B$ that retains many of these relations, and is a `lift' of $\A$ in the following sense.
\begin{definition}[Lift]
    \label{def:lift}
    Let $\A = (A, \alpha_{\bullet}, H)$ and $\B = (B, \beta_{\bullet}, H')$ be configurations. For any $p\in \P$, we define
    \begin{equation*}
        p\B := (B, (p \beta_x)_{x\in B}, H'/p).
    \end{equation*}
    We say $\B$ is a \emph{lift} of $\A$ if there exists a prime $p\in \P$ such that $p \B$ is a sub-configuration of $\A$; that is, $B\subset A$, $H'=Hp$ and $p\beta_x = \alpha_x \pmod 1$ for all $x\in B$. We call $p$ the \emph{lifting prime}.
\end{definition}

The proof relies on constructing a sequence of such lifts, which we call a tower.
\begin{definition}[Tower]
    \label{def:tower}
    A \emph{tower of configurations} of \emph{height} $k$ is a sequence $(\A_i)_{0\leq i\leq k}$ of configurations such that $\A_i$ is a lift of $\A_{i-1}$ for all $1\leq i\leq k$.
\end{definition}

\subsection{Local relations}
\label{sec:localrelations}
Roughly speaking, two points $x,y\in A$ satisfy a local relation if there exist primes $p,q\in \P$ such that $px \approx qy$ and $q\alpha_x \approx p\alpha_y\pmod 1$. We measure the error terms in these approximations as follows.

\begin{definition}[Strength of a local relation]
    \label{def:locrelations}
    Let $\A = (A, \alpha_{\bullet}, H)$ be a configuration. For $x,y\in A$ and $p,q\in \P$, we define
    \begin{equation}
        \label{eq:defdist}
        \dist{p,q;\A}{x, y} := \max \Big(\frac{1}{P}\abs{px-qy},\,  H \norm{q\alpha_x - p\alpha_y}\Big).
    \end{equation}
    When $\A$ is clear from the context, we simply write $\dist{p,q}{x, y}$.
\end{definition}

We collect the quadruples $(x,y,p,q)$ satisfying strong local relations in the following set.

\begin{definition}[Strong local relations]
    \label{def:quadruples}
    We define
    \begin{equation}
        \label{eq:defQ}
        \Q(\A) := \big\{ (x,y,p,q) \in A^2\times \P^2\, :\, \dist{p,q;\A}{x,y} \leq \tfrac{1}{10}\big\}.
    \end{equation}
\end{definition}

Thus, a quadruple $(x,y,p,q)$ lies in $\Q(\A)$ if and only if $\abs{px - qy} \leq \tfrac{P}{10}$ and $\norm{q\alpha_x - p\alpha_y} \leq \tfrac{1}{10H}$.\footnote{The constant $1/10$ in \cref{def:quadruples} is not important, but this choice ensures that $y$ is uniquely determined by~$x,p,q$ whenever $(x,y,p,q)\in \Q(\A)$, since $A$ is $1$-separated.}

\subsection{Local structure theorem}
The central tool of our iterative argument is the following local structure theorem for configurations with many local relations.

\begin{restatable}[Local structure theorem]{theorem}{structhm}
    \label{thm:structure}
    There exists an absolute constant $\Clift\geq 1$ such that the following holds. Let~$Y, P,\P$~be as in \cref{def:globalparam}. Let $\A = (A, \alpha_{\bullet}, H)$ be a configuration such that
    \begin{equation*}
        \abs{\Q(\A)} \geq \delta |A| |\P|^2
    \end{equation*}
    for some $0 < \delta <\tfrac{1}{2}$. Let $L\in \N$ and $0<\eps <\tfrac12$ be parameters such that $\eps^{-\Clift L} \leq P$ and $\eps \delta^{-5} L \leq 1$.

    Then, there exists a decomposition $A = A_0 \sqcup \ldots \sqcup A_L \sqcup A_{L+1}$ and a lift $\B = (A\setminus A_{L+1}, \,\beta_{\bullet}, Hp^*)$ of~$\A$ with the following properties.
    \begin{enumerate}[label=(\Roman*),ref=\Roman*]
        \item \label{item:struct1} $|A_0| \gg \delta^{5}|A|$.
        \item \label{item:struct2} For all but $\ll \eps |A| |\P|^2$ quadruples $(x,y,p,q)\in \Q(\A)$,
              \begin{enumerate}
                  \item \label{item:struct2a} if $x\in A_\ell$ then $y\in A_{\ell+e}$ for some $e\in \{-1,0,1\}$, and
                  \item \label{item:struct2b} if $x,y\notin A_{L+1}$, then $(x,y,p,q)\in \Q(\B)$.
              \end{enumerate}
    \end{enumerate}
\end{restatable}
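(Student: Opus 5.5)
The plan is to construct the lift by propagation along the graph of relations, with $A_0$ as a seed and $A_1,\dots,A_L$ as breadth-first layers. Let $G$ be the graph on $A$ whose edges are the pairs $\{x,y\}$ with $(x,y,p,q)\in\Q(\A)$ for some $p,q$. We set $A_\ell$ to be the points at graph distance exactly $\ell$ from $A_0$ for $1\le \ell\le L$, and put everything farther away into $A_{L+1}$. Then \cref{item:struct2a} holds for \emph{every} quadruple, since an edge changes the distance to $A_0$ by at most one. It will probably be necessary to prune $G$ to a robust subgraph (edges lying in many short cycles), so that the layering is not destroyed by a few stray edges. The pruning should cost only $\ll \eps|A||\P|^2$ quadruples, and the hypothesis $\eps\delta^{-5}L\le 1$ is what allows us to pay a loss of this size at each of the $L$ layers.

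\textbf{Step 1 (the seed).} I would find $A_0$ by a Cauchy--Schwarz / popularity argument. Counting closed walks of length four $x\to y\to x'\to y'\to x$ in $G$, the assumption $|\Q(\A)|\ge\delta|A||\P|^2$ gives $\gg\delta^{O(1)}|A||\P|^4$ such $4$-cycles. Composing the relations around a cycle forces a rigid approximate identity among the $\alpha$'s. Pigeonholing then produces a `cluster' $A_0$ with $|A_0|\gg\delta^5|A|$ inside which the frequencies are mutually compatible, in the sense that most pairs in $A_0$ are joined by many short paths and all these paths transport $\alpha$ in the same way. This is where the exponent $5$ in \cref{item:struct1} would come from.

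\textbf{Step 2 (the lift).} Choose the lifting prime $p^*\in\P$ so that relations involving $p^*$ are negligible; a random choice loses only $O(1/|\P|)$. Then write $\beta_x=(\alpha_x+j_x)/p^*$ with $j_x\in\Z/p^*\Z$. For an edge $(x,y,p,q)$, write
\[
q\alpha_x-p\alpha_y=c_{xy}+\eta_{xy}
\]
as real numbers, with $c_{xy}\in\Z$ and $|\eta_{xy}|\le 1/(10H)$; here $H\ge P^3$ makes $c_{xy}$ well defined. The condition $(x,y,p,q)\in\Q(\B)$ then says exactly that $q j_x - p j_y\equiv -c_{xy}\pmod{p^*}$. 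Since $p\ne p^*$, this determines $j_y$ uniquely from $j_x$. So $\beta$ is forced along any path, and I would define it on $A_0$ first and then layer by layer. On layer $\ell$, each $y$ takes the majority value of $j_y$ proposed by its neighbours in layer $\ell-1$. The edges where the vote fails are charged to the exceptional set.

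To control precision, I would allow a tolerance that grows geometrically: $\eps^{-C\ell}$ times the base precision at layer $\ell$, and pigeonhole over scales. The condition $\eps^{-\Clift L}\le P$ guarantees that even after $L$ layers the accumulated error stays below the gap $1/P$ between distinct branches $(\alpha_x+j)/p^*$. Hence the quadruples among $A\setminus A_{L+1}$ still satisfy the $\tfrac1{10}$ threshold, which is \cref{item:struct2b}.

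\textbf{Main obstacle.} The hard step is \emph{cycle consistency}: showing that the values $j_y$ transported along different short paths from $A_0$ agree for all but $\ll\eps|A||\P|^2$ quadruples. Around a cycle of length at most $2L+1$, the product of the transport maps $j\mapsto p^{-1}(qj-c)$ is an affine map mod $p^*$. Consistency means its fixed point is compatible with the seed, and the obstruction is an integer of size $P^{O(L)}$. I would first try to show, from the rigidity in Step 1, that this integer is genuinely $0$ for most cycles through the cluster. For the remaining cycles, the obstruction is a nonzero integer of size $P^{O(L)}$, so it is divisible by only $O(L)$ primes of size $\asymp P$, and averaging over the choice of $p^*$ disposes of them. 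Making the first part quantitative, uniformly across $L$ layers, is where I expect the real work to lie, probably via a Vitali-type covering of overlapping clusters so that each exceptional edge is counted only boundedly often.
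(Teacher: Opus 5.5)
Your layering agrees with the paper's: breadth-first layers from $A_0$ along a pruned set of quadruples, namely $\RR$ restricted to $A_{\mathrm{reg}}$. So \cref{item:struct2a} is fine. The gap is in how the branches $j_x$ are chosen and why they are consistent.

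\emph{The seed.} Step~1 cannot produce the set you describe. The $4$-cycle/popularity argument (this is \cref{lem:firstset}) only yields \emph{local} clusters: subsets of $A\times\P$ concentrated near a single $z\in[PY,4PY]$, each with $O(|\P|)$ elements. The graph $G(\A)$ has maximum degree at most $|\P|^2$, and $L\log|\P|\ll(\log P)^2$, while $\log|A|$ may be much larger. Hence there is no set of size $\gg\delta^5|A|$ most of whose pairs are joined by paths of length $O(L)$. The pieces of $A_0$ are unrelated, so ``define $j$ on $A_0$ first'' has no content.

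\emph{Cycle consistency.} Your proposed treatment runs the wrong way. If the obstruction $M$ of a cycle is a nonzero integer, the cycle closes up modulo $p^*$ only when $p^*\mid M$, i.e.\ for at most $O(L)$ primes. Averaging over $p^*$ therefore makes such cycles inconsistent for almost every choice, rather than disposing of them. Everything would then rest on $M=0$ exactly. For that, the trivial bound $|M|\lesssim P^{O(L)}/H$, with $H$ merely $\ge P^3$, gives nothing. The majority vote also gives no bound of the form $\eps|A||\P|^2$ on violated edges, and does not handle edges inside a layer. Finally, the ``geometrically growing tolerance'' conflicts with your own exact reformulation $qj_x-pj_y\equiv -c_{xy}\pmod{p^*}$, in which nothing accumulates.

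\emph{The missing idea.} Never transport branch values. Instead, pin $j_x$ \emph{locally at every point} using relations that involve $p^*$ --- precisely the ones you discard. Suppose $(x,p^*)$ and $(x',r)$, with $r\neq p^*$, lie in a common cluster. Then $r\alpha_x-p^*\alpha_{x'}=c+O(C/H)$ with $c\in\Z$, and demanding $\norm{r\beta_x-\alpha_{x'}}\ll C/(Hp^*)$ forces $rj_x\equiv -c\pmod{p^*}$. \Cref{lem:clusterlift} shows that a single branch works for the whole cluster. The paper's argument then runs as follows.
\begin{itemize}
\item It picks $p^*$ among primes occurring in many of the disjoint clusters of \cref{lem:firstset}; this is where $\delta^5$ comes from.
\item It propagates \emph{clusters}, not values. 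Along a robust quadruple, \cref{lem:clustercopy} turns a cluster $S$ through $(x,p^*)$ of diameter $\le 500^k$ into one through $(y,p^*)$ of diameter $\le 500^{k+1}$ and size $\ge\tfrac12\eps^2|S|$. The hypothesis $\eps^{-\Clift L}\le P$ keeps these sizes above $|\P|^{99/100}$ for $L$ steps.
\item Regularity (\cref{lem:clusteruniqueness}, proved via the Vitali-type covering) makes the union of all large clusters through $(x,p^*)$ a single cluster, so $\beta_x$ is canonical.
\item Each robust edge is checked locally. One has $\norm{p^*(q\beta_x-p\beta_y)}=\norm{q\alpha_x-p\alpha_y}\le 1/(10H)$, and $\norm{r(q\beta_x-p\beta_y)}\ll 500^L/H$ for some $r\neq p^*$ taken from the copied cluster. \Cref{lem:normcalculus} then gives $(x,y,p,q)\in\Q(\B)$.
\end{itemize}
No cycle consistency is ever needed.
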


The local structure theorem provides a partition of $A$ into (possibly empty) sets $A_0, A_1, \ldots, A_L$, along with an exceptional set $A_{L+1}$. On the non-exceptional part $A\setminus A_{L+1}$, the frequencies can be lifted to form a configuration $\B$ that preserves almost all local relations not involving $A_{L+1}$. Moreover, this decomposition heavily restricts the combinatorial structure of $\A$: apart from a negligible proportion of exceptions, the local relations in $\A$ must either be internal to a single set $A_{\ell}$, or connect adjacent sets $A_{\ell}$ and $A_{\ell+1}$.

In order to reach the main consequences on Fourier uniformity more quickly, we defer the rather technical proof of \cref{thm:structure} to \cref{part:local} (\cref{sec:preliminarylemmas,sec:covering,sec:structure}).

\subsection{Global structure theorem}
The ultimate goal of our analysis is to establish a global formula for the frequencies of a configuration $\A$ with many local relations.

\begin{restatable}[Global structure theorem]{theorem}{globalthm}
    \label{thm:globalstructuretheorem}
    Let $\eps_0>0$ and let $Y$ be sufficiently large in terms of $\eps_0$. Let $P := \exp((\log Y)^{2/5+\eps})$ for some $\eps_0<\eps<\tfrac{1}{5}$.

    Let $\A = (A, \alpha_{\bullet}, H)$ be a configuration such that $|\Q(\A)| \geq \delta |A| |\P|^2$, where $\delta \geq (\log \log Y)^{-10}$.

    Let $\kappa := \exp((\log Y)^{2/5-\eps})$. Then, there is a subset $A'\subset A$ of size
    \begin{equation*}
        |A'| \geq  \kappa^{-1} |A|,
    \end{equation*}
    such that, for all $x\in A'$, we have the approximate formula
    \begin{equation*}
        \alpha_x = \frac{a_0}{q_0} + \frac{T_0}{x} + O\bigg(\frac{\kappa}{HP}\bigg) \pmod 1,
    \end{equation*}
    where $a_0, q_0$ are coprime integers with $1\leq q_0 \leq \kappa$ and $T_0\in \R$ satisfies $|T_0| \leq \kappa \frac{Y^2}{HP}$.
\end{restatable}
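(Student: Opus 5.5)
The plan is to iterate the local structure theorem (\cref{thm:structure}) to build a tall tower of configurations, and then use an unconditional expansion estimate to turn the accumulated relations into a global formula. Set $k \asymp \log Y/\log P = (\log Y)^{3/5-\eps}$; this is the number of lifting steps needed for the lifting primes to multiply up to a scale comparable with $Y$. Starting from $\A_0=\A$, at step $i$ apply \cref{thm:structure} to the current configuration $\A_i$. The parameters will be $L \approx \log P/(C\log\eps_i^{-1})$ and $\eps_i$ chosen so that $\eps_i^{-\Clift L}\le P$ and $\eps_i\delta_i^{-5}L\le 1$.

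Each application produces a lift $\A_{i+1}$ supported on $A\setminus A_{L+1}$, with lifting prime $p_i^*$. By \cref{item:struct2b}, all but $O(\eps_i|A_i||\P|^2)$ relations among non-exceptional points survive in $\Q(\A_{i+1})$. The key point is how to handle the exceptional set $A_{L+1}$. By \cref{item:struct2a}, relations only move between adjacent layers $A_\ell$, so the layers form a ``path'' of length $L$. Pigeonholing over $\ell$ then shows that some cut between consecutive layers carries only $O(1/L)$ of the relations. Restricting to the side of that cut containing $A_0$, which has size $\gg\delta^5|A|$, loses only a $1/L$ fraction of the relation density. If we instead lost a constant factor at every step, then after $k$ steps we would face $c^{k}$-type losses, which is exactly the old barrier.

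The sizes $|A_i|$ may shrink, by at most a factor $\delta^{-5}$ per step, so $|A_k|\ge \exp(-O(k\log\log Y))|A|$. The relation density $\delta_i$ stays $\ge\delta/2$ provided $\sum_i (\eps_i+1/L)\ll\delta$. This requires $k/L\ll\delta$, which is exactly where the constraint on $P$ versus $k$ enters: we need $L\approx \log P/\log\log Y \gg k\,(\log\log Y)^{10}$.

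After $k$ steps we have a configuration $\B=\A_k$ with frequencies $\beta_x$ such that $N\beta_x=\alpha_x$, where $N=\prod p_i^*$ has size about $P^k$. The scale $H'=HN$ is large, and $\B$ still satisfies $\gg\delta|B||\P|^2$ relations, now with the much stronger precision $\|q\beta_x-p\beta_y\|\le 1/(10HN)$.

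The main obstacle is the second phase: extracting structure from such a sparse configuration. I would use the Vinogradov--Korobov expansion estimate, which bounds the number of pairs $(x,y)\in S\times T$ with $px\approx qy$ by (density)$\cdot|S||T||\P|^2/Y$ plus an error that is nontrivial only when $|S|,|T|\ge Y\exp(-(\log Y)^{1-3\theta/2+o(1)})$, roughly. The idea is to observe that, at precision $1/(HN)$ with $HN\gg Y^2$, the map $x\mapsto \beta_x$ behaves like $T/x$ locally. To make this precise, I would consider the quantities $x\beta_x$ and group points by their values modulo a lattice. The expansion estimate then forces a positive proportion of relations to hold inside a single class, giving $q\beta_x-p\beta_y\approx0$ with $\beta_x=a/q_0'+T/x$ consistently. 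Finally, I would multiply back by $N$ to recover $\alpha_x=a_0/q_0+T_0/x$, with $q_0\le\kappa$ and $|T_0|\le\kappa Y^2/(HP)$, and the loss in $|A'|$ is absorbed into $\kappa^{-1}$. The balancing of $\exp(k\log\log Y)$ against the expansion threshold is what yields the exponent $2/5$. I expect this expansion-to-formula step to be the hardest; I would first try it on the top configuration, using a Cauchy--Schwarz argument over dyadic ranges of $x\beta_x$.
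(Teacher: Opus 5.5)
There is a genuine gap: your iteration does not get past the old $1/2$ barrier, so it cannot prove the theorem for $\eps<1/10$.

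\textbf{Why the iteration fails.} In each application of \cref{thm:structure}, the condition $\eps_i^{-\Clift L}\le P$ caps $L$ at $\log P/(\Clift\log\eps_i^{-1})$. The $\eps_i$-losses are of size $\eps_i|A_i||\P|^2$ but are measured against a set of size only $\gg\delta^5|A_i|$. Since they must sum to $\ll\delta$ over $k$ steps, you need $\eps_i\le\delta^{O(1)}/k$, and hence $L\lesssim\log P/\log\log Y$.

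Meanwhile, for $H$ as small as $P^{O(1)}$, reaching $HN\gg Y^2$ takes $k\gtrsim\log Y/\log P$ lifts. In your scheme a lift can cost a relative $\asymp 1/L$ of the density. Nothing in \cref{thm:structure} excludes layers of comparable size with relations spread evenly between adjacent layers. In that case every cut, including the one discarding $A_{L+1}$, carries $\asymp\delta|A||\P|^2/L$ relations.

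Your own requirement $L\gg k(\log\log Y)^{10}$ therefore becomes $(\log P)^2\gg(\log Y)(\log\log Y)^{11}$, i.e.\ $\theta>1/2$, where $P=\exp((\log Y)^\theta)$. For $\theta=2/5+\eps<1/2$ one has $k/L\asymp(\log Y)^{1-2\theta+o(1)}\to\infty$.

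Your account of where $2/5$ comes from is also off. Balancing $\exp(k\log\log Y)$ against the expansion threshold compares the exponents $1-\theta$ and $1-3\theta/2$, and the first always wins. In fact the size losses along the tower do not matter at all, because \cref{cor:expansion} directly bounds from below the size of any configuration with many relations.

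\textbf{The missing idea: the lift-or-halve dichotomy of \cref{lem:liftorsplit}.}
\begin{itemize}
\item One first passes to a subset of maximal relative density, so that every proper subset has density at most $\delta$. You would need this too for your ``$A_0$ side'' to keep density close to $\delta$; as written, the relations could live on the other side of the cut.
\item One lifts \emph{only} when $|A_{L+1}|\le|A|/(\log Y)^{15}$. Then a lift costs just $O((\log Y)^{-10})$, and $(\log Y)^{10}$ lifts are affordable.
\item Otherwise, \cref{lem:realanalysis} produces a layer $A_i$ that is thin compared with $\sum_{j>i}|A_j|$. Cutting there passes to a set of at most half the size, at relative cost $O(\delta^{-6}(\log\log Y)^2/\log P)$.
\end{itemize}
The number of halvings is then limited by expansion rather than by $L$. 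If there were $\gg\delta^6\log P/(\log\log Y)^2$ halvings, you would be left with a set of relative size $\exp(-(\log Y)^{\theta-o(1)})$ still carrying $\gg\delta|B||\P|^2$ relations. This contradicts the lower bound $\exp(-(\log Y)^{1-3\theta/2+o(1)})$ from \cref{cor:expansion}, and that comparison is exactly where $\theta>2/5$ enters (\cref{prop:tower}).

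\textbf{The second phase is only a heuristic.} ``Grouping $x\beta_x$ modulo a lattice'' does not supply any of the steps the paper actually needs:
\begin{itemize}
\item It uses $H_k\ge P^{3\log Y}$ to control paths of length up to $\log Y$. This is why it takes $(\log Y)^{10}$ lifts rather than $\asymp\log Y/\log P$.
\item It extracts the common denominator from closed paths with distinct prime products, then propagates the maximal $q_x$ through a connected component (\cref{lem:getfirstformula}).
\item It bounds $q_0$ with the character-twisted expansion estimate \cref{prop:GRHsubstitute}.
\item It gets the $T/x$ term by tracing paths from a base point $x_0$, and bounds $|\beta_{x_0}|$ via expansion for the rescaled multiset $V$ (\cref{lem:betax}).
\end{itemize}
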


We prove \cref{thm:globalstructuretheorem} in \cref{part:global} (\cref{sec:tower,sec:largeH,sec:expansion}), using the local structure theorem (\cref{thm:structure}) as a black box. The argument proceeds in three main stages.

In \cref{sec:tower}, we iteratively apply \cref{thm:structure} to construct a tower of configurations above $\A$. If the exceptional set $A_{L+1}$ has negligible size, \cref{thm:structure} provides a lift $\B$ that is almost an exact copy of $\A$; unfortunately, this cannot be guaranteed in general. We circumvent this by exploiting two key features of \cref{thm:structure}. First, its quantitative strength allows for a logarithmically large chain length $L$, imposing more rigid constraints on the local relations than \cite[Proposition~4.1]{walsh3}. Second, its validity on arbitrarily sparse sets allows the iteration to proceed despite severe losses in the cardinality of $A$.

In \cref{sec:largeH}, we analyse the resulting construction. The lack of GRH-type expansion prevents us from adopting the strategy of \cite{walsh3}, where one fixes a single point $x_0$ in the top configuration and traces its `descendants' down the tower to deduce a global formula on a positive density subset of $A$. Instead, we work solely within the top configuration $\A'$. The local relations in $\A'$ have exceptionally small error terms on the frequency side (these errors having contracted by a factor $\asymp P$ at each step of the iteration), making them amenable to more direct graph-theoretic arguments.

Finally, in \cref{sec:expansion}, we combine the preceding analysis with an unconditional expansion estimate based on Vinogradov-Korobov type bounds for Dirichlet $L$-functions to establish the global structure theorem.

\subsection{Deduction of the Fourier uniformity estimate}

We now prove our main result on Fourier uniformity, \cref{thm:maingeneral}. The main ingredients are Elliott's inequality (via \cref{prop:setup}), our global structure theorem (\cref{thm:globalstructuretheorem}), and a variant of the Matomäki-Radziwi{\l\l} theorem with a power-saving upper bound for the number of exceptional intervals (\cref{thm:MR}).

\begin{proof}[Proof of \cref{thm:maingeneral}]
    The assumption in the statement implies the existence of an $H$-separated subset $S \subseteq [X, 2X)$ of size $\gg \delta X/H$, such that for each $x \in S$, there exists $\theta_x \in \R/\Z$ satisfying
    \begin{equation*}
        \bigg\lvert\!\sum_{x \leq n < x+H} f(n) e(n\theta_x)\bigg\rvert \gg \delta H.
    \end{equation*}

    By \cref{prop:setup}, there exists a scale $P$ with $H^{e^{-O(\delta^{-2})}} \leq P \leq H^{1/10}$ such that, writing $\P$ for the set of primes in $[P, 2P)$, there are
    \begin{equation*}
        \gg \delta^{7} |S| |\P|^2
    \end{equation*}
    quadruples $(x, y, p, q) \in S^2 \times \P^2$ satisfying $|px-qy|\leq \tfrac{1}{10} PH$ and $\|q \theta_x - p \theta_y\| \leq \tfrac{P}{10H}$. Defining $Y := X/H$, $A := \frac{1}{H} S\subset [Y, 2Y)$ and $\alpha_x := \theta_{Hx}$ for $x\in A$, we see that $\A := (A, \alpha_{\bullet}, H/P)$ is a configuration with
    \begin{equation*}
        \abs{\Q(\A)} \gg \delta^{7} |A| |\P|^2.
    \end{equation*}

    Let $\kappa := \exp((\log Y)^{2/5-\eps/2})$. By our global structure theorem, \cref{thm:globalstructuretheorem}, there exists a subset $A' \subseteq A$ of size $|A'| \geq \kappa^{-1} |A|$ such that, for each $x \in A'$, we have the approximate formula
    \begin{equation*}
        \alpha_x = \frac{a_0}{q_0} + \frac{T_0}{x} + O\bigg(\frac{\kappa}{H}\bigg) \pmod 1
    \end{equation*}
    for some $a_0, q_0, T_0$ independent of $x$, where $a_0$ and $q_0$ are coprime integers with $1 \leq q_0 \leq \kappa$ and $T_0$ is a real number with $|T_0| \leq \kappa Y^2/H$.

    To conclude, we apply \cref{thm:MR} to the dilated set $S' := H\cdot A'\subset [X, 2X)$, the frequencies $(\theta_x)_{x\in S'}$, and $T := HT_0$. The required bound \cref{eq:paramassumpt} simplifies to
    \begin{equation*}
        \exp((\log X)^{2/5-\eps/2})^{\delta^{-C}} \leq H \leq X^{\delta^{C}}
    \end{equation*}
    for some absolute constant $C>0$, which holds by our choices of $\delta$ and $H$ provided that $X$ is sufficiently large in terms of $\eps$. This yields the desired pretentiousness estimate for $f$.
\end{proof}
\part{Global structure theorem}
\label{part:global}

\section{Construction of a tower of configurations}
\label{sec:tower}
In this section, we use our local structure theorem, \cref{thm:structure}, to construct a tower of configurations above a given configuration $\A$ with many local relations, ensuring that each intermediate lift maintains a large relative density of local relations. We show that this construction succeeds unless $\A$ contains a very sparse, highly connected sub-configuration, a scenario we rule out in \cref{sec:expansion}.

To accomplish this, we exploit the full strength of \cref{thm:structure} to decompose $\A$ into $L$ parts, where $L$ can be chosen to be almost as large as $\log P$. We will use the following simple lemma to analyse the resulting partition.

\begin{lemma}
    \label{lem:realanalysis}
    Let $L\in \N$ and $1\leq C \leq L$. Let $a_1, \ldots, a_{L}\geq 0$ be such that, for all $i\in \{1, \ldots, L\}$,
    \begin{equation*}
        a_i \geq \frac{C}{L} \sum_{j>i} a_j.
    \end{equation*}
    Then,
    \begin{equation*}
        \min_{1\leq i\leq L} a_i \ll \frac{e^{-C/3}}{L} \sum_{i=1}^{L} a_i.
    \end{equation*}
\end{lemma}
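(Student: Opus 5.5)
We work with the tail sums $S_i := \sum_{j\geq i} a_j$ for $1\leq i\leq L+1$, so that $S_1 = \sum_{i=1}^L a_i$ and $S_{L+1}=0$. Since $a_i = S_i - S_{i+1}$, the hypothesis reads $S_i - S_{i+1} \geq \frac{C}{L} S_{i+1}$, that is,
\begin{equation*}
    S_{i+1} \leq \Big(1+\frac{C}{L}\Big)^{-1} S_i .
\end{equation*}
Iterating from $i=1$ gives the geometric decay $S_{i+1} \leq (1+C/L)^{-i} S_1$ for every $0\leq i\leq L$.

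The first step is to pick an index near the middle. Set $m := \lceil L/2 \rceil$, so that $m \geq 1$ and the block $\{m, \ldots, L\}$ has $L-m+1 \geq L/2$ indices. The minimum over $1\leq i\leq L$ is at most the minimum over this block, which is at most the block's average:
\begin{equation*}
    \min_{1\leq i\leq L} a_i \leq \frac{1}{L-m+1}\sum_{i=m}^{L} a_i \leq \frac{2}{L}\, S_m .
\end{equation*}

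The second step bounds $S_m$ using the decay, with $m-1 \geq L/2 - 1$ factors:
\begin{equation*}
    S_m \leq \Big(1+\frac{C}{L}\Big)^{-(m-1)} S_1 .
\end{equation*}
Because $C/L \leq 1$ and $\log(1+t) \geq t/2 \cdot \ldots$ more precisely $\log(1+t)\geq (\log 2)\, t$ on $[0,1]$ by concavity, we get
\begin{equation*}
    \Big(1+\frac{C}{L}\Big)^{-(m-1)} \leq \exp\Big(-(\log 2)\frac{C}{L}\Big(\frac{L}{2}-1\Big)\Big) = \exp\Big(-\frac{(\log 2)C}{2}\Big)\, e^{(\log 2)C/L}.
\end{equation*}
The last factor is at most $2$, since $C\leq L$. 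Moreover $(\log 2)/2 \approx 0.3466 > 1/3$, so the whole expression is $\leq 2e^{-C/3}$.

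Combining the two steps gives
\begin{equation*}
    \min_{1\leq i\leq L} a_i \leq \frac{4e^{-C/3}}{L}\sum_{i=1}^L a_i ,
\end{equation*}
as required. The only step needing any care is the constant bookkeeping in the middle estimate. The hypothesis $C\leq L$ is what keeps $C/L\leq 1$, so that $\log(1+t)\gg t$ holds with a constant good enough to reach the exponent $C/3$. It also ensures that losing one factor from $m-1$ costs only a bounded multiplicative constant.
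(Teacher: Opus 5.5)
Your proof is correct, including the degenerate cases $L\le 2$. There $m-1=0$ and $L/2-1\le 0$, so the exponent comparison holds trivially. You should delete the leftover false start ``$\log(1+t)\geq t/2\cdot\ldots$'', since that bound alone would only yield $e^{-C/4}$.

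You reach the result by a more direct route than the paper, although the underlying mechanism is the same.

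The paper passes to a continuous analogue. It encodes the $a_i$ as a step function $f$ on $[0,1]$ and checks that $f(x)\ge\gamma\int_x^1 f(y)\,dy$ with $\gamma=C/2$ (for $C\ge 2$, smaller $C$ being trivial; a factor $2$ is lost in this discretisation). It then observes that $F(x):=\int_x^1 f(y)\,dy$ satisfies $F'+\gamma F\le 0$, so $e^{\gamma x}F(x)$ is nonincreasing. Finally it evaluates $(1-x)\min f\le F(x)\le e^{-\gamma x}F(0)$ at the optimised point $x=1-1/\gamma$. This gives $\min_i a_i\le\gamma e^{1-\gamma}L^{-1}\sum_i a_i\ll Ce^{-C/2}L^{-1}\sum_i a_i$, and the factor $C$ is absorbed into $e^{-C/6}$.

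You run the same Gr\"onwall-type decay directly on the tail sums. The exact recursion $S_{i+1}\le(1+C/L)^{-1}S_i$ avoids both the factor-$2$ loss and the detour through absolutely continuous functions. It also treats all $1\le C\le L$ uniformly and gives the explicit bound $\min_i a_i\le 4\cdot 2^{-C/2}L^{-1}\sum_i a_i$.

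The price is in the exponent. You cut at the midpoint rather than at distance $\asymp L/C$ from the end, and you use the crude inequality $\log(1+t)\ge(\log 2)t$ (needed because $C/L$ can be as large as $1$). So your exponent $(\log 2)C/2\approx 0.347\,C$ only just clears $C/3$. An optimised cut, in the paper's continuous setting or in your discrete one, gives a comfortably stronger exponent. For the lemma as stated, and for its application in \cref{lem:liftorsplit}, either bound is more than sufficient.
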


\begin{proof}
    We first consider a continuous analogue of the problem. Let $f:[0,1]\to \R_{\geq 0}$ be a non-negative integrable function such that, for some $\gamma\geq 1$ and all $x\in [0,1]$,
    \begin{equation}
        \label{eq:realanalysiscondcont}
        f(x) \geq \gamma \int_{x}^{1} f(y) \, dy.
    \end{equation}
    We claim that
    \begin{equation}
        \label{eq:realanalysiscont}
        \min_{x\in [0,1]} f(x) \leq \gamma e^{1-\gamma} \int_0^1 f(y) \, dy.
    \end{equation}
    To see this, let $F(x) := \int_x^1 f(y) \, dy$. Then, $F$ is absolutely continuous and $F'(x) = -f(x)$ almost everywhere. The assumption \cref{eq:realanalysiscondcont} translates to $F'+\gamma F \leq 0$, i.e.~$(e^{\gamma x}F(x))' \leq 0$, almost everywhere. Therefore, the function $e^{\gamma x}F(x)$ (which is absolutely continuous) is non-increasing, so that
    \begin{equation*}
        (1-x) \min_{y\in [0,1]} f(y) \leq F(x) \leq e^{-\gamma x} F(0)
    \end{equation*}
    for all $x\in [0,1]$. Setting $x = 1 - \frac{1}{\gamma}$, we obtain the claim \cref{eq:realanalysiscont}.

    To deduce the lemma from this continuous version, we define a piecewise constant function $f$ on~$[0, 1]$ by setting $f(x) := a_i$ for $x\in \big(\frac{i-1}{L}, \frac{i}{L}\big]$, for $1\leq i\leq L$. Then, for all $x\in \big(\frac{i-1}{L}, \frac{i}{L}\big]$, we have
    \begin{equation*}
        \int_{x}^{1} f(y) \, dy \leq \frac{1}{L} \sum_{j\geq i} a_j \leq \left(\frac{1}{L}+\frac{1}{C}\right)a_i \leq \frac{2}{C} f(x),
    \end{equation*}
    so that $f$ satisfies \cref{eq:realanalysiscondcont} with $\gamma := \max(\tfrac{C}{2}, 1)$. Applying \cref{eq:realanalysiscont} concludes the proof.
\end{proof}

We can now prove a dichotomy: if $\A$ has many local relations, then either it admits a suitable lift, or it contains a small sub-configuration whose relative density of local relations only decreases by a factor of the shape $1-\frac{1}{\log P}$.

\begin{lemma}
    \label{lem:liftorsplit}
    Let $\A = (A, \alpha_{\bullet}, H)$ be a configuration such that $|\Q(\A)| \geq \delta |A| |\P|^2$, where ${\delta \geq 1/\log Y}$. Then, one of the following holds:
    \begin{enumerate}[label=(\roman*),ref=\roman*]
        \item \label{item:yeslift} There exists a lift $\B = (B, \beta_{\bullet}, Hp^*)$ of $\A$ such that
              \begin{equation*}
                  |\Q(\B)| \geq \bigg(1 - O\bigg( \frac{1}{(\log Y)^{10}} \bigg) \bigg) \delta |B| |\P|^2.
              \end{equation*}
        \item \label{item:yessplit} There is a subset $B\subset A$ with $|B| \leq \tfrac12 |A|$, such that
              \begin{equation*}
                  |\Q(\A|_{B})| \geq \bigg(1 - O\bigg(\frac{\delta^{-6}(\log \log Y)^2}{\log P}\bigg) \bigg) \delta |B||\P|^2.
              \end{equation*}
    \end{enumerate}
\end{lemma}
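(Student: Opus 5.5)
We apply \cref{thm:structure} to $\A$ with $L \asymp \log P/(\Clift\log\log Y)$ and $\eps := (\log Y)^{-11}$. Then $\eps^{-\Clift L} \le P$ holds. The condition $\eps\delta^{-5}L \le 1$ also holds, because $\delta \ge 1/\log Y$ gives $\delta^{-5}L \le (\log Y)^6$. This yields a decomposition $A = A_0\sqcup\cdots\sqcup A_{L+1}$ and a lift $\B=(A\setminus A_{L+1},\beta_\bullet,Hp^*)$, with all but $O(\eps|A||\P|^2)$ quadruples obeying (\ref{item:struct2a}) and (\ref{item:struct2b}). Write $a_\ell := |A_\ell|$ and $Q_{\ell} := |\Q(\A|_{A_\ell})|$. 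Up to the $\eps$-error, every quadruple of $\Q(\A)$ is internal to some $A_\ell$ or joins $A_\ell$ and $A_{\ell\pm1}$. Each $x$ has at most $|\P|^2$ quadruples with first coordinate $x$, since $y$ is determined by $(x,p,q)$. Hence the edges between $A_\ell$ and $A_{\ell+1}$ number at most $2|\P|^2\min(a_\ell,a_{\ell+1})$.

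\emph{Case 1: $A_{L+1}$ is small}, say $a_{L+1} \le (\log Y)^{-10}\delta |A|$. Then (\ref{item:struct2b}) shows that $\Q(\B)$ contains all quadruples of $\Q(\A)$ avoiding $A_{L+1}$, up to $O(\eps|A||\P|^2)$. The number of quadruples touching $A_{L+1}$ is at most $2a_{L+1}|\P|^2$. So $|\Q(\B)| \ge \delta|A||\P|^2 - O((\log Y)^{-10}\delta|A||\P|^2) \ge (1-O((\log Y)^{-10}))\delta|B||\P|^2$, using $|B|\le|A|$ and $\eps \le (\log Y)^{-10}\delta$. This is alternative (\ref{item:yeslift}). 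To make the threshold match the losses in Case 2, I would more carefully compare against tails $\sum_{j\ge \ell}a_j$ rather than $|A|$.

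\emph{Case 2: otherwise.} Here we look for a splitting set among the initial or terminal segments $B_\ell := A_\ell\sqcup\cdots\sqcup A_{L+1}$ (or the complements $A_0\sqcup\cdots\sqcup A_{\ell-1}$). Cutting between $A_{\ell-1}$ and $A_\ell$ loses only the boundary edges, at most $2|\P|^2 a_{\ell}$ (or $2|\P|^2 a_{\ell-1}$), plus $O(\eps|A||\P|^2)$. Suppose every such cut loses more than $\eta\delta|B_\ell||\P|^2$, where $\eta := C\delta^{-6}(\log\log Y)^2/\log P$; that is, $B_\ell$ fails (\ref{item:yessplit}). Then $a_\ell \gg \eta\delta\sum_{j\ge\ell}a_j$ for all $\ell$, modulo $\eps$-terms that are negligible relative to $a_{L+1}$ by the Case 2 assumption. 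With $C_{\mathrm{lem}} := \eta\delta L \asymp \delta^{-5}\log\log Y$, \cref{lem:realanalysis} gives some $\ell$ with $a_\ell \ll e^{-C_{\mathrm{lem}}/3}L^{-1}|A| \le (\log Y)^{-20}|A|$. Cutting at that $\ell$ loses almost nothing, which contradicts the assumed loss because $|B_\ell|\ge a_{L+1}\ge(\log Y)^{-10}\delta|A|$.

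The size constraint $|B|\le\frac12|A|$ is arranged by choosing the tail or head side, whichever has at most half the mass. Because $|A_0|\gg\delta^5|A|$ by (\ref{item:struct1}) and $A_{L+1}$ is non-negligible, both sides are non-trivial. The relative density of the chosen side is at least that of $\A$ minus the cut loss, by averaging: if both halves had density below $(1-\eta)\delta$, the total would be too small. The main obstacle is this bookkeeping: $\eps$, $\delta^5$, and the constant in \cref{lem:realanalysis} must be tuned so that $C_{\mathrm{lem}}\ge 3\cdot 25\log\log Y$ while $\eps^{-\Clift L}\le P$. This forces $L\approx\log P/\log\log Y$ and hence the factor $(\log\log Y)^2$ in (\ref{item:yessplit}).
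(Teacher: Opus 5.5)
Your architecture is the paper's: apply \cref{thm:structure} with $L\asymp\log P/\log\log Y$, split according to the size of $A_{L+1}$, and in the second case use \cref{lem:realanalysis} to find a thin layer at which to cut. Case~1 is fine.

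The gap is in the last step of Case~2. A cheap cut only gives $|\Q(\A|_{B_1})|+|\Q(\A|_{B_2})|\geq\delta(|B_1|+|B_2|)\,|\P|^2-(\text{loss})$. Your averaging argument then shows that \emph{one} of the two sides has density at least $(1-\eta)\delta$. It does not show that the side with $|B|\leq\frac12|A|$ does, and that is what \cref{item:yessplit} requires. This can genuinely fail. Take $A=D\sqcup I$ with $|I|<\frac12|A|$, where $I$ consists of isolated points and $D$ has density $\delta|A|/|D|>\delta$, and suppose the decomposition has $A_{L+1}=I$. Then the cut between $A_L$ and $A_{L+1}$ costs only the $\eps$-error, yet its small side $I$ has density $0$. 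For the same reason, your identification ``the cut at $\ell$ loses more than $\eta\delta|B_\ell||\P|^2$, that is, $B_\ell$ fails \cref{item:yessplit}'' is not valid: the cost of a cut says nothing by itself about the density of either side.

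The missing ingredient is the reduction the paper makes at the outset. Without loss of generality, every proper subset $A'\subset A$ satisfies $|\Q(\A|_{A'})|\leq\delta|A'||\P|^2$; otherwise restart with $\A|_{A'}$. This restart is harmless for four reasons:
\begin{itemize}
\item a lift of $\A|_{A'}$ is a lift of $\A$;
\item a subset of $A'$ of size at most $\frac12|A'|$ has size at most $\frac12|A|$;
\item both conclusions for a larger density $\delta'\geq\delta$ imply them for $\delta$;
\item $|A|$ strictly decreases, so the process terminates.
\end{itemize}
Given this, the upper bounds $|\Q(\A|_{B_k})|\leq\delta|B_k||\P|^2$ for $k=1,2$, combined with the lower bound on their sum, force \emph{each} side to satisfy $|\Q(\A|_{B_k})|\geq\delta|B_k||\P|^2-(\text{loss})$. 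Then $|B_2|\leq|A|\ll\delta^{-5}|A_0|\leq\delta^{-5}|B_1|$ lets you measure the loss against whichever side is smaller, which produces the factor $\delta^{-6}$.

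A smaller bookkeeping point: with $\eps=(\log Y)^{-11}$, the error $\eps|A|$ is comparable to your lower bound for $a_{L+1}$ when $\delta\approx 1/\log Y$. So ``negligible relative to $a_{L+1}$'' is not quite right. What you actually need is $\eps|A|\ll\eta\delta\,a_{L+1}$, and this holds only by the $(\log\log Y)^2$ margin in $\eta$. Taking $\eps=(\log Y)^{-20}$, as the paper does, removes this tension.
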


\begin{proof}
    Without loss of generality, we may assume that every proper subset $A' \subset A$ satisfies
    \begin{equation}
        \label{eq:maxasspt}
        |\Q(\A|_{A'})| \leq \delta |A'| |\P|^2,
    \end{equation}
    or else we can start over with $\A|_{A'}$ in place of $\A$.

    We apply \cref{thm:structure} with $\eps := (\log Y)^{-20}$ and $L := \lfloor (\log P) / (20\Clift\log \log Y) \rfloor$. Note that the conditions on $\eps$ and $L$ in \cref{thm:structure} are satisfied if $Y$ is sufficiently large and $L\geq 1$, which we may assume or else the lemma holds for trivial reasons.

    Write $A = A_0 \sqcup A_1 \sqcup \ldots \sqcup A_{L+1}$ and ${\B = (A\setminus A_{L+1}, \beta_{\bullet}, Hp^*)}$ for the decomposition and the lift given by \cref{thm:structure}.

    We claim that one of the following holds:
    \begin{enumerate}[label=(\alph*),ref=\alph*]
        \item \label{item:ALplusone} $|A_{L+1}| \ll |A|/(\log Y)^{15}$,
        \item \label{item:ALatmost} $|A_{L+1}| \geq |A|/(\log Y)^{15}$ and there exists $1\leq i \leq L$ such that
              \begin{equation*}
                  |A_i| \ll \frac{\log \log Y}{L} \sum_{j=i+1}^{L+1} |A_j|.
              \end{equation*}
    \end{enumerate}
    Indeed, suppose that $|A_{L+1}| \geq |A|/(\log Y)^{15}$ and that $|A_i| \geq \frac{60 \log \log Y}{L} \sum_{j=i+1}^{L+1} |A_j|$ for all $1\leq i\leq L$. Then, by \cref{lem:realanalysis}, we have $\min_{1\leq i\leq L+1} |A_i| \ll |A|/(\log Y)^{20}$. If this minimum is attained at $i=L+1$, we are in case \cref{item:ALplusone}; otherwise, we are in case \cref{item:ALatmost}.

    We now treat the two cases separately.

    Suppose first that case \cref{item:ALplusone} holds. By part \cref{item:struct2b} of \cref{thm:structure}, we have
    \begin{equation*}
        |\Q(\B)| \geq \delta |A| |\P|^2 - O\! \left(|A_{L+1}||\P|^2 + \eps |A||\P|^2\right).
    \end{equation*}
    Recalling that $B = A \setminus A_{L+1}$ and inserting our bound on $|A_{L+1}|$, we obtain
    \begin{equation*}
        |\Q(\B)| \geq \delta |A| |\P|^2 - O\! \left(\frac{|A||\P|^2}{(\log Y)^{15}}\right),
    \end{equation*}
    which yields conclusion \cref{item:yeslift} of the lemma.

    Now, suppose that case \cref{item:ALatmost} holds. Define $B_1 := \bigsqcup_{j=0}^{i-1} A_j$ and $B_2 := \bigsqcup_{j=i+1}^{L+1} A_j$. By property \cref{item:struct2a} of \cref{thm:structure}, we have
    \begin{equation*}
        |\Q(\A)| \leq |\Q(\A|_{B_1})| + |\Q(\A|_{B_2})| + O\! \left(|A_i||\P|^2 + \eps |A||\P|^2\right).
    \end{equation*}
    To simplify the error term, we note that $|A_i| \ll \frac{\log \log Y}{L} |B_2|$ and $|A| \ll (\log Y)^{15}|B_2|$ by assumption. Thus,
    \begin{equation}
        \label{eq:bound1QBi}
        |\Q(\A)| \leq |\Q(\A|_{B_1})| + |\Q(\A|_{B_2})| + O\! \left(\frac{\log \log Y}{L}|B_2||\P|^2\right).
    \end{equation}

    Since
    \begin{equation*}
        |\Q(\A)| \geq \delta |A||\P|^2 \geq \delta |B_1||\P|^2 + \delta |B_2| |\P|^2
    \end{equation*}
    and $|\Q(\A|_{B_k})| \leq \delta |B_k| |\P|^2$ for $k=1,2$ by \cref{eq:maxasspt}, the estimate \cref{eq:bound1QBi} implies that
    \begin{equation}
        \label{eq:bound2QBi}
        |\Q(\A|_{B_k})| \geq \delta |B_k| |\P|^2 - O\! \left(\frac{\log \log Y}{L}|B_2||\P|^2\right)
    \end{equation}
    for $k=1,2$. Moreover, by part \cref{item:struct1} of \cref{thm:structure}, we have
    \begin{equation*}
        |B_2| \leq |A| \ll \delta^{-5} |A_0| \leq \delta^{-5}|B_1|,
    \end{equation*}
    so that \cref{eq:bound2QBi} also holds with $\delta^{-5}|B_1|$ in place of $|B_2|$ in the error term. Hence, choosing $B$ to be the smaller of the two sets $B_1$ and $B_2$, we obtain conclusion \cref{item:yessplit} of the lemma.
\end{proof}

Iterating \cref{lem:liftorsplit}, we either obtain the desired tower of configurations, or we can isolate a very sparse sub-configuration with a large relative density of local relations. Crucially, in the latter case, this density parameter only decreases by a constant factor, while the size of the underlying set drops by almost a power of $P$.

\begin{proposition}
    \label{prop:tower}
    Let $\A = (A, \alpha_{\bullet}, H)$ be a configuration with $|\Q(\A)| = \delta |A| |\P|^2$, where $\delta \geq 2/\log Y$.
    Then, one of the following holds:
    \begin{enumerate}
        \item \label{item:towercase} There exists a tower of configurations $(\A_i)_{0\leq i\leq k}$ of height $k\gg (\log Y)^{10}$, such that $\A_0 = \A$ and ${|\Q(\A_k)| \gg \delta |A_k||\P|^2}$ (where~$A_k$ is the set of points of the configuration $\A_k$).
        \item \label{item:smallcase} There exists $B \subset A$ such that ${|\Q(\A|_{B})| \gg \delta |B||\P|^2}$ and $|B| \ll P^{-c_1\delta^6/ (\log \log Y)^2} |A|$, where $c_1>0$ is an absolute constant.
    \end{enumerate}
\end{proposition}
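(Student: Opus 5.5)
We iterate \cref{lem:liftorsplit}, maintaining a current configuration $\mathcal{C}$ with current density parameter $\delta_{\mathcal{C}}$ defined by $|\Q(\mathcal{C})| = \delta_{\mathcal{C}}|C||\P|^2$. There are two counters: $k$, the number of lift steps performed, and $s$, the number of split steps performed. We start from $\mathcal{C}=\A$, $k=s=0$. At each stage we apply \cref{lem:liftorsplit} to $\mathcal{C}$ with $\delta$ replaced by $\delta_{\mathcal C}$; this requires $\delta_{\mathcal C}\geq 1/\log Y$, and we will check it inductively.

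In the lift case \cref{item:yeslift}, we replace $\mathcal{C}$ by the lift $\B$ and increment $k$. The density is multiplied by $1-O((\log Y)^{-10})$. In the split case \cref{item:yessplit}, we replace $\mathcal{C}$ by the restriction $\mathcal{C}|_B$ with $|B|\leq \tfrac12|C|$ and increment $s$. The density is multiplied by $1-O(\delta_{\mathcal C}^{-6}(\log\log Y)^2/\log P)$.

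To keep the chain coherent, we record the sequence of configurations in the order they appear. Note that a split does not produce a lift, so the tower is rebuilt as follows. A restriction of a lift is a lift of a restriction: if $\B$ is a lift of $\A'$ with prime $p$, then $\B|_{B'}$ is a lift of $\A'|_{B'}$. Hence, whenever a split occurs after some lifts, we can restrict every lower configuration in the current tower to $B'$. Because the point sets are nested, this gives a tower of the same height whose bottom is $\A|_{B'}$. Consequently, at all times the current configuration is the top of a tower of height $k$ whose base is $\A|_{B}$ for the current point set $B=C\subset A$.

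We stop at the first time one of the following occurs:
\begin{itemize}
\item[(a)] $k$ reaches $K:=\lceil(\log Y)^{10}/C'\rceil$ for a suitable constant $C'$;
\item[(b)] $s$ reaches $S:=\lceil c\,\delta^{6}\log P/(\log\log Y)^2\rceil$ for a small constant $c$.
\end{itemize}
Before stopping, the accumulated density loss is at most
\[
\prod\bigl(1-O((\log Y)^{-10})\bigr)^{K}\cdot\prod\bigl(1-O(\delta_{\mathcal C}^{-6}(\log\log Y)^2/\log P)\bigr)^{S}.
\]
Inductively $\delta_{\mathcal C}\geq \delta/2$, so each factor in the second product is at least $1-O(2^6\delta^{-6}(\log\log Y)^2/\log P)$. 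Choosing $C'$ large and $c$ small makes both products at least $e^{-1/4}$. Hence $\delta_{\mathcal C}\geq \delta e^{-1/2}\geq \delta/2\geq 1/\log Y$, which justifies every application of \cref{lem:liftorsplit} and closes the induction. Termination is clear since $k+s$ increases at each step.

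If we stop in case (a), we have a tower of height $K\gg(\log Y)^{10}$ with top density $\gg\delta$. Its base is $\A|_B$ rather than $\A$, so we must arrange that the base is $\A$ itself. We do this by prepending: reorganise the run so that $\A_0=\A$. Since splits only shrink point sets, the rebuilt tower is $\A|_B,\A_1,\dots$. The statement wants $\A_0=\A$. We handle this by observing that $\A_1$ is also a lift of $\A$ itself, since $B\subset A$ and the frequency relation $p\beta_x=\alpha_x$ holds on $B$. So replacing the base $\A|_B$ by $\A$ is harmless, and conclusion \cref{item:towercase} follows.

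If we stop in case (b), there are two sub-cases depending on the last step performed.

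If the last step was a split, we take the current point set $B$. It satisfies $|B|\leq 2^{-S}|A|=P^{-c\log 2\,\delta^6/(\log\log Y)^2}|A|$. However, the current configuration may be a lift, with frequencies $\beta$ rather than $\alpha$, so we need to pass back to $\A$. The key point is that relations descend through lifts. If $(x,y,p,q)\in\Q(\B)$ where $p^*\B$ is a sub-configuration of $\A'$, then $\|q\alpha_x-p\alpha_y\|=\|p^*(q\beta_x-p\beta_y)\|\leq p^*/(10Hp^*)=1/(10H)$. So $\Q(\B)\subset\Q(\A'|_B)$, and by induction down the tower $|\Q(\A|_B)|\geq|\Q(\mathcal C)|\gg\delta|B||\P|^2$. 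This gives conclusion \cref{item:smallcase} with $c_1=c\log 2$.

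The same descent argument also shows that the case (a) tower has base $\A$, as claimed above. The main obstacle is bookkeeping the interleaving of lifts and splits, which the two observations above handle: restriction commutes with lifting, and $\Q$ is monotone under descent.
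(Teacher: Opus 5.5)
Your approach is the paper's: iterate \cref{lem:liftorsplit}, track the multiplicative density losses, stop after $\asymp(\log Y)^{10}$ lifting steps or $\asymp\delta^6\log P/(\log\log Y)^2$ halving steps, and in the second case pass the relations back down via $\Q(\B)\subset\Q(\A|_B)$. However, one step fails as written.

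You claim that choosing $c$ small makes the product of the halving factors at least $e^{-1/4}$. This presupposes that each factor $1-O(\delta^{-6}(\log\log Y)^2/\log P)$ is close to $1$, i.e.\ that $\delta^6\log P\geq C_0(\log\log Y)^2$ for a large absolute constant $C_0$. The choice of $c$ cannot arrange that. When $\delta^6\log P/(\log\log Y)^2$ is bounded, your cap $S=\lceil c\,\delta^6\log P/(\log\log Y)^2\rceil$ is still $\geq 1$, so a halving step can occur. In that case \cref{lem:liftorsplit} gives no usable lower bound on the new density. Consequently neither the induction $\delta_{\mathcal{C}}\geq\delta/2$ nor your final bound $|\Q(\A|_B)|\gg\delta|B||\P|^2$ in case (b) is justified in this regime.

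The fix is the one the paper makes at the outset. If $\delta^6\log P\leq C_0(\log\log Y)^2$, conclusion \cref{item:smallcase} holds trivially with $B=A$, because then $P^{-c_1\delta^6/(\log\log Y)^2}\geq e^{-c_1C_0}$. In the complementary regime your bookkeeping goes through.

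Separately, your tower-rebuilding step is unnecessary. By \cref{def:lift}, $\B$ is a lift of $\A'$ as soon as $p\B$ is a sub-configuration of $\A'$. So a lift of a restriction $\A'|_{B'}$ is already a lift of $\A'$. Hence the configurations obtained after the successive lifting steps, preceded by $\A_0:=\A$, form a tower directly, however many halving steps are interleaved. There is no need to restrict the lower levels and then swap the base back to $\A$; your version is correct, just roundabout.

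Finally, in case (b) the last step is necessarily a halving step, since $s$ only increases there. So only one of your two sub-cases can occur.
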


\begin{proof}
    We first dispose of the trivial case where $\delta^6 \log P \leq C_0 (\log \log Y)^2$ for a sufficiently large absolute constant $C_0>0$. In this regime, conclusion \cref{item:smallcase} is trivially satisfied for $B=A$. Hence, we may assume that $\delta^6 \log P > C_0 (\log \log Y)^2$.

    By induction, we construct a sequence of triples $(\B_i, B_i, \delta_i)$ where $\B_i$ is a configuration with point set $B_i$ satisfying $|\Q(\B_i)| = \delta_i |B_i| |\P|^2$. This sequence is initialised at $(\B_0, B_0, \delta_0) := (\A, A, \delta)$.

    Suppose that $\B_0, \ldots, \B_{i-1}$ have been constructed. If $\delta_{i-1} < \delta/2$, we stop the procedure. Otherwise, we apply \cref{lem:liftorsplit} to $\B_{i-1}$:
    \begin{itemize}
        \item \textit{Lifting step.} If case \cref{item:yeslift} of \cref{lem:liftorsplit} holds, define $\B_{i}$ to be the lift of~$\B_{i-1}$ produced by the lemma; the corresponding parameter $\delta_i$ satisfies
              \begin{equation*}
                  \delta_{i} \geq \bigg(1 - O\bigg( \frac{1}{(\log Y)^{10}} \bigg) \bigg)  \delta_{i-1}.
              \end{equation*}
        \item \textit{Halving step.} If, instead, case \cref{item:yessplit} of \cref{lem:liftorsplit} holds, there is a subset $B_{i} \subset B_{i-1}$ with $|B_{i}| \leq \tfrac12 |B_{i-1}|$ such that, defining $\B_i := \B_{i-1}|_{B_i}$, we have
              \begin{equation*}
                  \delta_{i} \geq \bigg(1 - O\bigg(\frac{\delta^{-6}(\log \log Y)^2}{\log P}\bigg) \bigg) \delta_{i-1}.
              \end{equation*}
    \end{itemize}

    Our initial assumption ensures that $\delta_i \geq \frac{99}{100} \delta_{i-1}$ in either case. For any $k,m\in \N$, if the first $k+m$ steps of the procedure consist of $k$ lifting steps and $m$ halving steps (in any order), we have
    \begin{equation*}
        \delta_{k+m} \geq \exp\!\left(-O\!\left(k \cdot \frac{1}{(\log Y)^{10}} + m \cdot \frac{\delta^{-6}(\log \log Y)^2}{\log P}\right)\right) \delta.
    \end{equation*}
    Hence, the stopping condition $\delta_{i}<\delta/2$ is never triggered before either $\gg (\log Y)^{10}$ lifting steps or $\gg {\delta^{6} \log P}/{(\log \log Y)^2}$ halving steps have been performed.

    If there are $k \gg (\log Y)^{10}$ lifting steps, defining $\A_0 := \A$ and letting $\A_{i}$ be the configuration obtained after the $i$-th lifting step, we obtain a tower of configurations as required for conclusion \cref{item:towercase} of the proposition.

    Otherwise, there are $m\gg {\delta^{6} \log P}/{(\log \log Y)^2}$ halving steps. In this case, the configuration $\B = (B, \beta_{\bullet}, H')$ obtained after these $m$ halving steps satisfies $|B| \leq 2^{-m} |A|$ and $|\Q(\B)| \geq \frac{\delta}{2} |B||\P|^2$. Since $\Q(\B) \subset \Q(\A|_{B})$ by definition of a lift, we obtain conclusion \cref{item:smallcase} of the proposition.
\end{proof} \section{Analysis of the top configuration}
\label{sec:largeH}

Having constructed high towers of configurations in the previous section, we now study the configurations that lie at the top of these towers. For such a configuration $\A = (A, \alpha_{\bullet}, H)$, the parameter $H$ is exceptionally large, meaning that the local relations in $\A$ hold with much smaller error terms on the frequency side. This rigidity allows us to employ graph-theoretic exploration arguments to extract an initial approximate formula for the frequencies $\alpha_x$ on a highly connected subset of $A$.

\subsection{Paths in the graph of local relations}

\begin{definition}[Graph of a configuration]
    \label{def:graph}
    Let $\A = (A, \alpha_{\bullet}, H)$ be a configuration. We define $G(\A)$ to be the graph with vertex set $A$, and with an edge between two distinct points $x,y\in A$ whenever $(x,y,p,q)\in \Q(\A)$ for some $p,q\in \P$.
\end{definition}

\begin{remark}
    \label{rem:graphobservations}
    Given a vertex $x\in A$ and $p,q\in \P$, there is at most one neighbour $y$ of $x$ such that $(x,y,p,q)\in \Q(\A)$, by \cref{def:locrelations} and the $1$-separation of $A$.

    Similarly, if two distinct points $x,y\in A$ are neighbours in $G(\A)$, then the pair $(p,q)\in \P^2$ with $(x,y,p,q)\in \Q(\A)$ is unique. Indeed, if $(x,y,p,q),(x,y,p',q')\in \Q(\A)$, then $\big|\frac{p}{q} - \frac{p'}{q'}\big| \leq \frac{1}{5Y}$, and thus $(p,q)=(p',q')$, as the left-hand side is either zero or at least $\frac{1}{qq'}$.
\end{remark}

To analyse the graph $G(\A)$, we introduce the following terminology, which will only be used in the present section.

\begin{definition}[Prime sequences and prime products of a path]
    \label{def:paths}
    A \emph{path of length $k$} in $G(\A)$ is a sequence $x_{\bullet} = (x_i)_{i=0}^k$ of vertices of $G(\A)$ such that $\{x_{i-1}, x_i\}$ is an edge of $G(\A)$ for all $1\leq i\leq k$.

    The \emph{prime sequences} of the path $x_{\bullet}$ are the unique sequences $(p_i)_{i=1}^k, (q_i)_{i=1}^k$ of primes in $\P$ such that ${(x_{i-1}, x_i, p_i, q_i)\in \Q(\A)}$ for all $1\leq i\leq k$. The \emph{prime products} of the path $x_{\bullet}$ are defined to be the numbers $\prod_{i=1}^k p_i$ and $\prod_{i=1}^k q_i$.
\end{definition}

We have the following simple estimate relating the endpoints of a path and the corresponding frequencies, which is similar to \cite[Lemma~6.3]{walsh3}.

\begin{lemma}
    \label{lem:paths}
    Let $\A = (A, \alpha_{\bullet}, H)$ be a configuration. Let $x_{\bullet} = (x_i)_{i=0}^k$ be a path of length $k\leq \log Y$ in $G(\A)$, with prime products $R$ and $S$. Then
    \begin{equation}
        \label{eq:pathabsval}
        \abs{\frac{R}{S} x_0 - x_k} \leq k
    \end{equation}
    and
    \begin{equation}
        \label{eq:pathnorm}
        \norm{S \alpha_{x_0} - R \alpha_{x_k}} \leq \frac{kR}{HP}.
    \end{equation}
\end{lemma}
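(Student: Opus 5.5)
Both bounds follow by telescoping along the path, using the two components of $\dist{p_i,q_i}{x_{i-1},x_i}\le \tfrac1{10}$, namely $|p_ix_{i-1}-q_ix_i|\le P/10$ and $\|q_i\alpha_{x_{i-1}}-p_i\alpha_{x_i}\|\le \tfrac{1}{10H}$. Write $R_j:=\prod_{i\le j}p_i$, $S_j:=\prod_{i\le j}q_i$, and let $R^{(j)}:=\prod_{i>j}p_i$, $S^{(j)}:=\prod_{i>j}q_i$ denote the tail products, so that $R=R_jR^{(j)}$ and $S=S_jS^{(j)}$.

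For \cref{eq:pathabsval}, each step gives $\big|\tfrac{p_i}{q_i}x_{i-1}-x_i\big|\le \tfrac{P}{10q_i}\le \tfrac1{10}$, since $q_i\ge P$. I will use the identity
\[
\frac{R}{S}x_0-x_k=\sum_{i=1}^k \frac{R^{(i)}}{S^{(i)}}\Big(\frac{p_i}{q_i}x_{i-1}-x_i\Big),
\]
which telescopes because $\tfrac{R^{(i)}}{S^{(i)}}\tfrac{p_i}{q_i}=\tfrac{R^{(i-1)}}{S^{(i-1)}}$. The danger is that the factor $R^{(i)}/S^{(i)}$ could be as large as $2^{k-i}$. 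To control it, note that every $x_i$ lies in $[Y,2Y)$, and that $\tfrac{R^{(i)}}{S^{(i)}}x_i \approx x_k$ up to the accumulated error. I expect this to be the only delicate point, and I will handle it by a simple induction: set $E_j:=\big|\tfrac{R^{(j)}}{S^{(j)}}x_j-x_k\big|$ for $j=k,k-1,\dots,0$. Then $E_{j-1}\le E_j+\tfrac{R^{(j)}}{S^{(j)}}\cdot\tfrac{1}{10}$, and $\tfrac{R^{(j)}}{S^{(j)}}\le \tfrac{x_k+E_j}{x_j}\le \tfrac{2Y+E_j}{Y}$. As long as $E_j\le k\le \log Y\le Y$, this ratio is at most $3$, giving $E_{j-1}\le E_j+\tfrac{3}{10}$. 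Induction then yields $E_0\le \tfrac{3k}{10}\le k$. This is where the hypothesis $k\le\log Y$ enters, ensuring $E_j\ll Y$ throughout.

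For \cref{eq:pathnorm}, I will telescope in the other direction:
\[
S\alpha_{x_0}-R\alpha_{x_k}=\sum_{i=1}^k R_{i-1}S^{(i)}\big(q_i\alpha_{x_{i-1}}-p_i\alpha_{x_i}\big) \pmod 1.
\]
Here each coefficient $R_{i-1}S^{(i)}$ is an integer, so the approximation modulo $1$ is preserved. Since $S^{(i)}\le (2P)^{k-i}$ could exceed $R/P$, I instead bound $\|\cdot\|\le \sum_i R_{i-1}S^{(i)}\cdot\tfrac{1}{10H}$, and then need $R_{i-1}S^{(i)}\le R/P$ up to a constant. Using the first part, $\tfrac{S^{(i)}}{R^{(i)}}\le \tfrac{x_i+E_i}{x_k}\le 3$, so $R_{i-1}S^{(i)}\le 3R_{i-1}R^{(i)}=3R/p_i\le 3R/P$. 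Summing over $i$ gives $\|S\alpha_{x_0}-R\alpha_{x_k}\|\le \tfrac{3kR}{10HP}\le \tfrac{kR}{HP}$.

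So the main obstacle is controlling the ratios of partial prime products, and this is resolved by the position estimate along the path.
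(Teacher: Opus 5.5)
Your proof is correct and follows the paper's argument. You telescope the position relations along the path, bound the tail ratios $R^{(i)}/S^{(i)}$ by $3$ using that all $x_i\in[Y,2Y)$, and telescope the frequency relations with integer coefficients $R_{i-1}S^{(i)}\le 3R/p_i$. The paper obtains the ratio bound from the crude $2^{j-i}$ estimate together with $k\le\log Y$, whereas you use a bootstrap induction; that is a harmless variation.

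One small slip: $E_i$ does not directly give $S^{(i)}/R^{(i)}\le (x_i+E_i)/x_k$. What it gives is $S^{(i)}/R^{(i)}\le x_i/(x_k-E_i)$, which is still $\le 3$, so the conclusion is unaffected.
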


\begin{proof}
    We introduce the notation $R_{i\to j} := \prod_{i<\ell \leq j} p_{\ell}$ and $S_{i\to j} := \prod_{i<\ell \leq j} q_{\ell}$ (and use the convention $R_{i\to i}=S_{i\to i}= 1$). For each $1\leq \ell \leq k$, we have $(x_{\ell-1}, x_{\ell}, p_{\ell}, q_{\ell}) \in \Q(\A)$, so that
    \begin{equation*}
        \abs{x_{\ell} - \frac{p_{\ell}}{q_{\ell}} x_{\ell-1}} \leq \frac{1}{10}.
    \end{equation*}
    By the triangle inequality, this implies that, for all $i<j$,
    \begin{equation}
        \label{eq:intermediatepath}
        \abs{x_j - \frac{R_{i\to j}}{S_{i\to j}}x_i} \leq \frac{1}{10} \sum_{i<\ell\leq j} \frac{R_{\ell\to j}}{S_{\ell \to j}}.
    \end{equation}
    In particular, using that $\P \subset [P, 2P]$, we obtain the crude bound
    \begin{equation*}
        \abs{x_j - \frac{R_{i\to j}}{S_{i\to j}} x_i} \leq \frac{1}{10} \sum_{i<\ell\leq j} 2^{j-\ell} \leq \frac{2^{j-i}}{10}.
    \end{equation*}
    Since $x_i, x_j\in [Y, 2Y]$ and $k\leq \log Y$, this implies that
    \begin{equation}
        \label{eq:useflbdprimes}
        \frac{R_{i\to j}}{S_{i\to j}} \in \left[ \frac{1}{2} -  \frac{2^{j-i}}{10Y} ,\, 2 + \frac{2^{j-i}}{10Y} \right] \subset \left[\frac13, \, 3\right]
    \end{equation}
    for all $i<j$. Plugging this estimate back into \cref{eq:intermediatepath} yields
    \begin{equation*}
        \abs{x_k -  \frac{R_{0\to k}}{S_{0\to k}} x_0} \leq \frac{3k}{10},
    \end{equation*}
    which proves \cref{eq:pathabsval}.

    The proof of \cref{eq:pathnorm} is very similar. For every $1\leq j\leq k$, we have
    \begin{equation*}
        \norm{R_{0\to j}S_{j\to k} \alpha_{x_j} - R_{0\to j-1}S_{j-1\to k} \alpha_{x_{j-1}}} \leq  R_{0\to j-1}S_{j\to k} \norm{p_j \alpha_{x_j} - q_j \alpha_{x_{j-1}}} \leq \frac{R_{0\to j}S_{j\to k}}{10HP}.
    \end{equation*}
    By the triangle inequality, we obtain
    \begin{equation*}
        \norm{R_{0\to k}\alpha_{x_k} - S_{0\to k} \alpha_{x_{0}}} \leq \frac{1}{10HP} \sum_{j=1}^k R_{0\to j}S_{j\to k}.
    \end{equation*}
    Using the bound $S_{j\to k} \leq 3 R_{j\to k}$ given in \cref{eq:useflbdprimes}, the second conclusion \cref{eq:pathnorm} follows.
\end{proof}

\begin{lemma}
    \label{lem:nonequivpaths}
    Let $\A = (A, \alpha_{\bullet}, H)$ be a configuration whose graph $G(\A)$ has minimum degree at least~$\delta |\P|^2$. Suppose that $|\P|\geq (\log Y)^4$ and $\delta \geq 4/\log |\P|$.

    Then, for any $x\in A$, there exists a path of length
    \begin{equation*}
        \frac{\log Y}{\log |\P|} + O\!\left( \frac{(\log Y) \log \log Y}{(\log |\P|)^2} + 1\right)
    \end{equation*}
    starting and ending at $x$, whose prime products $R$ and $S$ are distinct.
\end{lemma}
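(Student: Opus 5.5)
We argue by counting closed walks against a pigeonhole/counting bound. Fix $x\in A$ and let $k_0$ denote a length to be chosen of the form $\frac{\log Y}{\log|\P|}+O(\cdot)$. We explore the graph from $x$ as a tree of paths. Each non-backtracking-type step from a vertex $v$ can be made along one of at least $\delta|\P|^2$ edges, and by \cref{rem:graphobservations} the edge from $v$ determines a unique pair $(p,q)$, while distinct pairs $(p,q)$ with a given first coordinate give distinct neighbours. The strategy is: consider all paths of length $j$ starting from $x$; there are at least $(\delta|\P|^2)^j$ of them (recorded as sequences of pairs $(p_i,q_i)$, all distinct as sequences). On the other hand, the endpoint $x_j$ is determined up to $O(j)$ choices (by \cref{eq:pathabsval}, since $A$ is $1$-separated) once the ratio $R/S$ is known. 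Hence we look at the data $(R,S)$ of a path of length $j$.

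Key step: count how many prime sequences can share the same pair of prime products $(R,S)$. By unique factorisation, $R$ determines the multiset $\{p_i\}$ and $S$ the multiset $\{q_i\}$, so at most $(j!)^2$ sequences give the same $(R,S)$. If there were no closed path with $R\neq S$, I would aim to show the paths from $x$ are too "constrained": take two paths $\gamma_1,\gamma_2$ from $x$ of length $j$ ending at the same vertex $y$; concatenating $\gamma_1$ with the reverse of $\gamma_2$ (reversal of an edge $(x_{i-1},x_i,p_i,q_i)$ gives $(x_i,x_{i-1},q_i,p_i)\in\Q(\A)$ by symmetry of \cref{eq:defdist}) yields a closed path at $x$ of length $2j$ with prime products $R_1S_2$ and $S_1R_2$. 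If all such closed walks had equal products then $R_1/S_1=R_2/S_2$, so the ratio $R/S$ (hence essentially the pair $(R,S)$ up to the common factor, but with both fixed sizes $\asymp P^j$ and coprimality issues) is a function of the endpoint. Then the number of paths of length $j$ is at most $|A|\cdot(\text{number of }(R,S)\text{ with given }R/S)\cdot (j!)^2$. Since $R/S$ fixed and $R\in[P^j,(2P)^j]$ determines the reduced fraction, the number of $(R,S)$ with the given ratio is at most the number of common factors $m$ composed of $\le j$ primes of $\P$, i.e.\ at most $\binom{|\P|+j}{j}\le |\P|^j$ — this is too weak, so instead I would count endpoints per vertex $y$: the number of $(R,S)$ with fixed ratio $r$ and both products of $j$ primes in $\P$ is bounded by the number of multisets $\{p_i\}$, giving the bound $(\delta|\P|^2)^j \le |A|\,|\P|^j\,(j!)^2\cdot O(j)$, i.e.\ $(\delta|\P|)^j\le 2Y\,(j!)^2 O(j)$.

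Finally choose $j$ so that this fails: taking logs, $j(\log|\P|+\log\delta-2\log j)>\log Y+O(\log j)$. With $\delta\ge 4/\log|\P|$ and $j\approx \log Y/\log|\P|\le \log Y$ (and $|\P|\ge(\log Y)^4$ so $\log j\le \tfrac14\log|\P|$), the bracket is $\log|\P|\,(1-O(\log\log Y/\log|\P|))$, so $j=\frac{\log Y}{\log|\P|}+O\big(\frac{\log Y\log\log Y}{(\log|\P|)^2}+1\big)$ suffices, producing a closed walk of length $2j$ with $R\ne S$. The main obstacle is the factor $2$: the concatenation doubles the length, whereas the statement wants length $\approx \log Y/\log|\P|$. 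To fix this I would instead grow paths of length $j/2$ from $x$ in two halves and match their endpoints: the closed walk $\gamma_1\gamma_2^{-1}$ has length $2\cdot(j/2)$, and the required collision of endpoints among $(\delta|\P|^2)^{j/2}$ paths with distinct data only needs $(\delta|\P|)^{j/2}\cdot|\P|^{j/2}\gtrsim Y$ — more carefully, a collision with unequal ratios is forced once the number of distinct $(R,S)$ pairs, at least $(\delta|\P|^2)^{m}/(m!)^2$, exceeds $O(m)|A|\cdot\#\{(R,S):R/S=r\}$, and bounding the latter by $|\P|^m/m!$-type counts yields total length $2m\approx\log Y/\log|\P|$ with the stated error. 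Checking this last count carefully is where I expect the real work to lie.
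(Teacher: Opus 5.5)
There is a genuine gap, and it is the factor $2$ you flag yourself: your proposed fix does not remove it. With your bound $\#\{(R,S):R/S=r\}\lesssim|\P|^m/m!$, the collision condition $(\delta|\P|^2)^m/(m!)^2 > O(m)\,|A|\,|\P|^m/m!$ is equivalent to $(\delta|\P|)^m \gtrsim O(m)\, m!\,|A|$. This forces $m\approx \log Y/\log|\P|$, and hence a closed path of length $2m\approx 2\log Y/\log|\P|$. Renaming $j/2$ as $m$ changes nothing. Your heuristic $(\delta|\P|)^{m}|\P|^{m}\gtrsim Y$ would only be valid if you did not pay the factor $\approx|\P|^m$ for the many pairs $(R,S)$ sharing a ratio, and nothing in your argument avoids paying it. So the closing claim that this ``yields total length $2m\approx\log Y/\log|\P|$'' is an arithmetic error. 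Everything before that point is fine: the count of at least $(\delta|\P|^2)^j$ walks, the $(j!)^2$ multiplicity bound, and the reversal and concatenation step.

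The missing idea is to count only paths whose prime products are coprime. When you extend a path of length $\ell$ with prime sequences $(p_i),(q_i)$, require the new edge to use primes $p\notin\{q_1,\dots,q_\ell\}$ and $q\notin\{p_1,\dots,p_\ell\}$. This discards at most $2\ell|\P|$ of the at least $\delta|\P|^2$ edges at the current vertex. So for $m\le\delta|\P|/4$ there are still at least $(\delta|\P|^2/2)^m$ such paths of length $m$ from $x$, and at most $(m!)^2$ of them share a given pair $(R,S)$.

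For coprime pairs, $(R,S)$ is determined by $R/S$, so the per-ratio multiplicity disappears entirely. As soon as $(\delta|\P|^2/2)^m > (m!)^2|A|$, pigeonholing on the endpoint alone gives two such paths ending at the same $y$ with different products. By coprimality their ratios then differ, and concatenating one with the reverse of the other gives a closed path with $R_1S_2\neq R_2S_1$.

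Now $|\P|^{2m}$ competes with $Y$, so $m=\frac{\log Y}{2\log|\P|}+O\big(\frac{(\log Y)\log\log Y}{(\log|\P|)^2}+1\big)$ suffices. This uses $\delta\ge 4/\log|\P|$ and $|\P|\ge(\log Y)^4$, which also guarantee $m\le\delta|\P|/4$. The resulting closed path has length $2m$, which is the length required.
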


\begin{proof}
    Call a path $(x_i)_{i=0}^k$ in $G(\A)$ \emph{typical} if its prime products $R,S$ are coprime.

    Let $x\in A$. By induction, the number of typical paths of length $k$ with starting vertex $x$ is at least $(\delta |\P|^2/2)^k$, for all $1\leq k\leq \delta|\P|/4$. Indeed, a typical path $(x_i)_{i=0}^{\ell}$ of length $\ell < k$ starting at $x$, with prime sequences $(p_i)_{i=1}^{\ell}$ and $(q_i)_{i=1}^{\ell}$, can be extended to a typical path of length $\ell+1$ by choosing an oriented edge $(x_{\ell},y)$ whose corresponding primes $p,q\in \P$ satisfy $p\notin \{q_1, \ldots, q_{\ell}\}$ and $q\notin \{p_1, \ldots, p_{\ell}\}$, and there are at least
    \begin{equation*}
        \geq \delta |\P|^2 - 2\ell |\P| \geq \tfrac{1}{2}\delta |\P|^2
    \end{equation*}
    ways to do so.

    A path of length $k$ starting at $x$ is uniquely determined by its prime sequences, and each prime sequence is a permutation of the $k$ prime factors of the corresponding prime product. Hence, given any pair $(R,S)$ of integers, there are $\leq (k!)^2$ paths of length $k$ starting at $x$ with prime products $R$ and $S$.

    Therefore, for any positive integer $k$ such that
    \begin{equation}
        \label{eq:conditionsfork}
        k \leq \delta |\P|/4 \quad\text{and}\quad\bigg(\frac{\delta |\P|^2}{2} \bigg)^k > (k!)^2 |A|,
    \end{equation}
    there exists some $y\in A$ and two typical paths $(x_i)_{i=0}^k$ and $(y_i)_{i=0}^k$ from $x$ to $y$ not having the same pair of prime products. Let $R_1,S_1$ and $R_2,S_2$ be the prime products of $(x_i)_{i=0}^k$ and $(y_i)_{i=0}^k$, respectively. Since these two paths are typical, we have $\gcd(R_1,S_1) = \gcd(R_2, S_2) = 1$, and thus
    \begin{equation}
        \label{eq:distinctRiSi}
        \frac{R_1}{S_1} \neq \frac{R_2}{S_2}.
    \end{equation}
    Hence, the path obtained by concatenating $(x_i)_{i=0}^k$ with the reverse of $(y_i)_{i=0}^k$ is a path of length $2k$ starting and ending at $x$. The prime products of this closed path are $R_1S_2$ and $R_2S_1$, which are distinct by \cref{eq:distinctRiSi}.

    It only remains to show that the size conditions \cref{eq:conditionsfork} are verified for some $k\in \N$ of the form
    \begin{equation}
        \label{eq:choiceofk}
        k = \frac{\log Y}{2\log |\P|} + O\!\left(\frac{ (\log Y) \log \log Y}{(\log |\P|)^2}+1\right).
    \end{equation}

    By the intermediate value theorem, there exists a real number $t\in \big[\frac{\log Y}{2\log |\P|}, \frac{\log Y}{\log |\P|} \big]$ such that
    \begin{equation*}
        t - \frac{\log Y}{2\log |\P|} - \frac{4t \log(4\delta^{-1}t)}{2\log |\P|} = 0.
    \end{equation*}
    Indeed, the left-hand side is clearly negative at $t = \frac{\log Y}{2\log |\P|}$, and the non-negativity at $t = \frac{\log Y}{\log |\P|}$ is equivalent to
    \begin{equation*}
        \log |\P| \geq 4 \log (4 \delta^{-1} \log Y/\log |\P|),
    \end{equation*}
    which holds since $\delta \geq 4/\log |\P|$ and $|\P|\geq (\log Y)^4$.

    Define $k = \lceil t \rceil$. Recalling that $Y \geq P^3$ (see \cref{def:globalparam}), we have $t\geq 3/2$, so that $k < 2t$. The bound $t\leq \log Y/\log |\P|$ and the inequalities $\delta \geq 4/\log |\P|$ and $|\P|\geq (\log Y)^4$ readily imply \cref{eq:choiceofk} and the first condition in \cref{eq:conditionsfork}. For the second condition in \cref{eq:conditionsfork}, we observe that
    \begin{equation*}
        2^k \delta^{-k} (k!)^2 \leq (2\delta^{-1}k)^{2k} < (4\delta^{-1}t)^{4t} = \frac{|\P|^{2t}}{Y} \leq \frac{|\P|^{2k}}{|A|}.
    \end{equation*}
    Hence, this choice of $k$ satisfies all the required conditions.
\end{proof}

\subsection{Extraction of a global formula}
We now consider a configuration $\A = (A, \alpha_{\bullet}, H)$ with many local relations and a very large parameter $H$. Using the preceding lemmas, we show that many frequencies $\alpha_x$ can be approximated by rationals $a_x/q_0$ with a common denominator $q_0$. Our control over the size of $q_0$ is inevitably quite weak at this stage; we will improve it in \cref{sec:expansion} using a `modular' expansion estimate.

\begin{lemma}
    \label{lem:graphimproving}
    Let $d,n,\delta>0$. Let $G$ be a graph on $n$ vertices, with maximum degree $d$ and $\geq \delta d n$ edges. Then, there exists a subgraph $G'$ of $G$ with minimum degree $\geq \delta d/2$ and $\geq \delta dn/2$ edges.
\end{lemma}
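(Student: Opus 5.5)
The plan is the standard greedy pruning argument: repeatedly delete low-degree vertices and check that few edges are lost. Set $G_0 := G$. As long as the current graph $G_j$ has a vertex of degree $< \delta d/2$ (degree measured within $G_j$), pick one such vertex and delete it together with all its incident edges; call the result $G_{j+1}$. The process stops after at most $n$ steps, since each step removes a vertex. When it stops, the final graph $G'$ is an induced subgraph of $G$ in which every remaining vertex has degree $\geq \delta d/2$. This is the minimum degree condition.

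It remains to bound the number of edges removed. Each deletion removes fewer than $\delta d/2$ edges, and there are at most $n$ deletions. So the total number of removed edges is strictly less than $\delta d n/2$, unless no deletions occur at all, in which case nothing is lost. Hence $G'$ has more than $\delta d n - \delta d n/2 = \delta d n/2$ edges (or all $\geq \delta d n$ edges of $G$). In particular $G'$ is non-empty, so the minimum degree statement is not vacuous.

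There is no real obstacle here. The one point to get right is that the degree threshold must be checked in the current graph, not in $G$. This is what makes the loss per step less than $\delta d/2$. It is also what guarantees the terminal graph has the stated minimum degree.

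The maximum degree hypothesis $d$ is not needed in this argument; it only serves as the normalisation for $\delta$. If one prefers to make the strict inequality explicit, note that the number of deletions is at most $n - |V(G')| \leq n$. Each deletion costs at most $\lceil \delta d/2\rceil - 1 < \delta d/2$ edges, which gives the strict bound on the number of removed edges.
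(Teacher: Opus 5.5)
Your proof is correct and is the same argument the paper uses: greedily delete vertices whose degree in the current graph is below $\delta d/2$. At most $n$ deletions, each costing fewer than $\delta d/2$ edges, lose fewer than $\delta dn/2$ edges in total. Your remarks that degrees must be measured in the current graph and that the maximum-degree hypothesis is only a normalisation are both accurate.
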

\begin{proof}
    Let $G'$ be the graph obtained by iteratively removing all vertices of degree $< \delta d/2$. This procedure removes at most $\delta dn/2$ edges in total. Hence, $G'$ has the desired properties.
\end{proof}

\begin{lemma}
    \label{lem:getfirstformula}
    Let $\A = (A, \alpha_{\bullet}, H)$ be a configuration such that $|\Q(\A)| \geq \delta |A| |\P|^2$, where ${H \geq Y^{C_1}}$ and $\delta \geq C_1/\log |\P|$ for some sufficiently large absolute constant $C_1>0$. Assume that $|\P| \geq (\log Y)^4$.

    Then, there exists a subset $A_2 \subset A$ and an integer
    \begin{equation*}
        1\leq q_0 \leq \exp\!\left( O\!\left(\frac{(\log Y)\log \log Y}{\log |\P|} + \log |\P|\right) \right)
    \end{equation*}
    with the following properties.
    \begin{enumerate}
        \item \label{item:getform1} For each $x\in A_2$, there are $a_x\in \Z$ and $\beta_x \in \R$ such that $(a_x, q_0)=1$, $|\beta_x| \leq Y^{O(1)}/H$ and
              \begin{equation*}
                  \alpha_x = \frac{a_x}{q_0} + \beta_x \pmod 1.
              \end{equation*}
        \item \label{item:getform2} For each $x\in A_2$, there are $\gg \delta|\P|^2$ quadruples $(x, y, p, q)\in \Q(\A|_{A_2})$ such that ${(pq, q_0)=1}$, $p\neq q$,
              \begin{equation*}
                  q a_x \equiv p a_y \pmod {q_0} \quad \text{and} \quad |q \beta_x - p\beta_y| \leq \frac{1}{10H}.
              \end{equation*}
    \end{enumerate}
\end{lemma}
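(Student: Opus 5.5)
We first pass to a dense, well-connected piece of the graph. Consider $G(\A)$ with edges coming from $\Q(\A)$; by \cref{rem:graphobservations} each vertex has degree at most $|\P|^2$, and there are $\gg \delta |A||\P|^2$ edges, after discarding the $O(|A||\P|)$ diagonal quadruples with $x=y$ (which force $p=q$) and then those with $p=q$. Indeed, quadruples with $p=q$ have $|x-y|\le 1/10$, hence $x=y$, so there are only $|A||\P|$ of them, negligible since $\delta|\P|\gg 1$. \cref{lem:graphimproving} then gives a nonempty subset $A_1\subset A$ on which $G(\A|_{A_1})$ has minimum degree $\ge \delta|\P|^2/4$.

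The main step is to apply \cref{lem:nonequivpaths} to $\A|_{A_1}$; its hypotheses hold since $\delta\ge C_1/\log|\P|$ and $|\P|\ge(\log Y)^4$. Fix a base point $x\in A_1$. The lemma gives a closed path at $x$ of length $k=O(\log Y/\log|\P|+1)$ with prime products $R\neq S$. By \cref{lem:paths}, $\|(S-R)\alpha_x\|\le kR/(HP)$. Put $m:=|S-R|\le (2P)^k$, which gives
\[
m\le \exp\!\big(O((\log Y)\log P/\log|\P|+\log P)\big),
\]
of the stated shape since $\log P\asymp\log|\P|$ up to $\log\log$ factors. Thus $\alpha_x$ lies within $(2P)^k/(HPm)\le Y^{O(1)}/H$ of a rational with denominator dividing $m$.

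The hard part is getting a \emph{common} denominator on many points. I would run the same argument at every vertex $x\in A_1$, obtaining $m_x$, and then reduce $a_x/m_x$ to lowest terms $a_x/q_x$. Since $H\ge Y^{C_1}$ is huge, the rational approximations propagate rigidly along edges. If $(x,y,p,q)\in\Q$ with $\alpha_x\approx a_x/q_x$ and $\alpha_y\approx a_y/q_y$, then
\[
\Big\|q\frac{a_x}{q_x}-p\frac{a_y}{q_y}\Big\|\le Y^{O(1)}/H,
\]
which is smaller than $1/(q_xq_y)$ because $q_xq_y\le \exp(O(\log Y\log\log Y/\log|\P|+\log|\P|))\le Y^{O(1)}$ and $C_1$ is large. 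Hence $qa_x/q_x\equiv pa_y/q_y \pmod 1$ exactly, and the same congruence yields $|q\beta_x-p\beta_y|\le 1/(10H)$ after choosing representatives consistently. Comparing reduced denominators then gives $q_y=q_x\cdot(\text{divisor of }q)/(\text{divisor of }p)$. So along edges with $p,q\nmid q_xq_y$ (i.e.\ $(pq,q_0)=1$) the denominator is preserved. The number of primes dividing some $q_x$ is $O(\log q_x)$, which is $\ll \delta|\P|$ by the size bound and the assumption $|\P|\ge(\log Y)^4$, so for each vertex all but a $\ll$ small fraction of its edges preserve $q_x$.

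Finally, pick the most popular value $q_0$ of $q_x$ on $A_1$. Since every denominator-preserving edge joins vertices with equal $q_x$, the classes of $q_x$ are nearly disconnected. Taking $A_2$ to be a class $\{x:q_x=q_0\}$ (or any single class), then pruning once more with \cref{lem:graphimproving}, keeps minimum degree $\gg\delta|\P|^2$ in good edges. These good edges satisfy $(pq,q_0)=1$ and $p\ne q$, and the congruence $qa_x\equiv pa_y\pmod{q_0}$ holds, giving (1) and (2). The expected obstacle is exactly this bookkeeping step. One has to check that the exact-rational propagation argument needs $H$ larger than $q_0^2Y^{O(1)}$, and that removing edges through primes dividing $q_0$ costs only $O(\log q_0\cdot|\P|)\ll\delta|\P|^2$ per vertex.
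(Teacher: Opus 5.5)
\textbf{Gap: the bound on $q_0$.} You use only the frequency half of \cref{lem:paths}, the trivial bound $|S-R|\le (2P)^k$, and the crude length estimate $k=O(\log Y/\log|\P|+1)$. This gives only $q_x\le Y^{O(1)}$. Your claim that $\exp\big(O((\log Y)\log P/\log|\P|+\log P)\big)$ is ``of the stated shape'' is false. Since $\log P\asymp\log|\P|$, we have $(\log Y)\log P/\log|\P|\asymp\log Y$. The target exponent $\frac{(\log Y)\log\log Y}{\log|\P|}+\log|\P|$, however, is $o(\log Y)$ whenever $\log\log Y=o(\log|\P|)$ and $\log|\P|=o(\log Y)$, for instance when $\log|\P|\asymp(\log Y)^{2/5+\eps}$. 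That is exactly the regime where the lemma is used: in \cref{lem:largeHcase} one needs $q_0\le\exp((\log Y)^{1-\theta/2})$ before the expansion estimate can be applied.

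The missing input is the \emph{spatial} half of \cref{lem:paths}. Because the path is closed, $|\frac{R}{S}x-x|\le k$, hence $|R-S|\le kS/x\le k(2P)^k/Y$, which saves a full factor of $Y$. Since $q_x\mid R-S$, you then need $(2P)^k/Y$ to be small. This requires the precise length from \cref{lem:nonequivpaths}, namely $k=\frac{\log Y}{\log|\P|}+O\big(\frac{(\log Y)\log\log Y}{(\log|\P|)^2}+1\big)$ with leading coefficient exactly $1$, together with Chebyshev's bound $P\le|\P|\cdot O(\log Y)$. These give $P^k/Y\le|\P|^k(O(\log Y))^k/Y\le\exp\big(O\big(\frac{(\log Y)\log\log Y}{\log|\P|}+\log|\P|\big)\big)$. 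Your rigidity argument along edges only needs $q_x\le Y^{O(1)}$, so that part is fine.

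\textbf{Secondary gap: choosing the common denominator.} You assert that at \emph{each} vertex almost all edges preserve $q_x$; this is not justified. Take an edge $(x,y,p,q)$ with $q\nmid q_x$ but $p\mid q_y$, so that $q_y=pq_x$. Such an edge is controlled only by the $O(|\P|\log q_y)$ budget at $y$, not by anything at $x$. So you only get a \emph{global} bound on the number of non-preserving edges.

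Consequently, neither ``the most popular class'' nor ``any single class'' need carry $\gg\delta|\P|^2$ internal edges per vertex; a singleton class has none. You would first have to choose a class by averaging the internal edge density over classes, and only then apply \cref{lem:graphimproving}.

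The paper avoids this bookkeeping as follows:
\begin{enumerate}
\item It sets $q_0:=\max_x q_x$.
\item It deletes the edges with $(pq,q_0)>1$. This costs at most $2|\P|\log q_0\ll\delta|\P|^2$ per vertex, because $q_0$ is a single fixed number.
\item It takes $A_2$ to be the connected component of some $x_0$ with $q_{x_0}=q_0$.
\end{enumerate}
Along a remaining edge, $qq_ya_x\equiv pq_xa_y\pmod{q_xq_y}$ with $q_x=q_0$ gives $q_0\mid qq_ya_x$. Since $(q,q_0)=(a_x,q_0)=1$, this gives $q_0\mid q_y$, and maximality forces $q_y=q_0$. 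So the whole component has denominator $q_0$. Its minimum degree is inherited exactly, since a component contains all neighbours of its vertices.
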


\begin{proof}
    Since $|\Q(\A)| \geq \delta |A| |\P|^2$, the graph $G(\A)$ has $\geq \tfrac12 (\delta Y |\P|^2 - Y|\P|) \geq \tfrac{1}{3} \delta Y |\P|^2$ edges (as every edge corresponds to exactly two quadruples $(x,y,p,q)\in \Q(\A)$ with $p\neq q$). By \cref{lem:graphimproving}, there is a subset $A_1\subset A$ such that $G(\A|_{A_1})$ has minimum degree $\geq \tfrac{1}{6}\delta|\P|^2$.

    Let $x\in A_1$. Let $\kappa$ denote the quantity $\kappa := {(\log Y)\log \log Y}/{\log |\P|}$. By \cref{lem:nonequivpaths}, there exists a path of length $\ell$ with
    \begin{equation}
        \label{eq:thelengthell}
        \ell = \frac{\log Y}{\log |\P|} + O\!\left( \frac{\kappa}{\log |\P|} +1 \right),
    \end{equation}
    starting and ending at $x$, whose prime products $R_x$ and $S_x$ are distinct. Applying \cref{lem:paths} to this path, we get
    \begin{equation*}
        \abs{\frac{R_x}{S_x} x - x} \leq \ell \quad \text{and} \quad \norm{S_x \alpha_{x} - R_x \alpha_{x}} \leq \frac{\ell R_x}{HP}.
    \end{equation*}
    Since $R_x, S_x \leq (2P)^{\ell}$, we may rewrite these bounds as
    \begin{equation*}
        \abs{R_x - S_x} \leq \frac{\ell 2^{\ell} P^{\ell}}{Y} \quad \text{and} \quad \norm{(R_x - S_x) \alpha_{x}} \leq \frac{\ell 2^{\ell} P^{\ell-1}}{H}.
    \end{equation*}
    The last inequality implies that
    \begin{equation}
        \label{eq:qxversion}
        \alpha_x = \frac{a_x}{q_x} + \beta_x \pmod 1
    \end{equation}
    for some reduced fraction $a_x/q_x$ with denominator $q_x\mid R_x-S_x$ (with $a_x,q_x\in \N$), and some $\beta_x\in \R$ satisfying $|\beta_x| \leq \ell 2^{\ell} P^{\ell-1}/H$. In particular, $q_x \leq |R_x-S_x| \leq \ell 2^{\ell} P^{\ell}/Y$. By Chebyshev's estimate and our estimate \cref{eq:thelengthell} for $\ell$, we have
    \begin{equation*}
        \frac{P^{\ell}}{Y} \leq \frac{|\P|^{\ell}}{Y} \cdot (O(\log Y))^{\ell} \leq e^{O(\kappa+\log |\P|)}.
    \end{equation*}
    Thus, $q_x \leq e^{O(\kappa + \log |\P|)}$ and $|\beta_x| \leq Y^{O(1)}/H$.

    The next step is to pass to a suitable subset $A_2\subset A_1$ on which all $q_x$ are equal. Let
    \begin{equation*}
        q_0 := \max \{q_x : x\in A_1\}.
    \end{equation*}

    Let $G_0$ be the subgraph of $G(\A|_{A_1})$ obtained by discarding all quadruples $(x,y,p,q) \in \Q(\A|_{A_1})$ with $(pq, q_0)>1$. Recall that $G(\A|_{A_1})$ has minimum degree $\geq \tfrac{1}{6}\delta|\P|^2$. Therefore, the subgraph $G_0$ has minimum degree
    \begin{equation*}
        \geq \tfrac{1}{6}\delta|\P|^2 - 2|\P|\sum_{p\in \P}\ind{p\mid q_0} \geq \tfrac{1}{6}\delta|\P|^2 - 2|\P| {\log q_0} \gg \delta |\P|^2,
    \end{equation*}
    where we used that $\log q_0\ll \kappa + \log |\P| \ll \log Y$ and the bound $\delta \geq C_1/\log |\P| \geq C_1 \log Y/|\P|$ for~$C_1$ a sufficiently large constant.

    Let $x_0\in A_1$ be any point such that $q_{x_0} = q_0$. Let $A_2$ be the connected component of $x_0$ in~$G_0$.

    Suppose that $x,y\in A_2$ are connected by an edge in $G_0$, meaning that $(x,y,p,q)\in \Q(\A|_{A_2})$ for some $p,q\in \P$ not dividing $q_0$. Then $\norm{q\alpha_x-p\alpha_y} \leq (10H)^{-1}$, which by \cref{eq:qxversion} implies that
    \begin{equation}
        \label{eq:relationaftersubstitution}
        \norm{\frac{qq_ya_x-pq_xa_y}{q_x q_y} + q\beta_x - p\beta_y} \leq \frac{1}{10H}.
    \end{equation}
    Since $|q\beta_x - p\beta_y| \leq 2P(|\beta_x|+|\beta_y|) \leq Y^{O(1)}/H$, we get
    \begin{equation*}
        \norm{\frac{qq_ya_x-pq_xa_y}{q_x q_y}} \leq \frac{Y^{O(1)}}{H}.
    \end{equation*}
    The left-hand side is either zero or a fraction with denominator at most $q_xq_y \leq Y^{O(1)}$. The latter case is impossible by our assumption that $H \geq Y^{C_1}$ for some large enough constant $C_1$. Therefore, it must be the case that
    \begin{equation}
        \label{eq:qqy}
        qq_ya_x \equiv pq_x a_y \pmod{q_xq_y}.
    \end{equation}
    Plugging this information back into \cref{eq:relationaftersubstitution} yields $\norm{q\beta_x - p\beta_y} \leq (10H)^{-1}$. This modulo $1$ bound can be upgraded to $|q\beta_x - p\beta_y| \leq (10H)^{-1}$, using that ${|q\beta_x - p\beta_y|\leq Y^{O(1)}/H \leq 1/2}$ (provided $C_1$ is sufficiently large).

    Now, suppose that $x,y\in A_2$ are neighbours in $G_0$ such that $q_x = q_0$. By \cref{eq:qqy}, we have
    \begin{equation*}
        q_0 \mid qq_y a_x.
    \end{equation*}
    Since $(a_x, q_x)=1$ (by definition of $a_x,q_x$) and $(q_0, q)=1$ (by definition of $G_0$), we deduce that $q_0 \mid q_y$. By maximality of $q_0$, we conclude that $q_y = q_0$. Thus, the property of having $q_x = q_0$ propagates through the edges of $G_0$. Since $A_2$ is connected in $G_0$, and $x_0\in A_2$ satisfies $q_{x_0}=q_0$, we obtain that~$q_x = q_0$ for all $x\in A_2$. In particular, \cref{eq:qqy} simplifies to $q a_x \equiv p a_y \pmod{q_0}$. This concludes the proof of \cref{lem:getfirstformula}.
\end{proof}

The next technical lemma will be used to derive an approximate formula for the frequencies $\beta_x$ in the conclusion of \cref{lem:getfirstformula}.

\begin{lemma}
    \label{lem:betax}
    Let $\B = (B, \beta_{\bullet}, H)$ be a configuration whose frequencies satisfy $|\beta_x| \leq P^{-2\log Y}$ for all $x\in B$.\footnote{Here we slightly abuse notation by identifying each $\beta_x\in \R/\Z$ with its unique representative in $[-1/2, 1/2)$.} Suppose that the graph $G(\B)$ has minimum degree $\geq \delta |\P|^2 \geq 1$.

    Then, there exists $B'\subset B$ with $|\Q(\B|_{B'})| \gg \delta |B'||\P|^2$ and $x_0\in B'$ such that, for all $x\in B'$,
    \begin{equation*}
        \beta_x = \frac{x_0\beta_{x_0}}{x}+ O \!\left(\frac{ |\beta_{x_0}| \log Y}{Y} + \frac{\log Y}{HP}\right).
    \end{equation*}
    Furthermore, there is a multiset $V \subset \big[\tfrac13Y, 6Y\big]$ with $O(\log Y)$ elements in any unit interval, such that~$\gg \delta |V||\P|^2$ quadruples $(v_1,v_2,p,q)\in V^2 \times \P^2$ satisfy
    \begin{equation*}
        |\beta_{x_0}| \abs{pv_1-qv_2} \ll \frac{Y\log Y}{H}.
    \end{equation*}
\end{lemma}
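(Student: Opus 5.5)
The plan is to convert the frequency relations into a statement about the quantities $\gamma_x := x\beta_x$, show that $\gamma$ is almost constant along edges of $G(\B)$, and then take $B'$ to be a suitable graph ball around a base point $x_0$.

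\emph{Step 1: edge relations become real equalities.} Since $|\beta_x| \leq P^{-2\log Y}$ is tiny, for any edge $(x,y,p,q)\in\Q(\B)$ the numbers $q\beta_x$ and $p\beta_y$ are far below $1/2$. The condition $\norm{q\beta_x - p\beta_y}\le \frac{1}{10H}$ therefore upgrades to the genuine real inequality $|q\beta_x-p\beta_y|\le \frac1{10H}$.

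\emph{Step 2: $\gamma$ is almost constant along edges.} Write $\beta_x=\gamma_x/x$. The relation $|px-qy|\le P/10$ gives $p/y = q/x\cdot(1+O(1/Y))$. Substituting, we get
\begin{equation*}
    \frac{q}{x}\,\big|\gamma_x-\gamma_y\big| \ll \frac1H + \frac{P|\gamma_y|}{Y^2},
\end{equation*}
that is, $\gamma_x = \gamma_y(1+O(1/Y)) + O(Y/(HP))$. Iterating this along a path of length $k\le 2\log Y$ starting at $x_0$ is a discrete Grönwall argument, since $(1+O(1/Y))^{O(\log Y)} = 1+O(\log Y/Y)$. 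It gives
\begin{equation*}
    \gamma_x=\gamma_{x_0}+O\big(|\gamma_{x_0}|\log Y/Y + Y\log Y/(HP)\big).
\end{equation*}
Dividing by $x\asymp Y$ yields exactly the claimed formula $\beta_x = x_0\beta_{x_0}/x + O(\cdots)$.

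\emph{Step 3: choice of $B'$.} Fix any $x_0\in B$ and let $B_r$ be the ball of radius $r$ about $x_0$ in $G(\B)$. Since $|B_r|\le |B| \ll Y$ and $|B_0|=1$, the size cannot double at every step for $r\le \log_2 Y$. Hence there is some $r\le \log_2 Y$ with $|B_{r+1}|\le 2|B_r|$. Put $B' := B_{r+1}$. Every $x\in B_r$ has all of its $\ge\delta|\P|^2$ relations landing in $B'$, so
\begin{equation*}
    |\Q(\B|_{B'})|\ge \delta|B_r||\P|^2\ge \tfrac12\delta|B'||\P|^2 .
\end{equation*}
All points of $B'$ are joined to $x_0$ by paths of length $\le \log Y+1$, so Step 2 applies on all of $B'$.

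\emph{Step 4: the multiset $V$.} This is the step needing care, because the degenerate case in which $\beta_{x_0}$ is tiny must be separated out.

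If $|\beta_{x_0}| \le C Y\log Y/(HP)$, take $V := B'$. Then for every quadruple in $\Q(\B|_{B'})$ we have $|\beta_{x_0}||pv_1-qv_2|\le |\beta_{x_0}|P/10 \ll Y\log Y/H$. Since $B'$ is $1$-separated, the multiplicity condition is trivial.

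Otherwise, the error term in Step 2 is $\ll |\gamma_{x_0}|\log Y/Y$, which is relatively small, so $\beta_x\ne 0$. Define $v_x := \gamma_{x_0}/\beta_x$, so that $\beta_x = \gamma_{x_0}/v_x$ exactly and $v_x = x\,(1+O(\log Y/Y))$, i.e.\ $v_x = x+O(\log Y)$. This gives $v_x\in[\tfrac13Y,6Y]$. Because $B'$ is $1$-separated, the multiset $V:=\{v_x : x\in B'\}$ has $O(\log Y)$ elements in any unit interval. For each $(x,y,p,q)\in\Q(\B|_{B'})$, the real relation from Step 1 reads $|\gamma_{x_0}|\,|q/v_x-p/v_y|\le \frac1{10H}$, hence
\begin{equation*}
    |\gamma_{x_0}|\,|qv_y-pv_x|\ll \frac{Y^2}{H}.
\end{equation*}
Dividing by $x_0\asymp Y$ gives the required bound after the harmless relabelling $(v_1,v_2)=(v_y,v_x)$. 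By Step 3 the number of such quadruples is $\gg\delta|V||\P|^2$.

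The main obstacle is keeping the error accumulation in Step 2 uniform over path lengths $\asymp\log Y$. This is why $B'$ is chosen as a ball of radius $O(\log Y)$ via the doubling argument, rather than as a whole connected component.
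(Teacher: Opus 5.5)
Your proof is correct, and it takes a somewhat different route from the paper's.

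\emph{Shared step.} The paper also takes $B'$ to be a graph ball around $x_0$ selected by a growth argument. It uses the factor $3$, so that the radius is at most $\log Y$ and \cref{lem:paths} applies.

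\emph{The paper's route.} It telescopes along a path from $x_0$ to $x$ via \cref{lem:paths}. This gives prime products $R_x,S_x\le(2P)^{\log Y}$ with $|\frac{R_x}{S_x}x_0-x|\le\log Y$ and $\|S_x\beta_{x_0}-R_x\beta_x\|\le(\log Y)R_x/(HP)$. The hypothesis $|\beta_x|\le P^{-2\log Y}$ is exactly what allows the norm to be dropped there, because $R_x,S_x$ are so large. It then sets $v_x:=\frac{S_x}{R_x}Y\approx Yx_0/x$. This needs no case distinction, and $|\beta_{x_0}||qv_x-pv_y|\ll Y\log Y/H$ follows directly from $|\beta_x-\frac{S_x}{R_x}\beta_{x_0}|\le\frac{\log Y}{HP}$.

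\emph{Your route.} You remove the norm edge by edge before composing anything, so you only need $|\beta_x|\le 1/(8P)$, say. You then run a real-variable Gr\"onwall iteration on $\gamma_x=x\beta_x$. Your choice $v_x=\gamma_{x_0}/\beta_x$ makes the frequency relation exact, which even gives the slightly better bound $Y/H$. The price is that you must split off the degenerate case $|\beta_{x_0}|\ll Y\log Y/(HP)$, where $V=B'$ works trivially.

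\emph{What each buys.} Your argument is more self-contained and works under a much weaker smallness hypothesis. The paper's reuses \cref{lem:paths}, which it already needs for \cref{lem:getfirstformula}, and avoids the case split.

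\emph{Cosmetic slips.}
\begin{itemize}
\item You proved $|qv_y-pv_x|=|pv_x-qv_y|$ small, so the correct labelling is $(v_1,v_2)=(v_x,v_y)$ (or swap both the points and the primes), not $(v_y,v_x)$ alone.
\item Your doubling argument gives radius $\log_2 Y+O(1)$ rather than $\log Y+1$. This is harmless for your Step 2.
\end{itemize}
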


\begin{proof}
    We first pass to a subgraph of $G(\B)$ of reasonable diameter, while maintaining a large edge concentration.

    Let $x_0\in B$. For $i\geq 0$, let $B_i$ be the set of all vertices $x\in B$ at distance at most $i$ from $x_0$ in $G(\B)$. Since~$|B|\leq Y$, there exists an integer $1\leq r\leq \log Y$ such that $|B_{r}|\leq 3|B_{r-1}|$. Then, by the minimum degree assumption on $G(\B)$, we have
    \begin{equation}
        \label{eq:lowerboundQBr}
        |\Q(\B|_{B_r})| \geq |B_{r-1}| \delta |\P|^2 \geq \tfrac{1}{3}\delta |B_{r}||\P|^2.
    \end{equation}

    We set $B' := B_r$. For any $x\in B'$, there exists a path of length $\leq \log Y$ from $x_0$ to $x$ in $G(\B)$. Hence, by \cref{lem:paths}, there are $R_x,S_x \leq (2P)^{\log Y}$ such that
    \begin{equation}
        \label{eq:applylempathsbeta}
        \abs{\frac{R_x}{S_x} x_0 - x} \leq \log Y \quad \text{and} \quad \norm{S_x \beta_{x_0} - R_x \beta_{x}} \leq \frac{(\log Y )R_x}{HP}.
    \end{equation}
    In particular, $R_x/S_x \in \big[\frac{1}{3}, 3\big]$. Hence, we may write the first part of \cref{eq:applylempathsbeta} as
    \begin{equation}
        \label{eq:boundRxSx}
        \abs{\frac{S_x}{R_x} - \frac{x_0}{x}} \leq \frac{3\log Y}{Y}.
    \end{equation}
    The second part of \cref{eq:applylempathsbeta} can be simplified using the assumption on the size of the frequencies of $\B$. Indeed, since $|\beta_{x_0}|, |\beta_x| \leq P^{-2\log Y}$ and $R_x, S_x \leq (2P)^{\log Y}$, we have $ \abs{S_x \beta_{x_0} - R_x \beta_{x}} \leq 1/2$, which implies that $\norm{S_x \beta_{x_0} - R_x \beta_{x}} = \abs{S_x \beta_{x_0} - R_x \beta_{x}}$. Therefore,
    \begin{equation}
        \label{eq:betaxbeta}
        \abs{\beta_x - \frac{S_x}{R_x} \beta_{x_0}} \leq \frac{\log Y}{HP}.
    \end{equation}
    Combining the estimates \cref{eq:betaxbeta,eq:boundRxSx}, we see that every $x\in B'$ satisfies the approximate formula
    \begin{equation*}
        \beta_x = \frac{x_0\beta_{x_0}}{x}+ O \!\left(\frac{ |\beta_{x_0}| \log Y}{Y} + \frac{\log Y}{HP}\right).
    \end{equation*}
    This proves the first part of the lemma.

    For the second part, we define the multiset $V$ consisting of the values $v_x := \frac{S_x}{R_x}Y$ for $x\in B'$. Since $S_x/R_x\in \big[\frac{1}{3}, 3\big]$, it is immediate that $V \subset \big[\frac{1}{3}Y, 6Y\big]$. Moreover, for any $x,y\in B'$ we have $|v_x - v_y| \geq \tfrac14 |x-y| - 6\log Y$ by \cref{eq:boundRxSx}, so that $V$ has $O(\log Y)$ elements in any unit interval.

    Finally, for any quadruple $(x,y,p,q)\in \Q(\B|_{B'})$, we have
    \begin{equation*}
        \frac{|\beta_{x_0}|}{Y} \abs{q v_x - p v_y} = \abs{q \frac{S_x}{R_x}\beta_{x_0} - p \frac{S_y}{R_y}\beta_{x_0}} \leq |q\beta_x - p\beta_y| + O \!\left(\frac{\log Y}{H}\right)
    \end{equation*}
    by \cref{eq:betaxbeta}. Since $|q\beta_x - p\beta_y| \ll 1/H$ by definition of $\Q(\B|_{B'})$, the desired bound follows.
\end{proof}

\section{Proof of the global structure theorem}
\label{sec:expansion}
We start this section by proving an unconditional expansion estimate, \cref{prop:GRHsubstitute}. Combining this with the results of the previous sections, we then establish the global structure theorem, \cref{thm:globalstructuretheorem}, from which we deduce our main Fourier uniformity result, \cref{thm:maingeneral}.

\subsection{Vinogradov-Korobov expansion estimate}
To circumvent the need for GRH, we employ a large value estimate for Dirichlet polynomials over primes. Specifically, we bound the number of pairs $(\chi, t)$ for which $\big\lvert\!\sum_{p \in \mathcal{P}} \chi(p) p^{it}\big\rvert$ is exceptionally large.

\begin{lemma}
    \label{lem:largevalues}
    Let $T\geq 3$ and $q\in \N$. Let $\T$ be a set of pairs $(\chi, t)$ where $\chi$ is a Dirichlet character modulo $q$ and $t\in [-T, T]$. Suppose that $\abs{t_1 - t_2}\geq 1$ for distinct pairs $(\chi, t_1),(\chi,t_2)\in \T$.

    Let $P\geq 3$ and $k\in \N$ be such that $P^k\leq T^2$. Let $(c_p)$ be a sequence of $1$-bounded complex numbers supported on the primes $p\in [P, 2P]$.

    Then, uniformly for $0< \eta <1/2$,
    \begin{equation*}
        \sum_{(\chi, t)\in \T} \bigg\lvert\sum_{p} \chi(p)c_p p^{it}\bigg\rvert^{2k}
        \leq k^{O(k)} P^{2k} \Big(1 + |\T| q^{\eta}P^{-\eta k} \big(T^{5\eta^{3/2}} \log T+ \eta^{-1}\big)\Big).
    \end{equation*}
\end{lemma}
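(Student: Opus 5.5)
The plan is to expand the $k$-th power and reduce to a mean value estimate for a Dirichlet polynomial of length $\asymp P^k$, using the Vinogradov--Korobov zero-density/large-value technology in its standard form. Write $F(\chi,t) := \sum_p \chi(p) c_p p^{it}$. Then
\begin{equation*}
    F(\chi,t)^k = \sum_{n} b_n \chi(n) n^{it},
\end{equation*}
where $b_n$ is supported on integers $n\in [P^k, (2P)^k]$ with exactly $k$ prime factors from $[P,2P]$, and $|b_n| \leq k!$. Note also $\sum_n |b_n|^2 \leq k! \, (\sum_p 1)^k \leq k^{O(k)} P^{k}$, so in particular $\sum_n |b_n| \le k^{O(k)}P^k$.

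The key step is to prove a large values bound for $N := (2P)^k \le 2^kT^2$-length polynomials $D(\chi,t)=\sum_{n\sim N} b_n\chi(n)n^{it}$ over well-spaced $(\chi,t)$. I would split $\T$ according to a threshold: pairs where $|F(\chi,t)|^{2k} \leq k^{O(k)}P^{2k}\cdot(\text{something small})$ contribute trivially, so it suffices to bound, for $V$ large, the number of $(\chi,t)\in\T$ with $|F|\geq V$. Two inputs are combined. First, the trivial/mean value (Montgomery large sieve for characters and $t$) bound
\begin{equation*}
    \sum_{(\chi,t)\in\T} |D(\chi,t)|^2 \ll (qT + N)\, \textstyle\sum |b_n|^2 ,
\end{equation*}
which handles the term $1$ (the `diagonal' $k^{O(k)}P^{2k}$) when $N$ dominates. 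Second, a Vinogradov--Korobov type pointwise bound: for $\chi$ mod $q$ and $|t|\le T$, one has, for sums over primes (or short products of primes) of length $P$ with $P^k\le T^2$,
\begin{equation*}
    \bigg\lvert \sum_{p\sim P}\chi(p)c_p p^{it}\bigg\rvert \ll \ldots
\end{equation*}
Actually since $c_p$ is arbitrary, no pointwise bound holds for $F$ itself; instead I would use Hölder with a higher moment and Halász--Montgomery: the number of large values of $D$ is bounded by $|\T|$ times the $\eta$-saving, obtained as follows. Apply Hölder/duality to reduce $\sum_{\T}|D|^{2}$-type quantities to the double sum $\sum_{(\chi,t),(\chi',t')}\sum_n \chi\bar\chi'(n) n^{i(t-t')} w(n/N)$ with a smooth weight, i.e.\ a sum of $L(s,\chi\bar\chi')$-type quantities over $n\sim N$. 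Writing this via Mellin inversion as an integral of $L(s,\psi)$ on a line $\sigma = 1-\eta$, Vinogradov--Korobov bounds $|L(1-\eta+i\tau,\psi)|\ll (q(|\tau|+2))^{\eta}\,T^{5\eta^{3/2}}\log T$ (Richert-type bound $\zeta(\sigma+it)\ll |t|^{B(1-\sigma)^{3/2}}\log^{2/3}$ extended to Dirichlet $L$-functions in $q$-aspect via the trivial convexity factor $q^{\eta}$) give the off-diagonal contribution $N^{1-\eta} q^{\eta}(T^{5\eta^{3/2}}\log T)$, while the principal-character, small-$|t-t'|$ pole contributes $N\cdot\eta^{-1}$-type terms after summing the spacing. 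Dividing by the diagonal $N$ produces the factor $q^\eta P^{-\eta k}(T^{5\eta^{3/2}}\log T+\eta^{-1})$ per element of $\T$.

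Putting together: Halász--Montgomery gives $\sum_\T |D|^2 \le \|b\|_2^2\,(N + |\T| N^{1-\eta}q^\eta(T^{5\eta^{3/2}}\log T+\eta^{-1}))$, and with $\|b\|_2^2\le k^{O(k)}P^k$, $N\le 2^kP^k$ this is exactly $k^{O(k)}P^{2k}(1+|\T|q^\eta P^{-\eta k}(\cdots))$. The main obstacle is the off-diagonal estimate: getting the Vinogradov--Korobov bound for $L(s,\psi)$ uniformly in the conductor with only a $q^{\eta}$ loss near $\sigma=1$, and controlling the pole term for $\psi$ principal (giving the $\eta^{-1}$) using $|t_1-t_2|\ge1$. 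I would first try to quote a hybrid Vinogradov--Korobov bound for $L$-functions and interpolate with the trivial bound at $\sigma=1$ via Phragmén--Lindelöf.
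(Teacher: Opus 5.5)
Your eventual scheme is exactly the paper's: dualise against a smooth weight at a dyadic scale $N\in[P^k,(2P)^k]$, expand the square, apply Mellin inversion, shift to $\Re s=1-\eta$, bound $L(s,\chi_1\overline{\chi_2})$ there, and let the pole (present only when $\chi_1=\chi_2$) produce the diagonal via $2|b_1b_2|\le|b_1|^2+|b_2|^2$ and the $1$-separation. The bound in your last paragraph is also the right one. The large-sieve bound with $qT+N$, the threshold splitting and the H\"older step are not needed, and splitting $[P^k,(2P)^k]$ into $O(k)$ dyadic ranges costs only a factor absorbed in $k^{O(k)}$.

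The gap is in the one input you flag as the main obstacle: as you state it, it would not give the lemma. Your displayed bound $|L(1-\eta+i\tau,\psi)|\ll (q(|\tau|+2))^{\eta}T^{5\eta^{3/2}}\log T$ contains $|\tau|^{\eta}$. That is a factor of size $T^{\eta}$ once the integral is truncated at height $\asymp T$, which is where $P^k\le T^2$ is used. For $\eta<1/25$ this dominates $T^{5\eta^{3/2}}$. In the application (\cref{lem:boundT}, with $\eta=(\log T)^{-\theta}(\log\log T)^2$), the factor $T^{\eta}=\exp((\log T)^{1-\theta}(\log\log T)^2)$ overwhelms the saving $P^{\eta k}=\exp((\log T)^{1-3\theta/2}(\log\log T)^4)$.

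Phragm\'en--Lindel\"of interpolation between $\sigma=1$ and a fixed Vinogradov--Korobov line $\sigma_0$ has the same defect. It only yields $|t|^{5(1-\sigma_0)^{1/2}(1-\sigma)}$, an exponent linear in $1-\sigma$, which loses the $(1-\sigma)^{3/2}$ shape the whole lemma rests on. What you need is a bound at $\sigma=1-\eta$ itself, with the conductor costing exactly $q^{1-\sigma}$ and nothing extra in $t$. This is \cref{lem:fordVK}, which the paper obtains from Ford's Vinogradov--Korobov bound for the Hurwitz zeta function, uniform in $0<u\le1$, via $L(s,\chi)=q^{-s}\sum_{m=1}^{q}\chi(m)\zeta(s,m/q)$.

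Your accounting of the $\eta^{-1}$ is also wrong. The residue at the pole is $\ll_A N(1+|t_1-t_2|)^{-A}$. Summed over the $1$-separated $t_2$ attached to $\chi_1$, it gives the diagonal $N\sum|b_{\chi,t}|^2$, i.e.\ the term $1$ in the statement, with no $\eta^{-1}$. The $\eta^{-1}$ instead comes from the second term $q^{1-\sigma}/(1-\sigma)$ of the $L$-bound on the shifted line. In the Hurwitz decomposition this is the contribution $\sum_{m\le q}\chi(m)m^{-s}\ll q^{1-\sigma}/(1-\sigma)$. So it carries the factor $N^{1-\eta}q^{\eta}$ and multiplies $|\T|$, as in your final formula. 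If the residue really contributed $N\eta^{-1}$, the diagonal would be $N\eta^{-1}$ and the stated estimate would fail, since $\eta$ is independent of $k$ and cannot be absorbed into $k^{O(k)}$.
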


\Cref{lem:largevalues} is a straightforward adjustment of \cite[Lemma~6.6]{MRTTZ}, which treats the case $k=1$ (and achieves a more refined bound). For completeness,\footnote{The proof of \cite[Lemma~6.6]{MRTTZ} is not explicitly provided in~\cite{MRTTZ}, but instead suggested as a modification of \cite[Lemma~4.4]{MRII}.} we provide a detailed proof of \cref{lem:largevalues} below.

The key ingredient is the following Vinogradov-Korobov type upper bound for Dirichlet $L$-functions.

\begin{lemma}
    \label{lem:fordVK}
    Let $q\geq 1$ and let $\chi$ be a Dirichlet character modulo $q$. For $1/2 \leq \sigma < 1$ and $|t|\geq 3$, we have
    \begin{equation*}
        \abs{L(\sigma+it, \chi)} \ll q^{1-\sigma} |t|^{5(1-\sigma)^{3/2}} (\log{|t|})^{2/3} + \frac{q^{1-\sigma}}{1-\sigma}.
    \end{equation*}
\end{lemma}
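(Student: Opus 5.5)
The plan is to deduce the bound from the Vinogradov--Korobov zero-free-region type estimate for $\zeta$, namely $\zeta(\sigma+it)\ll |t|^{B(1-\sigma)^{3/2}}(\log|t|)^{2/3}$ for $1/2\le\sigma\le1$, $|t|\ge3$ (Ford's explicit version gives $B=4.45<5$). The passage to $L(s,\chi)$ goes through the Hurwitz zeta function. Writing
\begin{equation*}
    L(s,\chi)=q^{-s}\sum_{a=1}^{q}\chi(a)\,\zeta(s,a/q),
\end{equation*}
it suffices to bound each Hurwitz value uniformly in $a/q$. The proof of the Vinogradov--Korobov bound only uses exponential sum estimates for $\sum_{N<n\le 2N}(n+\theta)^{-it}$, and these hold uniformly in the shift $\theta\in(0,1]$. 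Ford's paper states the bound explicitly for $\zeta(s,u)$ with $0<u\le1$, in the form
\begin{equation*}
    \abs{\zeta(s,u)-u^{-s}}\ll |t|^{4.45(1-\sigma)^{3/2}}(\log|t|)^{2/3}.
\end{equation*}
I would quote this as the input.

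The first step is the direct summation. Applying this to each $a$ and using $|\chi(a)|\le1$, $|q^{-s}|=q^{-\sigma}$, I get
\begin{equation*}
    |L(s,\chi)|\le q^{-\sigma}\sum_{a\le q}\big((a/q)^{-\sigma}+C|t|^{4.45(1-\sigma)^{3/2}}(\log|t|)^{2/3}\big).
\end{equation*}
The first part equals $\sum_{a\le q}a^{-\sigma}\ll q^{1-\sigma}/(1-\sigma)$, which is exactly the second term of the lemma. The second part contributes $q^{1-\sigma}|t|^{4.45(1-\sigma)^{3/2}}(\log|t|)^{2/3}$, and this is at most the first term of the lemma since $4.45\le5$. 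This gives the claim. A principal or imprimitive character needs no separate treatment, since the Hurwitz decomposition works for any function on residues.

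The main obstacle is justifying the uniform-in-$u$ Hurwitz version of Vinogradov--Korobov. If I cannot simply cite Ford, I would rederive it. The plan there is to start from the approximate functional equation, or more simply a truncated Dirichlet series via Euler--Maclaurin: $\zeta(s,u)=\sum_{0\le n\le N}(n+u)^{-s}+O(N^{1-\sigma}/|t|)$ with $N\asymp|t|$. I would split the sum into dyadic blocks and apply partial summation. The blocks with $n\le\exp((\log|t|)^{2/3})$ are bounded trivially. The remaining blocks would be handled by the Vinogradov mean value theorem applied to $\sum e(-\tfrac{t}{2\pi}\log(n+u))$, where Taylor expanding the logarithm makes the shift $u$ harmless. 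Carrying the constant $5$ through is routine once Ford's bookkeeping is followed.
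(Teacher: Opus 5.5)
Your proposal is correct and matches the paper's proof. The paper likewise deduces the lemma from Ford's Theorem~1 for the Hurwitz zeta function, $|\zeta(s,u)-u^{-s}|\ll |t|^{4.45(1-\sigma)^{3/2}}(\log|t|)^{2/3}$ uniformly in $0<u\le 1$, via the identity $L(s,\chi)=q^{-s}\sum_{m=1}^{q}\chi(m)\zeta(s,m/q)$, with the $u^{-s}$ terms giving $\sum_{a\le q}a^{-\sigma}\le q^{1-\sigma}/(1-\sigma)$ exactly as you compute; your fallback rederivation is unnecessary, since the paper simply cites Ford (and Richert).
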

\begin{proof}
    As explained in \cite[p.92]{MRTTZ}, this is a direct consequence of the bound \cite[Theorem~1]{ford} (see also~\cite{richert}) on the Hurwitz zeta function $\zeta(s, u)$, using the identity $L(s, \chi) = q^{-s} \sum_{m=1}^q \chi(m) \zeta(s, m/q)$.
\end{proof}

\begin{proof}[Proof of \cref{lem:largevalues}]
    By multiplicativity, we have
    \begin{equation*}
        \bigg(\sum_{p} \chi(p) c_p p^{it}\bigg)^k = \sum_{n} a_n \chi(n) n^{it}
    \end{equation*}
    for some complex coefficients $|a_n| \leq k!$ supported on $n\in [P^k, (2P)^k]$. Let $W:\R\to \R$ be a smooth function such that $\ind{[1,2]} \leq W \leq \ind{[0.5,2.5]}$ pointwise. By repeated integration by parts, its Mellin transform satisfies $|\widetilde{W}(\sigma + i t)| \ll_A (1+|t|)^{-A}$ for $|\sigma| \leq 2$ and $t\in \R$, for every $A>0$. Fix some scale $P^k\leq N\leq (2P)^k$. By duality (see~\cite[Section~7.1]{IK}) and summing over dyadic scales, it suffices to prove the estimate
    \begin{equation}
        \label{eq:dual}
        \sum_{n} W\Big(\frac{n}{N}\Big) \bigg\lvert \sum_{(\chi, t)\in \T} b_{\chi,t\,} \chi(n) n^{it} \bigg\rvert^2 \ll N \Big(1 + |\T| q^{\eta}N^{-\eta } \big(T^{5\eta^{3/2}} \log T+ \eta^{-1}\big)\Big) \sum_{(\chi, t)\in \T} |b_{\chi,t}|^2
    \end{equation}
    for arbitrary coefficients $b_{\chi,t}\in \C$.

    Expanding the square, the left-hand side of \cref{eq:dual} becomes
    \begin{equation}
        \label{eq:dualexpanded}
        \sum_{(\chi_1, t_1),(\chi_2, t_2) \in \T} b_{\chi_1,t_1} \overline{b_{\chi_2,t_2}}\sum_{n} W\Big(\frac{n}{N}\Big) \chi_1\overline{\chi_2}(n) n^{i(t_1-t_2)}.
    \end{equation}
    By Mellin inversion,
    \begin{equation*}
        \sum_{n} W\Big(\frac{n}{N}\Big) \chi_1\overline{\chi_2}(n) n^{i(t_1-t_2)} = \frac{1}{2\pi i} \int_{2-i\infty}^{2+i\infty} N^s \widetilde{W}(s)L\big(s-i(t_1-t_2), \chi_1\overline{\chi_2}\big) ds.
    \end{equation*}
    We can restrict the range of integration to $\lvert\Im(s)\rvert\leq 3T$, at the cost of a negligible error term $O_A(T^{-A})$, using that $N \leq (2P)^k\leq T^{O(1)}$. We then shift the contour to $\Re(s) = 1-\eta$, obtaining a contribution
    \begin{equation*}
        \ind{\chi_1 = \chi_2} N^{1-i(t_1-t_2)} \widetilde{W}(1-i(t_1-t_2)) \ll_A \ind{\chi_1 = \chi_2} N (1+|t_1-t_2|)^{-A}
    \end{equation*}
    from the possible pole at $s = 1-i(t_1-t_2)$. Appealing to \cref{lem:fordVK}, the contribution of the remaining integrals is bounded by
    \begin{equation*}
        \ll_A N^{1-\eta} q^{\eta} \left( T^{5\eta^{3/2}} \log{T} + \eta^{-1}\right) + T^{-A}.
    \end{equation*}
    Combining these bounds, we get that \cref{eq:dualexpanded} is
    \begin{equation*}
        \ll \sum_{(\chi_1, t_1),(\chi_2, t_2) \in \T} \abs{b_{\chi_1,t_1}}  \abs{b_{\chi_2,t_2}} \left( \frac{\ind{\chi_1 = \chi_2} N}{ (1+|t_1-t_2|)^{2}}+ N^{1-\eta} q^{\eta} \left( T^{5\eta^{3/2}} \log{T} + \eta^{-1}\right) \right).
    \end{equation*}
    By the inequality $2|b_1| |b_2| \leq |b_1|^2 + |b_2|^2$ and the separation assumption on $\T$, we obtain \cref{eq:dual}, which concludes the proof of \cref{lem:largevalues}.
\end{proof}

\begin{lemma}
    \label{lem:boundT}
    Let $T$ be sufficiently large. Let $P = \exp((\log T)^{\theta})$ for some $\frac{1}{1000}\leq \theta \leq \frac{2}{3}-\frac{1}{1000}$ and let~$\P$ be the set of all primes in~$[P, 2P]$. Let $1\leq q \leq \exp((\log T)^{1-\theta/2})$ be an integer.

    Let $\T$ be a set of pairs $(\chi, t)$ such that
    \begin{equation*}
        \bigg\lvert \sum_{p\in \P} \chi(p) p^{it} \bigg\rvert \geq \frac{ |\P| }{(\log T)^{100}},
    \end{equation*}
    where $\chi$ is a Dirichlet character modulo $q$ and $t\in [-T, T]$. Suppose that $\abs{t_1 - t_2}\geq 1$ for distinct pairs $(\chi, t_1),(\chi,t_2)\in \T$.

    Then
    \begin{equation*}
        \abs{\T} \leq \exp\!\big(O\big((\log T)^{1-\frac{3\theta}{2}}(\log \log T)^3\big)\big).
    \end{equation*}
\end{lemma}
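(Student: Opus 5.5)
The plan is to apply \cref{lem:largevalues} with $c_p := \ind{p\in\P}$ and a suitably chosen moment $k$ and parameter $\eta$. First reduce to a separated set: for each character $\chi$ the $t$'s with fixed $\chi$ are already $1$-separated, so the hypothesis of \cref{lem:largevalues} holds directly. Each $(\chi,t)\in\T$ contributes at least $|\P|^{2k}(\log T)^{-200k}$ to the left-hand side. Using $|\P|\gg P/\log P$, we get the lower bound
\begin{equation*}
|\T|\,P^{2k}(\log T)^{-200k}(\log P)^{-2k} \ll \sum_{(\chi,t)\in\T}\bigg\lvert\sum_{p\in\P}\chi(p)p^{it}\bigg\rvert^{2k}.
\end{equation*}
Comparing with the upper bound $k^{O(k)}P^{2k}\big(1+|\T|q^{\eta}P^{-\eta k}(T^{5\eta^{3/2}}\log T+\eta^{-1})\big)$ and dividing by $P^{2k}$ gives
\begin{equation*}
|\T| \le (k\log T)^{O(k)}\Big(1 + |\T|\, q^{\eta}P^{-\eta k}\big(T^{5\eta^{3/2}}\log T+\eta^{-1}\big)\Big).
\end{equation*}
The goal is to choose $k,\eta$ so that the second term inside the brackets, multiplied by $(k\log T)^{O(k)}$, is at most $\tfrac12|\T|$. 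It can then be absorbed, leaving $|\T|\le (k\log T)^{O(k)}=\exp(O(k\log\log T))$.

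Next choose the parameters. The exponent of the absorbed term is
\begin{equation*}
\eta\log q - \eta k\log P + 5\eta^{3/2}\log T + O(k\log(k\log T)) + \log\log T + \log\eta^{-1}.
\end{equation*}
We need this to be $\le -\log 2$, and the dominant negative term is $\eta k(\log T)^{\theta}$. To balance $\eta k\log P$ against $\eta^{3/2}\log T$, take $k\log P\asymp \eta^{1/2}\log T$. To make $\eta k\log P$ beat $k\log(k\log T)\asymp k\log\log T$, take $\eta\asymp (\log\log T)/\log P$, say $\eta := C(\log\log T)^2/(\log T)^{\theta}$ for a large constant $C$. Then
\begin{equation*}
k \approx \eta^{1/2}(\log T)^{1-\theta} = C^{1/2}(\log\log T)(\log T)^{1-3\theta/2}.
\end{equation*}
Precisely, take $k := \lceil 20\,\eta^{1/2}(\log T)^{1-\theta}\rceil$, so that $\eta k\log P\ge 20\eta^{3/2}\log T$. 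This term then dominates $5\eta^{3/2}\log T$, and it also dominates $Ck\log\log T$ when $C$ is large, since $\eta\log P = C(\log\log T)^2$. The $q^{\eta}$ term gives $\eta\log q\le \eta(\log T)^{1-\theta/2}$. This should be compared with $\eta k\log P \gtrsim \eta^{3/2}\log T \asymp \eta(\log\log T)(\log T)^{1-\theta/2}$, so it is smaller by a factor $\log\log T$ and is harmless. The remaining terms $\log\log T+\log\eta^{-1}$ are trivially dominated, because $k\ge 1$ and $\eta k\log P\gg(\log\log T)^2$.

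Finally, check the side conditions. We need $0<\eta<1/2$, which holds since $\theta\ge1/1000$. We need $P^k\le T^2$, i.e.\ $k(\log T)^{\theta}\ll(\log\log T)(\log T)^{1-\theta/2}\le 2\log T$; this holds because $\theta>0$ and $T$ is large. The assumption $\theta\le 2/3-1/1000$ keeps $k$ at least a power of $\log T$, so rounding up in $k$ causes no trouble. The conclusion is then
\begin{equation*}
|\T|\le\exp\big(O(k\log(k\log T))\big)=\exp\big(O((\log T)^{1-3\theta/2}(\log\log T)^2)\big),
\end{equation*}
which is within the claimed bound with room to spare; the cube of $\log\log T$ in the statement also absorbs any slack from cruder choices. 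The main obstacle is simply the bookkeeping needed to balance $\eta$ and $k$ so that the $k^{O(k)}(\log T)^{O(k)}$ losses are beaten by $P^{-\eta k}$ while $T^{5\eta^{3/2}}$ remains controlled. The point that needs care is confirming that the $q^\eta$ factor is negligible given the specific range $q\le\exp((\log T)^{1-\theta/2})$.
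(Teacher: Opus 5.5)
Your proposal is correct and follows the paper's proof. You apply \cref{lem:largevalues} with $c_p=1$, $\eta\asymp(\log T)^{-\theta}(\log\log T)^2$ and $k$ of size $(\log T)^{1-3\theta/2}$ up to $\log\log T$ factors, then show that the $|\T|\,q^{\eta}P^{-\eta k}T^{5\eta^{3/2}}$ term cannot dominate. The only difference is that the paper takes $k=\lceil(\log T)^{1-3\theta/2}(\log\log T)^2\rceil$, whereas you balance $\eta k\log P$ directly against $\eta^{3/2}\log T$; your $k$ is therefore smaller by a factor of $\log\log T$, which gives the slightly stronger exponent $(\log\log T)^2$, and all the side conditions still check out.
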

\begin{proof}
    Applying \cref{lem:largevalues} with
    \begin{empheq}[left=\Bigg\{]{align*}\eta &= (\log T)^{-\theta} (\log \log T)^2\\
        k &= \lceil (\log T)^{1-\frac{3\theta}{2}}(\log\log T)^2 \rceil,
    \end{empheq}
    and all coefficients $c_p = 1$ (the condition $P^k\leq T^2$ is satisfied for large $T$), we obtain
    \begin{equation*}
        |\T| \left( \frac{ |\P| }{(\log T)^{100}}\right)^{2k} \leq k^{O(k)} P^{2k} \Big(1 + |\T| q^{\eta}P^{-\eta k} \big(T^{5\eta^{3/2}} \log T+ \eta^{-1}\big)\Big).
    \end{equation*}
    By our choices of parameters, this simplifies to
    \begin{equation*}
        |\T| \leq (\log T)^{O(k)} \Big(1 + |\T| q^{\eta} T^{5\eta^{3/2}} P^{-\eta k}\Big).
    \end{equation*}
    Thus, either
    \begin{equation*}
        |\T| \leq (\log T)^{O(k)} \leq \exp\!\big(O\big((\log T)^{1-\frac{3\theta}{2}}(\log \log T)^3\big)\big)
    \end{equation*}
    and we are done, or
    \begin{equation}
        \label{eq:imposscase}
        P^{\eta k} \leq (\log T)^{O(k)} q^{\eta} T^{5\eta^{3/2}}.
    \end{equation}
    However, in view of the estimates
    \begin{empheq}[left=\empheqlbrace]{align*}
        q^\eta &\leq \exp\!\big((\log T)^{1-3\theta/2}(\log \log T)^2\big)\\
        T^{\eta^{3/2}} &= \exp\!\big((\log T)^{1-3\theta/2}(\log \log T)^3\big)\\
        P^{\eta k} &\geq \exp\!\big((\log T)^{1-3\theta/2}(\log \log T)^4\big),
    \end{empheq}
    the inequality \cref{eq:imposscase} cannot hold for large enough $T$.
\end{proof}

We can now establish our expansion estimate, \cref{prop:GRHsubstitute}, by passing to the Fourier side and applying the preceding large value estimate.

\begin{proposition}[Expansion Estimate]
    \label{prop:GRHsubstitute}
    Let $Y$ be sufficiently large. Let $P = \exp((\log Y)^{\theta})$ for some~$\frac{1}{100}\leq \theta \leq \frac{2}{3}-\frac{1}{100}$ and let $\P$ be the set of primes in~$[P, 2P]$.

    Let $A\subset \big[\tfrac{1}{10}Y, 10Y\big]$ be a multiset with $O(\log Y)$ elements in any unit interval.

    Let $1\leq q_0 \leq \exp((\log Y)^{1-\theta/2})$ be an integer, let $\P' \subset \P$ consist of all primes not dividing $q_0$, and let~$(a_x)_{x\in A}$ be a sequence of integers coprime to $q_0$.

    Fix $1/Y\leq \eps \leq 1$. Suppose that there are $\geq (\log Y)^{-100} |A| |\P|^2$ quadruples $(x,y,p,q)\in A^2\times (\P')^2$ such that
    \begin{equation}
        \label{eq:conditionsquadruple}
        \abs{px - qy} \leq \eps P\quad \text{and}\quad q a_x \equiv p a_y \pmod{q_0}.
    \end{equation}
    Then
    \begin{equation*}
        \eps^{-1} q_0 \frac{Y}{|A|}  \leq \exp\!\big(O\big((\log Y)^{1-\frac{3\theta}{2}}(\log \log Y)^3\big)\big).
    \end{equation*}
\end{proposition}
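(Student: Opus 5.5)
Our plan is to encode the count of quadruples satisfying \cref{eq:conditionsquadruple} as a bilinear form and expand it in characters modulo $q_0$ and Archimedean characters $t\mapsto x^{it}$. The aim is to find many pairs $(\chi,t)$ with $|\sum_{p}\chi(p)p^{it}|$ large, which is incompatible with \cref{lem:boundT}.

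First we smooth the constraints. The condition $|px-qy|\le \eps P$ is equivalent to $|\log(px)-\log(qy)|\ll \eps/Y$, so we majorise $\ind{|u|\le \eps/Y}$ by a nonnegative function $\Phi$ with nonnegative Fourier transform supported on $|t|\le T:=Y/\eps$. For instance, take $\Phi(u)=\frac{\eps}{Y}F(uY/\eps)$ with $\widehat F=\ind{[-1,1]}*\ind{[-1,1]}$-type. Then
\begin{equation*}
\ind{|px-qy|\le\eps P}\ll \frac{\eps}{Y}\int_{|t|\le T}\widehat F(t\eps/Y)\,(px)^{it}(qy)^{-it}\,dt .
\end{equation*}
For the congruence $qa_x\equiv pa_y$, i.e.\ $q\bar p\equiv a_y\bar a_x \pmod{q_0}$, orthogonality gives $\frac{1}{\varphi(q_0)}\sum_{\chi}\chi(q)\bar\chi(p)\chi(a_x)\bar\chi(a_y)$. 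The count $N$ of quadruples is therefore
\begin{equation*}
\ll \frac{\eps}{Y\varphi(q_0)}\sum_{\chi}\int_{|t|\le T}\widehat F\cdot \Big|\sum_{p\in\P'}\bar\chi(p)p^{it}\Big|^2\Big|\sum_{x\in A}\chi(a_x)x^{it}\Big|^2 dt.
\end{equation*}
Here we pair $p$ with $x$ and $q$ with $y$, and the resulting term is the product $S(\chi,t)\overline{S(\chi,t)}$ times the $A$-sum and its conjugate; the positivity of $\widehat F$ makes it nonnegative.

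Next we split according to whether $|\sum_{p}\chi(p)p^{it}|\ge |\P|(\log Y)^{-60}$. On the small-value set we bound the prime sum trivially by $|\P|^2(\log Y)^{-120}$ and use the mean value estimate
\begin{equation*}
\sum_\chi\int_{|t|\le T}\Big|\sum_x \chi(a_x)x^{it}\Big|^2\ll \varphi(q_0)\,(T+Y)\,|A|\log Y ,
\end{equation*}
which holds because $A$ has $O(\log Y)$ points per unit interval and $\log x$ is spaced by $\gg 1/Y$; the characters contribute $\varphi(q_0)$ times the diagonal in $a_x$. This gives a contribution $\ll (\log Y)^{-119}|A||\P|^2$, smaller than the hypothesis, so the large-value set must carry $\gg(\log Y)^{-100}|A||\P|^2$. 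There we bound the prime sum by $|\P|^2$ and $|\sum_x|^2$ by $|A|^2$. Discretising $t$ into unit intervals gives a $1$-separated set $\T$ of large-value pairs (the large-value property is stable under unit shifts of $t$ up to losses, or we take local maxima), and we obtain
\begin{equation*}
(\log Y)^{-100}|A||\P|^2\ll \frac{\eps}{Y\varphi(q_0)}|\T|\,|\P|^2|A|^2 .
\end{equation*}
Rearranging yields $\eps^{-1}\varphi(q_0)Y/|A|\ll(\log Y)^{100}|\T|$. Since $\varphi(q_0)\gg q_0/\log\log q_0$, \cref{lem:boundT} applied with $T=Y/\eps\le Y^2$ (so $\log T\asymp\log Y$ and the admissible ranges of $\theta$ and $q$ match) gives the claimed bound.

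The main obstacle is the small-value step, since the $t$-integral runs over length $T=Y/\eps$, possibly much longer than $Y$. The mean value bound $(T+|A|\log Y)|A|$ times the prefactor $\eps/Y$ gives $\ll |A|\log Y$ per character-average, which is fine. A further subtlety is that the prime sum is over $\P'$ rather than $\P$, but the two differ by $O(\log q_0)\ll(\log Y)^{1-\theta/2}=o(|\P|(\log Y)^{-100})$ primes, so \cref{lem:boundT} still applies after a small adjustment of the threshold. We also need to handle the unit-discretisation of $t$ carefully, e.g.\ via a Gallagher/Sobolev-type bound or by choosing $t$ maximising within each unit interval.
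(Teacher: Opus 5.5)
Your proposal is correct and follows the paper's argument essentially step for step. You use a band-limited majorant for $|\log(px/qy)|\ll \eps/Y$ with Fourier support in $[-Y/\eps, Y/\eps]$ and orthogonality of characters modulo $q_0$, then split according to the size of $\sum_{p}\chi(p)p^{it}$, with the mean value theorem on the small values and \cref{lem:boundT} (at height $T=Y/\eps\in[Y,Y^2]$) on the large values.

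The differences are cosmetic. The paper first rounds $A$ to a set of multiples of $\eps$, whereas you run the mean value bound on the multiset directly. That is fine, but justify it by a Gallagher-type bound using the $O(\log Y)$ points in any interval of length $\eps$, not by a spacing claim, since multiset points may coincide.

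One small correction: the large-value property is \emph{not} stable under unit shifts of $t$, since the prime polynomial can change by $\asymp |\P|\log P$ over a unit interval. Use your second option instead: choose one large-value point in each unit interval meeting the set, and split into even and odd intervals to get the $1$-separation that \cref{lem:boundT} requires.
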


Taking $q_0 = \eps = 1$ in \cref{prop:GRHsubstitute}, we immediately deduce the following.

\begin{corollary}
    \label{cor:expansion}
    Suppose that $P = \exp\!\big((\log Y)^{\theta}\big)$ for some $\frac{1}{100}\leq \theta \leq \frac{2}{3}-\frac{1}{100}$.

    Let $\A = (A, \alpha_{\bullet}, H)$ be a configuration such that $|\Q(\A)| \geq \delta |A||\P|^2$ for some $\delta \geq (\log Y)^{-100}$. Then,
    \begin{equation*}
        \frac{Y}{|A|}  \leq \exp\!\big(O\big((\log Y)^{1-\frac{3\theta}{2}}(\log \log Y)^3\big)\big).
    \end{equation*}
\end{corollary}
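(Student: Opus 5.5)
The plan is to specialise \cref{prop:GRHsubstitute} to $q_0 = 1$ and $\eps = 1$, discarding the frequency information in $\Q(\A)$ and keeping only the archimedean condition $\abs{px-qy}\leq P/10$. We assume, as in \cref{prop:GRHsubstitute}, that $Y$ is sufficiently large; the range $\frac{1}{100}\leq\theta\leq\frac23-\frac1{100}$ and the definition of $\P$ are literally the same in both statements.

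First we check the hypotheses on $A$. Since $\A$ is a configuration, $A\subset[Y,2Y)\subset[\tfrac1{10}Y,10Y]$. It is a genuine set, hence a multiset, and it is $1$-separated, so any unit interval contains at most $2 = O(\log Y)$ of its points.

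Next we make the arithmetic data trivial. Take $q_0 := 1$, which satisfies $1\leq q_0\leq\exp((\log Y)^{1-\theta/2})$. Then every prime is coprime to $q_0$, so $\P'=\P$. Take $a_x := 0$ for all $x\in A$; these are coprime to $q_0=1$. The congruence $qa_x\equiv pa_y \pmod 1$ then holds automatically. Take $\eps := 1$, which lies in $[1/Y,1]$.

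Now we count quadruples. Every $(x,y,p,q)\in\Q(\A)$ lies in $A^2\times\P^2 = A^2\times(\P')^2$ and satisfies $\abs{px-qy}\leq \tfrac{P}{10}\leq \eps P$, so it meets \cref{eq:conditionsquadruple}. Their number is
\begin{equation*}
    |\Q(\A)| \geq \delta|A||\P|^2 \geq (\log Y)^{-100}|A||\P|^2 .
\end{equation*}

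\cref{prop:GRHsubstitute} therefore applies and gives
\begin{equation*}
    \eps^{-1}q_0\,\frac{Y}{|A|} = \frac{Y}{|A|} \leq \exp\!\big(O\big((\log Y)^{1-\frac{3\theta}{2}}(\log\log Y)^3\big)\big),
\end{equation*}
which is the claim. There is no real obstacle here: all the analytic content sits in \cref{prop:GRHsubstitute}. The only points needing care are that $a_x=0$ is admissible because $\gcd(0,1)=1$, and that the multiset hypothesis is met by a $1$-separated set.
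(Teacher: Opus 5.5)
Your proposal is correct and is exactly the paper's argument: the corollary is obtained by specialising \cref{prop:GRHsubstitute} to $q_0=\eps=1$, so that $\P'=\P$ and the congruence condition is vacuous. The hypothesis checks you give (the $1$-separated set $A\subset[Y,2Y)$ is an admissible multiset, every quadruple in $\Q(\A)$ satisfies $\abs{px-qy}\leq P/10\leq P$, and any choice of integers $a_x$ such as $a_x=0$ is admissible modulo $1$) are precisely what the paper leaves implicit.
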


\begin{proof}[Proof of \cref{prop:GRHsubstitute}]
    By standard rounding and multiplicity arguments, we may assume that $A\subset \big[\tfrac{1}{10}Y, 10Y\big]$ is a set of integer multiples of $\eps$ (rather than a multiset of arbitrary real numbers), such that there are $\gg (\log Y)^{-102} |A| |\P|^2$ quadruples $(x,y,p,q)\in A^2\times (\P')^2$ satisfying
    \begin{equation}
        \label{eq:newconditionquadruple}
        \abs{px - qy} \leq 5\eps P\quad \text{and}\quad q a_x \equiv p a_y \pmod{q_0}.
    \end{equation}

    Let~${\Phi:\R\to \R}$ be a Schwartz function such that $\Phi \geq \ind{[-50,\, 50]}$ and whose Fourier transform is supported on $[-1, 1]$. If $|px-qy|\leq 5\eps P$, then $\abs{\log px - \log qy} \leq 50\eps/Y$ by the mean value theorem. Thus, the number of quadruples $(x,y,p,q)\in A^2\times (\P')^2$ satisfying \cref{eq:newconditionquadruple} is
    \begin{equation*}
        \leq \sum_{x,y\in A} \sum_{p,q \in \P'} \Phi\bigg(\frac{Y}{\eps} \log \frac{px}{qy}\bigg) \ind{qa_x\equiv pa_y \spmod{q_0}}.
    \end{equation*}
    By orthogonality of characters and Fourier inversion, this expression can be rewritten as
    \begin{equation}
        \label{eq:fourierexpr}
        \frac{1}{\phi(q_0)} \sum_{\chi \spmod{q_0}} \frac{\eps}{Y} \int_{-Y/\eps}^{Y/\eps} \widehat{\Phi}\!\left(\frac{\eps t}{Y}\right) \abs{\sum_{x\in A} \chi(a_x) x^{it}}^2 \Bigg\lvert{\sum_{p\in \P} \overline{\chi(p)} p^{it}}\Bigg\rvert^2 dt.
    \end{equation}

    We bound \cref{eq:fourierexpr} by separating the contribution of those $(\chi, t)$ for which the prime Dirichlet polynomial is small from those for which it is large. By the mean-value theorem for Dirichlet polynomials \cite[Theorem~9.1]{IK}, recalling that $\eps^{-1}A\subset \N$, we have
    \begin{equation*}
        \frac{\eps}{Y} \int_{-Y/\eps}^{Y/\eps} \abs{\sum_{x\in A} \chi(a_x) x^{it}}^2 dt = \frac{\eps}{Y} \int_{-Y/\eps}^{Y/\eps} \abs{\sum_{x\in A} \chi(a_x) (\eps^{-1}x)^{it}}^2 dt
        \ll |A|.
    \end{equation*}
    Thus, for any $w>0$, \cref{eq:fourierexpr} is bounded by
    \begin{equation*}
        \ll (w|\P|)^2 |A| + \frac{\eps |A|^2|\P|^2}{Y \phi(q_0)} \sum_{\chi \spmod{q_0}} \int_{-Y/\eps}^{Y/\eps} \ind{\{\lvert\sum_{p\in \P} \overline{\chi(p)} p^{it} \rvert > w |\P|\}} dt.
    \end{equation*}
    Choose $w := (\log (Y/\eps))^{-100}$. Discretising and applying \cref{lem:boundT} with $T = Y/\eps \in [Y, Y^2]$, we have
    \begin{equation*}
        \sum_{\chi \spmod{q_0}} \int_{-Y/\eps}^{Y/\eps} \ind{\{\lvert\sum_{p\in \P} \overline{\chi(p)} p^{it} \rvert > w |\P|\}} dt \leq \exp\!\big(O\big((\log Y)^{1-\frac{3\theta}{2}}(\log \log Y)^3\big)\big).
    \end{equation*}

    Combining the above estimates, we obtain
    \begin{equation*}
        \frac{|A||\P|^2}{(\log Y)^{102}} \ll \frac{|A||\P|^2}{(\log Y)^{200}} +  \frac{\eps|A|^2|\P|^2}{Y \phi(q_0)} \exp\!\big(O\big((\log Y)^{1-\frac{3\theta}{2}}(\log \log Y)^3\big)\big).
    \end{equation*}
    Rearranging and using that ${\phi(n) \gg n^{1-o(1)}}$, the result follows for $Y$ sufficiently large.
\end{proof}

\subsection{Proof of the global structure theorem}
Combining this expansion estimate with the results of \cref{sec:largeH} yields the following approximate formula for the frequencies of configurations having a very large parameter $H$.

\begin{lemma}
    \label{lem:largeHcase}
    Suppose that $Y$ is sufficiently large and $P = \exp((\log Y)^{\theta})$ for some $\frac{1}{100}\leq \theta \leq \frac{2}{3}-\frac{1}{100}$.

    Let $\A = (A, \alpha_{\bullet}, H)$ be a configuration such that $|\Q(\A)| \geq \delta |A| |\P|^2$, where $H \geq P^{3\log Y}$ and $\delta \geq C_1/\log |\P|$ for some sufficiently large absolute constant $C_1>0$.

    Then, writing $f(Y) := (\log Y)^{1-\frac{3\theta}{2}} (\log \log Y)^{3}$, there exist
    \begin{itemize}
        \item a subset $A'\subset A$ with $|\Q(\A|_{A'})| \gg \delta |A'| |\P|^2$,
        \item an integer $q_0 \leq e^{O(f(Y))}$, and
        \item a real number $t$ with $|t| \leq e^{O(f(Y))} \frac{Y^2}{HP}$,
    \end{itemize}
    such that, for every $x\in A'$, there is an integer $a_x$ coprime to $q_0$ such that
    \begin{equation}
        \label{eq:formulalargeH}
        \alpha_x = \frac{a_x}{q_0} + \frac{t}{x} + O\bigg(\frac{e^{O(f(Y))}}{HP}\bigg) \pmod 1.
    \end{equation}
\end{lemma}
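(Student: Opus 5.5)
We plan to chain \cref{lem:getfirstformula}, \cref{lem:betax} and \cref{prop:GRHsubstitute}, the last of which is used twice: once to shrink $q_0$ and once to bound the archimedean parameter $t$.

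First, since $H\geq P^{3\log Y}\geq Y^{C_1}$ and $|\P|\geq P/(2\log P)\geq(\log Y)^4$, \cref{lem:getfirstformula} applies. It yields a set $A_2$, an integer $q_0$, and for each $x\in A_2$ a decomposition $\alpha_x=a_x/q_0+\beta_x$ with $(a_x,q_0)=1$ and $|\beta_x|\leq Y^{O(1)}/H$. Every $x\in A_2$ has $\gg\delta|\P|^2$ quadruples $(x,y,p,q)\in\Q(\A|_{A_2})$ with $(pq,q_0)=1$, $qa_x\equiv pa_y \pmod{q_0}$ and $|q\beta_x-p\beta_y|\leq 1/(10H)$. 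At this stage $q_0$ is only bounded by $\exp(O((\log Y)^{1-\theta}\log\log Y+(\log Y)^\theta))$. This is at most $\exp((\log Y)^{1-\theta/2})$, since $\theta\leq 2/3-1/100$ gives $\theta<1-\theta/2$ and $1-\theta<1-\theta/2$. So \cref{prop:GRHsubstitute} applies to $A_2$ with $\eps=1/10$ and the residues $a_x$. Those quadruples have $|px-qy|\leq P/10$ and number $\gg\delta|A_2||\P|^2\geq(\log Y)^{-100}|A_2||\P|^2$. Hence $q_0Y/|A_2|\leq e^{O(f(Y))}$, which gives both $q_0\leq e^{O(f(Y))}$ and $|A_2|\geq e^{-O(f(Y))}Y$. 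This is the improvement of $q_0$ anticipated at the end of \cref{sec:largeH}.

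Second, we pass to the configuration $\B=(A_2,\beta_\bullet,H)$ and use only the edges described above. By the last condition these all lie in $\Q(\B)$, so $G(\B)$ has minimum degree $\gg\delta|\P|^2$. We have $|\beta_x|\leq Y^{O(1)}/H\leq P^{-2\log Y}$, because $H\geq P^{3\log Y}$ and $P^{\log Y}\geq Y^{C}$ for any fixed $C$. \cref{lem:betax} then gives $B'\subset A_2$ and $x_0\in B'$ with $|\Q(\B|_{B'})|\gg\delta|B'||\P|^2$ and $\beta_x=t/x+O(|t|\log Y/Y^2+\log Y/(HP))$, where $t:=x_0\beta_{x_0}$. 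We take $A'=B'$. The edges used all satisfy the congruence, so $|\Q(\A|_{A'})|\geq|\Q(\B|_{B'})|$: each such quadruple has $q\alpha_x-p\alpha_y\equiv q\beta_x-p\beta_y\pmod 1$. It then remains to show $|t|\leq e^{O(f(Y))}Y^2/(HP)$. Once that holds, the error term $|t|\log Y/Y^2$ is $\ll e^{O(f(Y))}/(HP)$, and \cref{eq:formulalargeH} follows.

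Third, we bound $t$ using the multiset $V\subset[\tfrac13Y,6Y]$ supplied by \cref{lem:betax}. We may assume $|t|>Y^2/(HP)$, since otherwise there is nothing to prove. The lemma gives $\gg\delta|V||\P|^2$ quadruples with $|pv_1-qv_2|\ll Y\log Y/(H|\beta_{x_0}|)\asymp Y^2\log Y/(H|t|)$. We set $\eps:=C\,Y\log Y/(HP|t|)$, so these quadruples satisfy $|pv_1-qv_2|\leq\eps P$, and we apply \cref{prop:GRHsubstitute} with $q_0=1$. The required lower bound $\eps\geq 1/Y$ holds because $|t|\leq 2Y|\beta_{x_0}|\leq Y^{O(1)}/H$. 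If $\eps>1$, the desired bound on $|t|$ already holds. Otherwise the proposition, together with the trivial bound $|V|\ll Y\log Y$, gives $\eps^{-1}\leq e^{O(f(Y))}$, that is, $|t|\leq e^{O(f(Y))}Y^2/(HP)$.

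The main obstacle is bookkeeping the hypotheses of \cref{prop:GRHsubstitute} in both applications. We must check that the edge count after restricting to primes coprime to $q_0$ still beats $(\log Y)^{-100}$; this uses $\delta\gg1/\log|\P|$. We must also check that the multiplicity condition on $V$ and the range $[\tfrac1{10}Y,10Y]$ are met, and that $\eps\in[1/Y,1]$. Finally, the preliminary bound on $q_0$ from \cref{lem:getfirstformula} must fall within the admissible range $q_0\leq\exp((\log Y)^{1-\theta/2})$, which is exactly where the hypothesis $\theta<2/3$ enters.
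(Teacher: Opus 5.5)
Your proposal is correct and follows the paper's proof step for step: \cref{lem:getfirstformula}, then \cref{prop:GRHsubstitute} on $A_2$ to cut $q_0$ down to $e^{O(f(Y))}$, then \cref{lem:betax} on $(A_2,\beta_\bullet,H)$, and finally \cref{prop:GRHsubstitute} with $q_0=1$ on the multiset $V$ to bound $|\beta_{x_0}|$. Three small repairs are needed:
\begin{itemize}
\item Your $\eps$ should be $C\,Y^2\log Y/(HP|t|)$. A factor of $Y$ is missing, though your final bound on $|t|$ is the right one.
\item The condition $\eps\geq 1/Y$ is most easily secured by applying the proposition with $\max(\eps,1/Y)$. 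The crude bound $|t|\leq Y^{O(1)}/H$ does not give it by itself.
\item The inequality $|\Q(\A|_{A'})|\geq|\Q(\B|_{B'})|$ does not follow as written, because $\Q(\B|_{B'})$ may contain quadruples violating $qa_x\equiv pa_y \pmod{q_0}$. Instead, run the ball-growth argument from the proof of \cref{lem:betax} inside the subgraph of edges supplied by property \cref{item:getform2} of \cref{lem:getfirstformula}. The paper's own proof also leaves this point implicit.
\end{itemize}
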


\begin{proof}
    Applying \cref{lem:getfirstformula}, we obtain a subset $A_2\subset A$, an integer $q_0$ and sequences $(a_x)_{x\in A_2}$ and $(\beta_x)_{x\in A_2}$ satisfying the properties given in the lemma. In particular, $q_0$ satisfies
    \begin{equation}
        \label{eq:initialbdq0}
        q_0 \leq \exp\!\big( O\big((\log Y)^{1-\theta}\log \log Y + (\log Y)^{\theta}\big) \big).
    \end{equation}
    Given the range of $\theta$, this implies that $q_0 \leq \exp((\log Y)^{1-\theta/2})$ for $Y$ sufficiently large, which is the condition needed to apply the expansion estimate, \cref{prop:GRHsubstitute}.

    We apply it to the set $A_2$; all the other assumptions are satisfied since, by property \cref{item:getform2} of \cref{lem:getfirstformula}, there are $\gg \delta |A_2| |\P|^2$ quadruples $(x,y,p,q)\in (A_2)^2\times \P^2$ such that $(pq, q_0)=1$, $|px-qy| \leq P/10$ and $q a_x \equiv p a_y \pmod{q_0}$. Hence, \cref{prop:GRHsubstitute} gives $q_0 \leq e^{O(f(Y))}$, thus sharpening our initial bound \cref{eq:initialbdq0}.

    Let $\B := (A_2, \beta_\bullet, H)$. This configuration satisfies all the assumptions of \cref{lem:betax}. Indeed, $G(\B)$ has minimum degree $\gg \delta |\P|^2$ by property \cref{item:getform2} of \cref{lem:getfirstformula}, and the frequencies $(\beta_x)_{x\in A_2}$ satisfy $|\beta_x| \leq P^{-2\log Y}$ for $Y$ large enough. \Cref{lem:betax} then provides a subset $B' \subset A_2$ with $|\Q(\B|_{B'})| \gg \delta |B'| |\P|^2$ and an element $x_0 \in B'$ such that, for every $x\in B'$, we have
    \begin{equation*}
        \beta_x = \frac{x_0\beta_{x_0}}{x}+ O \!\left(\frac{ |\beta_{x_0}| \log Y}{Y} + \frac{\log Y}{HP}\right).
    \end{equation*}

    Combining this with the formula for $\alpha_x$ from \cref{lem:getfirstformula}, we obtain that, for every $x\in B'$,
    \begin{equation}
        \label{eq:alphaxbetaxzero}
        \alpha_x = \frac{a_x}{q_0} + \frac{x_0\beta_{x_0}}{x} + O \!\left(\frac{ |\beta_{x_0}| \log Y}{Y} + \frac{\log Y}{HP}\right) \pmod 1.
    \end{equation}

    It remains to estimate $|\beta_{x_0}|$. To do so, we use the final property in \cref{lem:betax}, which states that there is a multiset $V \subset \big[\tfrac13 Y, 6Y \big]$ with $O(\log Y)$ elements in any unit interval, such that $\gg \delta |V| |\P|^2$ quadruples $(v_1, v_2, p,q)\in V^2 \times \P^2$ satisfy
    \begin{equation*}
        |\beta_{x_0}| \abs{pv_1-qv_2} \ll \frac{Y\log Y}{H}.
    \end{equation*}
    If $|\beta_{x_0}| > e^{C f(Y)} \frac{Y}{HP}$ for some large enough constant $C>0$, this contradicts the expansion estimate, \cref{prop:GRHsubstitute}, applied to the set $V$ with $q_0=1$. Thus, we must have $|\beta_{x_0}| \leq e^{O(f(Y))} \frac{Y}{HP}$ which, when plugged back into the formula \cref{eq:alphaxbetaxzero} for the frequencies, yields the desired conclusion.
\end{proof}

We now have all the necessary ingredients to prove the global structure theorem, which we recall for convenience.

\globalthm*

\begin{proof}[Proof of \cref{thm:globalstructuretheorem}]
    We start by applying \cref{prop:tower} to construct a tower of configurations above $\A$.

    Suppose first that we are in case \cref{item:smallcase} of \cref{prop:tower}. Then, there is a subset $B\subset A$ such that $|\Q(\A|_B)| \gg \delta |B| |\P|^2$ and, writing $\theta := 2/5 + \eps$, we have
    \begin{equation*}
        |B| \ll \exp\!\bigg(-\frac{(\log Y)^{\theta}}{(\log \log Y)^{O(1)}} \bigg) |A|.
    \end{equation*}
    On the other hand, by \cref{cor:expansion}, we have
    \begin{equation*}
        |B| \geq \exp\!\Big(-(\log Y)^{1-\frac{3\theta}{2}}(\log \log Y)^{O(1)} \Big) |A|.
    \end{equation*}
    Since $1-\frac{3\theta}{2} < \theta - \eps$, this is a contradiction for $Y$ large enough in terms of $\eps_0$.

    Therefore, we must be in case \cref{item:towercase} of \cref{prop:tower}: there is a tower $(\A_i)_{0\leq i\leq k}$ of height $k \gg (\log Y)^{10}$, such that $\A_0 = \A$ and, if $\A_k = (A_k, \widetilde{\alpha}_{\bullet}, H_k)$, we have $|\Q(\A_k)| \gg \delta |A_k| |\P|^2$.

    By definition of a lift, we have $H_k \geq H P^k$, which is comfortably larger than $P^{3\log Y}$ for $Y$ large enough. We can thus apply \cref{lem:largeHcase} to $\A_k$, obtaining a subset $A' \subset A_k$ with $|\Q(\A_k|_{A'})| \gg \delta |A'| |\P|^2$, an integer $q_0 \leq e^{O(f(Y))}$ and a real number $|t| \leq e^{O(f(Y))} \frac{Y^2}{H_k P}$, such that, for every $x\in A'$,
    \begin{equation*}
        \widetilde{\alpha}_x = \frac{a_x}{q_0} + \frac{t}{x} + O\bigg(\frac{e^{O(f(Y))}}{H_k P}\bigg) \pmod 1
    \end{equation*}
    for some integer $a_x$ coprime to $q_0$. Here, we recall that $f(Y) := (\log Y)^{1-\frac{3\theta}{2}} (\log \log Y)^{3}$.

    Applying our expansion estimate, \cref{cor:expansion}, to the configuration $\A_k|_{A'}$, we obtain the lower bound $|A'| \geq e^{-O(f(Y))} Y$. By pigeonholing on the values of $a_x \spmod{q_0}$, we can find a further subset $A''\subset A'$ with $|A''| \geq e^{-O(f(Y))} |A|$ such that $a_x = a_0$ is constant for $x\in A''$.

    Let $p_1^*, \ldots, p_k^*$ be the lifting primes for the tower $(\A_i)_{0\leq i\leq k}$. By the definition of a lift, we have $H_k = H p_1^* \cdots p_k^*$ and $\alpha_x = p_1^* \cdots p_k^* \widetilde{\alpha}_x \pmod 1$ for every $x\in A_k$. Thus, for every $x\in A''$, we have
    \begin{equation*}
        \alpha_x = \frac{p_1^* \cdots p_k^* a_0}{q_0} + \frac{p_1^* \cdots p_k^* t}{x} + O\bigg(\frac{e^{O(f(Y))}}{H P}\bigg) \pmod 1.
    \end{equation*}
    Relabelling the parameters gives the desired conclusion.
\end{proof}
\part{Local structure theorem}
\label{part:local}

Throughout \cref{part:local}, we fix an arbitrary configuration $\A = (A, \alpha_{\bullet}, H)$. All results and definitions in these sections implicitly refer to this configuration. Unless specified otherwise, the parameters $Y$, $P$, and $H$ are only assumed to satisfy the default bounds in \cref{def:globalparam,def:configuration}.

\section{Preliminary lemmas}
\label{sec:preliminarylemmas}

We begin by establishing some basic properties of the function $\dist{p,q}{x,y}$ from \cref{def:locrelations}. These elementary facts appear in previous works under different notation; we include short proofs here to keep our exposition self-contained.

\subsection{Triangle-like inequalities} Importantly, the function $\dist{p,q}{x,y}$ does \emph{not} satisfy a triangle-like inequality of the form
\begin{equation*}
    {\dist{p,r}{x,z} \leq \dist{p,q}{x,y} + \dist{q,r}{y,z}},
\end{equation*}
due to the second argument of the maximum in \cref{eq:defdist}. Yet, a useful substitute is available: we prove it in \cref{lem:triangle} below.

\begin{lemma}
    \label{lem:normcalculus}
    Let $m,n\geq 1$ be coprime integers, $\alpha\in \R$ and $0< \eps \leq 1$. Suppose that $\norm{m\alpha} < \frac{\eps}{2n}$ and $\norm{n\alpha} < \frac{1}{2m}$. Then $\norm{\alpha} < \frac{\eps}{2mn}$.
\end{lemma}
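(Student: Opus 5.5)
We take representatives $\mu := m\alpha - a$ and $\nu := n\alpha - b$ with $a,b\in\Z$ chosen so that $|\mu| = \norm{m\alpha} < \frac{\eps}{2n}$ and $|\nu| = \norm{n\alpha} < \frac{1}{2m}$. The key identity is
\begin{equation*}
    n\mu - m\nu = n(m\alpha - a) - m(n\alpha - b) = mb - na \in \Z .
\end{equation*}
We bound the left-hand side: $|n\mu| < \frac{\eps}{2} \le \frac12$ and $|m\nu| < \frac12$, so $|n\mu - m\nu| < 1$. Being an integer, it vanishes, giving $mb = na$.

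Coprimality of $m,n$ then gives $m \mid a$ (since $m \mid na$) and, writing $a = mc$ with $c \in \Z$, also $b = nc$. Hence $m(\alpha - c) = \mu$, so
\begin{equation*}
    \norm{\alpha} \le |\alpha - c| = \frac{|\mu|}{m} < \frac{\eps}{2mn}.
\end{equation*}

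The only step needing care is the integrality argument, namely that the two strict bounds add to something strictly less than $1$; this is exactly where $\eps \le 1$ and the asymmetric hypotheses are used. Everything else is elementary.
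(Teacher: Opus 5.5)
Your proof is correct and is essentially the paper's argument. The paper writes $\alpha = \frac{k}{n} + \frac{\theta}{2mn} \pmod 1$ with $|\theta|<1$ using the bound on $\norm{n\alpha}$. It then uses the bound on $\norm{m\alpha}$ to force $n \mid km$, which is the same as your observation that the integer $mb - na$ has absolute value less than $1$. Coprimality then finishes both arguments in the same way.
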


\begin{proof}
    The assumption $\norm{n\alpha} < \frac{1}{2m}$ means that $\alpha = \frac{k}{n} + \frac{\theta}{2mn} \pmod{1}$ for some $|\theta| < 1$ and $k\in \Z$. Thus
    \begin{equation*}
        m\alpha = \frac{km}{n} + \frac{\theta}{2n} \pmod{1},
    \end{equation*}
    but since $\norm{m\alpha} < \frac{\eps}{2n} \leq \frac{1}{2n}$ and $|\theta| < 1$, it must be that $n\mid km$, i.e.~$n\mid k$ as $(m,n) = 1$. Therefore,~$\alpha = \frac{\theta}{2mn} \pmod{1}$. Recalling that $\norm{m\alpha} < \frac{\eps}{2n}$, we see that $|\theta| < \eps$, so that $\norm{\alpha} < \frac{\eps}{2mn}$.
\end{proof}

\begin{minipage}[t]{0.74\textwidth}
    \begin{lemma}[{\cite[Lemma~2.3]{walsh1}}]
        \label{lem:triangle}
        Let $0< C \leq \tfrac{H}{30P^2}$. Let $x,y_1,y_2,z\in A$ and ${p,q_1,q_2,r\in \P}$. Suppose that
        \begin{equation*}
            \dist{p,q_i}{x,y_i}, \dist{q_i,r}{y_i,z} \leq C
        \end{equation*}
        for $i=1,2$. If $q_1\neq q_2$, then
        \begin{equation*}
            \dist{p,r}{x,z} \leq 4C.
        \end{equation*}
    \end{lemma}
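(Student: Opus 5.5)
The plan is to bound the two components of $\dist{p,r}{x,z}$ separately: the spatial component by the ordinary triangle inequality, and the frequency component by combining the two frequency relations along each route $x\to y_i\to z$ and then using $q_1\neq q_2$ through \cref{lem:normcalculus}.

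\emph{Spatial part.} By \cref{def:locrelations}, the hypotheses give $|px-q_iy_i|\leq CP$ and $|q_iy_i-rz|\leq CP$. Hence
\begin{equation*}
|px-rz|\leq |px-q_1y_1|+|q_1y_1-rz|\leq 2CP,
\end{equation*}
so the first argument of the maximum defining $\dist{p,r}{x,z}$ is at most $2C$. Only one intermediate point is needed here.

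\emph{Frequency part.} The hypotheses also give $\norm{q_i\alpha_x-p\alpha_{y_i}}\leq C/H$ and $\norm{r\alpha_{y_i}-q_i\alpha_z}\leq C/H$. Multiply the first bound by $r$ and the second by $p$, and add them. The terms in $\alpha_{y_i}$ cancel, and with $\beta:=r\alpha_x-p\alpha_z$ we get
\begin{equation*}
\norm{q_i\beta}\leq \frac{(r+p)C}{H}\leq \frac{4PC}{H}\qquad (i=1,2),
\end{equation*}
since $p,r\leq 2P$. The goal is $\norm{\beta}\leq 4C/H$. A single $q_i$ only locates $\beta$ up to a multiple of $1/q_i$, so both routes are needed, and this is where $q_1\neq q_2$ is used. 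Distinct primes are coprime, so we apply \cref{lem:normcalculus} with $m=q_1$, $n=q_2$ and $\eps:=8Pq_2C/H$ (enlarged by an arbitrarily small amount to make the inequalities strict).

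\emph{Checking the hypotheses of \cref{lem:normcalculus}.} This is the only delicate step, because the constants have to fit within the assumption $C\leq H/(30P^2)$.
\begin{itemize}
\item First, $\eps\leq 16P^2C/H\leq 16/30<1$, as required.
\item Second, $\norm{q_1\beta}\leq 4PC/H=\eps/(2q_2)$.
\item Third, $\norm{q_2\beta}\leq 4PC/H<1/(2q_1)$, because $8PCq_1/H\leq 16P^2C/H\leq 16/30<1$.
\end{itemize}
The lemma then yields
\begin{equation*}
\norm{\beta}<\frac{\eps}{2q_1q_2}=\frac{4PC}{Hq_1}\leq \frac{4C}{H},
\end{equation*}
using $q_1\geq P$. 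Therefore $H\norm{r\alpha_x-p\alpha_z}\leq 4C$. Combined with the spatial bound $2C$, this gives $\dist{p,r}{x,z}\leq 4C$.

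The main obstacle is this bookkeeping: choosing $\eps$ so that both hypotheses of \cref{lem:normcalculus} hold with the constant $30$, and handling strict versus non-strict inequalities by a limiting argument.
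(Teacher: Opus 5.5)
Your proof is correct and follows the paper's argument: the spatial bound $|px-rz|\le 2CP$ comes from the ordinary triangle inequality, the bound $\norm{q_i(r\alpha_x-p\alpha_z)}\le 4CP/H$ comes from combining the two relations along each route, and \cref{lem:normcalculus} is then applied using $q_1\neq q_2$. The only addition is that you explicitly verify the hypotheses of \cref{lem:normcalculus} and handle strict versus non-strict inequalities, which the paper leaves implicit.
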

\end{minipage}
\hfill
\begin{minipage}[t]{0.25\textwidth}
    \vspace{-3pt} \centering
    \raisebox{0cm}{ \begin{tikzpicture}
            \begin{scope}[scale=0.85, every node/.style={font=\small}]
                \colorlet{cp}{blue!70!black}      \colorlet{cr}{red!70!black}       \colorlet{cqone}{teal!80!black}   \colorlet{cqtwo}{orange!80!black}

                \coordinate (x)  at (0, 0);
                \coordinate (y1) at (1.5, 1.5);
                \coordinate (z)  at (3, 0);
                \coordinate (y2) at (1.5, -1.5);

                \draw[thick, cp]    (x)  -- ($(x)!0.5!(y1)$);
                \draw[thick, cqone] (y1) -- ($(x)!0.5!(y1)$);

                \draw[thick, cqone] (y1) -- ($(y1)!0.5!(z)$);
                \draw[thick, cr]    (z)  -- ($(y1)!0.5!(z)$);

                \draw[thick, cr]    (z)  -- ($(z)!0.5!(y2)$);
                \draw[thick, cqtwo] (y2) -- ($(z)!0.5!(y2)$);

                \draw[thick, cqtwo] (y2) -- ($(y2)!0.5!(x)$);
                \draw[thick, cp]    (x)  -- ($(y2)!0.5!(x)$);

                \draw[dashed, thick, cp] (x) -- ($(x)!0.5!(z)$);
                \draw[dashed, thick, cr] (z) -- ($(x)!0.5!(z)$);

                \foreach \A/\B in {x/y1, y1/z, z/y2, y2/x, x/z} {
                        \path let \p1 = (\A), \p2 = (\B) in
                        node[fill=black, circle, inner sep=1pt] at ($(\p1)!0.5!(\p2)$) {};
                    }

                \node[fill=white, inner sep=1pt] at (x)  {\(x\)};
                \node[fill=white, inner sep=1pt] at (y1) {\(y_1\)};
                \node[fill=white, inner sep=1pt] at (z)  {\(z\)};
                \node[fill=white, inner sep=1pt] at (y2) {\(y_2\)};

                \node[text=cp, font=\scriptsize] at ($ (x)!0.25!(y1) + (0, 0.20) $) {\(p\)};
                \node[text=cp, font=\scriptsize] at ($ (x)!0.25!(y2) + (0, -0.27) $) {\(p\)};
                \node[text=cp, font=\scriptsize] at ($ (x)!0.25!(z)  + (0, 0.20) $) {\(p\)};

                \node[text=cr, font=\scriptsize] at ($ (z)!0.25!(y1) + (0, 0.20) $) {\(r\)};
                \node[text=cr, font=\scriptsize] at ($ (z)!0.25!(y2) + (0, -0.22) $) {\(r\)};
                \node[text=cr, font=\scriptsize] at ($ (z)!0.25!(x)  + (0, 0.20) $) {\(r\)};

                \node[text=cqone, font=\scriptsize] at ($ (y1)!0.25!(x) + (-0.25, 0.05) $) {\(q_1\)};
                \node[text=cqone, font=\scriptsize] at ($ (y1)!0.25!(z) + (0.25, 0.05) $) {\(q_1\)};

                \node[text=cqtwo, font=\scriptsize] at ($ (y2)!0.25!(x) + (-0.25, -0.05) $) {\(q_2\)};
                \node[text=cqtwo, font=\scriptsize] at ($ (y2)!0.25!(z) + (0.25, -0.05) $) {\(q_2\)};

            \end{scope}
        \end{tikzpicture}
    }
\end{minipage}

\begin{proof}
    From the assumptions $\norm{p\alpha_{y_i} - q_i \alpha_x}\leq C/H$ and $\norm{q_i \alpha_z - r \alpha_{y_i}} \leq C/H$, we have
    \begin{equation*}
        \mnorm{q_i(p\alpha_z - r\alpha_x)} \leq \mnorm{r(p\alpha_{y_i} - q_i\alpha_x)} + \mnorm{p(q_i \alpha_z - r\alpha_{y_i})} \leq \frac{4CP}{H}
    \end{equation*}
    for each $i=1,2$. Since $q_1\neq q_2$, we can apply \cref{lem:normcalculus} to deduce that $\norm{p\alpha_z - r\alpha_x} \leq 4C/H$. It remains to prove that $\abs{px - rz}\leq 4CP$, but the stronger bound $\abs{px-rz} \leq 2CP$ follows directly from the standard triangle inequality.
\end{proof}

We also require a version of \cref{lem:triangle} for `parallelogram patterns'.

\begin{minipage}[t]{0.69\textwidth}
    \begin{lemma}[{\cite[Lemma~3.5]{walsh3}}]
        \label{lem:parallelogram}
        Let $0<C \leq \tfrac{H}{30P^2}$.

        Let $x_1, x_1', x_2, x_2', y, y'\in A$ and~$p_1,p_2,q_1,q_2,r,s\in \P$. Suppose that
        \begin{equation*}
            \dist{p_i,q_i}{x_i, y}, \dist{p_i,q_i}{x_i', y'}, \dist{r,s}{x_i, x_i'} \leq C
        \end{equation*}
        for $i=1,2$. If $p_1\neq p_2$, then
        \begin{equation*}
            \dist{r,s}{y,y'} \leq 6C.
        \end{equation*}
    \end{lemma}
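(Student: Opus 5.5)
The plan is to mimic the proof of \cref{lem:triangle}: bound the two components of $\dist{r,s}{y,y'}$ in \cref{eq:defdist} separately. The position part follows from the ordinary triangle inequality using only $i=1$. The frequency part requires both $i=1,2$, combined through \cref{lem:normcalculus} using $p_1\neq p_2$.

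\emph{Position part.} The hypotheses give
\begin{equation*}
|p_ix_i-q_iy|\le CP,\qquad |p_ix_i'-q_iy'|\le CP,\qquad |rx_i-sx_i'|\le CP.
\end{equation*}
We use the algebraic identity
\begin{equation*}
q_i(ry-sy') = r(q_iy-p_ix_i) - s(q_iy'-p_ix_i') + p_i(rx_i-sx_i').
\end{equation*}
Since $r,s,p_i\le 2P$, the right-hand side is at most $6CP^2$ in absolute value. Dividing by $q_i\ge P$ gives $|ry-sy'|\le 6CP$, i.e.\ $\tfrac1P|ry-sy'|\le 6C$.

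\emph{Frequency part.} Set $\beta := s\alpha_y - r\alpha_{y'}$. The analogous identity reads
\begin{equation*}
p_i\beta = s(p_i\alpha_y-q_i\alpha_{x_i}) - r(p_i\alpha_{y'}-q_i\alpha_{x_i'}) + q_i(s\alpha_{x_i}-r\alpha_{x_i'}).
\end{equation*}
It holds modulo $1$, and each bracket has norm at most $C/H$. Hence $\norm{p_i\beta}\le 6CP/H$ for $i=1,2$.

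It remains to apply \cref{lem:normcalculus} with $m=p_1$ and $n=p_2$. These are distinct primes, so they are coprime. The constraint $C\le H/(30P^2)$ gives $\norm{p_2\beta}\le 6CP/H\le 1/(5P) < 1/(2p_1)$. For the other hypothesis, take $\eps$ slightly larger than $12p_2CP/H$, so that $\norm{p_1\beta}<\eps/(2p_2)$ holds. Since $12p_2CP/H\le 24CP^2/H\le 4/5$, we still have $\eps\le1$. The lemma then yields
\begin{equation*}
\norm{\beta}<\frac{\eps}{2p_1p_2}\approx \frac{6CP}{Hp_1}\le \frac{6C}{H}.
\end{equation*}
Letting the slack in $\eps$ tend to zero gives $\norm{\beta}\le 6C/H$.

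The only mildly delicate point is this verification of the hypotheses of \cref{lem:normcalculus}: checking the smallness conditions and handling the strict versus non-strict inequalities via the slack in $\eps$. The rest is the bookkeeping above.
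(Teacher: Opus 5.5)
Your proof is correct and follows the paper's argument: the same decompositions give $\norm{p_i(s\alpha_y-r\alpha_{y'})}\le 6CP/H$ for $i=1,2$ and $|q_1(ry-sy')|\le 6CP^2$, and then \cref{lem:normcalculus} is applied with the distinct primes $p_1,p_2$. Your explicit check of that lemma's hypotheses, using slack in $\eps$ to handle the strict inequalities, is a detail the paper leaves implicit, and it works as you wrote it.
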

\end{minipage}\hfill
\begin{minipage}[t]{0.31\textwidth}
    \vspace{-10pt} \centering
    \begin{tikzpicture}
        \begin{scope}[scale=0.85, every node/.style={font=\small}]
            \colorlet{cpone}{blue!70!black}     \colorlet{cptwo}{cyan!60!black}     \colorlet{cqone}{teal!80!black}     \colorlet{cqtwo}{orange!80!black}   \colorlet{cr}{red!70!black}         \colorlet{cs}{violet!80!black}

            \coordinate (x1)   at (-1, 0.5);
            \coordinate (y)    at (1.5, 0);
            \coordinate (x1p)  at (-1, -2);
            \coordinate (yp)   at (1.5, -2.5);
            \coordinate (x2)   at (3.5, 1);
            \coordinate (x2pb) at (3.5, -1.5);

            \draw[thick, cpone] (x1) -- ($(x1)!0.5!(y)$);
            \draw[thick, cqone] (y)  -- ($(x1)!0.5!(y)$);

            \draw[thick, cpone] (x1p) -- ($(x1p)!0.5!(yp)$);
            \draw[thick, cqone] (yp)  -- ($(x1p)!0.5!(yp)$);

            \draw[thick, cr] (x1)  -- ($(x1)!0.5!(x1p)$);
            \draw[thick, cs] (x1p) -- ($(x1)!0.5!(x1p)$);

            \draw[dashed, thick, cr] (y)  -- ($(y)!0.5!(yp)$);
            \draw[dashed, thick, cs] (yp) -- ($(y)!0.5!(yp)$);

            \draw[thick, cptwo] (x2) -- ($(x2)!0.5!(y)$);
            \draw[thick, cqtwo] (y)  -- ($(x2)!0.5!(y)$);

            \draw[thick, cptwo] (x2pb) -- ($(x2pb)!0.5!(yp)$);
            \draw[thick, cqtwo] (yp)   -- ($(x2pb)!0.5!(yp)$);

            \draw[thick, cr] (x2)   -- ($(x2)!0.5!(x2pb)$);
            \draw[thick, cs] (x2pb) -- ($(x2)!0.5!(x2pb)$);

            \foreach \A/\B in {x1/y, x1/x1p, x1p/yp, y/yp, x2/y, x2pb/yp, x2/x2pb} {
                    \path let \p1=(\A), \p2=(\B) in
                    node[fill=black, circle, inner sep=1pt] at ($(\p1)!0.5!(\p2)$) {};
                }

            \node[fill=white, inner sep=1pt] at (x1)   {\(x_1\)};
            \node[fill=white, inner sep=1pt] at (y)    {\(y\)};
            \node[fill=white, inner sep=1pt] at (x1p)  {\(x_1'\)};
            \node[fill=white, inner sep=1pt] at (yp)   {\(y'\)};
            \node[fill=white, inner sep=1pt] at (x2)   {\(x_2\)};
            \node[fill=white, inner sep=1pt] at (x2pb) {\(x_2'\)};

            \node[text=cpone, font=\scriptsize] at ($ (x1)!0.25!(y) + (0.1, 0.2) $) {\(p_1\)};
            \node[text=cqone, font=\scriptsize] at ($ (y)!0.25!(x1) + (0, 0.2) $) {\(q_1\)};

            \node[text=cpone, font=\scriptsize] at ($ (x1p)!0.25!(yp) + (0.1, 0.2) $) {\(p_1\)};
            \node[text=cqone, font=\scriptsize] at ($ (yp)!0.25!(x1p) + (-0.1, 0.2) $) {\(q_1\)};

            \node[text=cr, font=\scriptsize] at ($ (x1)!0.25!(x1p) + (-0.2, 0) $) {\(r\)};
            \node[text=cs, font=\scriptsize] at ($ (x1p)!0.25!(x1) + (-0.2, 0) $) {\(s\)};

            \node[text=cr, font=\scriptsize] at ($ (y)!0.25!(yp) + (0.18, 0) $) {\(r\)};
            \node[text=cs, font=\scriptsize] at ($ (yp)!0.25!(y) + (0.18, 0) $) {\(s\)};

            \node[text=cptwo, font=\scriptsize] at ($ (x2)!0.25!(y) + (-0.1, 0.2) $) {\(p_2\)};
            \node[text=cqtwo, font=\scriptsize] at ($ (y)!0.25!(x2) + (0.1, 0.25) $) {\(q_2\)};

            \node[text=cptwo, font=\scriptsize] at ($ (x2pb)!0.25!(yp) + (-0.05, -0.28) $) {\(p_2\)};
            \node[text=cqtwo, font=\scriptsize] at ($ (yp)!0.25!(x2pb) + (0.1, -0.2) $) {\(q_2\)};

            \node[text=cr, font=\scriptsize] at ($ (x2)!0.25!(x2pb) + (0.2, 0) $) {\(r\)};
            \node[text=cs, font=\scriptsize] at ($ (x2pb)!0.25!(x2) + (0.2, 0) $) {\(s\)};
        \end{scope}
    \end{tikzpicture}
\end{minipage}

\begin{proof}
    By the triangle inequality, for $i=1,2$, we have
    \begin{equation*}
        \mnorm{p_i(s\alpha_y-r\alpha_{y'})} \leq \mnorm{s(p_i\alpha_y-q_i\alpha_{x_i})} + \mnorm{r(p_i\alpha_{y'}-q_i\alpha_{x_i'})} + \mnorm{q_i(s\alpha_{x_i}-r\alpha_{x_i'})} \leq \frac{6CP}{H}.
    \end{equation*}
    Since $p_1\neq p_2$, we conclude that $\norm{s\alpha_y-r\alpha_{y'}} \leq 6C/H$ by \cref{lem:normcalculus}. On the other hand,
    \begin{equation*}
        \abs{q_1(ry-sy')} \leq \abs{r(p_1x_1-q_1y)} + \abs{s(p_1x_1'-q_1y')} + \abs{p_1(rx_1-sx_1')} \leq 6CP^2,
    \end{equation*}
    so that $\abs{ry-sy'} \leq 6CP$. Therefore, $\dist{r,s}{y,y'} \leq 6C$.
\end{proof}

\subsection{Clusters of local relations} To prove the local structure theorem, we will need to show the existence of certain patterns of local relations in $\A$, which we call \emph{clusters}.

\begin{definition}[Cluster]
    \label{def:cluster}
    Let $C>0$. A \emph{cluster of diameter at most $C$} is a subset $S\subset A\times \P$ such that $\dist{r,s}{x,x'} \leq C$ for all $(x, r), (x', s) \in S$.
\end{definition}

A cluster $S$ of diameter $\leq C$ has size ${|S| \leq (2C+1)|\P|}$. This is an immediate consequence of the following basic observation.

\begin{lemma}
    \label{lem:basic}
    Let $C>0$. For any $x\in A$ and $p,q\in \P$, there are at most $2C+1$ elements $y\in A$ such that $\dist{p,q}{x,y}\leq C$.
\end{lemma}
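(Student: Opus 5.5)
The plan is to use only the spatial component of $\dist{p,q}{x,y}$ and discard the frequency condition. If $\dist{p,q}{x,y}\leq C$, then \cref{def:locrelations} gives
\begin{equation*}
    \frac{1}{P}\abs{px-qy}\leq C,
\end{equation*}
that is, $\abs{y - \tfrac{p}{q}x}\leq \tfrac{CP}{q}\leq C$, because $q\in\P$ satisfies $q\geq P$. So every admissible $y$ lies in the closed interval $I := [\tfrac{p}{q}x - C,\, \tfrac{p}{q}x + C]$, whose length is $2C$.

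Next, $A$ is $1$-separated by \cref{def:configuration}. A $1$-separated set meets a closed interval of length $2C$ in at most $\lfloor 2C\rfloor + 1\leq 2C+1$ points, since the largest and smallest of the points $y_1<\dots<y_m$ in $I$ satisfy $y_m-y_1\geq m-1$, which forces $m-1\leq 2C$. This gives the claimed bound. There is no real obstacle; the only point to check is the bound $q\geq P$ used to turn $CP/q$ into $C$.

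For the consequence stated before the lemma, fix $(x,r)\in S$. Every $(x',s)\in S$ satisfies $\dist{r,s}{x,x'}\leq C$. For each of the $|\P|$ choices of $s$ there are at most $2C+1$ possible $x'$, so $|S|\leq (2C+1)|\P|$.
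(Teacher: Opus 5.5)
Your proof is correct and follows the paper's argument: bound $\abs{px/q - y}\leq C$ using only the spatial part of $\dist{p,q}{x,y}$ together with $q\geq P$, then count points of the $1$-separated set $A$ in an interval of length $2C$. You make the $q\geq P$ step and the $\lfloor 2C\rfloor+1$ count explicit where the paper leaves them implicit, and your deduction of $|S|\leq(2C+1)|\P|$ for clusters is also correct.
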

\begin{proof}
    Fix $x,p,q$. If $\dist{p,q}{x,y} \leq C$ for some $y\in A$, then $\abs{px/q-y} \leq C$. Since $A$ is $1$-separated, this interval contains at most $2C + 1$ points of $A$.
\end{proof}

We now give examples of clusters.
\begin{example}
    \label{ex:cluster}
    Let $z\in [PY, 4PY]$ and $\beta \in \R/\Z$. Let $S$ be the set of all pairs $(x,p)\in A\times \P$ such that $\abs{px - z} \leq CP$ and $\norm{p\beta - \alpha_x} \leq C(PH)^{-1}$. Then $S$ is a cluster of diameter at most $4C$. Indeed, for any two pairs $(x,r), (x',s)\in S$, we have
    \begin{equation*}
        \abs{rx - sx'} \leq \abs{rx - z} + \abs{sx' - z} \leq 2CP
    \end{equation*}
    and
    \begin{equation*}
        \norm{r\alpha_{x'} - s\alpha_x} \leq \norm{r(\alpha_{x'} - s\beta)} + \norm{s(r\beta - \alpha_x)} \leq \frac{4C}{H},
    \end{equation*}
    which implies that $\dist{r,s}{x,x'} \leq 4C$.
\end{example}

The next lemma shows that any cluster involving at least two distinct primes is of the form given in \cref{ex:cluster}, up to constants. Moreover, the approximation $p\beta \approx \alpha_x \pmod 1$ can be made exact for one pair $(x,p)$ of our choice in the cluster.

\begin{lemma}[{\cite[Lemmas~2.1~and~2.2]{walsh1}}]
    \label{lem:clusterlift}
    Let $0<C\leq \tfrac{H}{30P}$. Let $S\subset A\times \P$ be a cluster of diameter $\leq C$ and let $(x,p)\in S$. Suppose that $S$ is not entirely contained in $A\times \{p\}$. Then, there exists $\beta \in \R/\Z$ such that
    \begin{enumerate}[label=(\roman*),ref=\roman*]
        \item \label{item:exactlift} $\norm{p\beta - \alpha_x} = 0$, and
        \item \label{item:clusterlift} for all $(x',r)\in S$, we have $\norm{r \beta - \alpha_{x'}} \leq {C}{(Hp)^{-1}}$.
    \end{enumerate}
\end{lemma}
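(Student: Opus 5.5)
The idea is to pick $\beta$ among the $p$ solutions of $p\beta = \alpha_x \pmod 1$, so that \cref{item:exactlift} holds by construction. We then check \cref{item:clusterlift} for each $(x',r)\in S$ using the cluster relations together with \cref{lem:normcalculus}.

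\emph{Choosing $\beta$.} Since $S\not\subset A\times\{p\}$, fix some $(x_1,q)\in S$ with $q\neq p$. The cluster condition gives $\dist{p,q}{x,x_1}\le C$, so $\norm{q\alpha_x - p\alpha_{x_1}}\le C/H$. Write $\alpha_{x_1} = q\gamma$ for some $\gamma$ (any $q$-th root). Then $\norm{q(\alpha_x - p\gamma)}\le C/H$, so $\alpha_x - p\gamma = j/q + \eta$ with $|\eta|\le C/(qH)$ and some integer $j$. Since $(p,q)=1$, we may replace $\gamma$ by $\gamma + m/q$ with $pm\equiv -j\pmod q$. This absorbs the $j/q$ term, leaving $\alpha_x = p\gamma + \eta$ with $|\eta|\le C/(Hq)$. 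Set $\beta := \gamma + \eta/p$. Then $p\beta=\alpha_x$ exactly, and $q\beta - \alpha_{x_1} = q\eta/p$, so $\norm{q\beta-\alpha_{x_1}}\le C/(Hp)$. Thus both $(x,p)$ and $(x_1,q)$ satisfy \cref{item:clusterlift}.

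\emph{Extending to all of $S$.} Take $(x',r)\in S$ and set $\delta:= r\beta - \alpha_{x'}$. Using the cluster relations with $(x,p)$ and with $(x_1,q)$:
\begin{itemize}
\item $p\delta = r\alpha_x - p\alpha_{x'}$, so $\norm{p\delta}\le C/H$;
\item $q\delta = r(q\beta-\alpha_{x_1}) + (r\alpha_{x_1} - q\alpha_{x'})$, so $\norm{q\delta}\le rC/(Hp) + C/H\le 3C/H$.
\end{itemize}
Now apply \cref{lem:normcalculus} with $m=q$, $n=p$ and $\eps$ chosen so that $\eps/(2p) = 3C/H$. This requires:
\begin{itemize}
\item $\norm{p\delta}\le C/H < 1/(2q)$, which holds because $C\le H/(30P)$ and $q\le 2P$;
\item $\eps = 6pC/H\le 1$, which also follows from $C\le H/(30P)$.
\end{itemize}
The lemma yields $\norm{\delta}\le 3C/(Hpq)\le C/(Hp)$, since $q\ge P\ge 3$. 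Edge cases with $r\in\{p,q\}$ need no separate treatment: the same two inequalities hold for them.

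The main obstacle is the bookkeeping of constants. The plain estimate $\norm{q\delta}\lesssim C/H$ is not directly in the form required by \cref{lem:normcalculus}, and the conclusion must come out to exactly $C/(Hp)$ rather than a constant multiple. The factor $1/q$ gained from \cref{lem:normcalculus} supplies the needed slack. If the constants turn out tight, I would swap the roles of $m$ and $n$, or apply \cref{lem:normcalculus} with the pair $(p,r)$ when $r\neq p$, to recover the bound $C/(Hp)$.
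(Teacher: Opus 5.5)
\textbf{The error is in your final application of \cref{lem:normcalculus}.} Your construction of $\beta$ is fine. It is a cosmetic variant of the paper's, which uses B\'ezout to pick representatives with $|q\widetilde{\alpha}_x - p\widetilde{\alpha}_y|\le C/H$ and sets $\beta=\widetilde{\alpha}_x/p$; one small sign fix: you need $pm\equiv j$, not $-j$, modulo $q$. Your two estimates $\norm{p\delta}\le C/H$ and $\norm{q\delta}\le 3C/H$ are exactly the ones in the paper.

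With $m=q$, $n=p$ and $\eps=6pC/H$, the lemma gives $\norm{\delta}<\eps/(2mn)=3C/(Hq)$, not $3C/(Hpq)$; you dropped a factor of $p$. For this to imply $\norm{\delta}\le C/(Hp)$ you would need $q\ge 3p$. That never happens, since $p,q\in[P,2P]$.

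The claim that the lemma ``supplies slack'' is the underlying misconception. Its proof shows that once $\norm{n\delta}$ is small enough to fix the residue class, $\norm{\delta}=\norm{m\delta}/m$ exactly. So the output is always your bound on $\norm{m\delta}$ divided by $m$, with no extra gain.

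\textbf{The fix.} To reach $C/(Hp)$ you must take $m=p$ and put the sharp bound in the ``$\eps$'' slot:
\begin{itemize}
\item $\norm{p\delta}\le C/H<\eps/(2q)$ for any $\eps$ slightly larger than $2qC/H$, which is $\le 4/30<1$;
\item $\norm{q\delta}\le 3C/H\le 1/(10P)<1/(2p)$, so the weaker bound serves only to pin down the residue class.
\end{itemize}
This gives $\norm{\delta}<\eps/(2pq)$ for every such $\eps$, hence $\norm{\delta}\le C/(Hp)$.

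This is the ``swap of $m$ and $n$'' you list as a fallback. It is not optional; your main line as written does not prove the statement. With the swap, your argument coincides with the paper's proof.
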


\begin{proof}
    By assumption, there exists $(y,q)\in S$ with $q \neq p$. By definition of a cluster, we have $\norm{q\alpha_x - p \alpha_y} \leq C/H$. Since the linear combinations $qm - pn$ with $m,n\in \Z$ attain all integer values, we can find representatives $\widetilde{\alpha}_x, \widetilde{\alpha}_y \in \R$ of $\alpha_x, \alpha_y \in \R/\Z$ such that $\abs{q \widetilde{\alpha}_x - p \widetilde{\alpha}_y} \leq C/H$.

    We set $\beta := \widetilde{\alpha}_x / p \pmod 1$; this choice clearly verifies \cref{item:exactlift}. Moreover,
    \begin{equation}
        \label{eq:usefulbeta}
        \norm{q\beta - \widetilde{\alpha}_y} = \norm{\frac{q\widetilde{\alpha}_x - p \widetilde{\alpha}_y}{p}} \leq \abs{\frac{q\widetilde{\alpha}_x - p \widetilde{\alpha}_y}{p}} \leq \frac{C}{Hp}.
    \end{equation}

    Now, let $(x', r) \in S$ be arbitrary. On the one hand, applying the cluster property to $(x', r)$ and $(x, p)$, we get
    \begin{equation*}
        \norm{p(r\beta - \alpha_{x'})} = \norm{r \alpha_x - p \alpha_{x'}} \leq \frac{C}{H}.
    \end{equation*}
    On the other hand, using \cref{eq:usefulbeta} and the cluster property applied to $(x', r)$ and $(y, q)$, we have
    \begin{equation*}
        \norm{q(r\beta - \alpha_{x'})} \leq \norm{r(q\beta - \alpha_{y})} + \norm{r\alpha_{y} - q\alpha_{x'}} \leq \frac{3C}{H}.
    \end{equation*}
    By \cref{lem:normcalculus}, we conclude that $\norm{r\beta - \alpha_{x'}} \leq C(Hp)^{-1}$, as desired.
\end{proof}

In \cref{lem:clusterdisjointness}, we show that two clusters can be combined into a larger cluster provided their intersection involves at least two distinct primes.

\begin{definition}
    \label{def:disjointclusters}
    Two sets $S_1,S_2\subset A\times \P$ are called \emph{almost disjoint} if
    \begin{equation*}
        S_1\cap S_2 \subset A\times \{p\}
    \end{equation*}
    for some $p\in \P$. Otherwise, we say that $S_1, S_2$ \emph{strongly intersect}.
\end{definition}

\begin{lemma}
    \label{lem:clusterdisjointness}
    Let $k\in \N$ and $0 < C \leq \tfrac{4^{-k}H}{30P^2}$. Let $S_1, \ldots, S_k$ be clusters of diameter $\leq C$, where $S_i$~and~$S_{i+1}$ strongly intersect for all $1\leq i<k$. Then, $\bigcup_{i=1}^k S_i$ is a cluster of diameter $\leq 4^{k-1}C$.\footnote{The precise constant is not important as we will only use \cref{lem:clusterdisjointness} with bounded values of $k$.}
\end{lemma}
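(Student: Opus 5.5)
We argue by induction on $k$, with the case $k=2$ containing all the content. Since $\dist{p,q}{x,y}$ is symmetric under $(x,p)\leftrightarrow(y,q)$ (we have $|px-qy|=|qy-px|$ and $\norm{q\alpha_x-p\alpha_y}=\norm{p\alpha_y-q\alpha_x}$), the cluster condition is symmetric. For $k=1$ there is nothing to prove.

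\emph{Case $k=2$.} Let $S_1,S_2$ be clusters of diameter $\leq C'$ that strongly intersect, where $C'\leq \tfrac{H}{30P^2}$. Because the intersection is not contained in $A\times\{p\}$ for any single $p$, there are $(y_1,q_1),(y_2,q_2)\in S_1\cap S_2$ with $q_1\neq q_2$. Take any $(x,p),(z,r)\in S_1\cup S_2$.
\begin{itemize}
\item If both pairs lie in the same $S_j$, then $\dist{p,r}{x,z}\leq C'$ directly.
\item Otherwise, by symmetry we may assume $(x,p)\in S_1$ and $(z,r)\in S_2$. Since each $(y_i,q_i)$ lies in both clusters, we get $\dist{p,q_i}{x,y_i}\leq C'$ (using $S_1$) and $\dist{q_i,r}{y_i,z}\leq C'$ (using $S_2$), for $i=1,2$.
\end{itemize}
In the second case, \cref{lem:triangle} applies with constant $C'$, because $q_1\neq q_2$ and $C'\leq \tfrac{H}{30P^2}$. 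It gives $\dist{p,r}{x,z}\leq 4C'$. Lemma~\ref{lem:triangle} does not require the points $x,y_1,y_2,z$ to be distinct, so coincidences such as $(x,p)=(y_1,q_1)$ cause no trouble. Hence $S_1\cup S_2$ is a cluster of diameter $\leq 4C'$.

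\emph{Inductive step.} Assume the statement for $k-1\geq 2$, so $U:=\bigcup_{i<k}S_i$ is a cluster of diameter $\leq 4^{k-2}C$. The hypothesis $C\leq 4^{-k}H/(30P^2)$ is stronger than the one needed for $k-1$, so the induction hypothesis applies. Since $S_{k-1}\subset U$, we have $U\cap S_k\supset S_{k-1}\cap S_k$. The latter is not contained in any $A\times\{p\}$, so neither is the former, and $U$ and $S_k$ strongly intersect.

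Both $U$ and $S_k$ have diameter $\leq C':=4^{k-2}C$. This satisfies $C'\leq 4^{-2}H/(30P^2)\leq H/(30P^2)$, so the case $k=2$ applies to the pair $U,S_k$. It shows that $\bigcup_{i\leq k}S_i$ is a cluster of diameter $\leq 4C'=4^{k-1}C$.

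The only real step is the $k=2$ case, namely recognising that strong intersection supplies exactly the two intermediate pairs with distinct primes required by \cref{lem:triangle}. The rest is bookkeeping of constants.
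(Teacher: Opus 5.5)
Your proof is correct and follows the paper's argument. The paper also reduces to the case $k=2$ by induction and applies \cref{lem:triangle} to the two elements $(y_1,q_1),(y_2,q_2)\in S_1\cap S_2$ with $q_1\neq q_2$; your explicit check in the inductive step that $4^{k-2}C\leq 4^{-2}H/(30P^2)$ and that $U\cap S_k\supset S_{k-1}\cap S_k$ supplies bookkeeping the paper leaves implicit.
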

\begin{proof}
    It suffices to prove the case $k=2$, as the general case follows by induction.

    Consider arbitrary elements $(x,r) \in S_1$ and $(x',s) \in S_2$. Since $S_1$ and $S_2$ strongly intersect, $S_1 \cap S_2$ contains two elements $(y_1, q_1)$ and $(y_2, q_2)$ with $q_1 \neq q_2$. By \cref{def:cluster}, we have ${\dist{r,q_i}{x,y_i} \leq C}$ and $\dist{q_i,s}{y_i,x'} \leq C$ for $i=1,2$. By \cref{lem:triangle}, we conclude that $\dist{r,s}{x,x'} \leq 4C$.
\end{proof}

\section{Covering lemmas for clusters}
\label{sec:covering}

In this section, we prove several lemmas on global arrangements of clusters. We begin with a technical lemma that bounds the size of the overlaps for a family of almost disjoint clusters.

\begin{lemma}
    \label{lem:packing}
    Let $C\geq 1$ and $\eta > 0$. Let $(F_i)_{i\in I}$ be a collection of pairwise almost disjoint clusters of diameter $\leq C$ and size~$\geq \eta |\P|$. For each $i$, let $\overline{F_i}$ be a cluster of diameter $\leq 20C$ containing $F_i$. If~$J\subset I\times I$ is the set of all pairs $(i,j)$ such that $\overline{F_i}$ and $\overline{F_j}$ are almost disjoint, then
    \begin{equation*}
        \sum_{(i,j)\in J} \abs{\overline{F_i} \cap \overline{F_j}} \ll C^9 \eta^{-5} |A|.
    \end{equation*}
\end{lemma}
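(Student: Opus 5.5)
The plan is to trade each intersection $\overline{F_i}\cap\overline{F_j}$ for configurations of points inside the small clusters $F_i,F_j$, and then to count those configurations with the triangle-like inequality (\cref{lem:triangle}) and the basic counting bound (\cref{lem:basic}). The main difficulty is that the target bound $C^9\eta^{-5}|A|$ contains no factor of $|\P|$, so every naive double count must be sharpened by one factor of $|\P|$.

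\emph{Step 1 (size of a single intersection).} If $(i,j)\in J$, then $\overline{F_i}\cap\overline{F_j}\subset A\times\{p\}$ for some $p$. Fix one element $(x_0,p)$ of the intersection. Every other element $(x,p)$ satisfies $\dist{p,p}{x_0,x}\le 20C$, so \cref{lem:basic} gives $|\overline{F_i}\cap\overline{F_j}|\le 40C+1$. It therefore suffices to bound, with weight $O(C)$ each, the number of pairs $(i,j)\in J$ whose closures meet.

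\emph{Step 2 (averaging over the small clusters).} For a point $(x,p)\in\overline{F_i}\cap\overline{F_j}$, every $(y,r)\in F_i$ and every $(z,s)\in F_j$ satisfy $\dist{p,r}{x,y}\le 20C$ and $\dist{p,s}{x,z}\le 20C$. Since $|F_i|,|F_j|\ge\eta|\P|$, I would bound
\[
|\overline{F_i}\cap\overline{F_j}|\le \eta^{-2}|\P|^{-2}\sum_{(x,p)\in \overline{F_i}\cap\overline{F_j}}\ \sum_{(y,r)\in F_i}\ \sum_{(z,s)\in F_j} 1 .
\]
I would then sum over $(i,j)\in J$ and reorganise the sum by the tuple $(x,p,y,r,z,s)$, splitting according to whether $\dist{r,s}{y,z}\le 80C$ or not.
\begin{itemize}
\item \emph{Far pairs} ($\dist{r,s}{y,z}>80C$). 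By \cref{lem:triangle}, the pair $(y,r),(z,s)$ admits at most one witnessing prime $p$. Choosing $(y,r)$ costs $|A||\P|$, choosing $s$ costs $|\P|$, the prime $p$ is then unique, and $x$ and $z$ each have $O(C)$ choices by \cref{lem:basic}. This gives $O(C^2|A||\P|^2)$ tuples, and dividing by $\eta^2|\P|^2$ yields $O(C^2\eta^{-2}|A|)$, up to the multiplicity issue below.
\item \emph{Close pairs} ($\dist{r,s}{y,z}\le 80C$). Given $(y,r)$, there are only $O(C|\P|)$ choices of $(z,s)$, but now $p$ is free. Here I would use that $\overline{F_i}$ and $\overline{F_j}$ are almost disjoint. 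The clusters $F_i$ and $F_j$ are then ``close'' on a large set: the pairs $(y,r)\in F_i$, $(z,s)\in F_j$ with $\dist{r,s}{y,z}\le 80C$ cover many primes, so a union of boundedly many clusters of diameter $O(C)$ would strongly intersect. Combined with \cref{lem:clusterdisjointness}, this should force these contributions to be controlled by a bounded number of configurations per point of $A$.
\end{itemize}

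\emph{Step 3 (multiplicity of clusters through a point).} Both cases need a bound on how many of the pairwise almost disjoint $F_i$ can contain a given $(y,r)$, or more generally lie inside the neighbourhood $N(x,p):=\{(y,r):\dist{p,r}{x,y}\le 20C\}$. This neighbourhood has size $O(C|\P|)$, and each of its prime columns has $O(C)$ elements. Pairwise almost disjoint sets of size $\ge\eta|\P|$ meet in at most $O(C)$ points. A Cauchy--Schwarz (Johnson-type) count, $(k\eta|\P|)^2\le |N(x,p)|\,(k\eta|\P|+k^2\cdot O(C))$, gives $k\ll C\eta^{-1}$ once $|\P|\gg C^3\eta^{-3}$. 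In the complementary range I would instead count the distinct primes of each $F_i$, of which there are at least $\eta|\P|/(2C+1)$. Inserting these multiplicity bounds into Step 2, together with the factor $O(C)$ from Step 1, should produce a bound of the form $C^{O(1)}\eta^{-O(1)}|A|$. Careful bookkeeping should give exponents $9$ and $5$.

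\emph{Main obstacle.} I expect the close-pair case of Step 2, together with the multiplicity bound of Step 3, to be the crux: it is exactly where the free prime $p$ threatens an extra factor of $|\P|$. My first attempt would be to show that if many different $p$ witness closeness of $F_i$ and $F_j$, then $\overline{F_i}$ and $\overline{F_j}$ strongly intersect. That would contradict $(i,j)\in J$, so in effect only $O(1)$ columns contribute per pair.
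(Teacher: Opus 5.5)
\textbf{The gap is in the close-pair case of Step 2, which you yourself flag as the crux; the mechanism you propose for it cannot work.} Step 1 is correct.

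Once $(x,p)$ is a free summation variable, the set of admissible tuples $(x,p,y,r,z,s)$ with $\dist{r,s}{y,z}\le 80C$ really has size $\gg C^2|A||\P|^3$ in the worst case. For example, take $\alpha_x\equiv 0$ and $A=\Z\cap[Y,2Y)$; then a positive proportion of primes $p$ admit $\gg C$ witnesses $x$. So after dividing by $\eta^2|\P|^2$ a factor $|\P|$ survives, and no multiplicity bound on the $F_i$ removes it.

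Showing that ``many witnessing primes force $\overline{F_i},\overline{F_j}$ to strongly intersect'' does not help. For a single pair $(i,j)\in J$, all of $\overline{F_i}\cap\overline{F_j}$ already lies in one column, by the definition of almost disjointness, so ``$O(1)$ columns per pair'' holds automatically. The free $p$ in your count comes from \emph{different} pairs $(i,j)$ having different columns, and no property of an individual pair can exclude that.

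Nor can close pairs be ruled out. Cut a cluster of diameter $\le C$ into two halves $F_i,F_j$, and set $\overline{F_i}:=F_i$ and $\overline{F_j}:=F_j\cup\{u\}$ with $u\in F_i$. Then $(i,j)\in J$, and every pair of points is close.

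\textbf{The missing idea is to keep $(x,p)$, and with it the frequency side, out of the count entirely.} This also avoids \cref{lem:triangle}, whose hypothesis $20C\le H/(30P^2)$ is not assumed here. After your Step 1, it suffices to count pairs $(i,j)\in J$ with $\overline{F_i}\cap\overline{F_j}\neq\emptyset$. For those pairs, all of $F_i\cup F_j$ has $px$-coordinates in one interval of length $O(CP)$.

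The paper encodes this with windows $V_z=\{(x,p):|px-z|\le 100CP\}$ and $m(z)=\#\{i:F_i\subset V_z\}$, so the sum is $\ll P^{-1}\int m(z)^2\,dz$. Two bounds finish it:
\begin{itemize}
\item The pointwise bound $m(z)\ll C^4\eta^{-2}$ comes from the pairwise disjoint sets $\T_i$ of pairs of distinct-prime elements of $F_i$, close in spirit to your Step 3.
\item The factor $|\P|$ is saved by the support bound. A maximal $100CP$-separated subfamily consists of disjoint clusters of size $\ge\eta|\P|$ inside $A\times\P$, so it has at most $\eta^{-1}|A|$ members. Hence $|\mathrm{supp}\,m|\ll C\eta^{-1}|A|P$, and the total is $C^9\eta^{-5}|A|$.
\end{itemize}

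Equivalently, in your own framework, bound the number of such pairs by $\eta^{-2}|\P|^{-2}\sum M(y,r)M(z,s)$. The sum runs over point pairs with $|ry-sz|\le 40CP$, of which there are only $O(C|A||\P|^2)$, and $M(u)$ is the number of $F_i$ containing $u$. In fact $M(u)\ll C\eta^{-1}$ once $\eta|\P|\gg C$, because the $F_i$ through $u=(y,r)$ are pairwise disjoint off the column $r$. This avoids any loss of $|\P|$.
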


\begin{proof}
    For every cluster $S$ of diameter $\leq C$, the set $\{px : (x,p)\in S\}$ is contained in an interval of length $\leq CP$. For $z\in [PY, 4PY]$, define
    \begin{equation*}
        V_{z} := \big\{(x,p)\in A\times \P \,:\, |px-z| \leq 100CP\big\}
    \end{equation*}
    and consider the multiplicity function
    \begin{equation*}
        m(z) := \sum_{i\in I} \ind{F_i \subset V_z}.
    \end{equation*}

    We first prove a pointwise upper bound for $m(z)$. By \cref{lem:basic}, for every $i\in I$ and $p\in \P$,
    \begin{equation}
        \label{eq:multiplicitiesp}
        \abs{\{x\in A\,:\, (x,p)\in {F_i}\}} \leq \abs{\{x\in A\,:\, (x,p)\in \overline{F_i}\}} \ll C.
    \end{equation}
    In particular, each $F_i$ admits a subset $F_i'$ of size $\gg |F_i|/C$ such that no two elements of $F_i'$ share the same prime. Let $\T_i$ be the collection of all two-element sets $\{u,v\}$ with $u, v\in F_i'$, $u\neq v$. For all $i\neq j$, since $F_i$ and $F_j$ are almost disjoint, we have $|F_i'\cap F_j'|\leq 1$, and thus $\T_i \cap \T_j = \emptyset$.

    Note that $\abs{V_z} \ll C|\P|$ as $A$ is $1$-separated. Since the sets $(\T_i)_{i\in I}$ are pairwise disjoint, we have
    \begin{equation*}
        \sum_{\substack{i\in I\\ F_i \subset V_z}} |\T_i| \leq \binom{|V_z|}{2} \ll C^2|\P|^2.
    \end{equation*}
    Recalling that $|\T_i| \gg |F_i'|^2 \gg C^{-2} |F_i|^2$ and $|F_i| \geq \eta |\P|$, we get
    \begin{equation}
        \label{eq:Tibusiness}
        m(z) \ll C^4\eta^{-2}.
    \end{equation}

    Next, we bound the measure of the support of $m(z)$. For each $i\in I$, fix some ${z_i \in \{px : (x,p)\in F_i\}}$ and let $I'$ be a maximal subset of $I$ such that the points $(z_i)_{i\in I'}$ are $(100CP)$-separated. Then, the clusters $(F_i)_{i\in I'}$ are pairwise disjoint, so that
    \begin{equation*}
        |I'| \eta|\P| \leq \sum_{i\in I'} |F_i| \leq |A||\P|.
    \end{equation*}
    This yields $|I'| \leq \eta^{-1} |A|$. The support of $m(z)$ is clearly contained in the union of the intervals $[z_i - 200CP, z_i + 200CP]$ for $i\in I'$. Hence, the support of $m(z)$ has measure at most
    \begin{equation}
        \label{eq:measuremz}
        \leq 400C P \cdot |I'| \ll C \eta^{-1} |A| P.
    \end{equation}

    For $(i,j)\in J$, since the clusters $\overline{F_i}$ and~$\overline{F_j}$ are almost disjoint, we have $\abs{\overline{F_i} \cap \overline{F_j}} \ll C$ by \cref{eq:multiplicitiesp}. In addition, whenever $\overline{F_i} \cap \overline{F_j} \neq \emptyset$, the set $\{px : (p,x)\in \overline{F_i} \cup \overline{F_j}\}$ is contained in an interval of length~$\leq 40CP$. Therefore, for all $(i,j)\in J$,
    \begin{equation*}
        \abs{\overline{F_i} \cap \overline{F_j}} \ll \frac{1}{P}\int_{PY}^{4PY} \ind{\overline{F_i} \cup \overline{F_j} \subset V_z} \, dz \leq \frac{1}{P} \int_{PY}^{4PY} \ind{{F_i} \subset V_z} \ind{{F_j} \subset V_z} \, dz.
    \end{equation*}
    Summing over all $(i,j)\in J$, we get
    \begin{equation*}
        \sum_{(i,j)\in J} \abs{\overline{F_i} \cap \overline{F_j}} \ll \frac{1}{P} \int_{PY}^{4PY} m(z)^{2} \,dz.
    \end{equation*}
    The claimed estimate now follows from \cref{eq:Tibusiness} and \cref{eq:measuremz}.
\end{proof}

With the previous technical estimate in hand, we can prove a covering result reminiscent of the Vitali covering lemma in metric spaces: \Cref{lem:covering} allows us to replace an arbitrary collection of clusters with a more manageable collection while only losing a constant factor in the diameter.

\begin{lemma}[Efficient covering]
    \label{lem:covering}
    Let $1\leq C \leq |\P|^{1/4}$ and $\eta \geq |\P|^{-1/4}$. Let $\U\subset A\times \P$ be the union of all clusters of diameter~$\leq C$ and size $\geq \eta |\P|$.

    There exists a collection $(K_i)_{i\in I}$ of clusters of diameter $\leq 16C$ with the following properties.
    \begin{enumerate}[label=(\roman*),ref=\roman*]
        \item \label{item:cover1} For every cluster $S$ of diameter $\leq C$ and size $\geq \eta |\P|$, and every $(x,p)\in S$, there is a cluster~$K_i$ such that $(x,p)\in K_i$ and $|K_i| \geq |S|$.
        \item \label{item:cover2} $\sum_{i\in I}|K_i| \ll |A| |\P| + C^9 \eta^{-5} |A|$.
    \end{enumerate}
\end{lemma}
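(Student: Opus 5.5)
We will prove \cref{lem:covering} by a Vitali-type greedy selection, with the almost-disjointness bookkeeping supplied by \cref{lem:packing}. Let $\mathcal{S}$ be the family of all clusters of diameter $\leq C$ and size $\geq \eta|\P|$. Each such $S$ has size $\leq (2C+1)|\P|$, so sizes lie in a bounded range. We greedily select a maximal subfamily $(F_i)_{i\in I}$ of pairwise almost disjoint members of $\mathcal{S}$, processing clusters in \emph{decreasing order of size}. So $F_i$ is chosen as a largest cluster of $\mathcal{S}$ that is almost disjoint from all previously chosen ones. By maximality, every $S\in\mathcal{S}$ strongly intersects some chosen $F_i$ with $|F_i|\geq |S|$: take the first chosen cluster that strongly intersects $S$; if none of size $\geq|S|$ existed, $S$ would have been eligible at the moment a smaller cluster was chosen.

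The candidate covering clusters come next. For each $i$, let $\overline{F_i}$ be the union of $F_i$ with all $S\in\mathcal{S}$ that strongly intersect $F_i$ and satisfy $|S|\leq|F_i|$. Two such $S,S'$ are both chained to $F_i$, so the chain $S,F_i,S'$ together with \cref{lem:clusterdisjointness} (with $k=3$) shows that $\overline{F_i}$ is a cluster of diameter $\leq 16C$. The hypothesis $C\leq \tfrac{4^{-3}H}{30P^2}$ needed there follows from $C\leq|\P|^{1/4}$ and $H\geq P^3$. We set $K_i:=\overline{F_i}$. Property \cref{item:cover1} is then immediate: for $S\in\mathcal{S}$ and $(x,p)\in S$, choose $i$ as above, so $S\subset K_i$ and $|K_i|\geq|F_i|\geq|S|$. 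Since $16C\leq 20C$, the $K_i$ are admissible enlargements for \cref{lem:packing}.

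The main work is \cref{item:cover2}, i.e.\ bounding $\sum_i|K_i|$. By inclusion--exclusion (Bonferroni),
\begin{equation*}
    \Big|\bigcup_i K_i\Big| \geq \sum_i|K_i| - \sum_{i<j}|K_i\cap K_j|,
\end{equation*}
and the left side is $\leq|A||\P|$. We split the pair sum according to whether $K_i,K_j$ are almost disjoint or strongly intersect. Pairs in $J$ (almost disjoint enlargements) contribute $\ll C^9\eta^{-5}|A|$ by \cref{lem:packing}. The obstacle is pairs where $K_i$ and $K_j$ strongly intersect. In that case $F_i,K_i,K_j,F_j$ form a chain, so by \cref{lem:clusterdisjointness} the set $K_i\cup K_j$ is a cluster of bounded diameter containing the two almost disjoint clusters $F_i,F_j$. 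In particular $F_i,F_j$ are localised near a common $z$ in the sense of the proof of \cref{lem:packing}.

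We would like to show that such strongly intersecting pairs cannot exist, perhaps by enlarging $C$ to $100C$ throughout, with $F_i$ selected at diameter $C$ but cluster unions controlled at a larger diameter. If they do exist, the plan is to bound their number by the multiplicity function $m(z)\ll C^4\eta^{-2}$ from \cref{lem:packing}. Each $K_i$ then strongly meets only $O(C^4\eta^{-2})$ others, and each intersection has size $\leq|K_i|$, which is too lossy. The fallback is to exploit the size ordering, charging $|K_i\cap K_j|$ to the pair $F_i,F_j$ inside the common cluster $K_i\cup K_j$. The $\T$-pair disjointness argument of \cref{lem:packing} gives $\sum|F_i|^2\ll C^2|\P|^2$ over clusters inside one $V_z$, and integrating over $z$ as in \cref{lem:packing} again yields $\ll C^{O(1)}\eta^{-O(1)}|A|$. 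Adjusting exponents, possibly by redefining $K_i$ to avoid double-covering, should land the stated $C^9\eta^{-5}|A|$ bound. I expect this strongly-intersecting case to be the delicate step.
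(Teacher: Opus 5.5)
You have a genuine gap, and it is exactly the step you flag. With $K_i:=\overline{F_i}$, nothing in the construction rules out strongly intersecting pairs. Any $S\in\mathcal{S}$ with $|S|\le\min(|F_i|,|F_j|)$ that strongly intersects two of the almost disjoint clusters $F_i,F_j$ lies in $K_i\cap K_j$, and a single such $S$ may straddle several selected clusters.

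None of your remedies controls these overlaps. The multiplicity bound $m(z)\ll C^4\eta^{-2}$ only gives a bound of the form $C^4\eta^{-2}\sum_i|K_i|$ for the strongly intersecting part, which is circular. If you instead use $|K_i|\ll C|\P|$, you lose a factor $C^{O(1)}\eta^{-O(1)}$ on the main term $|A||\P|$, which the statement does not allow. Charging $|K_i\cap K_j|$ to the pair $F_i,F_j$ also fails. That intersection consists of points of the auxiliary clusters $S$, not of $F_i$ or $F_j$, whereas the $\mathcal{T}$-pair argument of \cref{lem:packing} only counts the pairwise almost disjoint family $(F_i)$.

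The missing observation is that \cref{item:cover1} asks only that the single pair $(x,p)$ lie in some $K_i$ with $|K_i|\ge|S|$. It does not ask that $S\subset K_i$, so each point of $\U$ may be assigned to one index only.
\begin{itemize}
\item For $(x,p)\in\U$, let $S_{(x,p)}$ be a cluster of diameter $\le C$ containing $(x,p)$ of maximal size.
\item Let $i(x,p)$ be the smallest $i$ such that $F_i$ strongly intersects $S_{(x,p)}$. Your greedy argument gives $|F_{i(x,p)}|\ge|S_{(x,p)}|\ge|S|$ for every admissible $S\ni(x,p)$.
\item Set $K_i:=F_i\cup\{(x,p)\in\U: i(x,p)=i\}$. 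This is a subset of your $\overline{F_i}$, hence still a cluster of diameter $\le 16C$, and \cref{item:cover1} holds as before.
\end{itemize}
Now the sets $K_i\setminus F_i$ are pairwise disjoint subsets of $\U$, so $\sum_i|K_i|\le\sum_i|F_i|+|\U|$. Apply Bonferroni to the $F_i$ alone, and use \cref{lem:packing} with $\overline{F_i}=F_i$ (every pair $i\neq j$ lies in $J$, since the $F_i$ are pairwise almost disjoint). This yields \cref{item:cover2}, and no overlap between enlarged clusters ever has to be estimated.
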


\begin{proof}
    If $\U = \emptyset$, the result is trivial. Otherwise, we proceed as follows.

    We start by constructing an auxiliary collection of clusters $(F_i)_{i\in I}$ using a greedy algorithm. Let~$F_1$ be a cluster of diameter $\leq C$ of maximal size. If $F_1, \ldots, F_{i-1}$ have been chosen, let $F_i$ be a cluster of maximal cardinality among all clusters of diameter $\leq C$ and size $\geq \eta |\P|$ that are almost disjoint from each of $F_1, \ldots, F_{i-1}$. This algorithm clearly terminates with a maximal collection $(F_i)_{i\in I}$ of pairwise almost disjoint clusters of diameter $\leq C$ and size $\geq \eta |\P|$, where $I\subset \N$.

    These clusters may not cover all of $\U$, so we shall expand them. For each $(x,p)\in \U$, let $S_{(x,p)}$ be a cluster of diameter $\leq C$ containing $(x,p)$ of maximal size, and define $i(x,p)$ to be the smallest $i$ such that $S_{(x,p)}$ and $F_i$ strongly intersect. Note that $|S_{(x,p)}| \leq |F_{i(x,p)}|$ by the maximality assumption in the greedy procedure.

    For every $i\in I$, define
    \begin{equation*}
        K_i := F_i \cup \big\{(x,p)\in \U \, :\, i(x,p) = i\big\}.
    \end{equation*}
    We claim that $K_i$ is a cluster of diameter $\leq 16C$. To see this, consider arbitrary elements $(x,r), (x',s)\in K_i$. By definition of $K_i$, there are clusters $S, S'$ of diameter $\leq C$ (possibly equal to $F_i$) such that $(x,r)\in S$, $(x',s)\in S'$, and both $S$ and $S'$ strongly intersect $F_i$. By \cref{lem:clusterdisjointness}, $S\cup F_i \cup S'$ is a cluster of diameter $\leq 16C$, so that $\dist{r,s}{x,x'} \leq 16C$.

    Property \cref{item:cover1} now follows: if $(x,p)\in \U$ lies in a cluster $S$ of diameter $\leq C$, then $(x,p)\in K_{i(x,p)}$ and
    \begin{equation*}
        |S|\leq |S_{(x,p)}| \leq |F_{i(x,p)}|\leq |K_{i(x,p)}|.
    \end{equation*}

    We turn to property \cref{item:cover2}. By definition of $K_i$ and truncated inclusion-exclusion (i.e.~the Bonferroni inequalities), we have
    \begin{equation*}
        \sum_{i\in I} |K_i| \leq \sum_{i\in I} |F_i| + |\U| \ll \bigg\lvert\bigcup_{i\in I}F_i\bigg\rvert + \sum_{\substack{i,j\in I\\ i\neq j}} |F_i\cap F_j|  + |\U|.
    \end{equation*}
    The middle term of the right-hand side is $\ll C^9 \eta^{-5} |A|$ by \cref{lem:packing} (with $\overline{F_i} = F_i$). The other terms are $\ll |A||\P|$, so we obtain \cref{item:cover2}.
\end{proof}

Finally, we establish a result that can be viewed as a strengthening of \cref{lem:clusterdisjointness}. While both results show that a union of clusters is again a cluster of slightly larger diameter, the conditions under which this holds are now significantly relaxed. Instead of requiring the clusters to strongly intersect, we consider families of clusters containing a given pair $(x,p)$, even if those clusters share no other points. The conclusion is not guaranteed to hold for every pair $(x,p)$, but \cref{lem:clusteruniqueness} shows that it does hold for almost all pairs.

\begin{definition}
    \label{def:regularity}
    Let $C\geq 1$ and $\eta > 0$. A pair $(x,p)\in A\times \P$ is called \emph{$(C,\eta)$-regular} if the union of all clusters of diameter $\leq C$ and size $\geq \eta |\P|$ containing $(x, p)$ is either empty, or itself a cluster of diameter $\leq 64C$.
\end{definition}

\begin{lemma}[Regularity]
    \label{lem:clusteruniqueness}
    Let $1\leq C \leq |\P|^{1/4}$ and $\eta \geq |\P|^{-1/4}$. Then, all but $\ll C^9 \eta^{-5} |A|$ pairs $(x,p)\in A\times \P$ are $(C,\eta)$-regular.
\end{lemma}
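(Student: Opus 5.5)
We build on the efficient covering of \cref{lem:covering} and the overlap bound of \cref{lem:packing}. Apply \cref{lem:covering} with parameters $C,\eta$ to obtain clusters $(K_i)_{i\in I}$ of diameter $\leq 16C$. The key consequence of property \cref{item:cover1} is that every cluster $S$ of diameter $\leq C$ and size $\geq \eta|\P|$ is ``absorbed'' in the following sense: for each $(x,p)\in S$ there is some $K_i\ni (x,p)$ with $|K_i|\geq |S|$. We want to show that for a typical pair $(x,p)$, every such $S$ containing $(x,p)$ strongly intersects one common cluster $K$ through $(x,p)$. Then \cref{lem:clusterdisjointness} (applied to $S$, $K$, $S'$ for two such clusters $S,S'$, with diameters $\leq 16C$) shows that the union of all such $S$ is a cluster of diameter $\leq 64C$, giving regularity.

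The plan is to declare $(x,p)$ \emph{bad} if it lies in two of the enlarged clusters that are almost disjoint from each other. Concretely, for each $i$ let $\overline{K_i}$ denote $K_i$; these are clusters of diameter $\leq 16C\leq 20C$. First I will check whether the $K_i$ contain clusters $F_i$ as in \cref{lem:packing}. In the construction of \cref{lem:covering}, $K_i\supset F_i$ with $(F_i)$ pairwise almost disjoint, of diameter $\leq C$ and size $\geq\eta|\P|$, so \cref{lem:packing} applies. It gives
\begin{equation*}
\sum_{(i,j)\in J}|K_i\cap K_j|\ll C^9\eta^{-5}|A|,
\end{equation*}
where $J$ is the set of almost disjoint pairs. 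Hence all but $\ll C^9\eta^{-5}|A|$ pairs $(x,p)$ have the property that any two $K_i,K_j$ containing $(x,p)$ strongly intersect. For such a good pair, the union of all $K_i$ containing it is a cluster of diameter $\leq 4\cdot 16C$ by \cref{lem:clusterdisjointness} with $k=2$. This is not yet the desired union of $S$'s, so a further reduction is needed.

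The main obstacle is linking an arbitrary cluster $S\ni(x,p)$ to the $K$'s \emph{strongly}, and not merely through the single point $(x,p)$. My first attempt is to work with the clusters themselves. Every element $(y,r)\in S$ lies in some $K_{i(y,r)}$, and the proof of \cref{lem:covering} in fact constructs $K_{i(y,r)}\supset F_{i(y,r)}$ with $S_{(y,r)}$ strongly intersecting $F_{i(y,r)}$. I would instead reformulate the badness condition by replacing each $K_i$ by $\overline{K_i}:=$ the union of $K_i$ with all diameter-$\leq C$ clusters strongly meeting $F_i$, which is a cluster of diameter $\leq 16C$ as shown there. By maximality of the greedy family, each relevant $S$ strongly intersects some $F_i$ and hence is contained in $\overline{K_i}$. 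So for a good pair $(x,p)$ (one not lying in $\overline{K_i}\cap\overline{K_j}$ for any almost disjoint pair), all such $S$ sit inside $\overline{K_i}$'s that pairwise strongly intersect. Applying \cref{lem:clusterdisjointness} to $\overline{K_i},\overline{K_j}$ then bounds the diameter of the union by $4\cdot16C=64C$, after possibly adjusting constants. The count of bad pairs comes from \cref{lem:packing}, whose hypotheses (diameter $\leq 20C$) are met. The technical risk is verifying that $S\subset\overline{K_i}$ and that the diameter bookkeeping stays within $64C$.
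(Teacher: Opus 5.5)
Your final argument is correct and is essentially the paper's proof: your $\overline{K_i}$ coincides with the paper's $\overline{F_i}$, the union of $F_i$ with all diameter-$\leq C$ clusters strongly meeting it, which already contains $K_i$. From there the steps are the same: maximality places each relevant $S$ inside some $\overline{F_i}$, and \cref{lem:packing} counts the pairs lying in two almost disjoint $\overline{F_i}$'s. No adjustment of constants is needed, since \cref{lem:clusterdisjointness} with $k=2$ on two strongly intersecting clusters of diameter $\leq 16C$ gives exactly $64C$; your initial $S,K,S'$ chain would only give $4^2\cdot 16C$ through that lemma, so you were right to abandon it.
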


\begin{proof}
    Let $\U$ be the union of all clusters of diameter $\leq C$ and size $\geq \eta |\P|$. Let $(F_i)_{i\in I}$ be a maximal collection of pairwise almost disjoint clusters of diameter $\leq C$ and size $\geq \eta |\P|$.

    For each $i\in I$, define $\overline{F_i}$ to be the union of $F_i$ and all clusters of diameter $\leq C$ that strongly intersect $F_i$. By \cref{lem:clusterdisjointness}, each $\overline{F_i}$ is a cluster of diameter~${\leq 16C}$ (as in the proof of \cref{lem:covering}).

    Consider a pair $(x,p)\in \U$ that is not $(C,\eta)$-regular. Then, there are clusters $S_1,S_2$ of diameter~$\leq C$ and size $\geq \eta |\P|$, both containing $(x,p)$, such that $S_1\cup S_2$ is not a cluster of diameter $\leq 64C$. By definition of $(F_i)$, there exist indices $j_1,j_2\in I$ such that $S_i$ strongly intersects $F_{j_i}$ for each $i=1,2$. Hence, $S_i \subset \overline{F_{j_i}}$. Observe that $\overline{F_{j_1}}$ and $\overline{F_{j_2}}$ must be almost disjoint, as otherwise their union (and hence also $S_1\cup S_2$) would be a cluster of diameter $\leq 64C$ by \cref{lem:clusterdisjointness}. We have thus shown that for every pair $(x,p)$ that is not $(C,\eta)$-regular, there are $j_1, j_2$ such that $(x,p)\in \overline{F_{j_1}}\cap \overline{F_{j_2}}$, where $\overline{F_{j_1}}$ and $\overline{F_{j_2}}$ are almost disjoint. The conclusion now follows from \cref{lem:packing}.
\end{proof}
\section{Proof of the local structure theorem}
\label{sec:structure}

The goal of this section is to prove \cref{thm:structure}, which describes the structure of configurations having many local relations relative to their size, and constructs lifts preserving a large proportion of these relations.

\subsection{Initial clusters}
In the next lemma, we construct many large disjoint clusters, which will later help us define the first set $A_0$ of the decomposition of $A$ given in \cref{thm:structure}. This lemma is a variant of \cite[Corollary~4.10]{walsh3} that only assumes the existence of many local relations \emph{relative to $|A|$}, and thus also applies to sparse sets $A$.

\begin{lemma}[Large clusters]
    \label{lem:firstset}
    Suppose that ${|\Q(\A)| \geq \delta |A| |\P|^2}$, where $\delta \geq \Cclusters |\P|^{-1/2}$ for some large absolute constant $\Cclusters$. Then, there exists a collection of $\gg \delta |A|$ disjoint clusters, each of diameter~$\leq 1/2$ and size $\gg \delta^4 |\P|$.
\end{lemma}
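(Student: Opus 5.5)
We will build the clusters from the standard "triangle" construction of \cref{ex:cluster}, using second-moment counting to locate points $x$ with many two-step relations. The plan has four steps.

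\textbf{Step 1 (regular vertices).} By pigeonhole, first pass to the set $A^\ast\subset A$ of points $x$ with $\gg \delta|\P|^2$ quadruples $(x,y,p,q)\in\Q(\A)$. These points carry $\gg\delta|A||\P|^2$ of the relations, so $|A^\ast|\gg\delta|A|$. This loses only constants.

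\textbf{Step 2 (many stars at a common prime).} For $x\in A^\ast$, pigeonhole on $p$. There is a set of $\gg\delta|\P|$ primes $p$ such that, for each, there are $\gg\delta|\P|$ primes $q$ with some $y=y(x,p,q)$ satisfying $(x,y,p,q)\in\Q(\A)$. Fix such a $p$ and set
\[
S_{x,p}:=\{(x,p)\}\cup\{(y(x,p,q),q)\}.
\]

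\textbf{Step 3 (is $S_{x,p}$ a cluster?).} Each element $(y,q)$ satisfies $|qy-px|\le P/10$ and $\|q\alpha_x-p\alpha_y\|\le 1/(10H)$. Write $z:=px$ and choose $\beta$ with $p\beta=\alpha_x$. If we knew $\|q\beta-\alpha_y\|\lesssim 1/(PH)$, then \cref{ex:cluster} would make $S_{x,p}$ a cluster of diameter $\le 1/2$ (after adjusting constants). However, we only know $\|p(q\beta-\alpha_y)\|\le 1/(10H)$, so $q\beta-\alpha_y$ lies within $1/(10Hp)$ of one of the $p$ residues $k/p$.

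The plan is to pigeonhole on this residue class. This is exactly the step where a cost in $\delta$ arises. Instead of pigeonholing over $p$ classes, we use the other primes through \cref{lem:normcalculus}. Take two elements $(y,q),(y',q')\in S_{x,p}$ which are themselves related, i.e.\ $(y,y',q,q')\in\Q(\A)$ up to constants. Then $\|q(q'\beta-\alpha_{y'})\|$ and $\|p(q'\beta-\alpha_{y'})\|$ are both small, so $\|q'\beta-\alpha_{y'}\|\lesssim 1/(PH)$.

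So we need many triangles $x$–$y$–$y'$. The count needed is $\gg\delta^{O(1)}|A||\P|^{3}$-ish paths. We get these by Cauchy–Schwarz on the two-step walk $y\leftarrow x\to y'$: the number of such paths is $\gg\delta^2|A||\P|^4$. The approximate condition $|q'y-qy'|\lesssim P$ is then automatic. The frequency condition is what the triangle needs, and that is what we must supply. The idea is to apply Cauchy–Schwarz again, now on the pairs $(y,y')$ sharing two common neighbours $x_1,x_2$ with distinct primes. Then \cref{lem:triangle} (the $q_1\ne q_2$ case) gives $d(y,y')\le 4\cdot\tfrac1{10}$.

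Counting the resulting "good" subsets yields $\gg\delta^{4}|\P|$ elements of $S_{x,p}$ pairwise at distance $\le 1/2$ for $\gg\delta|A|$ pairs $(x,p)$. This is where the $\delta^4$ comes from: one power from Step 1, one from Step 2, and two from the Cauchy–Schwarz. The hypothesis $\delta\ge C_1|\P|^{-1/2}$ absorbs the diagonal terms ($q_1=q_2$, $y=y'$), which contribute $O(|A||\P|^3)$ against the main term $\delta^2|A||\P|^4$.

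\textbf{Step 4 (disjointness).} Having $\gg\delta|A|$ clusters $S^{(x)}$ (one per $x\in A^\ast$), each of size $\gg\delta^4|\P|$ and diameter $\le1/2$, we extract a disjoint subfamily greedily. Any point $(y,q)$ lies in at most $O(1)$ clusters of the form \cref{ex:cluster} with $z$ in a fixed window, by \cref{lem:basic}. More precisely, the point $px$ lies within $P$ of $qy$, which determines $x$ up to $O(|\P|)$ choices. So an averaging/greedy argument keeps a $\gg1$ fraction of the clusters while making them disjoint; if necessary, we shrink each cluster by a constant factor and keep a positive proportion.

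\textbf{Main obstacle.} I expect the main obstacle to be Step 3: upgrading the frequency relations to the $1/(PH)$ scale uniformly across the cluster, i.e.\ controlling the ``mod $p$'' ambiguity via \cref{lem:normcalculus}. Step 4's disjointness bookkeeping is secondary.
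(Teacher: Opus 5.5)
You have some of the right ingredients: windows $|px-z|\lesssim P$, \cref{lem:triangle} with two distinct middle primes, and the use of $\delta\ge\Cclusters|\P|^{-1/2}$ against diagonal terms. But Step 3 has a genuine gap.

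A cluster requires \emph{every} pair of its elements to be related. Your second Cauchy--Schwarz only shows that many \emph{pairs} of leaves $(y,q),(y',q')$ of a star $S_{x,p}$ share a second common neighbour with a different prime. Density of related pairs does not give a set of $\gg\delta^4|\P|$ pairwise related leaves, since a dense graph need not contain a large clique. So the sentence ``counting the resulting good subsets yields\dots'' is exactly where an argument is missing, and the $\delta^4$ bookkeeping attached to it does not reflect a proof. The detour through a lift $\beta$ is also circular: controlling $\|q(q'\beta-\alpha_{y'})\|$ already requires $\|q\beta-\alpha_y\|\lesssim 1/(PH)$. It is also unnecessary, since a cluster is defined by pairwise relations.

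The missing idea is to fix \emph{two} vertices and take their common neighbourhood. Suppose $v_1=(y_1,q_1)\neq v_2=(y_2,q_2)$ both satisfy $|q_iy_i-z|\le P/4$. Then $1$-separation forces $q_1\neq q_2$. Any two common neighbours $(x_1,p_1),(x_2,p_2)$ then satisfy the hypotheses of \cref{lem:triangle} through $y_1,y_2$, so $\dist{p_1,p_2}{x_1,x_2}\le 4/10$ automatically. The paper finds such a pair with $\gg\eps^2|\P|$ common neighbours by convexity of $n\mapsto\binom n2$ in any window with $|Q_z|\ge\eps|\P|^2$. Your star picture can be repaired the same way. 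Prune the relation graph on $A\times\P$ to minimum degree $\gg\delta|\P|$. For a surviving $v_1=(x,p)$ one has $\sum_{w\in N(v_1)}(\deg w-1)\gg\delta^2|\P|^2$. Only vertices $(x',p')$ with $|p'x'-px|\le P/5$ can share a neighbour with $v_1$, and there are at most $|\P|$ of them. Hence some $v_2\neq v_1$ shares $\gg\delta^2|\P|$ neighbours with $v_1$.

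Step 4 is also unsupported. As you observe yourself, an element $(y,q)$ can lie in the stars of up to $|\P|$ vertices $(x,p)$. With one cluster per $x\in A^\ast$ and no control over where the points $p(x)x$ fall, a greedy selection only guarantees $\gg\delta|A|/|\P|$ disjoint clusters, not a constant proportion.

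The paper builds disjointness in from the start. It sets $V_z=\{(x,p)\in A\times\P:|px-z|\le P/4\}$ and lets $Q_z$ be the relations inside $V_z$. The key bound is $|Q_z|\le|V_z|^2\le|\P|^2$, which forces the $\delta|A||\P|^2$ relations to spread over many windows. One compares $\E{|Q_{\z}|}\gg\delta|A||\P|^2/Y$ with $\E{|Q_{\z}|\ind{|Q_{\z}|\le\eps|\P|^2}}\le\eps^{1/2}|\P|\,\E{|V_{\z}|}\ll\eps^{1/2}|A||\P|^2/Y$ and takes $\eps=c\delta^2$. This yields a $P$-separated set of $\gg\delta|A|$ points $z$ with $|Q_z|\gg\delta^2|\P|^2$. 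The corresponding $V_z$ are disjoint, and each contains a common-neighbourhood cluster of size $\gg(\delta^2)^2|\P|$. The hypothesis $\delta\ge\Cclusters|\P|^{-1/2}$ is exactly what gives $\eps\ge 10/|\P|$, so the diagonal is negligible.

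In the pruned-graph repair, the analogous step goes as follows. There are $\gg\delta|A||\P|$ surviving vertices, but only $O(|\P|)$ of them have $px$ in any interval of length $P$. So one can choose $\gg\delta|A|$ of them with pairwise $P$-separated values of $px$, and their clusters are then disjoint.
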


\begin{proof}
    For $z\in [PY, 4PY]$, let
    \begin{equation*}
        V_{z} := \big\{(x,p)\in A\times \P \,:\, |px-z| \leq \tfrac{1}{4}P\big\}
    \end{equation*}
    and
    \begin{equation*}
        Q_{z} := \big\{((x, p),(y,q))\in V_z^2 \,:\, (x,y,p,q)\in \Q(\A)\big\}.
    \end{equation*}
    We have the simple bound $|Q_{z}| \leq |V_z|^2 \leq |\P|^2$ as $A$ is $1$-separated.

    Suppose that the reverse inequality $|Q_z| \geq \eps |\P|^2$ holds for some $\eps \geq {10}/{|\P|}$. We claim that $V_z$ contains a cluster of diameter~$\leq 1/2$ and size $\gg \eps^{2} |\P|$. To see this, consider the graph $G_z$ with vertex set $V_z$ and edge set $E_z := Q_z \setminus \Delta$ where $\Delta = \{((x,p),(x,p))\,:\, (x,p)\in V_z\}$ is the diagonal. Since $|Q_z|\geq \eps |\P|^2$ and $|\Delta| = |V_z| \leq |\P|$, we have $|E_z| \gg \eps |\P|^2$ (as $\eps \geq {10}/{|\P|}$). By convexity of the function $n \mapsto \binom{n}{2}$, we get
    \begin{equation*}
        \frac{1}{|V_z|} \sum_{\substack{v_1, v_2\in V_z\\ v_1\neq v_2}}\sum_{u\in V_z} \ind{(u,v_1), (u,v_2)\in E_z} = \EE_{u\in V_z} \binom{\deg(u)}{2} \geq \binom{2|E_z|/|V_z|}{2} \gg \eps^2 |\P|^2.
    \end{equation*}
    Hence, there exist two distinct vertices $v_1, v_2\in V_z$ having $\gg \eps^2 |\P|$ common neighbours in $G_z$. Write~$v_1 = (y_1, q_1)$, $v_2 = (y_2, q_2)$. We know that $q_1\neq q_2$ as $v_1\neq v_2$ and $A$ is $1$-separated. We claim that the set $N_z$ of common neighbours of $v_1$ and $v_2$ is a cluster of diameter~${\leq 1/2}$. Indeed, if $(x_1, p_1), (x_2, p_2)\in N_z$, then ${(x_i, y_j, p_i, q_j)\in \Q(\A)}$ for all $i,j\in\{1,2\}$, and hence $\dist{p_1, p_2}{x_1, x_2} \leq 4/10$ by \cref{lem:triangle}.

    To finish the proof, it therefore suffices to find a $P$-separated set $Z\subset [PY, 4PY]$ of size $|Z| \gg \delta |A|$ such that $|Q_z| \gg \delta^2 |\P|^2$ for every $z\in Z$; the separation assumption guarantees that the sets $(V_z)_{z\in Z}$ will be pairwise disjoint.

    To achieve this, it is enough to show that
    \begin{equation}
        \label{eq:probtolb}
        \Pr{|Q_{\z}|\geq c\delta^2 |\P|^2} \gg \frac{\delta |A|}{Y}
    \end{equation}
    where $\z$ is a random variable uniformly distributed in $[PY, 4PY]$, and $c>0$ is an absolute constant. On the one hand, since ${|\Q(\A)| \geq \delta |A| |\P|^2}$ and ${|px-qy|\leq \tfrac{1}{10}P}$ whenever $(x,y,p,q)\in \Q(\A)$, we have
    \begin{equation*}
        \E{|Q_\z|} \geq \sum_{(x,y,p,q)\in \Q(\A)} \Pr{|px-\z|\leq \tfrac{1}{10}P} \gg \frac{\delta |A||\P|^2}{Y}.
    \end{equation*}
    On the other hand, for any $\eps>0$,
    \begin{equation*}
        \E{|Q_{\z}|} \leq |\P|^2 \Pr{|Q_{\z}|\geq \eps |\P|^2} + \E{ \abs{Q_{\z}} \ind{|Q_{\z}|\leq \eps |\P|^2}},
    \end{equation*}
    and
    \begin{equation*}
        \E{ \abs{Q_{\z}} \ind{|Q_{\z}|\leq\eps |\P|^2} } \leq \eps^{1/2}|\P| \E{ \abs{Q_{\z}}^{1/2} }\leq \eps^{1/2}|\P| \E{|V_\z|}\ll \eps^{1/2} \frac{|A| |\P|^2}{Y}.
    \end{equation*}
    Rearranging and choosing $\eps = c\delta^2$ for some small absolute constant $c>0$ yields \cref{eq:probtolb}.
\end{proof}

\newcommand{\tinyparallelogram}{\tikz[baseline=0.0ex, line width=0.08ex, line join=round, line cap=round]{\draw (1ex, 0) -- (0,0) -- (0.3ex, 0.8ex) -- (1.3ex, 0.8ex);

        \path (1.3ex, 0.8ex) -- coordinate[pos=0.5] (mid) (1ex, 0);
        \draw[red, line width=0.06ex]
        ([xshift=-0.15ex, yshift=0.15ex]mid) -- ([xshift=0.15ex, yshift=-0.15ex]mid)
        ([xshift=-0.15ex, yshift=-0.15ex]mid) -- ([xshift=0.15ex, yshift=0.15ex]mid);
    }}

\newcommand{\tinygraphicon}{\tikz[baseline=0.2ex, line width=0.08ex, line join=round, line cap=round]{\coordinate (x)  at (0.2ex, 1.2ex);
        \coordinate (x1) at (0.4ex, 0.0ex);
        \coordinate (x2) at (1.4ex, 0.8ex);
        \coordinate (y)  at (1.4ex, 1.2ex);
        \coordinate (y1) at (1.6ex, 0.0ex);
        \coordinate (y2) at (2.6ex, 0.8ex);

        \draw (x) -- (y);    \draw (x1) -- (y1);  \draw (x2) -- (y2);  \draw (x1) -- (x2);  \draw (x) -- (x1);

        \path (y1) -- coordinate[pos=0.5] (mid) (y2);
        \draw[red, line width=0.06ex]
        ([xshift=-0.15ex, yshift=0.15ex]mid) -- ([xshift=0.15ex, yshift=-0.15ex]mid)
        ([xshift=-0.15ex, yshift=-0.15ex]mid) -- ([xshift=0.15ex, yshift=0.15ex]mid);
    }}

\subsection{Rigidity} Exploiting the properties of $\dist{p,q}{x,y}$, we show that certain patterns of local relations are rare.

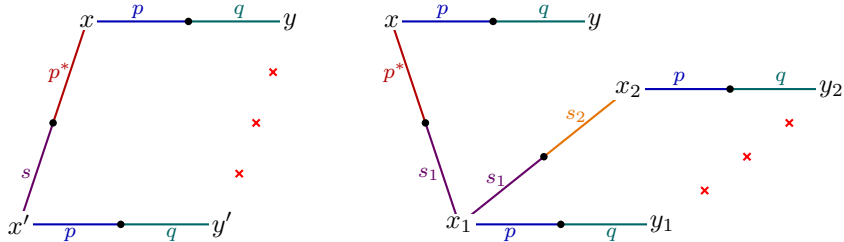
\begin{figure}[H]
    \centering
    \begin{tikzpicture}[scale=0.9, every node/.style={font=\small}]
        \colorlet{cp}{blue!70!black}       \colorlet{cq}{teal!80!black}       \colorlet{cvertA}{red!70!black}    \colorlet{cvertB}{violet!80!black} \colorlet{cvertC}{orange!90!black}

        \begin{scope}[xshift=-6.5cm]
            \coordinate (x)  at (1, 3);
            \coordinate (y)  at (4, 3);
            \coordinate (xp) at (0, 0);
            \coordinate (yp) at (3, 0);

            \draw[thick, cp] (x) -- ($(x)!0.5!(y)$);
            \draw[thick, cq] (y) -- ($(x)!0.5!(y)$);

            \draw[thick, cp] (xp) -- ($(xp)!0.5!(yp)$);
            \draw[thick, cq] (yp) -- ($(xp)!0.5!(yp)$);

            \draw[thick, cvertA] (x)  -- ($(x)!0.5!(xp)$);
            \draw[thick, cvertB] (xp) -- ($(x)!0.5!(xp)$);

            \foreach \i in {0.25, 0.5, 0.75} {
                    \path (y) -- coordinate[pos=\i] (mid) (yp);
                    \draw[red, thick]
                    ([xshift=-1.5pt, yshift=1.5pt]mid) -- ([xshift=1.5pt, yshift=-1.5pt]mid)
                    ([xshift=-1.5pt, yshift=-1.5pt]mid) -- ([xshift=1.5pt, yshift=1.5pt]mid);
                }

            \foreach \A/\B in {x/y, xp/yp, x/xp} {
                    \path let \p1=(\A), \p2=(\B) in
                    node[fill=black, circle, inner sep=1pt] at ($(\p1)!0.5!(\p2)$) {};
                }

            \node[fill=white, inner sep=1pt] at (x)  {\(x\)};
            \node[fill=white, inner sep=1pt] at (y)  {\(y\)};
            \node[fill=white, inner sep=1pt] at (xp) {\(x'\)};
            \node[fill=white, inner sep=1pt] at (yp) {\(y'\)};

            \node[text=cp, font=\scriptsize] at ($ (x)!0.25!(y) + (0, 0.15) $) {\(p\)};
            \node[text=cq, font=\scriptsize] at ($ (y)!0.25!(x) + (0, 0.15) $) {\(q\)};

            \node[text=cp, font=\scriptsize] at ($ (xp)!0.25!(yp) + (0, -0.15) $) {\(p\)};
            \node[text=cq, font=\scriptsize] at ($ (yp)!0.25!(xp) + (0, -0.15) $) {\(q\)};

            \node[text=cvertA, font=\scriptsize] at ($ (x)!0.25!(xp) + (-0.15, 0) $) {\(p^*\)};
            \node[text=cvertB, font=\scriptsize] at ($ (xp)!0.25!(x) + (-0.15, 0) $) {\(s\)};
        \end{scope}

        \begin{scope}
            \coordinate (x1) at (0, 0);
            \coordinate (y1) at (3, 0);
            \coordinate (x2) at (2.5, 2);
            \coordinate (y2) at (5.5, 2);
            \coordinate (x)  at (-1, 3);
            \coordinate (y)  at (2, 3);

            \draw[thick, cp] (x) -- ($(x)!0.5!(y)$);
            \draw[thick, cq] (y) -- ($(x)!0.5!(y)$);

            \draw[thick, cp] (x1) -- ($(x1)!0.5!(y1)$);
            \draw[thick, cq] (y1) -- ($(x1)!0.5!(y1)$);

            \draw[thick, cp] (x2) -- ($(x2)!0.5!(y2)$);
            \draw[thick, cq] (y2) -- ($(x2)!0.5!(y2)$);

            \draw[thick, cvertA] (x)  -- ($(x)!0.5!(x1)$);
            \draw[thick, cvertB] (x1) -- ($(x)!0.5!(x1)$);

            \draw[thick, cvertB] (x1) -- ($(x1)!0.5!(x2)$);
            \draw[thick, cvertC] (x2) -- ($(x1)!0.5!(x2)$);

            \foreach \i in {0.25, 0.5, 0.75} {
                    \path (y1) -- coordinate[pos=\i] (mid) (y2);
                    \draw[red, thick]
                    ([xshift=-1.5pt, yshift=1.5pt]mid) -- ([xshift=1.5pt, yshift=-1.5pt]mid)
                    ([xshift=-1.5pt, yshift=-1.5pt]mid) -- ([xshift=1.5pt, yshift=1.5pt]mid);
                }

            \foreach \A/\B in {x/y, x1/y1, x2/y2, x/x1, x1/x2} {
                    \path let \p1=(\A), \p2=(\B) in
                    node[fill=black, circle, inner sep=1pt] at ($(\p1)!0.5!(\p2)$) {};
                }

            \node[fill=white, inner sep=1pt] at (x1) {\(x_1\)};
            \node[fill=white, inner sep=1pt] at (y1) {\(y_1\)};
            \node[fill=white, inner sep=1pt] at (x2) {\(x_2\)};
            \node[fill=white, inner sep=1pt] at (y2) {\(y_2\)};
            \node[fill=white, inner sep=1pt] at (x)  {\(x\)};
            \node[fill=white, inner sep=1pt] at (y)  {\(y\)};

            \node[text=cp, font=\scriptsize] at ($ (x)!0.25!(y) + (0, 0.15) $) {\(p\)};
            \node[text=cq, font=\scriptsize] at ($ (y)!0.25!(x) + (0, 0.15) $) {\(q\)};

            \node[text=cp, font=\scriptsize] at ($ (x1)!0.25!(y1) + (0, -0.15) $) {\(p\)};
            \node[text=cq, font=\scriptsize] at ($ (y1)!0.25!(x1) + (0, -0.15) $) {\(q\)};

            \node[text=cp, font=\scriptsize] at ($ (x2)!0.25!(y2) + (0, 0.15) $) {\(p\)};
            \node[text=cq, font=\scriptsize] at ($ (y2)!0.25!(x2) + (0, 0.15) $) {\(q\)};

            \node[text=cvertA, font=\scriptsize] at ($ (x)!0.25!(x1) + (-0.2, 0) $) {\(p^*\)};
            \node[text=cvertB, font=\scriptsize] at ($ (x1)!0.25!(x) + (-0.2, 0) $) {\(s_1\)};

            \node[text=cvertB, font=\scriptsize] at ($ (x1)!0.28!(x2) + (-0.15, 0.1) $) {\(s_1\)};
            \node[text=cvertC, font=\scriptsize] at ($ (x2)!0.25!(x1) + (-0.15, 0.1) $) {\(s_2\)};

        \end{scope}
    \end{tikzpicture}
    \caption{Patterns considered in \cref{def:incomplete} (where $x,y,p,q$ and $p^*$ are fixed).}
    \label{fig:incomplete}
\end{figure}

\begin{definition}
    \label{def:incomplete}
    Let $(x,y,p,q)\in \Q(\A)$ and $p^*\in \P$. For $C>0$, define $\None(x,y,p,q,p^*;C)$ to be the number of triples $(x',y',s)\in A^2\times \P$ such that
    \begin{equation*}
        \dist{p,q}{x',y'}, \dist{p^*\!,s}{x,x'} \leq C\quad \text{and}\quad \dist{p^*\!,s}{y,y'} > 6C.
    \end{equation*}
    Also, define $\Ntwo(x,p,q,p^*;C)$ to be the number of tuples ${(x_1, x_2, y_1, y_2,s_1,s_2)\in A^{4}\times \P^{2}}$ such that
    \begin{equation*}
        \dist{p,q}{x_1,y_1}, \dist{p,q}{x_2,y_2}, \dist{p^*\!,s_1}{x,x_1}, \dist{s_1,s_2}{x_1,x_2} \leq C\quad\text{and}\quad \dist{s_1,s_2}{y_1,y_2}> 6C.
    \end{equation*}
\end{definition}

We have the trivial bound $\None(x,y,p,q,p^*;C) \ll_C |\P|$, as fixing $s$ leaves only $O_C(1)$ possibilities for $x'$ and $y'$ (by \cref{lem:basic}). Similarly, we have the trivial bound $\Ntwo(x,p,q,p^*;C) \ll_C |\P|^2$. The next lemma shows that, \emph{on average} over $(x,y,p,q)\in \Q(\A)$, we can gain a factor of $|\P|$ in each case.

\begin{lemma}
    \label{lem:badquad}
    Let $1\leq C \leq c_1 |\P|^{c_1}$ for some small absolute constant $c_1>0$. Fix $p^*\in \P$. Then
    \begin{equation*}
        \sum_{(x,y,p,q)\in \Q(\A)} \None(x,y,p,q,p^*;C) \ll C^3|A||\P|^2
    \end{equation*}
    and
    \begin{equation*}
        \sum_{(x,y,p,q)\in \Q(\A)} \Ntwo(x,p,q,p^*;C) \ll C^5 |A||\P|^3.
    \end{equation*}
\end{lemma}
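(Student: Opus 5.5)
The plan is to exploit \cref{lem:parallelogram} as a uniqueness statement: a ``broken parallelogram'' can only be completed in one way, as far as the prime on the $x$-side is concerned. Both bounds then follow by reversing the order of summation and counting from the $y$-side. Throughout, $C \leq c_1|\P|^{c_1} \leq P \leq \frac{H}{30P^2}$, using $H \geq P^3$. Hence \cref{lem:parallelogram} applies with parameter $C$, and likewise with $C$ replaced by $O(C)$.

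For the first bound, the sum counts tuples $(x,y,p,q,x',y',s)$ with
\begin{equation*}
\dist{p,q}{x,y}\leq \tfrac1{10}\leq C,\quad \dist{p,q}{x',y'}\leq C,\quad \dist{p^*,s}{x,x'}\leq C,\quad \dist{p^*,s}{y,y'}>6C.
\end{equation*}
I will group these tuples by the ``tail'' $(y,y',q,s)$, with $p^*$ fixed throughout.

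\emph{Key claim:} for a fixed tail, all admissible completions $(x,x',p)$ share the same prime $p$. Suppose two completions had primes $p_1\neq p_2$. Then \cref{lem:parallelogram} applies, with $x_i,x_i'$ on one side, $y,y'$ on the other, $p_i,q_i=q$ as the horizontal primes and $(r,s)=(p^*,s)$ vertically. It gives $\dist{p^*,s}{y,y'}\leq 6C$, a contradiction. With $p$ fixed, \cref{lem:basic} leaves $O(C)$ choices for $x$, namely $|px-qy|\leq CP$, and $O(C)$ choices for $x'$ given $y'$. So each tail has $O(C^2)$ completions.

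It remains to count tails. Chaining $x'\approx p^*x/s$, $y'\approx px'/q$ and $y\approx px/q$ gives $|y'-p^*y/s|\ll C$. Hence, given $(y,s)$, there are $O(C)$ choices of $y'$, because $A$ is $1$-separated. The number of tails is therefore $\ll C|A||\P|^2$, and the total is $\ll C^3|A||\P|^2$.

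For $\Ntwo$ the argument is the same, now grouping by the tail $(y_1,y_2,q,s_1,s_2)$. By \cref{lem:parallelogram}, applied to the $x_1,x_2 / y_1,y_2$ parallelogram with vertical primes $(s_1,s_2)$, all completions have a common $p$. Given $p$, we get in turn:
\begin{itemize}
\item $O(C)$ choices for $x_1$ from $y_1$, and $O(C)$ for $x_2$ from $y_2$;
\item $O(C)$ choices for $x$ from $(x_1,p^*,s_1)$;
\item $O(1)$ choices for $y$, since $(x,y,p,q)\in\Q(\A)$.
\end{itemize}
Tails number $\ll C|A||\P|^3$, since $y_2$ lies within $O(C)$ of $s_1y_1/s_2$. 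This gives $\ll C^4|A||\P|^3\leq C^5|A||\P|^3$.

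The main obstacle is the bookkeeping rather than any new idea. One must check that the approximate relations chain correctly, so that $y'$ (respectively $y_2$) really lies in an $O(C)$-window determined by the tail, with constants uniform since all primes lie in $[P,2P]$. One must also confirm that the hypotheses of \cref{lem:parallelogram} are met exactly, with the roles of $p_i,q_i$ oriented as in its statement and $q_1=q_2=q$ allowed. The only genuine input is that lemma's conclusion, used contrapositively to force uniqueness of $p$.
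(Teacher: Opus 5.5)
Your proof is correct and follows the paper's argument: you swap the order of summation to sum over the $y$-side data first, use \cref{lem:parallelogram} contrapositively so that only one prime $p$ can occur, and count the remaining variables with \cref{lem:basic}. For the second bound you even get $C^4|A||\P|^3$, because you use that $y$ is determined by $(x,p,q)$, whereas the paper reduces to the first bound and loses a factor of $C$. There is one small slip: $H\geq P^3$ only gives $H/(30P^2)\geq P/30$, not $\geq P$. This is harmless, since $C\leq c_1|\P|^{c_1}\leq P/30$ for $c_1\leq 1/30$, so the hypothesis of \cref{lem:parallelogram} still holds.
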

\begin{proof}
    The first part is a relative version of \cite[Lemma~4.8]{walsh3}. To shorten notation, write
    \begin{equation*}
        \ind{\tinyparallelogram}(x,x',y,y';p,q,r,s) := \ind{\{\dist{p,q}{x,y},\dist{p,q}{x',y'},\dist{r,s}{x,x'}\leq C\}} \ind{\dist{r,s}{y,y'}>6C},
    \end{equation*}
    i.e.~the indicator function of the pattern represented in \cref{fig:incomplete} (left side). Swapping the order of summation, we have
    \begin{equation*}
        \sum_{(x,y,p,q)\in \Q(\A)} \None(x,y,p,q,p^*;C) \leq \sum_{(y,y',s)\in A^2\times \P} \sum_{(x,x',p,q)\in A^2\times \P^2} \ind{\tinyparallelogram}(x,x',y,y';p,q,p^*,s).
    \end{equation*}
    We may restrict the outer sum to the triples $(y,y',s)$ satisfying $|p^*y-sy'| \leq 6C$, as otherwise $\ind{\tinyparallelogram}(x,x',y,y',p,q,p^*,s)=0$ (regardless of $x,x',p,q$). Thus, there are $\ll C|A||\P|$ choices for $y, y'$ and~$s$. Now, the key observation is that, by \cref{lem:parallelogram}, there is at most one prime $p = p(y,y',s)\in \P$ giving a non-zero contribution to the inner sum. There are $|\P|$ choices for $q$. Once $y,y',s,p$ and $q$ are fixed, there are $\ll C$ choices for each of $x$ and $x'$, by \cref{lem:basic}. Hence,
    \begin{equation*}
        \sum_{(x,y,p,q)\in \Q(\A)} \None(x,y,p,q,p^*;C) \ll C|A||\P|\cdot |\P| \cdot C^2 = C^3|A||\P|^2
    \end{equation*}
    as claimed.

    Similarly, the $\Ntwo$ sum is bounded by
    \begin{equation*}
        \leq \sum_{(y_1,y_2,s_1,s_2)\in A^2\times \P^2} \sum_{(x_1,x_2,p,q)\in A^2\times \P^2} \ind{\tinyparallelogram}(x_1,x_2,y_1,y_2;p,q,s_1,s_2) \sum_{x,y\in A} \ind{\{\dist{p^*\!,s_1}{x,x_1},\dist{p,q}{x,y}\leq C\}}.
    \end{equation*}
    Here, the inner sum over $x,y$ is always $\ll C^2$ by \cref{lem:basic}. Thus,
    \begin{equation*}
        \sum_{(x,y,p,q)\in \Q(\A)} \Ntwo(x,p,q,p^*;C) \ll C^2 \! \sum_{(y_1,y_2,s_1,s_2)\in A^2\times \P^2} \sum_{(x_1,x_2,p,q)\in A^2\times \P^2} \!\!\!\ind{\tinyparallelogram}(x_1,x_2,y_1,y_2;p,q,s_1,s_2).
    \end{equation*}
    Once $s_1$ is fixed, we are left with the same expression as in the previous case, after relabelling. Since there are $|\P|$ choices for $s_1\in \P$, we obtain a final bound of $\ll C^2 |\P| \cdot C^3|A||\P|^2$, as required.
\end{proof}

\subsection{Replication of clusters}

The following lemma provides a mechanism to generate new clusters from existing ones, at the cost of multiplying the diameter by a fixed constant. This operation is possible under certain technical conditions, including bounds on the functions $\None$ and $\Ntwo$ introduced in \cref{def:incomplete}.

\begin{lemma}
    \label{lem:clustercopy}
    Let $1\leq C \leq c_1 |\P|^{c_1}$ for some small absolute constant $c_1>0$. Let $(x,y,p,q)\in \Q(\A)$ and $p^*\in \P$. Let $S$ be a cluster of diameter $\leq C$ and size $\geq |\P|^{1/2}$ containing~$(x, p^*)$.

    Suppose that, for all $(x', r)\in S$, there exists $y'\in A$ such that $\dist{p,q}{x',y'}\leq 1/10$. Furthermore, assume that
    \begin{equation}
        \label{eq:assumpN1N2}
        \None(x,y,p,q,p^*;C)\leq \tfrac{1}{100}|S| \quad\text{and}\quad \Ntwo(x,p,q,p^*;C)\leq \tfrac{1}{100}|S|^2.
    \end{equation}

    Then, there exists a cluster $S'$ of diameter $\leq 24C$ and size $\geq |S|/2$ containing $(y,p^*)$. Moreover, for any $(y',r)\in S'$, there exists $(x',r)\in S$ such that $\dist{p,q}{x',y'}\leq C$.
\end{lemma}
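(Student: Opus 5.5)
The plan is to build $S'$ by transporting $S$ along the relation $(p,q)$. For each $(x',r)\in S$, pick $y'=y'(x',r)\in A$ with $\dist{p,q}{x',y'}\le 1/10$; such a $y'$ exists by hypothesis and is unique by the remark after \cref{def:quadruples}. Let $T:=\{(y'(x',r),r):(x',r)\in S\}$. Distinct elements of $S$ with the same prime $r$ have distinct $x'$, hence distinct images $y'$, since $|px'_1-px'_2|\ge P$ while the $qy'$ lie within $P/10$ of the $px'$. So $|T|=|S|$. The final clause of the lemma, that each $(y',r)\in S'$ comes from some $(x',r)\in S$ with $\dist{p,q}{x',y'}\le 1/10\le C$, will hold automatically for any $S'\subset T$. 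The remaining task is to find $S'\subset T$ with $|S'|\ge |S|/2$, containing $(y,p^*)$, of diameter $\le 24C$.

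First I handle pairs involving the base point. Note $(x,p^*)\in S$ maps to $(y,p^*)$. For $(x',s)\in S$ we have $\dist{p^*,s}{x,x'}\le C$ and $\dist{p,q}{x',y'}\le C$. Hence, unless $\dist{p^*,s}{y,y'}\le 6C$, the triple $(x',y',s)$ is counted by $\None(x,y,p,q,p^*;C)$. By \eqref{eq:assumpN1N2}, at most $|S|/100$ elements of $T$ are \emph{bad} in this sense. Let $T_1\subset T$ be the good ones, which include $(y,p^*)$.

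Next I control pairs in $T_1$. Take $(x_1,s_1),(x_2,s_2)\in S$ with images $(y_1,s_1),(y_2,s_2)$. Then $\dist{p^*,s_1}{x,x_1}\le C$, $\dist{s_1,s_2}{x_1,x_2}\le C$ (cluster property) and $\dist{p,q}{x_i,y_i}\le C$. So if $\dist{s_1,s_2}{y_1,y_2}>6C$, the tuple is counted by $\Ntwo$, and there are at most $|S|^2/100$ such bad ordered pairs. Call $(y_1,s_1)\in T_1$ \emph{heavy} if it is in a bad pair with more than $|S|/10$ elements. At most $|S|/5$ elements are heavy. Removing them leaves $T_2$ with $|T_2|\ge |S|(1-1/100-1/5)$.

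The main obstacle is that light elements may still have a few bad partners, so $T_2$ need not be a cluster. I will overcome this with a triangle argument through common good partners, using \cref{lem:triangle}. For $u=(y_1,s_1)$ and $v=(y_2,s_2)$ in $T_2$, each has at least $|T_2|-|S|/10>|S|/2$ good partners in $T_2$. So they share more than $|S|/10$ common good partners. Since $|S|\ge |\P|^{1/2}$ is large and each prime carries at most $O(C)$ elements of a cluster of diameter $6C$ (\cref{lem:basic}), two of these common partners $(z_1,t_1),(z_2,t_2)$ have $t_1\ne t_2$; this uses $C\le c_1|\P|^{c_1}$. Then $\dist{s_1,t_i}{y_1,z_i}\le 6C$ and $\dist{t_i,s_2}{z_i,y_2}\le 6C$ for $i=1,2$. \Cref{lem:triangle} applies as long as $6C\le H/(30P^2)$, which holds because $H\ge P^3$ and $C$ is small. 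It gives $\dist{s_1,s_2}{y_1,y_2}\le 24C$.

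Hence $S':=T_2$ is a cluster of diameter $\le 24C$ of size $\ge |S|/2$. It contains $(y,p^*)$, provided that element is not heavy. To secure this, I treat $(y,p^*)$ separately: it has at most $|S|/100$ bad partners by the $\None$ bound, so I keep it regardless of heaviness. The same common-partner argument then still applies to it.
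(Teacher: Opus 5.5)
Your proposal is correct and is essentially the paper's proof. The paper forms the graph on the transported triples with edges at distance $\le 6C$ and keeps the vertices of degree $\ge \tfrac34|V|$: the base point qualifies by the first bound in the hypothesis, and at least half of the vertices qualify by the second. It then finds two common neighbours with distinct primes and applies the triangle lemma, which is exactly your light/heavy thresholding.

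Two small remarks. Removing $T\setminus T_1$ is unnecessary, and $(y,p^*)$ is automatically light, since its bad partners are exactly $T\setminus T_1$. For the distinct-prime step, the clean justification is that each prime occurs at most $2C+1$ times in the cluster $S$, hence in $T$; the common partners themselves are not known to form a cluster.
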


\begin{proof}
    Let $V$ be the set of all triples $(x',y',r)\in A^2\times \P$ such that $(x',r)\in S$ and $\dist{p,q}{x',y'}\leq 1/10$. Note that $|V|=|S|$ as, by assumption, every $(x',r)\in S$ corresponds to some unique $(x',y',r)\in V$.

    Let $G$ be the graph with vertex set $V$ where two distinct vertices $(x_1,y_1,r_1),(x_2,y_2,r_2)\in V$ are joined by an edge whenever $\dist{r_1,r_2}{y_1,y_2} \leq 6C$.

    We restrict to a highly connected subset of $V$ containing $(x,y,p^*)$ (which lies indeed in $V$) as follows. By the first bound in \cref{eq:assumpN1N2}, there are $\leq \tfrac{1}{100}|V|$ vertices $(x',y',s)\in V$ which are not neighbours of $(x,y,p^*)$ in $G$. In addition, by the second bound in \cref{eq:assumpN1N2}, there are $\leq \tfrac{1}{100}|V|^2$ pairs of vertices of $G$ that are not connected by an edge. Defining $V'$ to be the set of all vertices $(x',y',r)\in V$ which have $\geq \tfrac{3}{4}|V|$ neighbours in $G$, we see that $(x,y,p^*)\in V'$ and $|V'|\geq |V|/2$.

    We can now construct the required cluster. Let
    \begin{equation*}
        S' := \big\{(y',r)\in A\times \P \, :\, \exists x'\in A,(x',y',r)\in V'\big\}.
    \end{equation*}
    Observe that $(y,p^*)\in S'$ and $|S'| = |V'| \geq |S|/2$, as required. Moreover, for every $(y',r)\in S'$, there is $x'\in A$ such that $(x',r)\in S$ and $\dist{p,q}{x',y'}\leq 1/10$. Thus, it only remains to prove that $S'$ is a cluster of diameter $\leq 24C$.

    Let $(y_1, r_1), (y_2, r_2)\in S'$ and let $(x_1, y_1, r_1), (x_2, y_2, r_2)\in V'$ be the corresponding vertices of $G$. These two vertices each have $\geq \tfrac{3}{4}|V|$ neighbours in $G$. Hence, they have $\geq \tfrac{1}{2} |V|$ common neighbours.  We can find two of these common neighbours, say $(x_1', y_1', r_1')$ and $(x_2', y_2', r_2')$, such that ${r_1' \neq r_2'}$ (otherwise $S$ would contain $\geq \tfrac12 |\P|^{1/2}$ pairs with the same prime, contradicting \cref{lem:basic}). By definition of $G$ we have
    \begin{equation*}
        \dist{r_i,r_j'}{y_i, y_j} \leq 6C
    \end{equation*}
    for all $i,j\in \{1,2\}$. By \cref{lem:triangle}, this implies $\dist{r_1,r_2}{y_1,y_2} \leq 24C$, which concludes the proof.
\end{proof}

\subsection{Proof of the local structure theorem} We are now ready to prove the main result of this section, which we recall for convenience.

\structhm*

\begin{proof}[Proof of \cref{thm:structure}]
    The assumptions $\eps^{-\Clift L}\leq P$ and $\eps \delta^{-5} L \leq 1$ imply that $\delta \geq \eps \geq P^{-1/\Clift}$.

    We will use an iterative argument, working with clusters of increasing diameter. To this end, we set up a hierarchy of scales
    \begin{equation*}
        \CC=\{500^\ell : 0\leq \ell\leq L+1\}. \end{equation*}
    For each $C\in \CC$, we apply \cref{lem:covering} with the parameters $C$ and $\eta = |\P|^{-1/100}$ to obtain a collection $(K_{i;C})_{i\in I_C}$ of clusters of diameter $\leq 16C$ satisfying the two properties in that lemma. These auxiliary clusters will be used in the construction of the sets $A_\ell$.

    By \cref{lem:firstset}, there exist $\asymp \delta |A|$ disjoint clusters of diameter~$\leq 1/2$ and size $\gg \delta^4 |\P|$. By an averaging argument, there is a set $\P^* \subset \P$ of size $\gg \delta^5|\P|$ such that every $p\in \P^*$ appears in $\gg \delta^5 |A|$ of these clusters. That is, for each $p\in \P^*$, there are $\gg \delta^5 |A|$ elements $x\in A$ such that $(x,p)$ belongs to a cluster of diameter $\leq 1/2$ and size $\gg \delta^4|\P|$.

    We now select a prime $\mathbf{p^*}$ uniformly at random from $\P^*$. We will show that \cref{thm:structure} holds with positive probability with this prime as the lifting prime.

    To construct the required decomposition of $A$, we will define a subset $\RR \subset \Q(\A)$ of quadruples satisfying various technical properties, which we now describe.

    \begin{itemize}
        \item \textit{Regularity.} Let $A_{\mathrm{reg}}$ be the random set of $x\in A$ such that $(x,\mathbf{p^*})$ is $(C, |\P|^{-1/100})$-regular for all $C \in \CC$. By \cref{lem:clusteruniqueness},
              \begin{equation*}
                  \EE_{\mathbf{p^*}\in \P^*} \abs{A \setminus A_{\mathrm{reg}}} \ll \frac{1}{|\P^*|} \sum_{C\in \CC} C^9 |\P|^{1/20} |A| \ll \frac{e^{O(L)}}{\delta^5 |\P|^{1-1/20}} |A|.
              \end{equation*}
              By Markov's inequality and our choice of parameters (choosing $\Clift$ to be sufficiently large), with probability at least $99\%$, we have
              \begin{equation}
                  \label{eq:boundAreg}
                  \abs{A \setminus A_{\mathrm{reg}}} \ll \frac{|A|}{|\P|^{1-1/10}}.
              \end{equation}

        \item \textit{Rigidity.} By \cref{lem:badquad}, for every $p_0\in \P$, the number of quadruples $(x,y,p,q)\in \Q(\A)$ satisfying ${\None(x,y,p,q,{p_0};C) > |\P|^{19/20}}$~or~$\Ntwo(x,p,q,{p_0};C) > |\P|^{39/20}$ for some $C \in \CC$ is bounded by
              \begin{equation}
                  \label{eq:badquadbound}
                  \ll \sum_{C\in \CC} \bigg( \frac{C^3 |A| |\P|^2}{|\P|^{19/20}} + \frac{C^5 |A| |\P|^3}{|\P|^{39/20}} \bigg) \ll e^{O(L)} |A| |\P|^{1+1/20} \ll |A| |\P|^{1+1/10}.
              \end{equation}

        \item \textit{Connectivity.} Let $\Q_{\mathrm{bad}}$ be the random set of quadruples $(x,y,p,q)\in \Q(\A)$ such that one of the previously defined clusters $K_{i;C}$ has the following properties: $K_{i;C}$ contains $(x,\mathbf{p^*})$ and there are fewer than $\eps^2 |K_{i;C}|$ elements $(x',r)\in K_{i;C}$ for which $\dist{p,q}{x',y'}\leq 1/10$ for some $y'\in A$.

              By the union bound, we have
              \begin{equation*}
                  \EE_{\mathbf{p^*}\in \P^*} \abs{\Q_{\mathrm{bad}}} \leq \frac{1}{|\P^*|} \sum_{C\in \CC} \sum_{i\in I_C} \sum_{p_0\in \P^*} \sum_{\substack{(x,y,p,q)\in \Q(\A) \\ (x,p_0)\in K_{i;C}}} I(p,q;i,C)
              \end{equation*}
              where $I(p,q;i,C)$ is the indicator function that the number of triples $(x',y',r)\in A^2\times \P$ with ${(x',y',p,q)\in \Q(\A)}$ and $(x',r)\in K_{i;C}$ is less than $\eps^2 |K_{i;C}|$. For any $p,q\in \P$, by definition of $I(p,q;i,C)$, we have
              \begin{equation*}
                  I(p,q;i,C) \sum_{\substack{x,y\in A\\ (x,y,p,q)\in \Q(\A)}}\sum_{\substack{p_0\in \P \\ (x,p_0)\in K_{i;C}}} 1 \leq \eps^2 |K_{i;C}|.
              \end{equation*}
              Inserting this bound into the previous expression, we obtain
              \begin{equation*}
                  \EE_{\mathbf{p^*}\in \P^*} \abs{\Q_{\mathrm{bad}}} \leq \frac{1}{|\P^*|} \sum_{C\in \CC} \sum_{i\in I_C} \sum_{p,q\in \P} \eps^2|K_{i;C}|  \ll \eps^2 \delta^{-5} |\P| \sum_{C\in \CC} \sum_{i\in I_C} |K_{i;C}|.
              \end{equation*}
              Since $(K_{i;C})_{i\in I_C}$ satisfies the second property in \cref{lem:covering} with $\eta = |\P|^{-1/100}$, we have
              \begin{equation*}
                  \sum_{i\in I_C} |K_{i;C}| \ll |A| |\P| + C^9 |\P|^{1/20} |A| .
              \end{equation*}
              We conclude that
              \begin{equation*}
                  \EE_{\mathbf{p^*}\in \P^*} \abs{\Q_{\mathrm{bad}}} \ll \eps^2 \delta^{-5} \big( L |A| |\P|^2 +  e^{O(L)} |A| |\P|^{1+1/20} \big).
              \end{equation*}
              By Markov's inequality and our choice of parameters (recall that $\eps \delta^{-5} L \leq 1$), with probability at least $99\%$, we have
              \begin{equation}
                  \label{eq:boundQbad}
                  \abs{\Q_{\mathrm{bad}}} \ll \eps |A| |\P|^2.
              \end{equation}
    \end{itemize}

    We can now define the random set $\RR$ to be the collection of all $(x,y,p,q)\in  \Q(\A)$ satisfying the following three properties:
    \begin{enumerate}[label=(\alph*),ref=\alph*]
        \item \label{item:robust1} $x,y\in A_{\mathrm{reg}}$.
        \item \label{item:robust2} For all $C\in \CC$,
              \begin{equation*}
                  \begin{cases}
                      \None(x,y,p,q,\mathbf{p^*};C) \leq |\P|^{19/20} \\
                      \Ntwo(x,p,q,\mathbf{p^*};C) \leq |\P|^{39/20}
                  \end{cases}
                  \text{and}\quad
                  \begin{cases}
                      \None(y,x,q,p,\mathbf{p^*};C) \leq |\P|^{19/20} \\
                      \Ntwo(y,q,p,\mathbf{p^*};C) \leq |\P|^{39/20}.
                  \end{cases}
              \end{equation*}
        \item \label{item:robust3} Neither $(x,y,p,q)$ nor $(y,x,q,p)$ belongs to $\Q_{\mathrm{bad}}$.
    \end{enumerate}
    By \cref{eq:boundAreg,eq:badquadbound,eq:boundQbad}, with positive probability, we have $\abs{A \setminus A_{\mathrm{reg}}} \ll {|A|}{|\P|^{-1+1/10}}$ and
    \begin{equation}
        \label{eq:boundRRR}
        \abs{\Q(\A) \setminus \RR} \ll \eps |A| |\P|^2.
    \end{equation}
    For the rest of this proof, we fix a realisation $p^*\in \P^*$ of $\mathbf{p^*}$ for which this holds. By a slight abuse of notation, we will henceforth use $A_{\mathrm{reg}}$, $\Q_{\mathrm{bad}}$ and $\RR$ to refer to the deterministic sets obtained by evaluating these random sets at the fixed prime $p^*$.

    We turn to the construction of the decomposition $A = A_0 \sqcup \ldots \sqcup A_L \sqcup A_{L+1}$.

    By definition of $\P^*$ and \cref{eq:boundAreg}, there is a subset $A_0\subset A_{\mathrm{reg}}$ of size $\gg \delta^5 |A|$ such that, for each $x\in A_0$, the pair $(x,p^*)$ is contained in a cluster of diameter $\leq 1/2$ and size $\gg \delta^4|\P|$. In particular, property~\cref{item:struct1} of~\cref{thm:structure} holds.

    Suppose that $A_0, \ldots, A_{\ell-1}$ have been constructed, for some $1\leq \ell \leq L$. We define
    \begin{equation}
        \label{eq:definitionAell}
        A_{\ell} := \Big\{ y\in A_{\mathrm{reg}}\setminus \bigsqcup_{k<\ell}A_k\ :\ \exists (x,y,p,q)\in \RR\text{ with }x\in A_{\ell-1} \Big\}.
    \end{equation}Finally, we define $A_{L+1} := A\setminus \bigsqcup_{\ell\leq L} A_\ell$. Property \cref{item:struct2a} follows directly from the definition of the sets $A_{\ell}$, the symmetry of $\RR$ (i.e.~the fact that $(x,y,p,q)\in \RR$ if and only if $(y,x,q,p)\in \RR$) and \cref{eq:boundRRR}.

    We shall now iterate \cref{lem:clustercopy} to construct, for each $x\in A\setminus A_{L+1}$, a cluster containing $(x,p^*)$.

    \textbf{Claim 1.} \textit{Let $(x,y,p,q)\in \RR$, where $x\in A_k$ for some $k\leq L$. Suppose that $(x,p^*)$ is contained in a cluster~$S$ of diameter $\leq 500^{k}$ and size $\geq |\P|^{99/100}$. Then, there is a cluster $S'$ of diameter~$\leq 500^{k+1}$ and size~$\geq \tfrac{1}{2}\eps^2 |S|$ containing $(y,p^*)$.
    }

    \textit{Proof of Claim 1.} Let $C := 500^{k}$. By definition of the collection $(K_{i;C})_{i\in I_C}$, there exists $i\in I_C$ such that $(x,p^*)\in K_{i;C}$ and $|K_{i;C}|\geq |S|$ (see \cref{lem:covering}). Let $S_1$ be the set of all $(x',r)\in K_{i;C}$ such that $\dist{p,q}{x',y'}\leq 1/10$ for some $y'\in A$. By property~\cref{item:robust3} in the definition of $\RR$, we have $(x,y,p,q)\notin \Q_{\mathrm{bad}}$, which means that $|S_1| \geq \eps^2 |K_{i;C}| \geq \eps^2 |S|$.

    We are now in a position to apply \cref{lem:clustercopy} to $S_1$ (which is a cluster of diameter $\leq 16C$ as it is contained in $K_{i;C}$). The assumption \cref{eq:assumpN1N2} in \cref{lem:clustercopy} holds by property~\cref{item:robust2} in the definition of $\RR$ and our choice of parameters. Thus, \cref{lem:clustercopy} yields a cluster $S'$ of diameter $\leq 24\cdot 16 C \leq 500^{k+1}$ and size $\geq |S_1|/2 \geq \tfrac12 \eps^2 |S|$ containing $(y,p^*)$, as required. \hfill$\triangleleft$

    \textbf{Claim 2.} \textit{For every $k \leq L$ and $x\in A_{k}$, the pair $(x,p^*)$ is contained in a cluster $S$ of diameter~$\leq 500^{k}$ and size $\geq \max(|\P|^{99/100}, \,c_0(\tfrac{1}{2}\eps^2)^{k} \delta^4 |\P|)$ where $c_0>0$ is an absolute constant.}

    \textit{Proof of Claim 2.} This follows from Claim 1 by induction on $k$. For $k=0$, this is true by definition of $A_0$. For $1\leq k \leq L$, we know that each $y\in A_{k}$ is connected to some $x\in A_{k-1}$ by a quadruple $(x,y,p,q)\in \RR$. By the induction hypothesis, $(x,p^*)$ is contained in a cluster of diameter $\leq 500^{k-1}$ and size $\geq \,c_0(\tfrac{1}{2}\eps^2)^{k-1} \delta^4 |\P|$. This size is at least $|\P|^{99/100}$ by our assumption that $\eps^{-\Clift L}\leq P$ and $\delta^{-\Clift}\leq P$ for a sufficiently large constant $\Clift$. Applying Claim 1 completes the proof.\hfill$\triangleleft$

    We remark that any cluster $S$ as in Claim 2 contains at least one pair $(x',r)$ with $r \neq p^*$, by \cref{lem:basic} and the lower bound $|S| \geq |\P|^{99/100}$. This will be useful in the next step.

    Now, we define the frequencies $\beta_x$ for $x\in A\setminus A_{L+1}$. Let $k \leq L$ and $x\in A_{k}$. Let $S(x)$ be the union of all clusters of diameter $\leq 500^{k}$ and size $\geq |\P|^{99/100}$ containing $(x,p^*)$; note that $S(x)$ is non-empty by Claim 2. By construction, $A_{k} \subset A_{\mathrm{reg}}$, which implies that~$S(x)$ is a cluster of diameter $\leq 64\cdot 500^{k}$. We define $\beta_x$ to be the real number given by applying \cref{lem:clusterlift} to this cluster $S(x)$ and the pair $(x, p^*)\in S(x)$. The first property in \cref{lem:clusterlift} ensures that $\B := (A\setminus A_{L+1}, \beta_{\bullet}, Hp^*)$ is a lift of $\A$.

    It only remains to prove \cref{item:struct2b} of \cref{thm:structure}. In view of our bound \cref{eq:boundRRR} on $\abs{\Q(\A) \setminus \RR}$, it suffices to show that, for every $(x,y,p,q)\in \RR$ with $x\in A_k$ for some $k \leq L$, we have $(x,y,p,q)\in \Q(\B)$. Consider such a quadruple, and let $S$ be a cluster of diameter $\leq 500^{k}$ and size $\geq |\P|^{99/100}$ containing $(x,p^*)$ (which exists by Claim 2). Let $K_{i;C}$, $S_1$ and $S'$ be the clusters constructed in the proof of Claim 1. In particular, $(y,p^*)\in S'$. Let $(y',r)\in S'$ be any pair such that $r\neq p^*$. Recall that $S'$ was obtained from $S_1$ by applying \cref{lem:clustercopy}. By the last property in \cref{lem:clustercopy}, there exists $(x',r)\in S_1$ such that $\dist{p,q}{x',y'}\leq 500^{k}$, so that $\norm{q\alpha_{x'} - p\alpha_{y'}} \leq 500^k/H$. Therefore, by the triangle inequality,
    \begin{equation*}
        \norm{r(q\beta_x - p\beta_y)} \leq \norm{q(r\beta_x - \alpha_{x'})} + \norm{q\alpha_{x'}-p\alpha_{y'}} + \norm{p(r\beta_y - \alpha_{y'})}  \ll \frac{500^{L}}{H},
    \end{equation*}
    using the second property in \cref{lem:clusterlift}. Moreover,
    \begin{equation*}
        \norm{p^*(q\beta_x - p\beta_y)} = \norm{q\alpha_x - p\alpha_y} \leq \frac{1}{10H}
    \end{equation*}
    as $(x,y,p,q)\in \Q(\A)$. Since $p^*\neq r$, these two estimates imply that $\norm{q\beta_x - p\beta_y} \leq (10Hp^*)^{-1}$ by \cref{lem:normcalculus}. Thus, $(x,y,p,q)\in \Q(\B)$, which completes the proof of \cref{thm:structure}.
\end{proof}

\part{Appendices}
\label{part:appendices}

\appendix

\begin{appendices}

    \section{Recovering Walsh's conditional theorem}
    \label{sec:GRHcase}
    In this appendix, we show how Walsh's Fourier uniformity result under GRH~\cite{walsh3} can be derived from our global structure theorem, yielding explicit quantitative savings (see \cref{cor:GRHfourieruniformity}).

    \subsection{Conditional expansion estimate}
    Assuming the Generalised Riemann Hypothesis for Dirichlet $L$-functions, we can replace \cref{prop:GRHsubstitute} with a much stronger expansion estimate.

    \begin{lemma}[GRH expansion estimate]
        \label{lem:GRHexpansion}
        Assume GRH. Suppose that~${P \geq (\log Y)^{C}}$ for some sufficiently large absolute constant $C>0$.

        Let $A, B\subset \big[\tfrac{1}{10}Y, 10Y\big]$ be multisets with at most $M$ elements in any unit interval.

        Let $1\leq q_0 \leq Y^{O(1)}$ be an integer, let $\P' \subset \P$ consist of all primes not dividing $q_0$, and let~$(a_x)_{x\in A}$ be a sequence of integers coprime to $q_0$.\footnote{We assume that $C$ is sufficiently large in terms of the implied constant in the upper bound for $q_0$.}

        Let $1/Y\leq \eps \leq 1$. Then, the number of quadruples $(x,y,p,q)\in A\times B \times (\P')^2$ such that
        \begin{equation*}
            \abs{px - qy} \leq \eps P\quad \text{and}\quad q a_x \equiv p a_y \pmod{q_0}
        \end{equation*}
        is
        \begin{equation*}
            \ll M^2 \bigg( |A|^{1/2} |B|^{1/2} P^{2-c} + \frac{\eps |A| |B| |\P|^2}{Y \phi(q_0)} \bigg),
        \end{equation*}
        where $c>0$ is an absolute constant.
    \end{lemma}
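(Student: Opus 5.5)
We will follow the Fourier-analytic argument from the proof of \cref{prop:GRHsubstitute}, but bound the weighted sum with Cauchy--Schwarz in $(x,y)$ rather than by a large-values estimate, and supply the prime sum bound from GRH.

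\emph{Setup.} First round $A$ and $B$ to integer multiples of $\eps$, absorbing the multiplicities into the factor $M^2$ at the end. Majorise the condition $|px-qy|\le\eps P$ by $\Phi\big(\tfrac{Y}{\eps}\log\tfrac{px}{qy}\big)$, where $\widehat\Phi$ is supported on $[-1,1]$. Detect the congruence $qa_x\equiv pa_y \pmod{q_0}$ with characters modulo $q_0$. This bounds the count by
\begin{equation*}
\frac{1}{\phi(q_0)}\sum_{\chi \spmod{q_0}}\frac{\eps}{Y}\int_{-Y/\eps}^{Y/\eps}\widehat\Phi\Big(\frac{\eps t}{Y}\Big)\, F_A(\chi,t)\,\overline{F_B(\chi,t)}\,\Big|\sum_{p\in\P'}\overline{\chi(p)}p^{it}\Big|^2\,dt,
\end{equation*}
where $F_A(\chi,t)=\sum_{x\in A}\chi(a_x)x^{it}$ and $F_B(\chi,t)=\sum_{y\in B}\chi(a_y)y^{it}$ (with the obvious labels attached to $B$).

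\emph{Main term.} Write $P(\chi,t)$ for the prime sum. The main term is the contribution of $\chi=\chi_0$ and $|t|\le T_0:=P^{c'}$ for a small $c'>0$. There we bound $|P(\chi,t)|\le|\P|$ and $|F_A|\le|A|$, $|F_B|\le |B|$, and the $t$-integral contributes a factor $\ll \eps T_0/Y$. That gives $\eps|A||B||\P|^2 T_0/(Y\phi(q_0))$, too big by a factor $T_0$. So the main term should instead be extracted by the exact prime number theorem in short ranges: on $|t|\le T_0$, write $P(\chi_0,t)=\int_P^{2P}u^{it}\,du/\log u+O(P^{1/2+o(1)})$ using GRH/RH. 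The smooth part integrates against the $\widehat\Phi$ kernel to reproduce the expected count $\eps|A||B||\P|^2/(Y\phi(q_0))$. Equivalently, count the pairs with the prime number theorem directly. The error term $P^{1/2+o(1)}$ goes to the remainder.

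\emph{Remainder.} For every other $(\chi,t)$, with $|t|\le 2Y/\eps\le 2Y^2$ and $q_0\le Y^{O(1)}$, GRH gives
\begin{equation*}
|P(\chi,t)|\ll P^{1/2}\log^2(q_0 Y)\ll P^{1-c}
\end{equation*}
once $P\ge(\log Y)^C$, when $\chi\ne\chi_0$ or $|t|>T_0$. For $\chi=\chi_0$ and $|t|>T_0$, the integral $\int u^{it}\,du/\log u$ is $\ll P/|t|\le P^{1-c'}$. So $\sup|P(\chi,t)|^2\ll P^{2-2c}$ on the remainder. By Cauchy--Schwarz, the remainder is then at most
\begin{equation*}
P^{2-2c}\Big(\tfrac{1}{\phi(q_0)}\sum_\chi\tfrac{\eps}{Y}\int|F_A|^2\Big)^{1/2}\Big(\tfrac{1}{\phi(q_0)}\sum_\chi\tfrac{\eps}{Y}\int|F_B|^2\Big)^{1/2}.
\end{equation*}

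\emph{Key difficulty.} The obstacle is this last mean value. The mean value theorem for Dirichlet polynomials, applied after rescaling by $\eps^{-1}$, gives $\tfrac{\eps}{Y}\int|F_A|^2\ll |A|$ for each $\chi$ separately, so averaging over $\chi$ with the factor $1/\phi(q_0)$ gives $\ll|A|$. Note that we must not sum over the $\phi(q_0)$ characters without that normalisation, and here it is present. Multiplicities contribute $M$ to each factor, giving the total $M^2|A|^{1/2}|B|^{1/2}P^{2-c}$ after adjusting $c$. One must also check that the rounding of $A,B$ to multiples of $\eps$ keeps the $\Phi$-majorant valid: change the window constant from $50$ to a larger one.
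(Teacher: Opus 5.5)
Your proposal is correct and is essentially the paper's argument: the same Fourier and character expansion as in the proof of \cref{prop:GRHsubstitute}, GRH to make the prime polynomial small away from $\chi_0$ and small $|t|$, Cauchy--Schwarz plus the mean value theorem (with the $1/\phi(q_0)$ normalisation) for the remainder, and trivial bounds on $F_A,F_B$ for the main term. The only difference is cosmetic. The paper splits according to whether $|\sum_{p}\chi(p)p^{it}|$ exceeds $w|\P|$ with $w=P^{-c}$, rather than using a pre-chosen $(\chi,t)$-region. Also, the factor $T_0$ you worry about is removed simply because $\int_{\R}\big|\int_P^{2P}u^{it}\,du/\log u\big|^2dt\ll|\P|^2$, thanks to the $\min(1,|t|^{-1})$ decay of the main term.
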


    \begin{proof}
        Repeating the arguments in the proof of \cref{prop:GRHsubstitute}, we see that the number of quadruples under consideration is bounded by
        \begin{equation*}
            \ll M^2 \Bigg((w|\P|)^2|A|^{1/2}|B|^{1/2} + \frac{\eps |A||B|}{Y\phi(q_0)} \sum_{\chi \spmod{q_0}} \int_{-Y/\eps}^{Y/\eps} \Bigg\lvert\!\sum_{p\in \P} \chi(p) p^{it}\Bigg\rvert^2 \ind{\{\lvert\sum_{p\in \P} \overline{\chi(p)} p^{it} \rvert > w |\P|\}} dt\Bigg),
        \end{equation*}
        for any $w>0$. Setting $w := P^{-c}$ for some small absolute constant $c>0$ and bounding $\sum_{p\in \P} \chi(p) p^{it}$ using GRH (see \cite[Section~2.2]{walsh3} for details), we obtain the claimed bound.
    \end{proof}

    We remark that the Riemann Hypothesis is sufficient to obtain \cref{lem:GRHexpansion} in the case $q_0 = 1$. Specialising to $M = q_0 = \eps = 1$ yields the following corollary.

    \begin{corollary}
        \label{cor:GRHexpansion}
        Assume RH. Suppose that~${P \geq (\log Y)^{C}}$ for some sufficiently large constant $C>0$.

        Then, any configuration $\A = (A, \alpha_{\bullet}, H)$ with $|\Q(\A)| \geq \delta |A| |\P|^2$ for some $\delta \geq (\log Y)^{-100}$ satisfies
        \begin{equation*}
            |A| \gg \delta Y.
        \end{equation*}
    \end{corollary}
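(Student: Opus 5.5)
We derive \cref{cor:GRHexpansion} from \cref{lem:GRHexpansion} by taking $B = A$ and specialising the parameters $M = q_0 = \eps = 1$. Since $A$ is $1$-separated, it has at most $2$ elements in any unit interval, so $M \ll 1$ is admissible. Because $A\subset[Y,2Y)\subset[\tfrac{1}{10}Y,10Y]$, the containment hypothesis also holds. With $q_0=1$ we have $\P'=\P$, and the congruence condition is vacuous. We take $a_x := 1$, which is harmless since every integer is coprime to $1$. When $q_0 = 1$, only the trivial character occurs in the argument. Hence, as the remark before the corollary notes, the bound on $\sum_{p\in\P} p^{it}$ needs only RH rather than GRH.

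Every quadruple $(x,y,p,q)\in\Q(\A)$ satisfies $|px-qy|\leq P/10 \leq \eps P$. So $|\Q(\A)|$ is at most the number of quadruples counted in \cref{lem:GRHexpansion}. This gives
\begin{equation*}
    \delta |A||\P|^2 \leq |\Q(\A)| \ll |A| P^{2-c} + \frac{|A|^2|\P|^2}{Y}.
\end{equation*}

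Next we dispose of the first term. Since $|\P|\gg P/\log P$, we have $P^{2-c}\ll |\P|^2 (\log P)^2 P^{-c}$. The hypotheses $P\geq(\log Y)^C$ and $\delta\geq(\log Y)^{-100}$ give $(\log P)^2P^{-c}\leq \tfrac{1}{2}\delta$ times any fixed constant, provided $C$ is large in terms of $c$. This needs a little care, since $\log P$ could a priori be large. However, $P^{-c}(\log P)^2$ is decreasing in $P$ for large $P$, so it is at most $(\log Y)^{-cC}(C\log\log Y)^2 \leq (\log Y)^{-101}$ once $C$ is large. The first term is therefore at most half the left-hand side and can be absorbed.

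What remains is $\delta|A||\P|^2 \ll |A|^2|\P|^2/Y$. Dividing by $|A||\P|^2$, which is nonzero because $A$ is non-empty, yields $|A|\gg\delta Y$. The only step needing attention is this absorption of the $P^{2-c}$ term, which is exactly why the lower bound $P\geq(\log Y)^C$ is imposed; everything else is direct substitution.
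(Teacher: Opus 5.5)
Your proposal is correct and is essentially the paper's argument: the paper simply says that specialising \cref{lem:GRHexpansion} to $B=A$ and $M=q_0=\eps=1$ (RH sufficing when $q_0=1$) yields the corollary. You additionally spell out the absorption of the $|A|P^{2-c}$ term using $P\geq(\log Y)^C$ and $\delta\geq(\log Y)^{-100}$, and your choice $M=2$ for a $1$-separated set is, if anything, slightly more accurate than the paper's $M=1$. Neither detail affects the conclusion.
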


    The conclusion of \cref{cor:GRHexpansion} is essentially as strong as what the random graph heuristic would predict.

    \subsection{Construction of a tower of configurations}
    For $P$ as small as in Walsh's conditional setting, we cannot directly apply \cref{prop:tower} to obtain a tower of configurations. To construct a suitable tower, we instead combine our local structure theorem, \cref{thm:structure}, with the strong expansion estimate given by \cref{lem:GRHexpansion}.

    We start by proving a variant of \cref{lem:liftorsplit}. The main observation is that cases \cref{item:GRHincrement1} and \cref{item:GRHincrement2} of \cref{lem:alternativeforGRHcase} pass to a subset of $A$ that retains more local relations than would be expected from a random subset of the same size. Unlike in the unconditional setting, losing the constant factor $\frac{99}{100}$ in case \cref{item:GRHincrement2} is acceptable here since the RH expansion estimate is strong enough to handle a loss of this magnitude.

    \begin{lemma}
        \label{lem:alternativeforGRHcase}
        Assume RH. Let $\A = (A, \alpha_{\bullet}, H)$ be a configuration such that $|\Q(\A)| \geq \delta |A| |\P|^2$, and suppose that~${P \geq (\log Y)^{C\delta^{-1}}}$ for some sufficiently large constant $C>0$.

        Then, one of the following is true.
        \begin{enumerate}[label=(\roman*),ref=\roman*]
            \item \label{item:GRHlift} There exists a lift $\B = (B, \beta_{\bullet}, Hp^*)$ of $\A$ such that
                  \begin{equation*}
                      |\Q(\B)| \geq \Big(1 - O\Big( \frac{1}{(\log Y)^{10}} \Big) \Big) \delta |B| |\P|^2.
                  \end{equation*}
            \item \label{item:GRHincrement1} There is a proper subset $B\subset A$ such that $|B| \geq \tfrac12 |A|$ and $|\Q(\A|_{B})| \geq \delta |B||\P|^2$.
            \item \label{item:GRHincrement2} There is a subset $B \subset A$ such that $|B| \leq \tfrac12 |A|$ and $|\Q(\A|_{B})| \geq \tfrac{99}{100} \delta |B||\P|^2$.
        \end{enumerate}
    \end{lemma}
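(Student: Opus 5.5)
We follow the proof of \cref{lem:liftorsplit}, but replace the maximality assumption \cref{eq:maxasspt} by the trichotomy. If some proper subset $B\subset A$ with $|B|\geq\tfrac12|A|$ has $|\Q(\A|_B)|\geq\delta|B||\P|^2$, we are in case \cref{item:GRHincrement1} and stop. So we assume every such subset satisfies $|\Q(\A|_B)|<\delta|B||\P|^2$. We then apply \cref{thm:structure} with $\eps:=(\log Y)^{-20}$ and $L:=\lfloor(\log P)/(20\Clift\log\log Y)\rfloor$. The hypothesis $P\geq(\log Y)^{C\delta^{-1}}$ gives $L\gg\delta^{-1}$ (indeed $L\geq C'\delta^{-1}$ for any prescribed $C'$). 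This yields the decomposition $A=A_0\sqcup\dots\sqcup A_{L+1}$ and the lift $\B$ on $A\setminus A_{L+1}$.

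\textbf{Case $|A_{L+1}|\ll|A|/(\log Y)^{15}$.} Property \cref{item:struct2b} gives case \cref{item:GRHlift}, exactly as in \cref{lem:liftorsplit}.

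\textbf{Case $|A_{L+1}|$ large.} We cut at a level $i\in\{1,\dots,L\}$. Put $B_1:=\bigsqcup_{j<i}A_j$ and $B_2:=\bigsqcup_{j>i}A_j$. By \cref{item:struct2a},
\[
|\Q(\A)|\leq|\Q(\A|_{B_1})|+|\Q(\A|_{B_2})|+O\big(|A_i||\P|^2+\eps|A||\P|^2\big).
\]
Since $\sum_i|A_i|\leq|A|$, some $i$ satisfies $|A_i|\leq|A|/L$. Choosing $C$ large makes the loss at most $\tfrac{1}{1000}\delta|A||\P|^2$. Now let $B$ be the smaller of $B_1,B_2$ and $B'$ the larger, so $|B|\leq\tfrac12|A|$.
\begin{itemize}
\item If $|\Q(\A|_B)|\geq\tfrac{99}{100}\delta|B||\P|^2$, we are in case \cref{item:GRHincrement2}.
\item Otherwise $|\Q(\A|_{B'})|\geq\delta|B'||\P|^2+(\tfrac{1}{100}\delta|B|-\tfrac{1}{1000}\delta|A|)|\P|^2$.
\end{itemize}
In the second subcase, if $B'\subsetneq A$ with $|B'|\geq\tfrac12|A|$, we land in case \cref{item:GRHincrement1} provided the bracket is nonnegative, i.e.\ provided $|B|\geq\tfrac1{10}|A|$.

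\textbf{The main obstacle.} It remains to choose $i$ so that both $B_1$ and $B_2$ are not too small, say $|B_1|,|B_2|\geq c|A|$, while $|A_i|\ll|A|/L$. We use the GRH expansion input, \cref{cor:GRHexpansion}, together with \cref{item:struct1} and the size lower bound $|A_{L+1}|\geq|A|/(\log Y)^{15}$.

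The idea is a coarea/pigeonhole argument on the prefix sums $s_i:=|\bigsqcup_{j\leq i}A_j|$. If every index with $|A_i|\leq 10|A|/L$ lay in a region where $s_i<c|A|$ or $s_i>(1-c)|A|$, then the middle range $c|A|\leq s_i\leq(1-c)|A|$ would be crossed only by levels with $|A_i|>10|A|/L$. This leaves at most $L/10$ such levels, so the middle mass would concentrate in few large layers.

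The key structural fact is that $B_2$, or a tail of the chain, has almost no relations leaking out. By \cref{cor:GRHexpansion} under RH, any sub-configuration with relative density $\gg\delta$ has $|B|\gg\delta Y$. This bounds how lopsided the chain can be: $\delta Y\ll|A|$, and tail sets with density $\gtrsim\delta/2$ cannot be tiny. In particular $|B_2|\gg\delta|A|$ whenever $\Q(\A|_{B_2})$ retains density.

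Combining this with $|A_0|\gg\delta^5|A|$ and choosing $i$ among the $\gg L$ indices with small $|A_i|$ inside the window where $s_i\in[c\delta^5|A|,(1-c\delta)|A|]$, we get both pieces of size $\gg\delta^{5}|A|$. Here we need $L\gg\delta^{-6}$, which is arranged by taking $C$ large, possibly adjusting the exponent of $\delta$ in the hypothesis. The loss term is then $\ll\delta^{-5}L^{-1}\delta|B||\P|^2$, which is small relative to $\tfrac1{100}\delta|B||\P|^2$. The threshold $\tfrac1{10}$ above should be replaced by the achieved lower bound $c\delta^5$, with the $\tfrac{99}{100}$ versus $1$ margin adjusted accordingly.

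Making this bookkeeping close — ensuring the chosen cut leaves both sides large enough for the averaging between cases \cref{item:GRHincrement1} and \cref{item:GRHincrement2} to succeed — is where we expect the real work to be.
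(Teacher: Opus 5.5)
Your proposal has a genuine gap. It sits exactly at the step you flag as the main obstacle, and the suggested repair does not work.

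\textbf{Why the cut step fails.} You control the relations lost at the cut only through the trivial bound $O(|A_i||\P|^2)$ with $|A_i|\le |A|/L$. So the loss is measured against $|A|$, and you are forced to look for a \emph{balanced} cut (your threshold $|B|\ge\frac{1}{10}|A|$). Nothing guarantees one exists. You only know $|A_0|\gg\delta^5|A|$ and $|A_{L+1}|\ge |A|/(\log Y)^{15}$. Hence $A_1\sqcup\dots\sqcup A_{L+1}$ may carry only a polylogarithmically small share of $A$, or the mass may sit in one huge layer. In either case your window $s_i\in[c\delta^5|A|,(1-c\delta)|A|]$ contains few or no indices.

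\textbf{Why the proposed fixes do not help.}
\begin{itemize}
\item \cref{cor:GRHexpansion} cannot supply the missing lower bound on the tail. It applies to $B_2$ only if $\A|_{B_2}$ already has density $\gg\delta$, which is exactly what is undecided in the branch where you need it.
\item Strengthening the hypothesis to force $L\gg\delta^{-6}$ proves a different, weaker statement.
\item A sharper combinatorial choice of cut (counting indices where the prefix or suffix mass grows by a factor $1+\delta/100$) only works when $L\gg\delta^{-1}\log\log Y$. The hypothesis $P\ge(\log Y)^{C\delta^{-1}}$ does not give this, since $\eps$ must be polylogarithmically small for \cref{item:GRHlift} and so $L\asymp\log P/\log\log Y$.
\item A minor point: excluding $A_i$ from both sides means the larger side may have fewer than $\frac12|A|$ elements, so \cref{item:GRHincrement1} is not literally reached. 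Put $A_i$ into $B_1$.
\end{itemize}

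\textbf{The missing idea.} Bound the loss relative to $\min(|B_1|,|B_2|)$ rather than $|A|$; then no balance is needed. Take $B_1=\bigsqcup_{j\le i}A_j$ and $B_2=\bigsqcup_{j>i}A_j$, both non-empty with $|B_1|+|B_2|=|A|$. A loss of at most $\frac{1}{100}\delta\min(|B_1|,|B_2|)|\P|^2$ forces one of two outcomes. Either the larger side has density $\ge\delta$, giving \cref{item:GRHincrement1}, or the smaller side has density $\ge\frac{99}{100}\delta$, giving \cref{item:GRHincrement2}.

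The paper obtains such a loss in two cases.
\begin{itemize}
\item If some $1\le i\le L$ has $|A_i|\le|A|/(\log Y)^{13}$, the trivial bound already suffices. Indeed $\delta\ge1/\log Y$ (from $Y\ge P^3$) gives $|A_0|\gg|A|/(\log Y)^5$, hence $\min(|B_1|,|B_2|)\gg|A|/(\log Y)^{11}$.
\item Otherwise every layer has size $\gg|A|/(\log Y)^{13}$. Pick $i$ with $|A_i|+|A_{i+1}|\ll|A|/L$ and apply the \emph{two-set} estimate \cref{lem:GRHexpansion} (with $q_0=M=1$) to the pair $(A_i,A_{i+1})$. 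It shows the relations between adjacent layers number $\ll|A_i||A_{i+1}||\P|^2/Y$. The $P^{2-c}$ term is negligible since $P\ge(\log Y)^C$ and the layers agree up to polylogarithmic factors. This improves the trivial bound by the factor $|A_{i+1}|/Y\ll1/L$ (and symmetrically $|A_i|/Y\ll1/L$), so it is $\ll\min(|B_1|,|B_2|)|\P|^2/L$.
\end{itemize}
The last bound is below $\frac{1}{100}\delta\min(|B_1|,|B_2|)|\P|^2$ exactly when $\delta L$ is large, i.e.\ under the stated hypothesis $P\ge(\log Y)^{C\delta^{-1}}$. This bilinear use of RH on adjacent layers, not the one-set lower bound of \cref{cor:GRHexpansion}, is what your argument lacks.
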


    \begin{proof}
        Let $\eps := (\log Y)^{-20}$ and let $L$ be the largest integer satisfying the conditions $\eps^{-\Clift L} \leq P$ and $\eps \delta^{-5} L \leq 1$ of \cref{thm:structure}. Note that $\delta \geq 1/\log Y$ as $Y\geq P^3 \geq (\log Y)^{C\delta^{-1}}$. Thus, the parameter~$L$ is only limited by the condition $\eps^{-\Clift L} \leq P$, so we have $L \asymp \log P / \log \log Y$.

        Let $A = A_0 \sqcup A_1 \sqcup \ldots \sqcup A_{L+1}$ and ${\B = (A\setminus A_{L+1}, \beta_{\bullet}, Hp^*)}$ be the decomposition and the lift given by \cref{thm:structure}.

        \textbf{Case 1.} If $|A_{L+1}| \leq |A|/(\log Y)^{11}$, then by part \cref{item:struct2b} of \cref{thm:structure}, we have
        \begin{equation*}
            |\Q(\B)| \geq \delta |A| |\P|^2 - O\! \left(\eps |A||\P|^2+|A_{L+1}||\P|^2\right) \geq \delta |B| |\P|^2 - O\!\left(\frac{|A| |\P|^2}{(\log Y)^{11}} \right),
        \end{equation*}
        so that conclusion \cref{item:GRHlift} holds. We may therefore assume that $|A_{L+1}| \geq |A|/(\log Y)^{11}$.

        \textbf{Case 2.} Suppose that there exists $1\leq i \leq L$ such that $|A_i| \leq |A|/(\log Y)^{13}$. Let $B_1 := \bigsqcup_{j=0}^{i} A_j$ and $B_2 := \bigsqcup_{j=i+1}^{L+1} A_j$. Then, by part \cref{item:struct2a} of \cref{thm:structure},
        \begin{equation}
            \label{eq:duringGRHSplit}
            |\Q(\A|_{B_1})| + |\Q(\A|_{B_2})| \geq \delta |A| |\P|^2 - O\! \left(\eps |A||\P|^2+|A_i||\P|^2\right) \geq \delta |A| |\P|^2 - O\!\left(\frac{|A| |\P|^2}{(\log Y)^{13}} \right).
        \end{equation}
        Since $|A_0| \gg |A|/(\log Y)^5$ by part \cref{item:struct1} of \cref{thm:structure}, and $|A_{L+1}| \geq |A|/(\log Y)^{11}$, we have $\min(|B_1|, |B_2|) \gg |A|/(\log Y)^{11}$. Hence, \cref{eq:duringGRHSplit} can be rewritten as
        \begin{equation*}
            |\Q(\A|_{B_1})| + |\Q(\A|_{B_2})| \geq \delta |B_1| |\P|^2 + \delta |B_2| |\P|^2 - O\!\left(\frac{\delta \min(|B_1|, |B_2|) |\P|^2}{\log Y} \right).
        \end{equation*}
        For $Y$ sufficiently large, this estimate implies that one of conclusions \cref{item:GRHincrement1} or \cref{item:GRHincrement2} holds for some choice of $B\in \{B_1, B_2\}$: if $|B_1| \geq |A|/2$, then either conclusion \cref{item:GRHincrement1} holds for $B = B_1$, or conclusion \cref{item:GRHincrement2} holds for $B = B_2$; the case $|B_2| \geq |A|/2$ is treated symmetrically.

        \textbf{Case 3.} It only remains to treat the case where $|A_i| \gg |A|/(\log Y)^{13}$ for all $0\leq i \leq L+1$. By averaging, there exists $1<i<L$ such that $|A_i|+|A_{i+1}| \ll |A|/L$.

        Let $B_1 := \bigsqcup_{j=0}^{i} A_j$ and $B_2 := \bigsqcup_{j=i+1}^{L+1} A_j$. By the conditional expansion estimate, \cref{lem:GRHexpansion}, the number of quadruples $(x,y,p,q)\in \Q(\A)$ with $x\in A_i$ and $y\in A_{i+1}$ is bounded by
        \begin{equation*}
            \ll |A_i|^{1/2} |A_{i+1}|^{1/2} P^{2-c} + \frac{|A_i||A_{i+1}| |\P|^2}{Y} \ll \frac{|A_i||A_{i+1}| |\P|^2}{Y},
        \end{equation*}
        using that $P \geq (\log Y)^C$ for some sufficiently large constant $C>0$ in the last inequality. Therefore, by part \cref{item:struct2a} of \cref{thm:structure}, we get
        \begin{equation*}
            |\Q(\A|_{B_1})| + |\Q(\A|_{B_2})| \geq \delta |A| |\P|^2 - O\! \left(\eps |A||\P|^2+ \frac{|A_i||A_{i+1}| |\P|^2}{Y}\right).
        \end{equation*}
        Since $|A|/(\log Y)^{13} \ll |A_i|, |A_{i+1}| \ll |A|/L$ and $\eps = (\log Y)^{-20}$, the error term can be bounded by $O(\min(|B_1|, |B_2|) |\P|^2 / L)$, so that
        \begin{equation*}
            |\Q(\A|_{B_1})| + |\Q(\A|_{B_2})| \geq \delta |B_1| |\P|^2 + \delta |B_2| |\P|^2 - O\!\left(\frac{\delta \min(|B_1|, |B_2|) |\P|^2}{\delta L} \right).
        \end{equation*}
        As before, this estimate implies that one of conclusions \cref{item:GRHincrement1} or \cref{item:GRHincrement2} holds for some $B\in \{B_1, B_2\}$, provided that $\delta L$ is sufficiently large. Since
        \begin{equation*}
            L \asymp \frac{\log P}{\log \log Y} \geq C \delta^{-1},
        \end{equation*}
        this is indeed the case if $C$ is chosen to be sufficiently large.
    \end{proof}

    Iterating the preceding lemma, we can construct a tower of configurations with a large relative density of local relations at the top level.

    \begin{lemma}
        \label{lem:GRHtower}
        Assume RH. Let $\A = (A, \alpha_{\bullet}, H)$ be a configuration such that $|\Q(\A)| = \delta |A| |\P|^2$, and suppose that~${P \geq (\log Y)^{C\delta^{-2}}}$ for some sufficiently large constant $C>0$.

        Then, there exists a tower of configurations $(\A_i)_{0\leq i \leq k}$ of height $k\gg (\log Y)^{10}$, such that $\A_0 = \A$ and $|\Q(\A_k)| \gg \delta^2 |A_k| |\P|^2$ (where $A_k$ is the set of points of $\A_k$).
    \end{lemma}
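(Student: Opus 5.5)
We will iterate \cref{lem:alternativeforGRHcase}, in the same spirit as the proof of \cref{prop:tower}, and use \cref{cor:GRHexpansion} to rule out too many halving steps. We construct a sequence of configurations $\B_i$ with point sets $B_i$ and densities $\delta_i$, defined by $|\Q(\B_i)| = \delta_i |B_i||\P|^2$, starting from $(\B_0,B_0,\delta_0) := (\A,A,\delta)$. At each stage we apply \cref{lem:alternativeforGRHcase} to $\B_{i-1}$. Its hypothesis $P\geq (\log Y)^{C\delta_{i-1}^{-1}}$ is guaranteed by our assumption $P\geq (\log Y)^{C\delta^{-2}}$, as long as we maintain $\delta_{i-1}\gg \delta^2$; in fact we will keep $\delta_{i-1} \geq \delta/2$ for most of the process.

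The three cases of the lemma give three kinds of steps.
\begin{itemize}
\item In case \cref{item:GRHlift} we take a lifting step. The density drops only by a factor $1-O((\log Y)^{-10})$.
\item In case \cref{item:GRHincrement1} we replace $B_{i-1}$ by a proper subset $B$ of at least half its size. The density does not decrease. Since each such step strictly decreases $|B_{i}|$, the process cannot get stuck. Moreover, these steps cost nothing in density, so only the lifting and halving steps need to be counted.
\item In case \cref{item:GRHincrement2} we take a halving step. The size of the set at least halves and the density drops by the factor $\tfrac{99}{100}$.
\end{itemize}
Since points of configurations are $1$-separated in $[Y,2Y)$, sizes are at most $Y$. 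Hence after $m$ halving steps we have $|B_i| \leq 2^{-m}|A| \leq 2^{-m} Y$ and $\delta_i \geq (99/100)^m \delta\cdot(1-O((\log Y)^{-10}))^{k}$, where $k$ is the number of lifting steps.

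The key point is that halving steps are limited by expansion. Take the restriction of the current configuration to its point set; if it has density $\delta' \geq (\log Y)^{-100}$, then \cref{cor:GRHexpansion} gives $|B_i| \gg \delta' Y$. Combined with $|B_i|\leq 2^{-m}Y$ and $\delta'\geq (99/100)^m\delta/2$, this forces $2^{-m} \gg (99/100)^m \delta$, i.e.\ $m \ll \log(1/\delta)$. Here we use that restricting through lifts only enlarges $\Q$, as in the end of the proof of \cref{prop:tower}. So only $O(\log \delta^{-1})$ halving steps can occur, and the total density loss from them is a factor $(99/100)^{O(\log\delta^{-1})} = \delta^{O(1)}$.

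This is the main obstacle: the exponent must come out so that the final density is $\gg \delta^2$. We therefore need to run the comparison carefully. Note $(99/100)^m \geq 2^{-m/60}$, so $2^{-m}Y \geq |B_i| \gg 2^{-m/60}\delta Y$ gives $2^{-59m/60} \gg \delta$. Thus $(99/100)^m \geq 2^{-m/60} \gg \delta^{1/59}$, and the density stays $\gg \delta^{1+1/59} \geq \delta^2$ throughout. In particular the hypothesis $P\ge(\log Y)^{C\delta_i^{-1}}$ of \cref{lem:alternativeforGRHcase} stays valid, given $P\geq(\log Y)^{C\delta^{-2}}$ with $C$ large.

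Meanwhile the lifting steps lose only $\exp(-O(k(\log Y)^{-10}))$. We stop once $k \asymp (\log Y)^{10}$ lifting steps are done. The process must reach this point, since increment steps strictly shrink the set and halving steps are bounded, so infinitely many non-lifting steps cannot occur. The $\A_j$ are then the configurations after successive lifting steps, with increment and halving steps absorbed via sub-configurations. A lift of a sub-configuration of $\A_{j-1}$ is a lift of $\A_{j-1}$, which gives the tower with $|\Q(\A_k)| \gg \delta^2|A_k||\P|^2$.
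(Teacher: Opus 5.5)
Your proposal is correct and follows the paper's proof. Both iterate \cref{lem:alternativeforGRHcase} through lifting, lossless and halving steps, and both play $|B_i|\le 2^{-m}|A|\le 2^{-m}Y$ against the lower bound $|B_i|\gg \delta_i Y$ from \cref{cor:GRHexpansion} to bound the number $m$ of halving steps.

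The paper phrases this as a contradiction at the first time $\delta_i<c\delta^2$. You instead bound $m$ directly, which gives the slightly stronger density $\gg\delta^{1+1/59}$. That is fine provided you run it as an induction on the step index. The induction is needed so that, when you invoke \cref{cor:GRHexpansion} and \cref{lem:alternativeforGRHcase}, the current density is already known to be $\gg\delta^{1+1/59}$, which is $\ge(\log Y)^{-100}$ since $\delta\gg(\log Y)^{-1/2}$. It closes with a fixed constant.
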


    \begin{proof}
        We imitate the proof of \cref{prop:tower}, using \cref{lem:alternativeforGRHcase} instead of \cref{lem:liftorsplit}.

        That is, by induction, we construct a sequence of triples $(\B_i, B_i, \delta_i)$ where $\B_i$ is a configuration with point set $B_i$ satisfying $|\Q(\B_i)| = \delta_i |B_i| |\P|^2$. We start with $(\B_0, B_0, \delta_0) := (\A, A, \delta)$ and construct the subsequent triples by repeatedly applying \cref{lem:alternativeforGRHcase}. Thus, for each $i\geq 1$, one of the following alternatives holds:
        \begin{itemize}
            \item \textit{Lifting step:} $\B_i$ is a lift of $\B_{i-1}$ and $\delta_i \geq \left(1 - O\!\left((\log Y)^{-10} \right) \right)  \delta_{i-1}$.
            \item \textit{Lossless step:} $\B_i = \B_{i-1}|_{B_i}$ for some proper subset $B_i\subset B_{i-1}$ and $\delta_i \geq \delta_{i-1}$.
            \item \textit{Halving step.} $\B_i= \B_{i-1}|_{B_i}$ for some $B_i\subset B_{i-1}$ with $|B_i| \leq \tfrac12 |B_{i-1}|$, and $\delta_i \geq \tfrac{99}{100} \delta_{i-1}$.
        \end{itemize}
        We run this iterative procedure until either $\delta_i< c\delta^2$ or we have encountered $\lfloor c (\log Y)^{10}\rfloor$ lifting steps, for some small parameter $c>0$ to be chosen later.

        Suppose that, for some $i_1 \geq 1$, we have $\delta_{i_1} < c\delta^2$, but there have been fewer than $\lfloor c (\log Y)^{10}\rfloor$ lifting steps up to time $i_1$. Then, if $c>0$ is sufficiently small, we have $\delta_{i_1} \asymp (99/100)^m \delta$, where $m$ is the number of halving steps up to time $i_1$. This is because lossless steps do not decrease $\delta_i$, and lifting steps decrease $\delta_i$ by at most a factor $1 - O((\log Y)^{-10})$. Thus, we see that
        \begin{equation*}
            \delta_{i_1} \asymp \big( \tfrac{99}{100} \big)^m\delta \asymp c\delta^2 \quad\text{and}\quad |B_{i_1}| \leq 2^{-m} |A|.
        \end{equation*}
        However, since $|\Q(\B_{i_1})| = \delta_{i_1} |B_{i_1}| |\P|^2$, the expansion estimate, \cref{cor:GRHexpansion}, implies that
        \begin{equation*}
            |B_{i_1}| \gg \delta_{i_1} Y.
        \end{equation*}
        Combining these estimates, we get
        \begin{equation*}
            c\delta^2 \ll 2^{-m} \leq \big( \tfrac{99}{100} \big)^{2m} \asymp c^2\delta^2,
        \end{equation*}
        which is a contradiction if $c>0$ is a sufficiently small absolute constant.

        As a result, the algorithm must terminate after $\gg (\log Y)^{10}$ lifting steps have been performed. Defining $\A_i$ to be the configuration obtained after the $i$-th lifting step, we obtain the desired tower of configurations.
    \end{proof}

    \subsection{Derivation of Walsh's conditional theorem} We now prove a stronger version of \cref{lem:largeHcase} assuming GRH, and then deduce Walsh's global structure theorem.

    \begin{lemma}
        \label{lem:GRHlargeHcase}
        Assume GRH. There is an absolute constant $C>0$ such that the following holds.

        Suppose that $P \geq (\log Y)^{C}$. Let $\A = (A, \alpha_{\bullet}, H)$ be a configuration such that $|\Q(\A)| \geq \delta |A| |\P|^2$, where $H \geq P^{3\log Y}$ and $\delta \geq C/\log |\P|$.

        Then, there exist
        \begin{itemize}
            \item a subset $A'\subset A$ with $|\Q(\A|_{A'})| \gg \delta |A'| |\P|^2$,
            \item an integer $q_0 \ll \delta^{-2}$, and
            \item a real number $t$ with $|t| \ll  \delta^{-1} (\log Y)^3 \frac{Y^2}{HP}$,
        \end{itemize}
        such that, for every $x\in A'$, there is an integer $a_x$ coprime to $q_0$ such that
        \begin{equation}
            \alpha_x = \frac{a_x}{q_0} + \frac{t}{x} + O\bigg(\frac{ \delta^{-1} (\log Y)^4}{HP}\bigg) \pmod 1.
        \end{equation}
    \end{lemma}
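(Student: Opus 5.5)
The plan is to follow the proof of \cref{lem:largeHcase} step by step, replacing each application of the unconditional expansion estimate \cref{prop:GRHsubstitute} by the GRH estimate \cref{lem:GRHexpansion}. The latter gives bounds polynomial in $\delta^{-1}$ and $\log Y$ instead of $e^{O(f(Y))}$.

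First, I apply \cref{lem:getfirstformula} to $\A$. Its hypotheses hold: $H\geq P^{3\log Y}\geq Y^{C_1}$ once $P\geq (\log Y)^C$, and $|\P|\geq(\log Y)^4$. This produces $A_2$, a modulus $q_0\leq Y^{O(1)}$, residues $a_x$ coprime to $q_0$, and offsets $\beta_x$ with $|\beta_x|\leq Y^{O(1)}/H$. Property \cref{item:getform2} supplies $\gg\delta|A_2||\P|^2$ quadruples with $(pq,q_0)=1$, $|px-qy|\leq P/10$ and $qa_x\equiv pa_y\pmod{q_0}$.

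To bound $q_0$, I apply \cref{lem:GRHexpansion} with $A=B=A_2$, $M=1$ and $\eps=1$. This shows the number of such quadruples is
\[
\ll |A_2|P^{2-c}+\frac{|A_2|^2|\P|^2}{Y\phi(q_0)}.
\]
Since $P\geq(\log Y)^C$ and $\delta\geq C/\log|\P|$, the first term is $o(\delta|A_2||\P|^2)$. Using $|A_2|\leq Y$, it follows that $\phi(q_0)\ll\delta^{-1}$, hence $q_0\ll\delta^{-1}\log\log(\delta^{-1})\ll\delta^{-2}$.

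Next, I apply \cref{lem:betax} to $\B=(A_2,\beta_\bullet,H)$. This gives $B'$, a base point $x_0$, the formula $\beta_x=x_0\beta_{x_0}/x+O(|\beta_{x_0}|\log Y/Y+\log Y/(HP))$, and a multiset $V$ with $O(\log Y)$ points per unit interval. On $V$ there are $\gg\delta|V||\P|^2$ quadruples with $|pv_1-qv_2|\leq\eps P$, where $\eps\asymp Y\log Y/(HP|\beta_{x_0}|)$.

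To bound $|\beta_{x_0}|$, I apply \cref{lem:GRHexpansion} to $V$ with $M\asymp\log Y$ and $q_0=1$. This gives
\[
\delta|V||\P|^2\ll(\log Y)^2\Big(|V|P^{2-c}+\frac{\eps|V|^2|\P|^2}{Y}\Big).
\]
The first term is absorbed because $P\geq(\log Y)^C$. Since $|V|\leq|B'|\leq Y$, we get $\eps\gg\delta(\log Y)^{-2}$, i.e.\ $|\beta_{x_0}|\ll\delta^{-1}(\log Y)^3\,Y/(HP)$. Setting $t:=x_0\beta_{x_0}$ gives $|t|\ll\delta^{-1}(\log Y)^3Y^2/(HP)$. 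The error term $|\beta_{x_0}|\log Y/Y+\log Y/(HP)$ becomes $O(\delta^{-1}(\log Y)^4/(HP))$, and we take $A'=B'$.

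The main obstacle is bookkeeping in the $V$ step. One must check that the requirement $1/Y\leq\eps\leq1$ holds; if $\eps>1$ the bound is already stronger, and otherwise we can truncate. One must also track the $M^2=(\log Y)^2$ loss: this is what produces the $(\log Y)^3$ in $t$ and the $(\log Y)^4$ in the error, so the exponents must be followed carefully. A second point to verify is that the $P^{2-c}$ term is genuinely negligible against $\delta|\P|^2$ for $\delta\geq C/\log|\P|$, which requires choosing $C$ large.
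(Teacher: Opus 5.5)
Your proposal is correct and follows the paper's own proof step for step: \cref{lem:getfirstformula}, then \cref{lem:GRHexpansion} with $A=B=A_2$ and $M=\eps=1$ to force $\phi(q_0)\ll\delta^{-1}$, then \cref{lem:betax}, and finally \cref{lem:GRHexpansion} on the multiset $V$ with $M\asymp\log Y$ and $q_0=1$ to get $|\beta_{x_0}|\ll\delta^{-1}(\log Y)^3\,Y/(HP)$. One small point, equally implicit in the paper: \cref{lem:betax} gives $|\Q(\B|_{B'})|\gg\delta|B'||\P|^2$ rather than the required bound for $\Q(\A|_{B'})$, since a quadruple in $\Q(\B)$ need not satisfy $qa_x\equiv pa_y \pmod{q_0}$; rerunning the ball-growth argument of \cref{lem:betax} with only the quadruples from property~\cref{item:getform2} of \cref{lem:getfirstformula}, which lie in both $\Q(\A)$ and $\Q(\B)$, gives $|\Q(\A|_{B'})|\gg\delta|B'||\P|^2$, so $A'=B'$ works.
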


    \begin{proof}
        The proof is the same as that of \cref{lem:largeHcase}, with the GRH expansion lemma \cref{lem:GRHexpansion} replacing \cref{prop:GRHsubstitute}. We quickly reproduce the main steps for the reader's convenience.

        After applying \cref{lem:getfirstformula}, we obtain an initial bound on $q_0$ of the form
        \begin{equation*}
            q_0 \leq \exp\!\left( O\!\left(\frac{(\log Y)\log \log Y}{\log |\P|} + \log |\P|\right) \right)  \leq Y^{O(1)}.
        \end{equation*}
        We also obtain a subset $A_2 \subset A$ such that, for all $x\in A_2$,
        \begin{equation*}
            \alpha_x = \frac{a_x}{q_0} + \beta_x \pmod 1,
        \end{equation*}
        where $(a_x, q_0)=1$ and $|\beta_x| \leq P^{-2\log Y}$. Moreover, for each $x\in A_2$, there are $\gg \delta |\P|^2$ quadruples $(x,y,p,q)\in \Q(\A|_{A_2})$ such that $(pq, q_0)=1$, $p\neq q$, ${qa_x \equiv p a_y \pmod{q_0}}$ and $|q\beta_x - p\beta_y| \leq 1/H$.

        The GRH expansion estimate, \cref{lem:GRHexpansion}, applied with $A=B=A_2$ and $M = \eps=1$, implies that
        \begin{equation*}
            q_0 \ll \phi(q_0)^{1+o(1)} \ll \delta^{-1+o(1)}.
        \end{equation*}

        Applying \cref{lem:betax} to the configuration $\B := (A_2, \beta_\bullet, H)$ then produces a subset $B' \subset A_2$ with $|\Q(\B|_{B'})| \gg \delta |B'| |\P|^2$ and an element $x_0 \in B'$ such that, for every $x\in B'$,
        \begin{equation*}
            \alpha_x = \frac{a_x}{q_0} + \frac{x_0\beta_{x_0}}{x} + O \!\left(\frac{ |\beta_{x_0}| \log Y}{Y} + \frac{\log Y}{HP}\right) \pmod 1.
        \end{equation*}
        As in the proof of \cref{lem:largeHcase}, we bound $|\beta_{x_0}|$ by applying an expansion estimate to the multiset~$V$ given in \cref{lem:betax}. By \cref{lem:GRHexpansion}, the number of quadruples $(v_1, v_2, p,q)\in V^2 \times \P^2$ such that $|pv_1 - qv_2| \leq \eps P$ is
        \begin{equation*}
            \ll (\log Y)^2 \bigg( |V| P^{2-c} + \frac{\eps |V|^2 |\P|^2}{Y} \bigg)
        \end{equation*}
        for any $1/Y \leq \eps \leq 1$. Comparing this with the final conclusion of \cref{lem:betax}, we deduce that
        \begin{equation*}
            |\beta_{x_0}| \ll \delta^{-1} (\log Y)^3 \frac{Y}{HP},
        \end{equation*}
        which gives the claimed result.
    \end{proof}

    Finally, we can prove Walsh's global structure theorem under GRH.

    \begin{theorem}
        \label{thm:GRHglobalstructuretheorem}
        Assume GRH. There is an absolute constant $C>0$ such that the following holds.

        Let $\A = (A, \alpha_{\bullet}, H)$ be a configuration such that $|\Q(\A)| \geq \delta |A| |\P|^2$, where $\delta \geq C(\log P)^{-1/2}$.

        Suppose that $P \geq (\log Y)^{C\delta^{-2}}$. Then, there is a subset $A'\subset A$ of size
        \begin{equation*}
            |A'| \gg \delta^6 |A|,
        \end{equation*}
        such that, for all $x\in A'$, we have the approximate formula
        \begin{equation*}
            \alpha_x = \frac{a_0}{q_0} + \frac{T_0}{x} + O\bigg(\frac{\delta^{-2} (\log Y)^4}{HP}\bigg) \pmod 1,
        \end{equation*}
        where $a_0, q_0$ are coprime integers with $1\leq q_0 \ll \delta^{-4}$ and $T_0\in \R$ satisfies $|T_0| \leq \delta^{-2} (\log Y)^3 \frac{Y^2}{HP}$.
    \end{theorem}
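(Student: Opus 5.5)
The plan mirrors the proof of \cref{thm:globalstructuretheorem}, with \cref{lem:GRHtower} replacing \cref{prop:tower} and \cref{lem:GRHlargeHcase} replacing \cref{lem:largeHcase}.

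\emph{Step 1: build a tower.} Since $P \geq (\log Y)^{C\delta^{-2}}$, we may apply \cref{lem:GRHtower} to $\A$. This requires $|\Q(\A)| = \delta|A||\P|^2$ exactly, so we first lower $\delta$ to the exact ratio, which only helps the hypotheses. The lemma gives a tower $(\A_i)_{0\leq i\leq k}$ of height $k\gg(\log Y)^{10}$ with $|\Q(\A_k)|\gg \delta^2|A_k||\P|^2$. Write $\A_k=(A_k,\widetilde\alpha_\bullet,H_k)$. Then $H_k\geq HP^k\geq P^{3\log Y}$.

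\emph{Step 2: apply the top-level lemma.} We apply \cref{lem:GRHlargeHcase} to $\A_k$ with density parameter $\delta' \asymp \delta^2$. Its hypothesis $\delta'\geq C/\log|\P|$ follows from $\delta\geq C(\log P)^{-1/2}$, after enlarging $C$ if needed. This yields the following:
\begin{itemize}
\item a set $A'_k\subset A_k$ with $|\Q(\A_k|_{A'_k})|\gg\delta^2|A'_k||\P|^2$;
\item an integer $q_0\ll\delta^{-4}$;
\item a real $t$ with $|t|\ll \delta^{-2}(\log Y)^3\, Y^2/(H_kP)$;
\item the formula $\widetilde\alpha_x = a_x/q_0 + t/x + O(\delta^{-2}(\log Y)^4/(H_kP))$ on $A'_k$.
\end{itemize}

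\emph{Step 3: density and pigeonholing.} By \cref{cor:GRHexpansion} applied to $\A_k|_{A'_k}$, we get $|A'_k|\gg\delta^2 Y\geq \delta^2|A|$. We then pigeonhole on the residue $a_x \bmod q_0$, over $\phi(q_0)\ll\delta^{-4}$ classes. This gives $A''\subset A'_k$ with constant $a_x=a_0$ and $|A''|\gg\delta^6|A|$.

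\emph{Step 4: push down the tower.} Let $p_1^*,\dots,p_k^*$ be the lifting primes and set $M=p_1^*\cdots p_k^*$. Then $H_k=HM$ and $\alpha_x = M\widetilde\alpha_x \pmod 1$ for $x \in A''$. Multiplying the formula by $M$ gives
\[
\alpha_x = \frac{Ma_0}{q_0}+\frac{Mt}{x}+O\Big(\frac{\delta^{-2}(\log Y)^4}{HP}\Big) \pmod 1.
\]
Each $p_i^*\geq P$ exceeds $q_0\ll\delta^{-4}$, because $P$ is huge. So $(M,q_0)=1$, and $Ma_0$ can be reduced mod $q_0$ to a residue $a_0'$ coprime to $q_0$. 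Setting $T_0:=Mt$ gives $|T_0|\ll\delta^{-2}(\log Y)^3\,Y^2/(HP)$, since $Y^2/(H_kP) = Y^2/(HMP)$. After absorbing constants, these are the claimed bounds.

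The only delicate point is bookkeeping in Step 2: the density drops to $\delta^2$, so the $\delta$-dependence of $q_0$, of $t$, and of the error term must be tracked with $\delta^2$ in place of $\delta$. This is what produces the exponents $\delta^{-4}$, $\delta^{-2}$ and $\delta^6$ in the statement.
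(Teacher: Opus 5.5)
Your proposal is correct and follows the paper's proof essentially step for step. The paper builds the tower with \cref{lem:GRHtower}, applies \cref{lem:GRHlargeHcase} to the top configuration at density $\asymp\delta^2$, uses \cref{cor:GRHexpansion} and pigeonholes over the $\phi(q_0)\ll\delta^{-4}$ residues, then pushes the formula down by the product of the lifting primes; one small slip is that the exact ratio $|\Q(\A)|/(|A||\P|^2)$ is at least $\delta$, so in Step 1 you are raising $\delta$, not lowering it (harmless, as you note), and your explicit check that $(M,q_0)=1$ supplies a detail the paper leaves implicit.
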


    \begin{proof}
        The proof of \cref{thm:GRHglobalstructuretheorem} closely follows that of \cref{thm:globalstructuretheorem}, with \cref{lem:GRHtower} and \cref{lem:GRHlargeHcase} replacing \cref{prop:tower} and \cref{lem:largeHcase}, respectively.

        By \cref{lem:GRHtower}, we can construct a tower of configurations $(\A_i)_{0\leq i \leq k}$ of height $k \gg (\log Y)^{10}$, such that $\A_0 = \A$ and $|\Q(\A_k)| \gg \delta^2 |A_k| |\P|^2$, where $\A_k = (A_k, \widetilde{\alpha}_{\bullet}, H_k)$.

        Applying \cref{lem:GRHlargeHcase} to $\A_k$, we obtain a subset $A' \subset A_k$ with ${|\Q(\A_k|_{A'})| \gg \delta^2 |A'| |\P|^2}$, along with an integer $q_0 \ll \delta^{-4}$ and a real number $|t| \ll  \delta^{-2} (\log Y)^3 \frac{Y^2}{H_k P}$, such that, for every $x\in A'$,
        \begin{equation*}
            \widetilde{\alpha}_x = \frac{a_x}{q_0} + \frac{t}{x} + O\bigg(\frac{ \delta^{-2} (\log Y)^4}{H_k P}\bigg) \pmod 1
        \end{equation*}
        for some integer $a_x$ coprime to $q_0$.

        By \cref{cor:GRHexpansion}, we have the lower bound $|A'| \gg \delta^2 Y$. By pigeonholing on the values of $a_x \spmod{q_0}$, there exists a subset $A''\subset A'$ with $|A''| \gg \delta^6 Y$ such that $a_x$ is constant for $x\in A''$. The conclusion then follows from the definition of a lift, as in the proof of \cref{thm:globalstructuretheorem}.
    \end{proof}

    Combining \cref{thm:GRHglobalstructuretheorem} with \cref{prop:setup} and \cref{thm:MR} (as in the proof of \cref{thm:maingeneral}), we recover a quantitative form of Walsh's conditional Fourier uniformity estimate~\cite{walsh3}.

    \begin{corollary}
        \label{cor:GRHfourieruniformity}
        Assume GRH. Let $H = H(X) = (\log X)^{\psi(X)}$, where $\psi(X)\to \infty$ as $X\to \infty$, and ${\psi(X) \leq (\log X)^{1-c}}$ for some fixed $c>0$.

        Then
        \begin{equation*}
            \sum_{X\leq x < 2X} \sup_{\alpha\in \R/\Z}  \bigg\lvert\!\sum_{x \leq n < x+H} \lambda(n) e(n\alpha)\bigg\rvert \ll \frac{HX}{\sqrt{\log \psi(X)}}.
        \end{equation*}
    \end{corollary}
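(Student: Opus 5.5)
We will follow the proof of \cref{thm:maingeneral} verbatim, replacing \cref{thm:globalstructuretheorem} by its conditional counterpart \cref{thm:GRHglobalstructuretheorem}. We argue by contradiction. Suppose the sum on the left exceeds $\delta HX$ with $\delta := C_0(\log\psi(X))^{-1/2}$ for a large constant $C_0$. Then there is an $H$-separated set $S\subset[X,2X)$ of size $\gg\delta X/H$, together with frequencies $\theta_x$, such that the short exponential sums are $\gg\delta H$. By \cref{prop:setup} there is a scale $P$ with $H^{e^{-O(\delta^{-2})}}\le P\le H^{1/10}$ for which $\gg\delta^7|S||\P|^2$ quadruples satisfy the usual relations. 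Rescaling exactly as before gives a configuration $\A=(A,\alpha_\bullet,H/P)$ in $[Y,2Y)$, with $Y=X/H$ and $|\Q(\A)|\ge\delta'|A||\P|^2$, where $\delta'\asymp\delta^7$.

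Next we check the hypotheses of \cref{thm:GRHglobalstructuretheorem} with $\delta'$ in place of $\delta$. First, $\delta'\ge C(\log P)^{-1/2}$ holds since $\log P\ge e^{-O(\delta^{-2})}\psi(X)\log\log X$ is enormous compared with $\delta^{-14}$. Second, $P\ge(\log Y)^{C\delta'^{-2}}$ is equivalent to $\log P\ge C\delta^{-14}\log\log Y$. Since $\log P\ge e^{-O(\delta^{-2})}\psi\log\log X$, it suffices that $\psi(X)\ge e^{O(\delta^{-2})}$, i.e.\ $\delta^{-2}\le c\log\psi(X)$.

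\textbf{Main obstacle.} This parameter bookkeeping is the key point. The loss $e^{-O(\delta^{-2})}$ in the scale $P$ coming from \cref{prop:setup} is exactly what forces $\delta\gg(\log\psi)^{-1/2}$, and hence the $\sqrt{\log\psi(X)}$ saving in the statement. One must check that all polynomial-in-$\delta$ factors are absorbed by the exponential one, and that $Y\ge P^3$ holds, which is fine since $P\le H^{1/10}$ and $\psi\le(\log X)^{1-c}$ give $H\le X^{o(1)}$.

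The theorem then yields a set $A'\subset A$ with $|A'|\gg\delta^{42}|A|$ on which $\alpha_x=a_0/q_0+T_0/x+O(\delta^{-O(1)}(\log Y)^4/H)$, where $q_0\ll\delta^{-O(1)}$ and $|T_0|\ll\delta^{-O(1)}(\log Y)^3Y^2/H$. We dilate back to $S'=HA'$ and apply \cref{thm:MR} with $T=HT_0$. Its parameter condition \cref{eq:paramassumpt} now reads roughly $(\log X)^{O(1)\delta^{-C}}\le H\le X^{\delta^C}$. This holds because $H=(\log X)^{\psi}$ with $\psi\ge e^{c\delta^{-2}}\gg\delta^{-C}$, and because $\psi\le(\log X)^{1-c}$. \Cref{thm:MR} then says that $\lambda$ is $\delta$-pretentious to some $\chi(n)n^{it}$ with conductor $\delta^{-O(1)}$ and $|t|\le X^2/H^{2-o(1)}$. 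This contradicts the Vinogradov–Korobov non-pretentiousness of the Liouville function in that range, since the required distance is $\gg\log\log X$ while $\log\delta^{-1}\ll\log\log\psi$. Hence the sum is $\ll HX/\sqrt{\log\psi(X)}$.
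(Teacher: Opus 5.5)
Your proposal is correct and follows the paper's proof essentially step by step: \cref{prop:setup}, rescaling to a configuration, \cref{thm:GRHglobalstructuretheorem}, and then \cref{thm:MR}, with the same parameter bookkeeping and the same choice $\delta \asymp (\log \psi(X))^{-1/2}$. The only difference is the last step, where you use unconditional Vinogradov--Korobov non-pretentiousness of $\lambda$ and the paper instead appeals to GRH bounds for twisted prime sums; either works, since the pretentious distance is $\gg \log\log X$ while $\log \delta^{-1} \ll \log\log \psi(X)$.
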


    \begin{proof}
        Let $X$ be sufficiently large, and suppose that the conclusion fails. Then, there exists an $H$-separated subset $S \subseteq [X, 2X)$ of size $\gg \delta X/H$, such that for each $x \in S$, there exists $\theta_x \in \R/\Z$ satisfying
        \begin{equation*}
            \bigg\lvert\!\sum_{x \leq n < x+H} \lambda(n) e(n\theta_x)\bigg\rvert \gg \delta H,
        \end{equation*}
        where
        \begin{equation}
            \label{eq:deltaGRH}
            \delta := \frac{C_1}{\sqrt{\log \psi(X)}}
        \end{equation}
        for some large constant $C_1>0$ to be chosen later.

        By \cref{prop:setup} (the assumption on $\delta$ holds by \cref{eq:deltaGRH} if $C_1$ is sufficiently large), there exists a scale $P$ with
        \begin{equation*}
            H^{e^{-O(\delta^{-2})}} \leq P \leq H^{1/10}
        \end{equation*}
        such that, writing $\P$ for the set of primes in $[P, 2P)$, the number of quadruples ${(x, y, p, q) \in S^2 \times \P^2}$ such that $|px-qy|\leq \tfrac{1}{10} PH$ and $\|q \theta_x - p \theta_y\| \leq \tfrac{P}{10H}$ is $\gg \delta^{7} |S| |\P|^2$. Defining $Y := X/H$, $A := \frac{1}{H} S\subset [Y, 2Y)$ and $\alpha_x := \theta_{Hx}$ for $x\in A$, we see that $\A := (A, \alpha_{\bullet}, H/P)$ is a configuration satisfying
        \begin{equation*}
            \abs{\Q(\A)} \gg \delta^{7} |A| |\P|^2.
        \end{equation*}

        We now apply \cref{thm:GRHglobalstructuretheorem} to $\A$. Again, note that the required lower bound on $P$ is satisfied, using~\cref{eq:deltaGRH} and choosing $C_1$ sufficiently large. Thus, by \cref{thm:GRHglobalstructuretheorem}, there exists a subset $A' \subseteq A$ of size $|A'| \gg \delta^{O(1)} |A|$, such that, for each $x \in A'$, we have the approximate formula
        \begin{equation*}
            \alpha_x = \frac{a_0}{q_0} + \frac{T_0}{x} + O\bigg(\frac{\delta^{O(1)}(\log X)^4}{H}\bigg) \pmod 1
        \end{equation*}
        where $(a_0, q_0)=1$, $q_0 \ll \delta^{-O(1)}$ and $|T_0| \ll \delta^{-O(1)} (\log X)^3 Y^2/H$.

        We conclude by applying \cref{thm:MR}. In the present setting, the condition \cref{eq:paramassumpt} takes the form
        \begin{equation*}
            (\log X)^{\delta^{-C}} \leq H \leq X^{\delta^{C}}
        \end{equation*}
        for some large constant $C>0$, which holds by definition of $H$ and $\delta$, for $X$ sufficiently large. Hence, \cref{thm:MR} yields
        \begin{equation*}
            \sum_{p\leq X} \frac{1+\Re(\chi(p)p^{it})}{p} \ll \log (\delta^{-1})
        \end{equation*}
        for some Dirichlet character $\chi$ of conductor $\ll \delta^{-O(1)} \ll (\log \log X)^{O(1)}$ and some real number $t$ with $|t| \ll (\delta^{-1} \log X)^{O(1)} X^2/H^2 \ll X^2$. This contradicts classical estimates for twisted character sums under GRH (see e.g.~\cite[Chapter~13]{MV}) when $X$ is sufficiently large.
    \end{proof}
    \section{Existence of local relations between the frequencies}
    \label{sec:appendixlocalrelations}
    In this appendix, we reproduce the arguments in \cite[Section~3]{walsh1} to show that the frequencies $\theta_x$ associated to large Fourier coefficients of a multiplicative function in short intervals must satisfy certain local relations, making the quantitative dependence on $\delta$ explicit.

    \begin{lemma}
        \label{lem:Elliott}
        Let $g : I \to \C$ be a $1$-bounded function, where $I \subset \Z$ is an interval of size $|I| = H$. For every $\tau > 0$, there is an exceptional set of primes $\mathcal{E}$ with
        \begin{equation*}
            \sum_{p\in \mathcal{E}} \frac{1}{p} \ll \tau^{-2}
        \end{equation*}
        such that, for every prime $p\leq H$ not in $\mathcal{E}$, we have
        \begin{equation*}
            \bigg\lvert\frac{1}{H} \sum_{n\in I} g(n) - \frac{p}{H} \sum_{\substack{n\\ pn\in I}} g(pn) \bigg\rvert \leq \tau.
        \end{equation*}
    \end{lemma}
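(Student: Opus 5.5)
This is Elliott's inequality (the Turán–Kubilius dual form). The plan is to prove it via the Turán–Kubilius inequality in its dual (large-sieve) form. For each prime $p\le H$, define the discrepancy
\[
D_p := \frac{p}{H}\sum_{pn\in I} g(pn) - \frac{1}{H}\sum_{n\in I} g(n).
\]
The set of exceptional primes will be $\mathcal{E} := \{p\le H : |D_p|>\tau\}$. It then suffices to prove the bound
\[
\sum_{p\le H} \frac{|D_p|^2}{p} \ll 1,
\]
since Chebyshev/Markov immediately gives $\sum_{p\in\mathcal{E}} 1/p \le \tau^{-2}\sum_p |D_p|^2/p \ll \tau^{-2}$.

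To prove the $L^2$ bound, write $\frac{p}{H}\sum_{pn\in I} g(pn) = \frac{1}{H}\sum_{m\in I} g(m)\, p\,\ind{p\mid m}$. This gives
\[
D_p = \frac{1}{H}\sum_{m\in I} g(m)\,\big(p\ind{p\mid m}-1\big).
\]
Now dualise. For complex weights $c_p$ with $\sum_p |c_p|^2 p \le 1$, set $c_p = \overline{D_p}/(p\,(\sum_q |D_q|^2/q)^{1/2})$. By Cauchy–Schwarz in $m$ and $1$-boundedness of $g$,
\[
\Big(\sum_p \frac{|D_p|^2}{p}\Big)^{1/2} = \Big|\frac{1}{H}\sum_{m\in I} g(m)\sum_p c_p\big(p\ind{p\mid m}-1\big)\Big| \le \Big(\frac{1}{H}\sum_{m\in I}\Big|\sum_{p\le H} c_p\big(p\ind{p\mid m}-1\big)\Big|^2\Big)^{1/2}.
\]
So it suffices to bound the last quantity by $O(1)$.

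Expand the square and use the counting estimates
\[
\#\{m\in I: p\mid m\} = H/p + O(1), \qquad \#\{m\in I: pq\mid m\} = H/(pq)+O(1)\ \text{ for } p\ne q.
\]
The diagonal terms $p=q$ contribute $\frac1H\sum_p |c_p|^2(p^2(H/p+O(1)) - 2p(H/p+O(1)) + H) \ll \sum_p |c_p|^2 p + \frac{1}{H}\sum_p |c_p|^2p^2$. Both pieces are $\ll 1$, since $p\le H$. For the off-diagonal terms $p\neq q$, the main terms cancel exactly: $pq\cdot\frac{1}{pq} - p\cdot\frac1p - q\cdot\frac1q + 1 = 0$. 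What remains is an error of size $\frac{1}{H}\sum_{p\ne q}|c_p||c_q|\,O(pq + p + q + 1) \ll \frac1H\big(\sum_{p\le H}|c_p|p\big)^2$.

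Handling this off-diagonal error is the step I expect to be the main obstacle. Cauchy–Schwarz gives $(\sum_p |c_p| p)^2 \le \sum_p |c_p|^2 p \cdot \sum_{p\le H} p \ll H^2/\log H$, which is too large by a factor $H/\log H$. The fix is to reorganise the error term. Write $\ind{pq\mid m}$ via the actual counts, so that the off-diagonal sum equals $\frac1H\sum_{m}|\sum_p c_p p\ind{p\mid m}|^2$ minus its diagonal, minus the cross terms. Then bound the $O(1)$ errors only for those pairs where $pq\le H$; for $pq>H$, $\#\{m: pq\mid m\}\le 1$.

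The cleanest route is to instead invoke the standard Turán–Kubilius inequality
\[
\frac1H\sum_{m\in I}\Big|\sum_{p\le H} a_p(\ind{p\mid m}-1/p)\Big|^2 \ll \sum_{p\le H}\frac{|a_p|^2}{p},
\]
which holds for intervals of length $H$ with primes $p\le H$ (for instance via the large sieve, or Elliott's original argument), applied with $a_p = p c_p$. This yields exactly $\sum_p |c_p|^2 p \le 1$, completing the proof. I would cite the Turán–Kubilius inequality in this form rather than redo the off-diagonal bookkeeping, following \cite[Section~3]{walsh1}.
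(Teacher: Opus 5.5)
\textbf{There is a genuine gap in the final step.} Your reduction to $\sum_{p\le H}|D_p|^2/p\ll 1$, the dualisation, and the diagonal estimate are all correct, and you diagnose the off-diagonal term correctly. The problem is the Tur\'an--Kubilius inequality you cite:
\[
\frac1H\sum_{m\in I}\Big|\sum_{p\le H}a_p\big(\ind{p\mid m}-\tfrac1p\big)\Big|^2\ll\sum_{p\le H}\frac{|a_p|^2}{p}.
\]
It holds for $I=[1,H]$, but it is false for a general interval of length $H$, which is the setting of the lemma (it is applied with $I=[x,x+H)$, $x\asymp X$).

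Here is a counterexample. Suppose $I$ contains $m_0=p_1\cdots p_k$ with distinct $p_i\in(H/2,H]$, and take $a_p=\ind{p\in\{p_1,\dots,p_k\}}$. The single term $m=m_0$ already gives a left-hand side of at least $(k-2k/H)^2/H$. The right-hand side is at most $2k/H$. So the ratio is $\gg k$, and $k$ can be as large as $\asymp\log x/\log H$.

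The classical proof on $[1,H]$ uses the fact that no $m\le H$ is divisible by a product $pq>H$. On a shifted interval this fails, and that is exactly the off-diagonal difficulty you ran into, so citing the inequality does not remove it. Your remark that $\#\{m\in I: pq\mid m\}\le 1$ for $pq>H$ does not help either, since $O(1)$ per pair is the bound that was already too weak.

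\textbf{The fix: use the $1$-boundedness of $g$ on the large primes.}
\begin{itemize}
\item Large primes. Since $\#\{n: pn\in I\}\le H/p+1$, you have $|D_p|\le 3$ for every $p\le H$. Hence $\sum_{H^{1/2}<p\le H}|D_p|^2/p\le 9\sum_{H^{1/2}<p\le H}1/p\ll 1$ by Mertens.
\item Small primes. Run your dual argument exactly as written, with $c_p$ supported on $p\le H^{1/2}$. The only change is that Cauchy--Schwarz now uses $\sum_{p\le H^{1/2}}p\ll H/\log H$ in place of $\sum_{p\le H}p\asymp H^2/\log H$, so your off-diagonal error becomes
\[
\frac1H\Big(\sum_{p\le H^{1/2}}|c_p|p\Big)^2\le\frac1H\sum_p|c_p|^2p\cdot\sum_{p\le H^{1/2}}p\ll\frac{1}{\log H}.
\]
Equivalently, the large sieve with $Q=H^{1/2}$ handles this range directly.
\end{itemize}
Combining the two ranges gives $\sum_{p\le H}|D_p|^2/p\ll 1$, and Markov's inequality finishes the proof.

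\textbf{Comparison with the paper.} The paper's proof uses the same duality-and-expansion argument, written with $F(p)=p^{1/2}\sum_{n\in I}(\ind{p\mid n}-1/p)g(n)$. It dismisses the off-diagonal term in one line ``by Cauchy--Schwarz''. As your own computation shows, that step also only works after restricting to $p\le H^{1/2}$ and treating the remaining primes trivially as above.
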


    \begin{proof}
        This is (essentially) Elliott's inequality~\cite[Theorem~3.13,~p.462]{tenenbaum}. We reproduce the proof for the interested reader.

        It is enough to show the second moment bound
        \begin{equation*}
            \sum_{p\leq H} \frac{1}{p} \bigg\lvert\frac{1}{H} \sum_{n\in I} g(n) - \frac{p}{H} \sum_{\substack{n\\ pn\in I}} g(pn) \bigg\rvert^2 \ll 1.
        \end{equation*}
        Writing $F(p) := {\sum_{n\in I}  \big(\ind{p\mid n} - \frac{1}{p}\big) p^{1/2} g(n)}$, this is equivalent to
        \begin{equation}
            \label{eq:elliotttoprove}
            \sum_{p \leq H} \abs{F(p)}^2 \ll H^2.
        \end{equation}
        By Cauchy-Schwarz, we have
        \begin{equation*}
            \sum_{p \leq H} \abs{F(p)}^2 = \sum_{n\in I} g(n)  \sum_{p\leq H}  \big(\ind{p\mid n} - \tfrac{1}{p}\big) p^{1/2} \overline{F(p)} \leq H^{1/2}\bigg( \sum_{n\in I} \bigg\lvert\sum_{p\leq H}  \big(\ind{p\mid n} - \tfrac{1}{p}\big) p^{1/2}  \overline{F(p)}\,\bigg\rvert^2 \bigg)^{1/2}.
        \end{equation*}
        Expanding the square and swapping the order of summation, we get
        \begin{equation}
            \label{eq:elliottinterm}
            \sum_{n\in I} \bigg\lvert\sum_{p\leq H}  \big(\ind{p\mid n} - \tfrac{1}{p}\big) p^{1/2}  \overline{F(p)}\,\bigg\rvert^2 \ll H \sum_{p\leq H} |F(p)|^2 + \sum_{\substack{p_1, p_2\leq H\\p_1\neq p_2}} p_1^{1/2}p_2^{1/2} \abs{F(p_1)}\abs{F(p_2)},
        \end{equation}
        using that $\sum_{n\in I} \big(\ind{p_1\mid n} - \frac{1}{p_1}\big)\big(\ind{p_2\mid n} - \frac{1}{p_2}\big) \ll 1$ for $p_1\neq p_2$. By Cauchy-Schwarz, the off-diagonal contribution in \cref{eq:elliottinterm} is negligible. Simplifying, we obtain \cref{eq:elliotttoprove}.
    \end{proof}

    \begin{lemma}
        \label{lem:parseval}
        Let $g : I \to \C$ be a $1$-bounded function, where $I \subset \Z$ is an interval of size $|I| = H$. Suppose that $S \subset \R/\Z$ is a set of $1/H$-separated frequencies such that
        \begin{equation*}
            \bigg\lvert\frac{1}{H} \sum_{n\in I} g(n)e(n\alpha) \bigg\rvert \geq \tau
        \end{equation*}
        for all $\alpha\in S$. Then $|S| \ll \tau^{-3}$.
    \end{lemma}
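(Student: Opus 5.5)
The plan is to use a large sieve / Parseval argument. Fix, for each $\alpha\in S$, a unimodular constant $c_\alpha$ with $c_\alpha \sum_{n\in I} g(n)e(n\alpha) = \bigl\lvert\sum_{n\in I} g(n)e(n\alpha)\bigr\rvert$. Summing the hypothesis over $\alpha\in S$ gives
\begin{equation*}
    \tau H |S| \leq \sum_{\alpha\in S} c_\alpha \sum_{n\in I} g(n) e(n\alpha) = \sum_{n\in I} g(n) \sum_{\alpha\in S} c_\alpha e(n\alpha).
\end{equation*}

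By Cauchy--Schwarz and the $1$-boundedness of $g$, the right-hand side is at most $H^{1/2}\bigl(\sum_{n\in I}\bigl\lvert\sum_{\alpha\in S} c_\alpha e(n\alpha)\bigr\rvert^2\bigr)^{1/2}$. Expanding the square gives
\begin{equation*}
    \sum_{\alpha,\alpha'\in S} c_\alpha\overline{c_{\alpha'}} \sum_{n\in I} e(n(\alpha-\alpha')).
\end{equation*}
The inner geometric sum is $\ll \min(H, \norm{\alpha-\alpha'}^{-1})$.

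For fixed $\alpha$, the $1/H$-separation of $S$ means the points $\alpha'\neq\alpha$ satisfy $\norm{\alpha-\alpha'}\geq j/H$ for the $j$-th nearest ones on each side. This makes the off-diagonal sum $\ll H\sum_{j\leq |S|} 1/j \ll H\log(2|S|)$. Together with the diagonal term $H$, the whole expression is $\ll H|S|\log(2|S|)$. Hence
\begin{equation*}
    \tau H|S| \ll H |S|^{1/2}\log(2|S|)^{1/2},
\end{equation*}
which gives $|S| \ll \tau^{-2}\log(2|S|)$ and so $|S| \ll \tau^{-2}\log(2/\tau) \ll \tau^{-3}$.

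If one prefers to avoid the logarithm, an alternative is to use the Selberg/Montgomery--Vaughan large sieve inequality directly. It gives $\sum_{n}\lvert\sum_\alpha c_\alpha e(n\alpha)\rvert^2 \leq (H+H)|S|$ for $1/H$-separated points, and hence $|S|\ll \tau^{-2}$, which is stronger than required. The only step needing care is this off-diagonal bound. Since the claimed exponent $3$ is generous, either the crude harmonic-sum estimate or the dual large sieve inequality suffices, so I do not expect a genuine obstacle.
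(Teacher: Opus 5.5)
Your proof is correct, but it takes a different route from the paper.

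The paper works directly on the frequency side. By the triangle inequality, the lower bound $\gg\tau$ persists on a $(c\tau/H)$-neighbourhood of each $\alpha\in S$. These neighbourhoods are disjoint by the $1/H$-separation, so integrating and using Parseval, $\int_0^1\lvert\frac1H\sum_{n\in I}g(n)e(n\alpha)\rvert^2\,d\alpha\le\frac1H$, gives $|S|\cdot\frac{\tau}{H}\cdot\tau^2\ll\frac1H$. This requires no off-diagonal analysis. However, the width $\tau/H$ of the neighbourhoods costs a factor $\tau$, which is exactly where the exponent $3$ comes from.

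Your dual ($TT^*$) argument has to control the off-diagonal geometric sums. In return it is sharper: it gives $|S|\ll\tau^{-2}\log(2/\tau)$ with the crude harmonic-sum bound, and $|S|\le 2\tau^{-2}$ with the Montgomery--Vaughan/Selberg dual large sieve. Either is more than enough. (With $\tau^{-2}$, the factor $\delta^{6}$ in \cref{prop:setup} would become $\delta^{5}$, which is immaterial there.)

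Two small points of rigour in your write-up:
\begin{enumerate}
\item On the circle, ``the $j$-th nearest on each side'' is cleaner if phrased as follows: the arc $\{\beta:\|\beta-\alpha\|\le r\}$ contains at most $2rH$ points of $S\setminus\{\alpha\}$. Hence the $k$-th nearest such point satisfies $\|\alpha-\alpha'\|\ge k/(2H)$, and the off-diagonal sum is $\ll\sum_{k\le|S|}H/k$.
\item The step from $|S|\ll\tau^{-2}\log(2|S|)$ to $|S|\ll\tau^{-2}\log(2/\tau)$ uses $\tau\le1$. This holds automatically, since $g$ is $1$-bounded and $S\neq\emptyset$.
\end{enumerate}
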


    \begin{proof}
        By the triangle inequality, the lower bound $\abs{\frac{1}{H} \sum_{n\in I} g(n)e(n\alpha)} \gg \tau$ persists for all $\alpha$ in a \mbox{$(c\tau/H)$-neighbourhood} of $S$, for some small constant $c>0$. Since $S$ is~$1/H$-separated, we have
        \begin{equation*}
            |S| \frac{\tau}{H} \tau^2 \ll \int_0^1 \bigg\lvert\frac{1}{H} \sum_{n\in I} g(n)e(n\alpha) \bigg\rvert^2\! d\alpha  \leq \frac{1}{H},
        \end{equation*}
        which rearranges to $|S| \ll \tau^{-3}$.
    \end{proof}

    \begin{proposition}
        \label{prop:setup}
        Let $f:\N \to \C$ be a $1$-bounded multiplicative function. Let $C_0 \leq H\leq X$ and $\delta \geq C_0 (\log \log H)^{-1/2}$ for some sufficiently large absolute constant $C_0$.

        Let $A\subset [X, 2X]$ be an $H$-separated set. Suppose that, for each $x\in A$, there exists a frequency $\theta_x \in \R/\Z$ satisfying
        \begin{equation*}
            \bigg\lvert{\sum_{x\leq n<x+H} f(n) e(n\theta_x)}\bigg\rvert \geq \delta H.
        \end{equation*}
        Then, there exists a scale $P$ with $H^{e^{-O(\delta^{-2})}} \leq P \leq H^{1/10}$ such that, writing $\P$ for the set of primes in $[P, 2P]$, there are
        \begin{equation*}
            \gg \delta^{6} \frac{|A|^2}{X/H} |\P|^2
        \end{equation*}
        quadruples $(x,y,p,q)\in A^2\times \P^2$ with $|qx-py| \leq \tfrac{1}{10}PH$ and ${\norm{p\theta_x-q\theta_y} \leq \frac{P}{10H}}$.
    \end{proposition}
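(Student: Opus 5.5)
We follow the Elliott-inequality argument of Walsh~\cite[Section~3]{walsh1}, keeping the dependence on $\delta$ explicit. Fix $x\in A$ and set $g_x(n) := f(n)e(n\theta_x)$ on $I_x := [x,x+H)$. Apply \cref{lem:Elliott} with $\tau := c\delta$. Outside an exceptional set $\mathcal{E}_x$ with $\sum_{p\in\mathcal{E}_x}1/p \ll \delta^{-2}$, every prime $p\le H$ satisfies
\[
\Big|\frac{p}{H}\sum_{pn\in I_x} f(pn)e(pn\theta_x)\Big| \ge \tfrac{\delta}{2}.
\]
Using multiplicativity $f(pn)=f(p)f(n)$ (the terms with $p\mid n$ contribute $O(H/p^2)$, which is harmless), this becomes
\[
\Big|\sum_{n\in[x/p,(x+H)/p)} f(n)e(np\theta_x)\Big| \gg \delta H/p.
\]
So $f$ correlates, on the short interval $[x/p,(x+H)/p)$ of length $H/p$, with the frequency $p\theta_x$.

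\textbf{Choice of scale.} The dyadic ranges $[P,2P]$ with $P\in[H^{e^{-K}},H^{1/10}]$ have total reciprocal prime mass $\asymp K$. Taking $K := C\delta^{-2}$, at least a positive proportion of these dyadic scales have, for at least $\gg\delta^{O(1)}|A|$ points $x$, no more than half of their primes in $\mathcal{E}_x$. To see this, average $\sum_{p\in\mathcal{E}_x}1/p\ll\delta^{-2}$ over $x$ and over $\gg\delta^{-2}$ scales and apply Markov. Fix such a $P$. We then get $\gg\delta|A||\P|$ pairs $(x,p)$ with $p\in[P,2P]$ and the correlation above.

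\textbf{Collision counting.} Partition $[X/(2P),X]$ into intervals $J$ of length $\asymp H/P$. For each $J$, consider the pairs $(x,p)$ with $[x/p,(x+H)/p)$ essentially equal to $J$, i.e.\ $x/p$ in a window of width $\frac{H}{10P}$. Such pairs yield frequencies $p\theta_x$ at which $f$ correlates on $J$ at level $\gg\delta$. By \cref{lem:parseval}, the $\frac{P}{H}$-separated frequencies with correlation $\ge\tau$ on an interval of length $H/P$ number $\ll\delta^{-3}$.

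Let $m_J$ be the number of pairs $(x,p)$ attached to $J$. Pigeonholing the $m_J$ frequencies $p\theta_x$ into $O(\delta^{-3})$ clusters of radius $\frac{P}{10H}$ gives $\gg\delta^3 m_J^2$ pairs $((x,p),(y,q))$ with $|x/p-y/q|\le\frac{H}{10P}$ and $\|p\theta_x-q\theta_y\|\le\frac{P}{10H}$. The first condition gives $|qx-py|\le\frac{pq H}{10P}$, so the normalisation $\frac1{10}PH$ holds after adjusting constants and the window widths.

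Summing over $J$ and applying Cauchy--Schwarz, with $\sum_J m_J\gg\delta|A||\P|$ and $\#J\asymp XP/H$ (restricted to $J$'s that actually occur), gives
\[
\sum_J\delta^3 m_J^2 \gg \delta^3\frac{(\delta|A||\P|)^2}{XP/H} \gg \delta^5\frac{|A|^2}{X/H}|\P|^2\cdot\frac{|\P|}{P}.
\]
The factor $|\P|/P\asymp 1/\log P$ must be compensated. I expect the main bookkeeping issue to be this density normalisation. The fix is to count $\sum_J m_J\gg\delta|A|\pi$-weighted with the correct number of intervals: the number of distinct relevant $J$ is really $\ll |A|\cdot|\P|$ overlap classes only within range $X/P$ windows of size $H/P$, giving $\#J\asymp X/H\cdot$ const after grouping by $p$-independent $x/p$ scale. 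I would redo this carefully by fixing the dyadic range of $x/p$, which has length $\asymp X/P$ and hence $\asymp X/H$ windows, removing the spurious factor.

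Diagonal terms ($x=y$, $p=q$) contribute $\ll |A||\P|$, which is negligible provided $\delta^{6}|A|H|\P|/X\gg1$. If this fails, the claimed bound is weaker than the trivial diagonal count, and the diagonal can be included. This yields the stated $\gg\delta^6\frac{|A|^2}{X/H}|\P|^2$ (one extra $\delta$ absorbs losses from the scale selection). Finally, the condition $\delta\ge C_0(\log\log H)^{-1/2}$ ensures $e^{-O(\delta^{-2})}$ still leaves $P\ge$ a large power of $\log H$, so the range of scales is nonempty.
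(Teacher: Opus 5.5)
Your skeleton is the paper's: Elliott's inequality, a scale chosen among $\asymp\delta^{-2}$ units of $\sum 1/p$ mass, multiplicativity to get correlation with $p\theta_x$ on $[x/p,(x+H)/p)$, binning by position and by large spectrum via \cref{lem:parseval}, then Cauchy--Schwarz. However, the binning step has a genuine gap. You assert that every pair $(x,p)$ with $x/p$ in a window of width $\frac{H}{10P}$ yields a frequency $p\theta_x$ at which $f$ correlates at level $\gg\delta$ on one common interval $J$. With $p$ ranging over all of $[P,2P]$, the interval $[x/p,(x+H)/p)$ has length $H/p\in[\frac{H}{2P},\frac{H}{P}]$, and its left endpoint moves by up to $\frac{H}{10P}$. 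So its symmetric difference with $J$ can be a constant fraction of $|J|$. That swamps a sum of size $\delta H/p$ once $\delta$ is small, so you cannot apply \cref{lem:parseval} on $J$ to cover all the $p\theta_x$ by $O(\delta^{-3})$ arcs.

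The paper closes exactly this hole. After fixing the scale, it pigeonholes to primes in a short range $[P,(1+\delta/C)P]$. It then uses a grid $Z$ of spacing $C^{-1}\delta H/P$, so $|Z|\asymp C\delta^{-1}X/H$. This makes $[x/p,(x+H)/p)$ and $[z,z+H/P)$ differ in only $O(C^{-1}\delta H/P)$ points, so the correlation survives at level $\delta/8$.

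These refinements cost $\delta^{2}$ (the prime pigeonhole, squared in Cauchy--Schwarz) and $\delta$ (the finer grid). So reaching $\delta^{6}$ also requires the scale selection to produce $\gg|A||\P|$ good pairs, not the $\gg\delta|A||\P|$ you state. The averaging genuinely gives this. Weight the $e$-adic ranges by $1/\log P$, with total weight $\asymp C\delta^{-2}$ against exceptional mass $\ll\delta^{-2}$. This yields a scale at which only a proportion $O(1/C)$ of pairs $(x,p)$ are exceptional. With your count plus the necessary refinements, you would fall at least one power of $\delta$ short of $\delta^{6}$.

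Two smaller remarks. First, the ``spurious factor'' came solely from writing $[X/(2P),X]$ instead of $[X/(2P),2X/P]$, as you eventually note. Second, no diagonal removal is needed, since the statement counts the quadruples $(x,x,p,p)$.
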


    \begin{proof}
        Let $W$ be the set of all pairs $(x, p)\in A\times \{p \text{ prime}: p\leq H\}$ such that
        \begin{equation}
            \label{eq:defW2}
            \bigg\lvert{\frac{p}{H} \sum_{\substack{n\\ x\leq pn<x+H}} f(pn) e(pn\theta_x)}\bigg\rvert \geq \frac{\delta}{2}.
        \end{equation}
        The set $W$ can be shown to be large using Elliott's inequality. Indeed, for each $x\in A$, applying \cref{lem:Elliott} with $g(n) := f(n)e(n\theta_x)$, $I := [x, x+H)\cap \Z$ and $\tau := \delta/2$, we have
        \begin{equation*}
            \sum_{p\leq H} \frac{1-\ind{(x,p)\in W}}{p} \ll \delta^{-2}.
        \end{equation*}
        Let $P_1 := H^{1/10}$ and $P_0 := e^{-k}P_1$ for some parameter $k\in \N$ such that $10\leq P_0 \leq P_1^{1/2}$. Dropping the primes $p\notin [P_0, P_1)$ and summing over $x\in A$, we get
        \begin{equation*}
            \sum_{i=0}^{k-1} \frac{1}{e^{-i}P_1} \sum_{e^{-i-1}P_1\leq p<e^{-i}P_1}\,  \sum_{x\in A} \big(1-\ind{(x,p)\in W}\big) \ll \delta^{-2}  |A|.
        \end{equation*}
        By averaging, there exists $0\leq i \leq k$ such that
        \begin{equation*}
            \frac{\log (e^{-i}P_1)}{e^{-i}P_1} \sum_{e^{-i-1}P_1\leq p<e^{-i}P_1}\,  \sum_{x\in A} \big(1-\ind{(x,p)\in W}\big) \ll \frac{\delta^{-2}}{\sum_{i=0}^{k-1} \frac{1}{\log(e^{-i}P_1)}}|A| \ll \frac{\delta^{-2}}{\log \frac{\log P_1}{\log P_0}} |A|.
        \end{equation*}
        Let $C$ be a large constant to be chosen later. We assume throughout that the constant $C_0$ in the statement of the proposition is sufficiently large with respect to $C$. Select $k$ so that $P_0 \asymp H^{e^{-C\delta^{-2}}}$. By the prime number theorem, for $P' := e^{-i-1}P_1 \in [P_0, P_1]$, we have
        \begin{equation*}
            \frac{1}{|\P'|}\sum_{p\in \P'}\sum_{x\in A} \ind{(x,p)\in W} \gg |A|
        \end{equation*}
        provided $C$ is sufficiently large, where $\P'$ denotes the set of primes in $[P', eP')$. It will be convenient to restrict to a slightly shorter interval of primes. By the pigeonhole principle, there exists $P\in [P', eP')$ such that, writing $\P$ for the set of primes in $[P, (1+\delta/C)P]$, we have
        \begin{equation}
            \label{eq:goodlbW}
            \sum_{x\in A} \sum_{p\in \P}\ind{(x,p)\in W} \gg \frac{\delta}{C} |A| |\P'|.
        \end{equation}

        For every $(x,p)\in W \cap (A\times \P)$, since $f$ is multiplicative (but not necessarily completely multiplicative), the estimate \cref{eq:defW2} implies that
        \begin{equation}
            \label{eq:usemult2}
            \bigg\lvert{\frac{p}{H} \sum_{\substack{n\\ x\leq pn<x+H}} f(n) e(n p\alpha_x)}\bigg\rvert \geq \frac{\delta}{2} - O(P^{-1}) \geq \frac{\delta}{4},
        \end{equation}
        where the last inequality holds provided $C_0$ is large enough in terms of $C$, using the assumption $\delta \geq C_0 (\log \log H)^{-1/2}$ and the definition of $P_0$.

        Let $Z$ be a maximal $C^{-1}\delta H/P$-separated subset of $[\frac{X}{2P}, \frac{2X}{P}]$. For each $z\in Z$, let $S_z$ be a maximal $\frac{P}{20H}$-separated set of frequencies $\beta \in \R/\Z$ with the property that
        \begin{equation*}
            \bigg\lvert{\frac{P}{H} \sum_{\substack{z\leq n<z+\frac{H}{P}}} f(n) e(n \beta)}\bigg\rvert \geq \frac{\delta}{8}.
        \end{equation*}
        By \cref{lem:parseval}, we know that $\abs{S_z} \ll \delta^{-3}$.

        For each pair $(x,p)\in W\cap (A\times \P)$, there is at least one $z\in Z$ such that $|x/p - z| \leq C^{-1}\delta H/P$; by~\cref{eq:usemult2}, we then have
        \begin{equation*}
            \bigg\lvert{\frac{P}{H} \sum_{\substack{z\leq n<z+\frac{H}{P}}} f(n) e(n p\alpha_x)}\bigg\rvert \geq \bigg\lvert{\frac{p}{H} \sum_{\substack{ \frac{x}{p}\leq n<\frac{x+H}{p}}} f(n) e(n p\alpha_x)}\bigg\rvert - O(C^{-1}\delta) \geq \frac{\delta}{8}
        \end{equation*}
        for $C$ large enough, using that $|p-P| \leq C^{-1}\delta P$. Therefore, $\norm{p\alpha_x - \beta} \leq \frac{P}{20H}$ for some $\beta\in S_z$. By Cauchy-Schwarz, this implies that the number of tuples $(x,y,p,q,z,\beta)\in A^2\times \P^2 \times Z\times \R/\Z$ such that $(x,p), (y,q)\in W$, $\beta\in S_z$,
        \begin{equation*}
            |x/p-z|, |y/q-z| \leq C^{-1}\delta H/P\quad\text{and}\quad \norm{p\alpha_x - \beta}, \norm{q\alpha_y - \beta} \leq \tfrac{P}{20H}
        \end{equation*}
        is at least
        \begin{equation*}
            \frac{(\sum_{x\in A}\sum_{p\in \P} \ind{(x,p)\in W})^2}{\sum_{z\in Z} |S_z|} \gg \frac{(C^{-1}\delta|A| |\P'|)^2}{C\delta^{-4}X/H} = C^{-3}\delta^{6} \frac{|A|^2}{X/H} |\P'|^2,
        \end{equation*}
        using \cref{eq:goodlbW} and the bound $|S_z| \ll \delta^{-3}$. Hence, by the triangle inequality, the number of quadruples $(x,y,p,q)\in A^2\times \P^2$ with $|qx-py| \ll C^{-1}\delta PH$ and $\norm{p\alpha_x - q\alpha_y} \leq \frac{P}{10H}$ is at least
        \begin{equation*}
            \gg C^{-3}\delta^{6} \frac{|A|^2}{X/H} |\P'|^2.
        \end{equation*}
        The conclusion follows upon choosing $C$ sufficiently large.
    \end{proof} \section{A variant of the Matomäki-Radziwiłł theorem}
    \label{sec:appendix_mr}
    In this appendix, we prove \cref{thm:MR}, a structural result characterising multiplicative functions that correlate with linear phases $e(n\alpha_x)$ on many short intervals, where $x\mapsto \alpha_x$ is a smoothly varying function of the form
    \begin{equation*}
        \alpha_x\approx \frac{a}{q}+\frac{T}{x} \pmod 1.
    \end{equation*}
    While the underlying arguments are well known to experts, they are scattered across \cite{MR,MRII,MRTTZ}, and no result of the required generality is proved in the literature. We fill in this gap by providing a detailed proof.

    \subsection{Preliminary lemmas and technical setup}

    The following lemma is a restatement of \cite[Lemma~8.1]{MRII}, whose proof is not explicitly provided in \cite{MRII}. Instead, the reader is referred to the proof of \cite[Lemma~14]{MR} for justification. Since \cite[Lemma~8.1]{MRII} differs slightly from \cite[Lemma~14]{MR}, we include a proof for completeness.

    \begin{lemma}
        \label{lem:MRParseval}
        Let $X\geq y\geq 1$. Let $f:\R\to \C$ be in $L^1\cap L^2$. Then
        \begin{equation}
            \label{eq:MRparseval}
            \frac{1}{X} \int_X^{2X} \abs{\int_{\R} f(t) \frac{(x+y)^{1+it} - x^{1+it}}{1+it} dt }^2 dx \ll \max_{T\geq X/y} \frac{Xy}{T} \int_{-T}^T \abs{f(t)}^2 dt.
        \end{equation}
    \end{lemma}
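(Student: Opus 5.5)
We follow the standard route of \cite[Lemma~14]{MR}: expand the square and use a Plancherel-type argument in the $x$ variable. A cleaner version is to write $\frac{(x+y)^{1+it}-x^{1+it}}{1+it} = \int_x^{x+y} u^{it}\,du$. The inner integral is then
\begin{equation*}
    \int_x^{x+y} F(\log u)\,du, \qquad F(v) := \int_\R f(t) e^{ivt}\,dt,
\end{equation*}
so $F$ is (up to normalisation) the Fourier transform of $f$. After the substitution $u = e^v$, the quantity inside the absolute value is an average of $F(v)e^{v}$ over the short window $v\in[\log x, \log(x+y)]$, which has length $\asymp y/X$. So the left side of \cref{eq:MRparseval} is $\frac1X\int_X^{2X}\big|\int_{\log x}^{\log(x+y)} F(v)e^v\,dv\big|^2dx$.

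We then split $f$ dyadically in frequency. Write $f = f_0 + \sum_{j\ge1} f_j$, where $f_0 = f\ind{|t|\le X/y}$ and $f_j = f\ind{2^{j-1}X/y<|t|\le 2^jX/y}$. For $f_0$, Cauchy--Schwarz in $t$ together with the trivial bound $|\int_x^{x+y}u^{it}du|\le y$ gives at most $y^2 \big(\int_{|t|\le X/y}|f|\big)^2$. That is too lossy, so instead we treat each piece through the $L^2$ identity.

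The key step is the following weighted Plancherel estimate, valid for any $g\in L^1\cap L^2$:
\begin{equation*}
    \frac1X\int_X^{2X}\Big|\int g(t)\frac{(x+y)^{1+it}-x^{1+it}}{1+it}dt\Big|^2dx \ll X\int |g(t)|^2\min\Big(y^2,\frac{X^2}{|t|^2}\Big)\frac{1}{X}\,dt .
\end{equation*}
To prove it, change variables $x=e^v$ and insert a smooth majorant of $\ind{[X,2X]}$. Expanding the square then produces the kernel
\begin{equation*}
    \int w(x)\, x^{i(t_1-t_2)}\, K_{t_1}(x)\overline{K_{t_2}(x)}\, dx,
\end{equation*}
where $|K_t(x)|\le\min(y, 2X/|t|)$. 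Integration by parts in $x$ gives decay $(1+|t_1-t_2|)^{-2}$ times $X\min(y,X/|t_1|)\min(y,X/|t_2|)$, and Schur's test (or $2ab\le a^2+b^2$) then yields the display.

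Applying this to $f$ and grouping dyadically in $t$, the range $|t|\le X/y$ contributes $\ll Xy^2\cdot\frac1X\int_{|t|\le X/y}|f|^2 = \frac{Xy}{T}\int_{-T}^{T}|f|^2$ with $T=X/y$. Each dyadic range $|t|\sim T\ge X/y$ contributes $\ll \frac{X^2}{T^2}\int_{|t|\sim T}|f|^2 = \frac{Xy}{T}\cdot\frac{X}{yT}\int_{|t|\sim T}|f|^2$. Since $X/(yT)\le 1$ and these factors form a geometric series over dyadic $T$, summing gives $\ll\max_{T\ge X/y}\frac{Xy}{T}\int_{-T}^T|f|^2$.

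The main obstacle is making the off-diagonal decay rigorous. The kernel $K_t(x)$ depends on $x$, so the $x$-integration by parts must control $\partial_x K_t$, which is of size $\ll |t|\min(y,X/|t|)/X$. One has to check that this only costs a factor $1+|t_i|/|t_1-t_2|$ that is absorbed correctly. If this becomes messy, the fallback is to follow \cite[Lemma~14]{MR} directly: bound the left side by $\frac{1}{X}\int_0^\infty$ against a smooth weight, then apply Plancherel in the variable $\log x$ to the function $F$ convolved with the window indicator, whose Fourier multiplier is $\frac{(1+y/x)^{1+it}-1}{1+it}\ll\min(y/X,1/|t|)$. The dyadic summation is then identical.
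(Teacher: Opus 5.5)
There is a genuine gap, and it is at the step you flag as the main obstacle. The kernel bound you need is not just messy to verify: it is false for $|t_i|\gg X/y$.

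\textbf{Why the kernel bound fails.} Write $K_t(x)=\frac{(x+y)^{1+it}-x^{1+it}}{1+it}$. The product $K_{t_1}(x)\overline{K_{t_2}(x)}$ contains the cross term $\frac{(x+y)^{1+it_1}x^{1-it_2}}{(1+it_1)(1-it_2)}$. Its phase $t_1\log(x+y)-t_2\log x$ is stationary when $t_1/t_2=1+y/x$.
\begin{itemize}
\item So $\int w(x/X)K_{t_1}(x)\overline{K_{t_2}(x)}\,dx$ has a ridge along $t_1-t_2\asymp t_2y/X$. By stationary phase its height there is $\asymp \frac{X^2}{t_2^2}\big(\frac{X^3}{|t_2|y}\big)^{1/2}$.
\item For example, with $y=X^{1/2}$ and $t_2=X^{3/4}$ this height is $X^{11/8}$. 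Your claimed bound $X\min(y,X/|t_1|)\min(y,X/|t_2|)(1+|t_1-t_2|)^{-2}$ is only $\asymp X$ there.
\item Even with the correct kernel, a Schur or $2ab\le a^2+b^2$ argument with absolute values loses a factor of order $(|t|y/X)^{1/2}$, because the ridge has width $\asymp |t|y/X$.
\item In terms of your integration by parts: after pulling out the phase $x^{i(t_1-t_2)}$, the amplitude $x^{-it}K_t(x)$ still oscillates on the $x$-scale $X^2/(|t|y)$. Each integration by parts therefore only gains when $|t_1-t_2|\gg |t|y/X$.
\end{itemize}
Your fallback has the same defect in another form. The window $[\log x,\log(x+y)]$ has $x$-dependent length, so $\frac{(1+y/x)^{1+it}-1}{1+it}$ is not a Fourier multiplier, and Plancherel in $\log x$ does not apply to it.

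\textbf{The missing idea.} The paper uses the trick from the proof of \cite[Lemma~14]{MR}: remove the $x$-dependence of the ratio $y/x$ before expanding the square. One writes
\begin{equation*}
\frac{(x+y)^{s}-x^{s}}{s}=\frac{1}{2y}\Big(\int_y^{3y}\frac{(x+w)^{s}-x^{s}}{s}\,dw-\int_y^{3y}\frac{(x+w)^{s}-(x+y)^{s}}{s}\,dw\Big),
\end{equation*}
substitutes $w=ux$ in the first integral (and $w-y=v(x+y)$ in the second), and applies Cauchy--Schwarz in $u$.
\begin{itemize}
\item The integrand is then non-negative, so the $x$-dependent range $[y/x,3y/x]$ can be enlarged to the fixed range $[y/(2X),3y/X]$.
\item Swapping the $u$- and $x$-integrals reduces the lemma to bounding $\frac1X\int_X^{2X}\big|\int f(t)x^{1+it}c_u(t)\,dt\big|^2dx$ for a \emph{fixed} $u\ll y/X$, where $c_u(t)=\frac{(1+u)^{1+it}-1}{1+it}$.
\item Now $c_u$ is a genuine multiplier with $|c_u(t)|\ll\min(u,1/|t|)$. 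The only $x$-dependence left is $x^{2+i(t_1-t_2)}$, so the smooth-weight kernel really is $\ll X^2(1+|t_1-t_2|)^{-2}$.
\item This gives $\ll X^2\int|f|^2|c_u|^2\ll\int|f(t)|^2\min(y^2,X^2/t^2)\,dt$, which is exactly your weighted estimate.
\end{itemize}
From there your dyadic summation goes through unchanged.
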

    \begin{proof}
        If $f$ is supported on $[-X/y, X/y]$, using the trivial bound
        \begin{equation}
            \label{eq:trivboundit}
            \abs{\frac{(x+y)^{1+it} - x^{1+it}}{1+it}} = \abs{\int_x^{x+y}u^{it}du}\leq y,
        \end{equation}
        we immediately obtain that the left-hand side of \cref{eq:MRparseval} is $\leq y^2 \int_{-X/y}^{X/y} \abs{f(t)}^2 dt$, which is acceptable.

        Hence, it suffices to treat the case where the support of $f$ is contained in the complement of~$[-X/y, X/y]$; the general case then follows by the triangle inequality.

        It turns out to be enough to prove the inequality
        \begin{equation}
            \label{eq:parsevaltoprove}
            \frac{1}{X} \int_X^{2X} \abs{\int_{\R} f(t) x^{1+it} \frac{(1+u)^{1+it} - 1}{1+it} dt }^2 dx \ll \max_{T\geq X/y} \frac{Xy}{T} \int_{-T}^T \abs{f(t)}^2 dt
        \end{equation}
        where $u \ll y/X \leq 1$ is a \emph{fixed} real number (this is the analogue of \cref{eq:MRparseval} where the $x$-dependent quantity $y/x$ has been replaced by $u$). This reduction from \cref{eq:MRparseval} to \cref{eq:parsevaltoprove} follows from a trick, which consists of applying the identity
        \begin{equation*}
            \frac{(x+y)^{s} - x^{s}}{s} = \frac{1}{2y} \left(\int_{y}^{3y} \frac{(x+w)^s-x^s}{s} dw - \int_{y}^{3y} \frac{(x+w)^s-(x+y)^s}{s} dw\right)
        \end{equation*}
        and pulling the integral over $w$ outside via Cauchy-Schwarz (see \cite[Proof of Lemma~14]{MR} for further details).

        Introducing a smooth weight function $\ind{[1,2]} \leq W \leq \ind{[1/2,5/2]}$ and changing the order of summation, the left-hand side of \cref{eq:parsevaltoprove} is
        \begin{equation}
            \label{eq:parsevalinterm}
            \ll \int_{\R} \int_{\R} \abs{\frac{f(t_1)f(t_2)}{t_1 t_2}} \abs{\frac{1}{X} \int_{\R} W\! \left(\frac{x}{X}\right) x^{2+i(t_1-t_2)} dx }dt_1 dt_2.
        \end{equation}
        A standard Mellin transform computation shows that
        \begin{equation*}
            \frac{1}{X} \int_{\R} W\! \left(\frac{x}{X}\right) x^{2+i(t_1-t_2)} dx \ll \frac{X^2}{1+|t_1-t_2|^2}.
        \end{equation*}
        Using $|f(t_1)f(t_2)|\leq |f(t_1)|^2+|f(t_2)|^2$, the expression \cref{eq:parsevalinterm} simplifies to
        \begin{equation*}
            \ll X^2 \int_{\R} \abs{ \frac{f(t)}{t} }^2 dt.
        \end{equation*}
        By a dyadic decomposition, and recalling that $f$ is zero on $[-X/y, X/y]$, we obtain \cref{eq:parsevaltoprove}. This proves \cref{lem:MRParseval}.
    \end{proof}

    Following Matomäki and Radziwiłł \cite{MR,MRII}, we shall work with a sequence $(a_n)$ supported on integers $n$ having prime factors in prescribed ranges, and obeying a suitable factorability assumption.

    \begin{notation}
        \label{def:factorable}
        Let $X\geq 10^{100}$. Let $10\leq P_1<Q_1\leq P_2<Q_2\leq P_3<Q_3\leq X^{1/3}$. Let $(a_n)_{n\geq 1}$ be a sequence of $1$-bounded complex numbers with the following properties.
        \begin{enumerate}
            \item The support of $(a_n)$ is contained in the set of $X/4 < n\leq 8X$ having at least one prime factor in each of the three intervals $(P_i, Q_i]$, and no repeated prime factor in those intervals.
            \item There are $1$-bounded sequences $(b_m)$ and $(c_p)$ such that, whenever $n=mp_1p_2p_3\in (X/4, 8X]$ for some integer $m$ and primes $p_i\in (P_i, Q_i]$ not dividing $m$, we have $a_n = b_m c_{p_1} c_{p_2} c_{p_3}$.
        \end{enumerate}
    \end{notation}

    \begin{notation}
        \label{not:WoneWtwo}
        Let $W_1,W_2$ be smooth functions such that $\ind{[1,\,5/2]}\leq W_1\leq \ind{[1/2,\,4]} \leq W_2 \leq \ind{[1/4, \,8]}$, and whose Mellin transforms ${\widetilde{W_j}(s) := \int_0^{\infty} W_j(x)x^{s-1}dx}$ satisfy the decay estimate
        \begin{equation}
            \label{eq:MRmellindecay}
            \widetilde{W_j}(it) \ll \exp(-c |t|^{1/2})
        \end{equation}
        for $j=1,2$ (use e.g.~the construction in~\cite{ingham}). We define
        \begin{equation}
            \label{eq:deffofs}
            F(s) := \sum_{n\geq 1}  \frac{a_n}{n^s}W_1 \!\left(\frac{n}{X}\right).
        \end{equation}
    \end{notation}

    \cref{lem:expression1forF,lem:expression2forF} give alternative expressions for $F(s)$ via Mellin inversion, with \cref{lem:expression2forF} also relying on the multiplicativity properties of the sequence $(a_n)$.

    \begin{lemma}
        \label{lem:expression1forF}
        For $s\in \C$ with $\Re(s)\geq 1$,
        \begin{equation*}
            F(s) = \frac{1}{2\pi} \int_{-(\log X)^3}^{(\log X)^3} \bigg(\sum_{n\geq 1} \frac{a_n}{n^{s+iu}}W_2 \Big(\frac{n}{X}\Big)\bigg) \widetilde{W_1}(iu) X^{iu} du + O(X^{-10}).
        \end{equation*}
    \end{lemma}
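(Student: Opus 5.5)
We prove the identity by Mellin inversion applied to the weight $W_1$, inserting the second weight $W_2$ for free.

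The starting point is the inversion formula
\begin{equation*}
    W_1(x) = \frac{1}{2\pi i}\int_{(0)} \widetilde{W_1}(w)\, x^{-w}\, dw = \frac{1}{2\pi}\int_{\R} \widetilde{W_1}(iu)\, x^{-iu}\, du, \qquad x>0.
\end{equation*}
This is valid because $W_1$ is smooth and compactly supported in $(0,\infty)$. In particular $\widetilde{W_1}$ is entire, and by \cref{eq:MRmellindecay} it is absolutely integrable on the line $\Re(w)=0$.

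Since $W_1\leq \ind{[1/2,4]}\leq W_2$ and $W_2=1$ on $[1/2,4]$, we have $W_1(n/X) = W_1(n/X)\,W_2(n/X)$ for every $n$. We apply the inversion formula with $x=n/X$ to write
\begin{equation*}
    W_1(n/X) = W_2(n/X)\cdot \frac{1}{2\pi}\int_{\R}\widetilde{W_1}(iu)\, X^{iu} n^{-iu}\,du.
\end{equation*}
Substituting this into the definition \cref{eq:deffofs} of $F(s)$ and interchanging the finite $n$-sum (supported on $n\le 8X$) with the $u$-integral gives
\begin{equation*}
    F(s)=\frac{1}{2\pi}\int_{\R}\bigg(\sum_{n}\frac{a_n}{n^{s+iu}}W_2\Big(\frac nX\Big)\bigg)\widetilde{W_1}(iu)X^{iu}\,du.
\end{equation*}
The interchange is justified by absolute convergence.

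It remains to truncate the integral to $|u|\le(\log X)^3$. For $\Re(s)\ge1$, the inner sum is bounded by $\sum_{n\le 8X}1/n\ll \log X$, and $|X^{iu}|=1$. The tail is therefore bounded by
\begin{equation*}
    \ll \log X\int_{|u|>(\log X)^3}\exp(-c|u|^{1/2})\,du \ll \log X\cdot (\log X)^{3/2}\exp\big(-c(\log X)^{3/2}\big),
\end{equation*}
which is $O(X^{-10})$ since $(\log X)^{3/2}$ dominates $\log X$.

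There is no genuine obstacle. The only point needing care is the tail truncation, and it works precisely because the sub-exponential decay \cref{eq:MRmellindecay} beats any power of $X$ once $|u|\ge(\log X)^{3}$.
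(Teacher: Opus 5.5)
Your proof is correct and follows the paper's argument: write $W_1(n/X)=W_1(n/X)W_2(n/X)$, apply Mellin inversion to $W_1$ on the line $\Re(w)=0$, and truncate the $u$-integral to $|u|\le(\log X)^3$ using the decay \cref{eq:MRmellindecay}. You also supply the details the paper leaves implicit, namely why $W_2\equiv 1$ on the support of $W_1$, why the interchange of the finite sum and the integral is valid, and the explicit tail estimate.
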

    \begin{proof}
        By Mellin inversion of $W_1$, for any $s\in \C$, we have
        \begin{equation*}
            F(s) = \sum_{n\geq 1}  \frac{a_n}{n^s}W_2 \!\left(\frac{n}{X}\right) W_1 \!\left(\frac{n}{X}\right) = \frac{1}{2\pi} \int_{-\infty}^{\infty} \sum_{n\geq 1} \frac{a_n}{n^{s+iu}} W_2 \!\left(\frac{n}{X}\right) \widetilde{W_1}(iu) X^{iu} du.
        \end{equation*}
        The integral can then be truncated using the fast decay of the Mellin transform \cref{eq:MRmellindecay}.
    \end{proof}

    \begin{lemma}
        \label{lem:expression2forF}
        For $s\in \C$ with $\Re(s)\geq 1$, we have
        \begin{equation*}
            F(s) = \sum_{A,B,C} \frac{1}{2\pi} \int_{-(\log X)^3}^{(\log X)^3} Q_{2,A}(s+iu) Q_{3,B}(s+iu)R_C(s+iu) \widetilde{W_1}(iu) X^{iu} du + F_2(s) + O(X^{-10}),
        \end{equation*}
        where $A,B$ and $C$ range over the powers of two such that $A\in (\tfrac12 P_2, Q_2]$, $B\in (\tfrac12 P_3, Q_3]$ and ${ABC\in [\tfrac{1}{16}X, 4X]}$, and where
        \begin{empheq}[left=\empheqlbrace]{align*}
            Q_{j, D}(s) &:= \sum_{p \in (D, 2D]\cap (P_j, Q_j]} \frac{c_p}{p^s},\\
            R_D(s) &:= \sum_{D<m\leq 2D} \frac{r_m}{m^s},\\
            F_2(s) &:= \sum_{\substack{X/2< n\leq 4X}} \frac{e_n}{n^s}
        \end{empheq}
        for some complex coefficients $r_m, e_n$ satisfying $|r_m|\leq 1$ and $\sum_n |e_n|^2 \ll X(\log X)^4/P_2$.
    \end{lemma}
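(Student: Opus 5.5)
Starting from the definition $F(s) = \sum_n a_n n^{-s} W_1(n/X)$, I will first write $F(s)$ as in \cref{lem:expression1forF}. It expresses $F(s)$ as an integral over $|u|\leq (\log X)^3$ of the smoothed sum $\sum_n a_n n^{-s-iu} W_2(n/X)$, weighted by $\widetilde{W_1}(iu)X^{iu}$, with an error of $O(X^{-10})$. All the remaining work is to decompose the inner Dirichlet polynomial $G(s) := \sum_n a_n W_2(n/X) n^{-s}$, which does not depend on $u$. The $u$-integral is then simply carried along. The exception is the remainder term: there I need $F_2$ to be independent of $u$. I therefore plan to build the remainder at the level of $F$ itself, as $F_2(s) := \sum_n e_n n^{-s}$ with $e_n$ an explicit correction supported on $X/2<n\leq 4X$. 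Note that $W_1(n/X)$ is supported on $[X/2,4X]$, where $W_2\equiv 1$.

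For the decomposition of $G$, the plan is a Ramaré-type identity applied to the primes in $(P_2,Q_2]$ and $(P_3,Q_3]$. Each $n$ in the support has at least one prime factor in each interval, and these factors are not repeated. Writing $\omega_j(n)$ for the number of prime factors of $n$ in $(P_j,Q_j]$, we have
\[
 a_n = \sum_{\substack{n = p_2 p_3 m\\ p_j\in(P_j,Q_j],\ p_j\nmid m}} \frac{a_n}{\omega_2(n)\,\omega_3(n)} .
\]
By the factorability property, $a_n = c_{p_2}c_{p_3}\cdot a'_{m}$, where $a'_m$ collects $b$ and $c_{p_1}$. The weight $1/(\omega_2(n)\omega_3(n))$ equals $1/((\omega_2(m)+1)(\omega_3(m)+1))$ and so depends only on $m$. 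This would give the clean factorisation $Q_2Q_3R$ with $r_m := a'_m/((\omega_2(m)+1)(\omega_3(m)+1))$, which is $1$-bounded, except for two obstructions. The first is the coprimality conditions $p_j\nmid m$. The second is that the weight $W_2(n/X)$, and the dyadic constraints $A<p_2\leq 2A$, $B<p_3\le 2B$, couple the variables. I would remove the latter by restricting to $X/2<n\leq 4X$ (where $W_2=1$) and splitting $p_2,p_3,m$ into dyadic blocks $A,B,C$. The condition $ABC\in[X/16,4X]$ then holds automatically. However, the sharp cutoff $p_2p_3m\in(X/2,4X]$ is not a product condition, and the blocks $(A,B,C)$ near the boundary overlap it only partially.

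The main obstacle is precisely to show that all these discrepancies go into $F_2$ with $\sum_n|e_n|^2\ll X(\log X)^4/P_2$. The discrepancies are the coprimality failures ($p_j\mid m$, so $p_j^2\mid n$), the boundary blocks, and the $n$ counted by the $Q_2Q_3R$ terms but lying outside $(X/2,4X]$ after multiplication by $W_1$. I would handle them as follows. The terms with $p_j^2\mid n$ for some $p_j>P_2$ are supported on a set of size $\ll X/P_2$. Each such $n$ has coefficient bounded by the number of representations, i.e.\ by a divisor-type count $\ll(\log X)^2$, which gives $\ll X(\log X)^4/P_2$. For the boundary effect, instead of sharply truncating I would keep the full dyadic sum. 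I would then define $e_n$ as the exact difference between $W_1(n/X)a_n n^{-s}$-coefficients and the coefficients generated by the product terms (after applying $W_1$ through the Mellin integral). If that difference is only supported on $n$ with repeated large primes, the bound follows. If instead the boundary contributes, I would fall back on absorbing the edge blocks $ABC$ near $X/2$ or $4X$ into the factored sum with the smooth weight, letting $W_1$ do the cutoff. Checking that this bookkeeping closes, with $r_m$ still $1$-bounded, is the step I expect to need the most care.
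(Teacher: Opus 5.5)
Your combinatorial core matches the paper's proof. That core consists of the Ramaré-type identity over the primes in $(P_2,Q_2]$ and $(P_3,Q_3]$, the choice $r_m=b'_m/((\omega_2(m)+1)(\omega_3(m)+1))$, and the bound $\sum_n|e_n|^2\ll X(\log X)^4/P_2$. That bound comes from the $\ll X/P_2$ integers with a repeated prime factor above $P_2$, each having $\ll(\log X)^2$ representations. The gap is in how you handle the smooth weight.

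By routing through \cref{lem:expression1forF} you commit to decomposing $G(s)=\sum_n a_nW_2(n/X)n^{-s}$. But $W_2(mpq/X)$ is not a product weight, so it can never be absorbed into $Q_{2,A}Q_{3,B}R_C$. Replacing it by the sharp cutoff $\mathbf{1}_{(X/2,4X]}(mpq)$ just trades it for another non-product condition. You then leave the resulting ``boundary'' issue unresolved, with only a conditional fallback. As written, the plan does not yet produce the stated identity.

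The missing observation is that no boundary problem arises if you never introduce $W_2$. Apply the identity to $F(s)$ itself, keeping the weight $W_1(n/X)$:
\[
F(s)=\sum_{p\in(P_2,Q_2]}\sum_{q\in(P_3,Q_3]}\sum_{(m,pq)=1}\frac{r_mc_pc_q}{(mpq)^s}\,W_1\Big(\frac{mpq}{X}\Big).
\]
Next, drop the condition $(m,pq)=1$ at the cost of $F_2(s)=\sum_n e_nn^{-s}$. Here $e_n$ carries the factor $W_1(n/X)$, so it is automatically $u$-independent and supported in $[X/2,4X]$.

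Then split $p,q,m$ dyadically. This is an exact partition: $W_1(mpq/X)\neq0$ forces $mpq\in[X/2,4X]$, hence $ABC\in[X/16,4X]$, so no block is ever partially cut.

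Only now Mellin-invert $W_1(mpq/X)=\frac{1}{2\pi}\int\widetilde{W_1}(iu)X^{iu}(mpq)^{-iu}\,du$. Since $(mpq)^{-s-iu}$ factors, each block gives exactly
\[
\frac{1}{2\pi}\int Q_{2,A}(s+iu)Q_{3,B}(s+iu)R_C(s+iu)\widetilde{W_1}(iu)X^{iu}\,du .
\]
Truncating to $|u|\le(\log X)^3$ costs $O(X^{-10})$, for three reasons:
\begin{itemize}
\item for $\Re s\ge1$ one has $|Q_{j,D}(s+iu)|\le\sum_{D<p\le 2D}1/p\ll1$ and $|R_C(s+iu)|\le\sum_{C<m\le2C}1/m\le 1$;
\item there are only $O((\log X)^2)$ triples $(A,B,C)$;
\item $\widetilde{W_1}(iu)\ll e^{-c|u|^{1/2}}$.
\end{itemize}

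Your closing remark about ``letting $W_1$ do the cutoff'' points in this direction. It should be the whole argument, applied uniformly to every block rather than as a fallback for edge blocks. Once you adopt it, the only remainder is the coprimality failure, and the bookkeeping you were worried about closes immediately.
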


    \begin{proof}
        Write $\omega_{(P_j, Q_j]}(n) := \sum_{p\in (P_j, Q_j]} \ind{p\mid n}$. The assumptions on $(a_n)$ (see \cref{def:factorable}) imply that
        \begin{equation*}
            \sum_{n\geq 1} \frac{a_n}{n^{s}} W_1 \!\left(\frac{n}{X}\right) =   \sum_{p\in (P_2, Q_2]}  \sum_{q\in (P_3, Q_3]} \sum_{\substack{m\geq 1 \\  (m, pq)=1}} \frac{b_m' c_p c_q }{(mpq)^s \omega_{(P_2, Q_2]}(mpq) \omega_{(P_3, Q_3]}(mpq)} W_1 \!\left(\frac{mpq}{X}\right)
        \end{equation*}
        for some $1$-bounded complex sequence $(b_m')$. Thus, letting
        \begin{equation*}
            r_m := \frac{b_m'}{(\omega_{(P_2, Q_2]}(m)+1)( \omega_{(P_3, Q_3]}(m)+1)},
        \end{equation*}
        we have
        \begin{equation*}
            \sum_{n\geq 1} \frac{a_n}{n^{s}}W_1 \!\left(\frac{n}{X}\right) =   \sum_{p\in (P_2, Q_2]}  \sum_{q\in (P_3, Q_3]} \sum_{\substack{m\geq 1 \\  (m,pq)=1}} \frac{r_m c_p c_q}{(mpq)^s} W_1 \!\left(\frac{mpq}{X}\right).
        \end{equation*}
        The condition $(m,pq)=1$ can be dropped, at the expense of an extra term of the form
        \begin{equation*}
            \sum_{n\geq 1} \frac{e_n}{n^s}
        \end{equation*}
        for some coefficients $|e_n| \ll (\log X)^2$ supported on the integers $n\in (\tfrac12 X, 4X]$ having a repeated prime factor in $(P_2, Q_2] \cup (P_3, Q_3]$. Note that the number of such integers $n$ is
        \begin{equation*}
            \ll \sum_{P_2<p\leq Q_3} \sum_{X/2<n\leq 4X} \ind{p^2\mid n} \ll X \sum_{P_2<p\leq Q_3} \frac{1}{p^2} \ll \frac{X}{P_2}.
        \end{equation*}

        Therefore, by dyadic partitioning,
        \begin{equation*}
            \sum_{n\geq 1} \frac{a_n}{n^{s}}W_1 \!\left(\frac{n}{X}\right) = \sum_{A,B,C}  \sum_{\substack{p\in (P_2, Q_2]\\ p\in (A, 2A]}}  \sum_{\substack{q\in (P_3, Q_3]\\ q\in (B, 2B]}} \sum_{\substack{m\in (C, 2C]}} \frac{r_m c_p c_q}{(mpq)^s} W_1 \!\left(\frac{mpq}{X}\right) +  \sum_{n\geq 1} \frac{e_n}{n^s}
        \end{equation*}
        where $A,B,C$ range over the set of powers of two such that $ABC\in [\tfrac{1}{16}X, 4X]$. The conclusion follows by Mellin inversion and truncation, as in the proof of \cref{lem:expression1forF}.
    \end{proof}

    \subsection{Main part of the Matomäki-Radziwiłł proof}

    \Cref{prop:MRpart1} below is the version of \mbox{\cite[Proposition~8.3]{MRII}} that we require. As noted in \cref{rem:MRpart1differences}, treating \cite[Proposition~8.3]{MRII} as a black box would be insufficient for our purposes (even though it is more general than \cref{prop:MRpart1} in other respects). By extracting from its proof only those ideas relevant to our purposes, we avoid some technical complications.

    \begin{proposition}
        \label{prop:MRpart1}
        Let $X, P_j, Q_j, (a_n), (b_m)$ and $(c_p)$ be as in \cref{def:factorable}.

        Let $1\leq y\leq X^{1/2}$. Let $\U \subset [-X, X]$ be any measurable set and $\nu > 0$. Then, there exists a measurable function $g : [X, 2X] \to \C$ such that
        \begin{equation*}
            \frac{1}{X} \int_{X}^{2X} \bigg\lvert{g(x)-\sum_{x<n\leq x+y} a_n} \bigg\rvert ^2 dx \ll E_1 + E_2 + E_3
        \end{equation*}
        where
        \begin{empheq}[left=\empheqlbrace]{align*}
            E_1 &:= (\log X)^2 + \frac{y^2 (\log X)^5}{P_2},\\
            E_2 &:=  \max_{X/y\leq T\leq X} \frac{Xy}{T} \int_{\substack{[-T, T]\qquad\  \\ \odist(t,\U) \geq X^{1/10} }} \bigg\lvert\sum_{n\geq 1} \frac{a_n}{n^{1+it}} W_2 \!\left(\frac{n}{X}\right)\! \bigg\rvert^2 dt,\\
            E_3 &:= \frac{y^2(\log X)^{11}}{X^{2\nu}} \bigg(1+ \frac{ Q_2 \cdot \abs{\{t\in [-X, X] \,:\, \odist(t, \U) \leq 4X^{1/10}\}}}{(Xy)^{1/2}}\bigg),
        \end{empheq}
        and $g$ satisfies
        \begin{equation}
            \label{eq:boundforg}
            \norm{g}_{\infty} \ll y \sum_{A,B} \int_{\V} \abs{Q_{2,A}(1+it) Q_{3,B}(1+it)} dt \end{equation}
        with
        \begin{equation}
            \label{eq:defV}
            \V := \Big\{t\in [-X, X]\, :\, \max_A \abs{Q_{2,A}(1+it)} \geq X^{-\nu} \Big\}.
        \end{equation}
        In \cref{eq:boundforg,eq:defV}, the variables $A$ and $B$ range over the sets of powers of two in $(\tfrac12 P_2, Q_2]$ and $(\tfrac12 P_3, Q_3]$, respectively. The Dirichlet polynomials $Q_{2, A}$ and $Q_{3,B}$ are those defined in \cref{lem:expression2forF}.
    \end{proposition}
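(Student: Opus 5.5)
We follow the Matomäki--Radziwiłł scheme and construct $g$ as the contribution of the ``large'' set $\V$ to a Perron-type representation of the short sums. By Perron's formula (with smooth weights), for $x\in[X,2X]$ and $y\le X^{1/2}$ we write
\begin{equation*}
\sum_{x<n\le x+y} a_n = \frac{1}{2\pi}\int_{\R} F(1+it)\,\frac{(x+y)^{1+it}-x^{1+it}}{1+it}\,dt + O(\text{tails}),
\end{equation*}
where $F$ is as in \cref{eq:deffofs}. Since $W_1=1$ on $[1,5/2]$, $a_n W_1(n/X)=a_n$ for $n\in(x,x+y]$. The truncation to $|t|\le X$ and the lower-order terms are absorbed into $E_1$ by trivial bounds, which accounts for the $(\log X)^2$ term. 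We then split $\R$ into three regions: (a) $t$ far from $\U$, i.e.\ $\odist(t,\U)\ge X^{1/10}$; (b) $t$ near $\U$ with $t\notin\V$; (c) $t$ near $\U$ with $t\in\V$.

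\textbf{Region (a).} We apply \cref{lem:expression1forF} to write $F(1+it)$ as an average over $|u|\le(\log X)^3$ of the $W_2$-weighted Dirichlet polynomial at $1+i(t+u)$. By Cauchy--Schwarz in $u$ (using the decay of $\widetilde{W_1}$) and \cref{lem:MRParseval}, the mean square of this part is controlled by $E_2$. The shift by $u$ is harmless because $(\log X)^3\ll X^{1/10}$, so $t+u$ still lies far from $\U$, up to adjusting the threshold.

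\textbf{Regions (b) and (c).} Here we use \cref{lem:expression2forF}. The $F_2$ term, via \cref{lem:MRParseval} and the mean value theorem for Dirichlet polynomials, contributes $\ll y^2\cdot(\sum|e_n|^2)/X\cdot(\log X)\ll y^2(\log X)^5/P_2$, which goes into $E_1$. For the main term in region (b), we have $|Q_{2,A}(1+i(t+u))|<X^{-\nu}$ for all $A$ (with $t+u$ in place of $t$). We bound the mean square through \cref{lem:MRParseval} by
\begin{equation*}
\frac{Xy}{T}\int |Q_2|^2|Q_3R|^2 \le \frac{Xy}{T}\,X^{-2\nu}\int |Q_3R|^2 .
\end{equation*}
The mean value theorem on a set of measure $|\{\odist(t,\U)\le 4X^{1/10}\}|$ gives $\int|Q_3R|^2\ll (\text{measure}+X/Q_2)\cdot Q_2/X$ per dyadic block, roughly. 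Summing over the $O((\log X)^3)$ dyadic triples $A,B,C$ and the $u$-integral produces the logarithmic losses and exactly the shape of $E_3$. The factor $(Xy)^{1/2}$ comes from optimizing $T$ against $y^2$ in the maximum of \cref{lem:MRParseval}.

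\textbf{Definition of $g$.} Finally, we define $g(x)$ as the contribution of region (c), that is, the integral over $t$ with $t+u\in\V$, of $\sum_{A,B,C}Q_{2,A}Q_{3,B}R_C$ times the kernel. Bounding the kernel by $y$ (see \cref{eq:trivboundit}) and $|R_C|\le 1$ on $\Re s=1$ yields \cref{eq:boundforg}, after absorbing the $u$-shift into $\V$ (possibly redefining $\V$ with a slightly shifted argument). Since $g$ is built independently of any restriction near $\U$, we can take $g$ to be the full $\V$-part, and the only mean-square errors are those of regions (a), (b) and the $F_2$ term.

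\textbf{Main obstacle.} The main obstacle is the careful bookkeeping in region (b). We need the large-value set near $\U$ to be handled with the measure of the $4X^{1/10}$-neighbourhood and the $u$-shifts simultaneously. Our first attempt would be to perform the $u$-integration outside, via Cauchy--Schwarz, and then enlarge neighbourhoods by $(\log X)^3$ at each step.
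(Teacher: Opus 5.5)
Your skeleton matches the paper's, which packages it as \cref{lem:MRstep1} for a general set $J$ and then takes $J=\V$. The shared steps are:
\begin{itemize}
\item Perron's formula.
\item The region far from $\U$, handled by \cref{lem:expression1forF} and \cref{lem:MRParseval} to give $E_2$. Split at $2X^{1/10}$, so that the $u$-shift keeps $\odist\ge X^{1/10}$ there and stays within $4X^{1/10}$ near $\U$.
\item The $F_2$ term, giving $y^2(\log X)^5/P_2$.
\item $g$ defined as the part of the \cref{lem:expression2forF} main term that is near $\U$ and has $t+u\in\V$. It should not be the ``full $\V$-part'': far from $\U$ you only have \cref{lem:expression1forF}, with no factor $Q_{2,A}Q_{3,B}$ to feed into \cref{eq:boundforg}. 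The substitution $r=t+u$ then gives \cref{eq:boundforg} without redefining $\V$.
\end{itemize}

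The genuine gap is in region (b). After discarding $Q_{2,A}$ via $|Q_{2,A}(1+ir)|<X^{-\nu}$, you must bound $\int_{\mathcal N_T}|Q_{3,B}(1+ir)R_C(1+ir)|^2\,dr$ with $\mathcal N_T=\{r\in[-T,T]:\odist(r,\U)\le 4X^{1/10}\}$. You invoke a ``mean value theorem on a set of measure $M$'' giving $\ll 1+M/(BC)$, but no such estimate exists.
\begin{itemize}
\item The mean value theorem is a statement about intervals. On $[-T,T]$ it gives $1+T/(BC)$, which is far too weak.
\item Applied component by component to $\mathcal N_T$, it gives $K+M/(BC)$, where $K$ is the number of components. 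Nothing in the hypotheses controls $K$, since $\U$ is an arbitrary measurable set.
\item A bound $1+M/N$ for general sets of measure $M$ is of the strength of Montgomery's large values conjecture.
\end{itemize}
Your own bookkeeping is also inconsistent. If your bound held, it would produce $MQ_2/X$ in $E_3$ rather than $MQ_2/(Xy)^{1/2}$, so the factor $(Xy)^{1/2}$ cannot come from ``optimising $T$ against $y^2$''.

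The missing tool is the Hal\'asz--Montgomery inequality. For any $1$-separated $S\subset[-T,T]$,
\[
\sum_{t\in S}|Q_{3,B}(1+it)R_C(1+it)|^2\ll \frac{1}{BC}\big(BC+|S|T^{1/2}\big)\log T .
\]
Write $M:=|\{t\in[-X,X]:\odist(t,\U)\le 4X^{1/10}\}|$. Then:
\begin{itemize}
\item Discretise $\mathcal N_T$ into such sets $S$ with $|S|\ll M$.
\item Use $(BC)^{-1}\asymp A/X\le Q_2/X$.
\item Use $T\ge X/y$, so that $\frac{Xy}{T}\le y^2$ and $\frac{Xy}{T}\cdot\frac{T^{1/2}}{X}=yT^{-1/2}\le y^2(Xy)^{-1/2}$.
\end{itemize}
The region (b) contribution then becomes
\[
\ll \frac{(\log X)^{11}}{X^{2\nu}}\,\frac{Xy}{T}\Big(1+\frac{MQ_2T^{1/2}}{X}\Big)\le \frac{y^2(\log X)^{11}}{X^{2\nu}}\Big(1+\frac{MQ_2}{(Xy)^{1/2}}\Big)=E_3 .
\]
So the $(Xy)^{1/2}$ is precisely the trace of the $T^{1/2}$ loss in Hal\'asz--Montgomery. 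With this substitution in region (b), the rest of your argument goes through.
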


    \begin{remark}
        \label{rem:MRpart1}
        Note that $E_2$ and $E_3$ depend on $\U$, while $E_3$ and $\norm{g}_{\infty}$ depend on $\nu$. We will later choose $\U$ and $\nu$ (as well as the intervals $(P_j, Q_j]$) to ensure that all of these quantities are suitably small.
    \end{remark}

    \begin{remark}
        \label{rem:MRpart1differences}
        A key difference between \cref{prop:MRpart1} and \cite[Proposition~8.3]{MRII} is that we do not apply a Hal\'asz-Montgomery type estimate to bound $\norm{g}_{\infty}$ at this stage. This allows us to exploit an additional averaging over Dirichlet characters later on.
    \end{remark}

    The following lemma is the first step towards \cref{prop:MRpart1}.

    \begin{lemma}
        \label{lem:MRstep1}
        Let $X, P_j, Q_j, (a_n), (b_m),(c_p),y$ and $\U$ be as in \cref{prop:MRpart1}.

        Let $J \subset [-X, X]$ be any measurable set. We have
        \begin{equation*}
            \frac{1}{X} \int_{X}^{2X} \bigg\lvert{g(x)-\sum_{x<n\leq x+y} a_n} \bigg\rvert ^2 dx \ll E_1 + E_2 + E_3^*
        \end{equation*}
        with $E_1, E_2$ as in \cref{prop:MRpart1} and
        \begin{equation*}
            E_3^* :=  \max_{\substack{A,B,C\\ X/y\leq T\leq X}} \frac{X(\log X)^{10}y}{T} \int_{\substack{[-T, T]\setminus J\quad\ \ \\ \odist(t,\U)\leq 4X^{1/10}}} \abs{Q_{2,A}(1+it) Q_{3,B}(1+it) R_C(1+it)}^2 dt,
        \end{equation*}
        for some measurable function $g$ satisfying the uniform bound
        \begin{equation*}
            \norm{g}_{\infty} \ll y \sum_{A,B}  \int_{J} \abs{Q_{2,A}(1+it) Q_{3,B}(1+it)} dt .
        \end{equation*}
        In these expressions, the variables $A,B,C$ and the Dirichlet polynomials $Q_{2,A}, Q_{3,B}$ and $R_C$ are defined to be as in \cref{lem:expression2forF}.
    \end{lemma}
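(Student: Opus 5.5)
Since $a_n$ is supported on $(X/4,8X]$ and $W_1\equiv 1$ on $[1,5/2]$, for $x\in[X,2X]$ and $y\le X^{1/2}$ the short sum $\sum_{x<n\le x+y}a_n$ only sees $n$ with $W_1(n/X)=1$. So Perron/Mellin inversion gives
\begin{equation*}
\sum_{x<n\le x+y} a_n = \frac{1}{2\pi}\int_{\R} F(1+it)\,\frac{(x+y)^{1+it}-x^{1+it}}{1+it}\,dt .
\end{equation*}
The $t$-integral, restricted to $|t|\le X$, differs from the full integral by $O(\log X)$ pointwise (smooth weight, tail beyond $X$), which contributes to $E_1$. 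Split $[-X,X]$ into three sets:
\begin{itemize}
\item $\mathcal{D}_1:=\{\odist(t,\U)\ge X^{1/10}\}$;
\item $\mathcal{D}_2:=J\cap\{\odist(t,\U)<X^{1/10}\}$;
\item $\mathcal{D}_3$, the remainder.
\end{itemize}
Define $g(x)$ to be the contribution of $\mathcal{D}_2$, using the factored expression of \cref{lem:expression2forF} (without $F_2$). The difference $g(x)-\sum a_n$ is then the $\mathcal{D}_1$ contribution, plus the $\mathcal{D}_3$ contribution, plus $F_2$ pieces, plus small errors. We bound each in mean square with \cref{lem:MRParseval}.

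For $\mathcal{D}_1$, use \cref{lem:expression1forF}. It writes $F(1+it)$ as an average over $|u|\le(\log X)^3$ of $\sum_n a_n n^{-1-i(t+u)}W_2(n/X)$ with integrable weight $\widetilde{W_1}(iu)$. By Cauchy--Schwarz in $u$ and the shift $t\mapsto t+u$, the distance to $\U$ changes by at most $(\log X)^3$. Since the region $\odist\ge X^{1/10}$ needs slack, we instead use the cutoff $\odist(t+u,\U)\ge X^{1/10}-(\log X)^3$, or equivalently arrange the split with a small margin. \cref{lem:MRParseval} then gives $E_2$; the range $T>X$ is handled by the truncation.

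For the $F_2$ term, apply \cref{lem:MRParseval} together with the mean value theorem for Dirichlet polynomials, $\int_{-T}^T|F_2(1+it)|^2dt\ll (T+X)X^{-2}\sum|e_n|^2$. This yields
\begin{equation*}
\frac{Xy}{T}\cdot\frac{T+X}{X^2}\cdot\frac{X(\log X)^4}{P_2}\ll \frac{y^2(\log X)^4}{P_2}\quad (T\ge X/y),
\end{equation*}
which is inside $E_1$.

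For $\mathcal{D}_3$, expand via \cref{lem:expression2forF} as a sum over $O((\log X)^3)$ triples $(A,B,C)$ and an integral over $u$. By Cauchy--Schwarz over the triples and over $u$, and by \cref{lem:MRParseval}, its mean square is $\ll (\log X)^{O(1)}$ times the maximum over $A,B,C,T$ of $\frac{Xy}{T}\int|Q_{2,A}Q_{3,B}R_C(1+it)|^2dt$. The integral runs over shifts of $[-T,T]\setminus J$ intersected with $\odist(t,\U)\le 4X^{1/10}$. This is $E_3^*$, modulo the $u$-shift issue: to make the set exactly $[-T,T]\setminus J$, one should define $J$-membership after the shift, that is, split according to $t+u$ rather than $t$. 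I would organise the proof so that the set decomposition is applied to the variable $t+u$ inside the $u$-integral, and define $g$ accordingly. The pointwise bound for $g$ follows from the trivial bound \cref{eq:trivboundit}, $|R_C|\le 1$ on $\Re=1$ (since $\sum_{m\sim C} 1/m\ll1$), and integrability of $\widetilde{W_1}$.

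The main obstacle is bookkeeping of the shifts $u$ relative to the regions defined by $\U$ and $J$. Choosing to cut according to the shifted variable, with margins $X^{1/10}$ versus $4X^{1/10}$, should absorb this. The accumulated logarithmic losses are the $(\log X)^{10}$ in $E_3^*$ and the $(\log X)^5$ in $E_1$.
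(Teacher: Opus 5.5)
Your proposal is correct and follows essentially the same route as the paper. The paper uses Perron's formula truncated at height $\asymp X$ with an $O(\log X)$ error. It then cuts in the variable $t$ at $\odist(t,\U)\ge 2X^{1/10}$, which is exactly the margin you call for, so that after the shift $r=t+u$ one lands in $\odist(r,\U)\ge X^{1/10}$ for $E_2$ and in $\odist(r,\U)\le 4X^{1/10}$ for $E_3^*$. It applies \cref{lem:expression1forF} on the far region, and \cref{lem:expression2forF} together with the mean value theorem for $F_2$ on the near region. Finally it splits according to whether $t+u\in J$, taking $g$ to be the $t+u\in J$ part, as you ultimately propose.

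Only the $J$-cut may be made in the shifted variable; the $\U$-cut must stay in $t$, with the margin. This is because the two expansions of $F(1+it)$ agree only after integrating over $u$, so your alternative of cutting at $\odist(t+u,\U)\ge X^{1/10}-(\log X)^3$ would not work.
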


    \begin{proof}
        By Perron's formula (e.g.~take $\alpha = 0$, $c = 1$, $T=X/2$ and $s\to 0^+$ in \cite[Lemma 1.1 (p.11)]{harman}), \begin{equation}
            \label{eq:perron}
            \sum_{x<n\leq x+y} a_n = \sum_{x<n\leq x+y} a_n W_1 \!\left(\frac{n}{X}\right)  = \frac{1}{2\pi} \int_{-X/2}^{X/2} F(1+it) \frac{(x+y)^{1+it} - x^{1+it}}{1+it} dt + O (\log X).
        \end{equation}

        Our goal is to write
        \begin{equation*}
            \sum_{x<n\leq x+y} a_n = g(x)+h(x)
        \end{equation*}
        with $g$ small in $L^\infty([X, 2X])$ and $h$ small in $L^2([X, 2X])$ (precisely, $\frac{1}{X} \int_{X}^{2X} \abs{ h(x) }^2 dx \ll E_1+E_2+E_3^*$). We will establish this decomposition by successively extracting terms from the formula \cref{eq:perron} and demonstrating that their contribution in $L^2([X, 2X])$ is acceptable. These terms will be absorbed into $h(x)$, while what remains after this process will define $g(x)$.

        First of all, the error term $O (\log X)$ from \cref{eq:perron} can clearly be absorbed into $h(x)$.

        Let $I := \big\{t\in [-X/2, X/2] \, :\, \odist(t, \U) \geq 2 X^{1/10}\big\}$. By \cref{lem:MRParseval} applied to the function ${f(t) := F(1+it) \ind{I}(t)}$, we have
        \begin{equation*}
            \frac{1}{X} \int_{X}^{2X} \abs{ \int_{I} F(1+it) \frac{(x+y)^{1+it} - x^{1+it}}{1+it} dt}^2 dx \ll  \max_{X/y\leq T\leq X/2} \frac{Xy}{T} \int_{I \cap [-T, T]} \abs{F(1+it)}^2 dt.
        \end{equation*}
        By \cref{lem:expression1forF}, Cauchy-Schwarz and \cref{eq:MRmellindecay}, for $X/y\leq T\leq X/2$ we have
        \begin{equation*}
            \int_{I \cap [-T, T]} \abs{F(1+it)}^2 dt \ll \int_{I \cap [-T, T]}  \int_{-(\log X)^3}^{(\log X)^3} \bigg\lvert\sum_{n\geq 1}\frac{a_n}{n^{1+it+iu}} W_2 \!\left(\frac{n}{X}\right)\!\bigg\rvert^2 \abs{\widetilde{W_1}(iu)} du\, dt    + \frac{1}{X^{19}}.
        \end{equation*}
        Swapping the order of integration, performing the change of variables $r:=t+u$ and integrating over $u$, this is
        \begin{equation*}
            \ll \int_{\substack{[-2T, 2T]\quad \  \\ \odist(r,\U) \geq X^{1/10} }} \bigg\lvert\sum_{n\geq 1}\frac{a_n}{n^{1+ir}}  W_2 \!\left(\frac{n}{X}\right) \!\bigg\rvert^2 dr  + \frac{1}{X^{19}}
        \end{equation*}
        where we used the definition of $I$ and the fact that $(\log X)^3 < \min( X^{1/10}, T)$. This is an acceptable contribution (see the definition of $E_2$ in \cref{prop:MRpart1}). In other words, the contribution corresponding to integration over $I$ in \cref{eq:perron} can be absorbed into $h(x)$.

        We now use the formula for $F(1+it)$ given in \cref{lem:expression2forF}. The error term $O(X^{-10})$ of that formula can trivially be absorbed into $h(x)$.

        Let us consider the term $F_2(1+it)$ appearing in \cref{lem:expression2forF}. By \cref{lem:MRParseval},
        \begin{equation*}
            \frac{1}{X} \int_{X}^{2X} \abs{\frac{1}{2\pi}\int_{[-X/2, X/2]\setminus I}F_2(1+it) \frac{(x+y)^{1+it} - x^{1+it}}{1+it} dt}^2 dx  \ll \max_{T\geq X/y} \frac{Xy}{T} \int_{-T}^{T} \abs{F_2(1+it)}^2 dt.
        \end{equation*}
        By the mean-value theorem \cite[Theorem~9.1]{IK}, this is
        \begin{equation*}
            \ll \max_{T\geq X/y} \frac{Xy}{T} (T+X) \sum_{n} \abs{\frac{e_n}{X}}^2 \log T \ll \max_{T\geq X/y} \frac{Xy}{T} (T+X) \frac{(\log X)^5}{XP_2} \ll \frac{y^2 (\log X)^5}{P_2}.
        \end{equation*}
        Thus, the contribution of $F_2(1+it)$ can be absorbed into $h(x)$, by definition of $E_1$.

        Hence, up to some terms that are suitably bounded in $L^2([X, 2X])$, the sum $\sum_{x<n\leq x+y} a_n$ equals
        \begin{equation}
            \label{eq:remainingint}
            \frac{1}{2\pi} \sum_{A,B,C} \int_{-(\log X)^3}^{(\log X)^3} \int_{-X/2}^{X/2}  G_{A,B,C}(t,u) \frac{(x+y)^{1+it} - x^{1+it}}{1+it} dt\, du
        \end{equation}
        where the range of $A,B,C$ is the same as in \cref{lem:expression2forF} and
        \begin{equation*}
            G_{A,B,C}(t,u) :=  \ind{\{\odist(t,\U) < 2X^{1/10}\}}Q_{2,A}(1+it+iu) Q_{3,B}(1+it+iu)R_C(1+it+iu) \widetilde{W_1}(iu) X^{iu}.
        \end{equation*}

        We split \cref{eq:remainingint} based on whether $t+u\in J$ or not (recall that $J$ is the set given in the statement of \cref{lem:MRstep1}).

        The contribution of $t+u\notin J$ can be absorbed into $h(x)$. Indeed, by Cauchy-Schwarz and \cref{lem:MRParseval} we have
        \begin{align*}
            \frac{1}{X} \int_{X}^{2X} & \Bigg\lvert \!\sum_{A,B,C} \int_{-(\log X)^3}^{(\log X)^3} \int_{-X/2}^{X/2}  \ind{t+u\notin J} G_{A,B,C}(t,u) \frac{(x+y)^{1+it} - x^{1+it}}{1+it} dt\, du \Bigg\rvert^2 dx \\
                                      & \ll (\log X)^{5}  \sum_{A,B,C} \int_{-(\log X)^3}^{(\log X)^3} \max_{X/y\leq T\leq X/2} \frac{Xy}{T}  \int_{-T}^{T}   \abs{ \ind{t+u\notin J} G_{A,B,C}(t,u) }^2 dt \, du.
        \end{align*}
        Using the trivial bound $\widetilde{W_1}(iu) X^{iu} \ll 1$ and changing variables $r:=t+u$, this becomes
        \begin{equation*}
            \ll (\log X)^{10} \max_{\substack{A,B,C\\ X/y\leq T\leq X/2}} \frac{Xy}{T} \int_{-2T}^{2T}  \ind{\{\odist(r,\U)\leq 4X^{1/10}\}} \ind{r\notin J} \abs{Q_{2,A}(1+ir) Q_{3,B}(1+ir)R_C(1+ir) }^2 dr,
        \end{equation*}
        which is an acceptable contribution of the form $E_3^*$.

        We can finally define $g(x)$ to be the remaining expression
        \begin{equation*}
            g(x) := \frac{1}{2\pi} \sum_{A,B,C} \int_{-(\log X)^3}^{(\log X)^3} \int_{-X/2}^{X/2}  \ind{t+u\notin J} G_{A,B,C}(t,u) \frac{(x+y)^{1+it} - x^{1+it}}{1+it} dt\, du.
        \end{equation*}
        By the triangle inequality, along with the trivial bounds $R_C(1+it+iu) \ll 1$ and~\cref{eq:trivboundit}, we have
        \begin{equation*}
            \norm{g}_{\infty} \ll y  \sum_{A,B} \int_{-(\log X)^3}^{(\log X)^3}  \int_{-X/2}^{X/2} \ind{t+u\in J}\abs{Q_{2,A}(1+it+iu) Q_{3,B}(1+it+iu) \widetilde{W_1}(iu)} dt\,  du.
        \end{equation*}
        Changing variables $r:=t+u$ and integrating over $u$ yields
        \begin{equation*}
            \norm{g}_{\infty} \ll y \sum_{A,B} \int_{-X}^{X} \ind{r\in J} \abs{Q_{2,A}(1+ir) Q_{3,B}(1+ir)} dr,
        \end{equation*}
        which is the claimed bound for $\norm{g}_{\infty}$.
    \end{proof}

    We can now prove \cref{prop:MRpart1} by choosing an appropriate set $J$ in \cref{lem:MRstep1} (namely, the set of all $t\in [-X, X]$ where one of the Dirichlet polynomials $Q_{2, A}(1+it)$ is large).

    \begin{proof}[Proof of \cref{prop:MRpart1}]
        We apply \cref{lem:MRstep1} with $J$ being the set $\V$ defined in \cref{eq:defV}. The announced bound for $\norm{g}_{\infty}$ is immediate.

        It remains to treat the term $E_3^*$. Using the bound for $Q_{2,A}$ in the definition of $\V$, we have
        \begin{equation}
            \label{eq:theboundforA2}
            E_3^* \ll \frac{X(\log X)^{10}y}{X^{2\nu}} \max_{\substack{A,B,C\\ X/y\leq T\leq X}} \frac{1}{T}  \int_{\substack{[-T, T]\quad\quad\ \ \\ \odist(t,\U)\leq 4X^{1/10}}} \abs{Q_{3,B}(1+it) R_C(1+it)}^2 dt.
        \end{equation}
        For any $1$-separated set $S\subset [-T, T]$, the Hal\'asz-Montgomery inequality \cite[Theorem~9.6]{IK} gives
        \begin{equation*}
            \sum_{t\in S} \abs{Q_{3,B}(1+it) R_C(1+it)}^2 \ll \frac{1}{BC} \big(BC + |S| T^{1/2}\big) \log T.
        \end{equation*}
        Applying this to bound the integral in \cref{eq:theboundforA2} (after discretising), we deduce that
        \begin{equation*}
            E_3^* \ll \frac{X(\log X)^{11}y}{X^{2\nu}} \max_{\substack{A,B,C\\ X/y\leq T\leq X}} \frac{1}{T} \bigg(1 + \frac{\abs{\{t\in [-T, T] \,:\, \odist(t, \U) \leq 4X^{1/10}\}}\cdot T^{1/2}}{BC}\bigg).
        \end{equation*}
        Observing that $(BC)^{-1} \asymp A/X \ll Q_2/X$ and treating $T$ trivially, we conclude that $E_3^*\ll E_3$.
    \end{proof}

    Next, we choose the set $\U$ in \cref{prop:MRpart1} to obtain a good bound for the quantities $E_2$ and $E_3$. This step corresponds to \cite[Theorem~9.2]{MRII}, but is technically much simpler in our context.

    \begin{proposition}
        \label{prop:MRpart2}
        Let $\nu > 0$ be sufficiently small.

        Let $X\geq 10^{100}$ and $10\leq P_1<Q_1\leq P_2<Q_2\leq P_3<Q_3\leq X^{1/3}$. Let $(a_n), (b_m)$ and $(c_p)$ be as in \cref{def:factorable}.

        Let $1\leq y\leq X^{1/2}$. Suppose that $P_1 \geq (\log X)^{1/\nu}$. Then
        \begin{equation*}
            \frac{1}{X} \int_{X}^{2X} \bigg\lvert{g(x)-\sum_{x<n\leq x+y} a_n} \bigg\rvert ^2 dx \ll \frac{y(y+Q_1)(\log X)^{11}}{P_1^{2\nu }} + (\log X)^2
        \end{equation*}
        for some measurable function $g$ obeying the uniform bound \cref{eq:boundforg}.
    \end{proposition}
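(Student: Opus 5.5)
We apply \cref{prop:MRpart1} with a specific choice of the set $\U$, chosen so that $E_2$ and $E_3$ are both small. Recall that $E_2$ involves the mean square of $\sum_n a_n n^{-1-it}W_2(n/X)$ over those $t$ at distance $\geq X^{1/10}$ from $\U$. The natural choice is to let $\U$ be the set of $t\in[-X,X]$ where some short prime polynomial at scale $P_1$ is large. Concretely, take $H\in(\tfrac12P_1,Q_1]$ ranging over powers of two, define $Q_{1,H}(s):=\sum_{p\in(H,2H]\cap(P_1,Q_1]}c_p p^{-s}$, and set
\[\U:=\Big\{t\in[-X,X] : \max_H \abs{Q_{1,H}(1+it)}\geq P_1^{-\nu}\Big\}.\]

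\textbf{Bounding $E_2$.} The factorability in \cref{def:factorable} gives a Ramaré-type identity. Up to an error supported on integers with repeated primes or few prime factors, it writes $\sum a_n n^{-s}W_2(n/X)$ as $\sum_H Q_{1,H}(s)\,R'_{X/H}(s)$, where $R'$ is a $1$-bounded-type polynomial of length $X/H$. This is exactly as in \cref{lem:expression2forF}, using the weight $1/(\omega_{(P_1,Q_1]}(m)+1)$. On the set $\odist(t,\U)\geq X^{1/10}$ we have $\abs{Q_{1,H}(1+it)}<P_1^{-\nu}$ for every $H$. The mean value theorem then gives
\[\int_{-T}^{T}\abs{Q_{1,H}R'}^2\,dt \ll P_1^{-2\nu}\,\frac{T+X/H}{X/H}\,(\log X)^{O(1)}.\]
Summing over the $O(\log X)$ choices of $H$, multiplying by $Xy/T$ and maximising over $T\geq X/y$, we obtain
\[E_2\ll P_1^{-2\nu}\,y\,(y+Q_1)\,(\log X)^{O(1)}.\]
The error term coming from the Ramaré identity is handled by the same mean value argument and is of $E_1$ type.

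\textbf{Bounding $E_3$.} Here we need the measure of the $4X^{1/10}$-neighbourhood of $\U$. A $1$-separated subset of $\U$ on which $\abs{Q_{1,H}}\geq P_1^{-\nu}$ has size at most $X^{O(\nu)}\cdot P_1^{2\nu\ell}$-type bounds, obtained from a large-values (moment) estimate applied to $Q_{1,H}^\ell$ with $\ell\approx \log X/\log H$. Alternatively one can use the Halász–Montgomery inequality on $Q_{1,H}^\ell$. Either route gives $\abs{\U}\ll X^{1/2-c}$ for $\nu$ small. With this, the neighbourhood has measure $\ll X^{1/2-c+1/10}\cdot X^{o(1)}$, and so
\[Q_2\cdot\abs{\{t:\odist(t,\U)\leq4X^{1/10}\}}\,(Xy)^{-1/2}\ll 1\]
provided $Q_2\leq X^{c/2}$. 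In any case, $E_3\ll y^2(\log X)^{11}X^{-2\nu}$, which is dominated by the claimed bound since $X^{-2\nu}\leq P_1^{-2\nu}$.

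\textbf{Main obstacle.} The hard step is the large-values count for $\U$. It requires $P_1\geq(\log X)^{1/\nu}$ so that the $\log$ losses from the moment method are absorbed, and it requires exponent bookkeeping to ensure that the $X^{1/10}$ thickening and the factor $Q_2$ do not overwhelm the savings. If $Q_2$ is too large for this, I would instead absorb that case by shrinking $\nu$.

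The $E_1$ term contributes $(\log X)^2+y^2(\log X)^5/P_2$. The first part is the $(\log X)^2$ term in the claimed bound, and the second is absorbed into the main term because $P_2\geq Q_1\geq P_1\geq P_1^{2\nu}$. Collecting the three bounds gives the stated estimate, with $g$ supplied by \cref{prop:MRpart1}.
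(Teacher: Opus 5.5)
Your route is the paper's. You use the same large-values set $\U$ for the short prime polynomials $Q_{1,D}$, the same Ramaré-type factorisation with the mean value theorem for $E_2$, and a moment-method count of large values for $E_3$. The $E_2$ part is fine, with two small points. First, define $\U$ on $[-2X,2X]$ as the paper does: after Mellin inversion you need smallness of $Q_{1,D}(1+i(t+u))$ for $|u|\le(\log X)^3$, which is what the $X^{1/10}$ buffer absorbs. Second, you should track the power of $\log X$ in $E_2$, since the statement fixes $(\log X)^{11}$; the paper gets $(\log X)^9$.

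The genuine gap is in the $E_3$ step. You weaken the count of a $1$-separated subset of $\U$ to $X^{1/2-c}$, and then need the condition $Q_2\le X^{c/2}$. Your own arithmetic would actually require $Q_2\le X^{c-1/10}$. Neither condition is among the hypotheses: the proposition allows $Q_2$ up to $X^{1/3}$. Your fallback of shrinking $\nu$ does nothing to $Q_2$ once you have discarded the $\nu$-dependence of the count. As written, the bracket $1+Q_2\,\abs{\{t:\odist(t,\U)\le 4X^{1/10}\}}(Xy)^{-1/2}$ in $E_3$ is not shown to be $O(1)$.

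The fix is to keep the sharp bound you state at the start of that paragraph. Fix a dyadic $D\in(\tfrac12P_1,Q_1]$ and set $\ell=\lceil\log X/\log D\rceil$. Let $S\subset[-2X,2X]$ be $1$-separated with $\abs{Q_{1,D}(1+it)}\ge P_1^{-\nu}$ on $S$. The mean value theorem applied to $Q_{1,D}^{\ell}$ gives $|S|\ll P_1^{2\nu\ell}\,\ell!\,4^{\ell}$. Since $\ell\ll\log X/\log P_1$, we get $P_1^{2\nu\ell}\le X^{O(\nu)}$. Since $\log(4\ell)\ll\log\log X\le\nu\log P_1$, we also get $\ell!\,4^\ell\le X^{O(\nu)}$; this is exactly where the hypothesis $P_1\ge(\log X)^{1/\nu}$ enters. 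Summing over the $O(\log X)$ values of $D$, the $4X^{1/10}$-neighbourhood of $\U$ has measure $\ll X^{1/10+O(\nu)}\log X$. Hence
$Q_2X^{1/10+O(\nu)}\log X\,(Xy)^{-1/2}\le X^{1/3+1/10-1/2+O(\nu)}\log X\ll1$
for $\nu$ small, using only $Q_2\le X^{1/3}$. This gives $E_3\ll y^2(\log X)^{11}X^{-2\nu}$, and the rest of your bookkeeping then goes through.
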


    \begin{proof}
        We apply \cref{prop:MRpart1}. Recall that
        \begin{equation}
            \label{eq:recalldefE2}
            E_2 := \max_{X/y\leq T\leq X} \frac{Xy}{T} \int_{\substack{[-T, T]\qquad\  \\ \odist(t,\U) \geq X^{1/10} }} \bigg\lvert\sum_{n\geq 1} \frac{a_n}{n^{1+it}} W_2 \!\left(\frac{n}{X}\right)\!\bigg\rvert^2 dt.
        \end{equation}
        Taking out one prime from the interval $(P_1, Q_1]$ exactly as in the proof of \cref{lem:expression2forF}, we obtain the identity
        \begin{equation*}
            \sum_{n\geq 1}\frac{a_n}{n^{s}} W_2 \!\left(\frac{n}{X}\right) = \sum_{D,E} \frac{1}{2\pi} \int_{-(\log X)^3}^{(\log X)^3} Q_{1,D}(s+iu) R_E'(s+iu) \widetilde{W_2}(iu) X^{iu} du + F_2'(s) + O(X^{-10}),
        \end{equation*}
        where $D,E$ range over the powers of two such that $\tfrac{1}{16}X\leq DE \leq 8X$ and $D\in (\tfrac12 P_1, Q_1]$, and where
        \begin{empheq}[left=\empheqlbrace]{align*}
            Q_{1, D}(s) &:= \sum_{p \in (D, 2D]\cap (P_1, Q_1]} \frac{c_p}{p^s},\\
            R_E'(s) &:= \sum_{E<m\leq 2E} \frac{r_m'}{m^s},\\
            F_2'(s) &:= \sum_{\substack{X/4< n\leq 8X}} \frac{e_n'}{n^s}
        \end{empheq}
        for some complex coefficients $r_m', e_n'$ satisfying $|r_m'|\leq 1$ and $\sum_n |e_n'|^2 \ll X(\log X)^2/P_1$.

        As in the proof of \cref{lem:MRstep1}, the error term $O(X^{-10})$ is negligible, and the contribution of $F_2'(1+it)$ to \cref{eq:recalldefE2} is
        \begin{equation*}
            \ll \max_{T\geq X/y} \frac{Xy}{T} (T+X) \sum_{n} \abs{\frac{e_n'}{X}}^2 \log T \ll \max_{T\geq X/y} \frac{Xy}{T} (T+X) \frac{(\log X)^3}{XP_1} \ll \frac{y^2 (\log X)^3}{P_1}.
        \end{equation*}
        by the mean-value theorem \cite[Theorem~9.1]{IK}.

        For the main term of the identity, we use Cauchy-Schwarz and change variables as in the proof of \cref{lem:MRstep1}. This gives
        \begin{equation}
            \label{eq:interestforE2}
            E_2 \ll  \max_{\substack{X/y\leq T\leq X\\ D,E}} \frac{X(\log X)^8y}{T}  \int_{[-2T, 2T]\setminus \U}  \abs{Q_{1,D}(1+ir) R_{E}'(1+ir)}^2 dr + \frac{y^2 (\log X)^3}{P_1}.
        \end{equation}

        Defining
        \begin{equation}
            \label{eq:defU}
            \U := \Big\{t\in [-2X, 2X]\, :\, \max_D \abs{Q_{1,D}(1+it)} \geq P_1^{-\nu} \Big\},
        \end{equation}
        we have
        \begin{equation*}
            \int_{[-2T, 2T]\setminus \U}  \abs{Q_{1,D}(1+ir) R_{E}'(1+ir)}^2 dr \leq P_1^{-2\nu}  \int_{[-2T, 2T]}  \abs{R_{E}'(1+ir)}^2 dr.
        \end{equation*}
        By the mean-value theorem and the bound $E\asymp X/D \gg X/Q_1$,
        \begin{equation*}
            \int_{[-2T, 2T]}  \abs{R_{E}'(1+ir)}^2 dr \ll (T+E) \sum_m \abs{\frac{r_m'}{E}}^2\log T \ll \left(\frac{T}{E}+1\right) \log X \ll  \left(\frac{TQ_1}{X}+1\right) \log X.
        \end{equation*}
        Hence, the estimate \cref{eq:interestforE2} becomes
        \begin{equation*}
            E_2 \ll \max_{X/y\leq T\leq X} \frac{X(\log X)^9y}{P_1^{2\nu }} \left(\frac{Q_1}{X}+\frac{1}{T}\right) + \frac{y^2 (\log X)^3}{P_1} \ll \frac{y (Q_1+y)(\log X)^9}{P_1^{2\nu }},
        \end{equation*}
        which is acceptable.

        It remains to give a suitable bound for $E_3$, and for this it is enough to show that
        \begin{equation}
            \label{eq:Uissmall}
            \frac{ Q_2 \cdot \abs{\{t\in [-X, X] \,:\, \odist(t, \U) \leq 4X^{1/10}\}}}{(Xy)^{1/2}} \ll 1.
        \end{equation}
        Let $S$ be an arbitrary $1$-separated subset of $\U$. We bound the size of $S$ using the large value estimate \cite[Lemma~8]{MR} (which is a simple consequence of the mean-value theorem, see also \cite[Eq.~(9.30),~p.236]{IK}). We get
        \begin{equation*}
            \abs{S} \ll \sum_D \exp \! \left(2\nu \frac{(\log X)(\log P_1)}{\log D} + 2\nu \log P_1 + 2 \frac{\log X}{\log D} \log \log X\right),
        \end{equation*}
        where $D$ ranges over the powers of two in $(\tfrac12 P_1, Q_1]$. Using our assumption that $\log X \leq P_1^{\nu}$, we have $\log \log X \ll \nu \log D$ for any such $D$. Therefore, we have the simple bound
        \begin{equation*}
            \abs{S} \ll X^{O(\nu)} \log X.
        \end{equation*}
        Choosing $S$ to be a maximal $1$-separated subset of $\U$, we conclude that
        \begin{equation*}
            \big\lvert{\{t\in [-X, X] \,:\, \odist(t, \U) \leq 4X^{1/10}\}}\big\rvert \ll X^{1/10 + O(\nu)} \log X.
        \end{equation*}
        Since $Q_2 \leq X^{1/3}$, this establishes \cref{eq:Uissmall} (in a strong form) if $\nu$ is sufficiently small, which concludes the proof of \cref{prop:MRpart2}.
    \end{proof}

    \subsection{Average over Dirichlet characters}
    As mentioned in \cref{rem:MRpart1differences}, our application of \cref{prop:MRpart2} involves an extra average over Dirichlet characters. The following estimate will then be used to control the $g(x)$ term on average (compare with \cref{eq:boundforg}).

    \begin{lemma}
        \label{lem:boundgonaverage}
        Let $X$ be sufficiently large, and let $(\log X)^{-1} \leq \nu \leq 10^{-8}$.

        Let $X^{\nu^{1/8}} \leq P_2<Q_2<P_3<Q_3\leq X^{1/3}$. Let $q$ be a positive integer such that $q\leq X^{\nu}$.

        For a Dirichlet character $\chi\spmod q$, $D>0$ and $j\in \{2,3\}$, define
        \begin{equation*}
            Q_{j,D}(s,\chi) := \sum_{p\in (D, 2D]\cap (P_j, Q_j]} \frac{\chi(p) c_p}{p^s},
        \end{equation*}
        where $(c_p)$ is a sequence of $1$-bounded complex coefficients supported on primes.

        Then
        \begin{equation}
            \label{eq:boundinggonavg}
            \sum_{\chi \spmod{q}} \sum_{A,B} \int_{\V(\chi)} \abs{Q_{2,A}(1+it, \chi) Q_{3,B}(1+it, \chi)} dt \ll \left(1+\log \frac{\log Q_3}{\log P_2}\right)^2,
        \end{equation}
        where
        \begin{equation*}
            \V(\chi) := \Big\{t\in [-X, X]\, :\, \max_A \abs{Q_{2,A}(1+it,\chi)} \geq X^{-\nu} \Big\}
        \end{equation*}
        and, in these expressions, $A$ and $B$ range over the powers of two in $(\tfrac12 P_2, Q_2]$ and $(\tfrac12 P_3, Q_3]$, respectively.
    \end{lemma}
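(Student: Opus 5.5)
We plan to bound each term separately using the large values estimate for prime Dirichlet polynomials, now with an extra average over characters (the hybrid large sieve). Fix dyadic $A,B$. By Cauchy--Schwarz,
\begin{equation*}
\sum_{\chi}\int_{\V(\chi)} |Q_{2,A}Q_{3,B}|\,dt \le \bigg(\sum_\chi\int_{\V(\chi)}|Q_{2,A}(1+it,\chi)|^2dt\bigg)^{1/2}\bigg(\sum_\chi\int_{\V(\chi)}|Q_{3,B}(1+it,\chi)|^2dt\bigg)^{1/2}.
\end{equation*}
We aim to show that each factor is $\ll 1/\log A$ and $\ll 1/\log B$ respectively, up to constants. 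Then the double sum is at most $\sum_{A,B}(\log A\log B)^{-1/2}$. This is not yet what we want: we need $\sum_A 1/\log A\ll 1+\log(\log Q_2/\log P_2)$. So we should instead aim for $\sum_\chi\int_{\V(\chi)}|Q_{j,D}|^2\ll (\log D)^{-2}$. This is plausible, because the mean value of $|Q_{j,D}|^2$ per unit length of $t$ is about $1/(D\log D)$, while the set being integrated over must be small.

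\textbf{Step 1: measure of $\V(\chi)$.} For each $\chi$, we discretise $\V(\chi)$ into $1$-separated points where some $|Q_{2,A}(1+it,\chi)|\ge X^{-\nu}$. We then raise to a power $k$ with $A^k\approx X^{\sqrt\nu}$ or so, and apply the hybrid large sieve $\sum_\chi\sum_{t}|\sum_{n\sim N}b_n\chi(n)n^{-it}|^2\ll (qT+N)\sum|b_n|^2$, as in the large value estimate \cite[Lemma~8]{MR} but with characters included. Since $A\ge P_2\ge X^{\nu^{1/8}}$, we have $k\ll \nu^{-1/8}$, and the loss $k!$ together with $X^{2\nu k}$ stays at $X^{O(\nu^{7/8})}$. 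This gives a total count $\sum_\chi|\V(\chi)|\ll X^{O(\nu^{7/8})}$ for $1$-separated points. The bound is uniform since $q\le X^\nu$.

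\textbf{Step 2: $L^2$ bound over the small set.} For each $j$ and $D$, we bound $\sum_\chi\int_{\V(\chi)}|Q_{j,D}(1+it,\chi)|^2dt$ using the Halász--Montgomery inequality in hybrid form over the well-spaced set from Step 1. The result is $\ll \frac{1}{D^2}(D+|\mathcal S|(qX)^{1/2}\log X)\sum_{p\sim D}1$. This is too weak for $D$ near $P_2$ unless $|\mathcal S|(qX)^{1/2}\ll D$, which fails since $D$ may be as small as $X^{\nu^{1/8}}$. So we instead use the Huxley-type variant, or apply the mean value theorem after raising $Q_{j,D}$ to a power $\ell$ with $D^\ell\approx X$. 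Since $D^\ell\ge qT$, the mean value theorem gives $\sum_\chi\int|Q^\ell|^2\ll (D^\ell)^{-1}\ell!\,(D/\log D)^\ell\cdot$(small). Hölder then lets us pass from the $\ell$-th moment to the second moment on a set of measure $X^{O(\nu^{7/8})}$. This yields $\ll X^{O(\nu^{7/8})}\cdot (\ell!/(\log D)^{\ell})^{1/\ell}/D$, which is far below $(\log D)^{-2}$.

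\textbf{Step 3: summation.} We combine the two factors: each dyadic pair contributes $\ll (\log A)^{-1}(\log B)^{-1}$ at worst, so summing over $A,B$ gives $\sum_A 1/\log A \cdot\sum_B 1/\log B$. Since $\sum_{A\in(P_2,Q_3]}1/\log A\ll 1+\log(\log Q_3/\log P_2)$, this yields \cref{eq:boundinggonavg}.

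The main obstacle is Step 2 for $B$ up to $Q_3=X^{1/3}$ and $A$ down to $P_2$, where the savings must overcome the $X^{O(\nu^{7/8})}$ size of $\V$. The first thing we will try is the high-moment trick. If that fails, we fall back on bounding $|Q_{3,B}|\le \sum_{p\sim B}1/p\ll 1/\log B$ trivially and proving $\sum_\chi\int_{\V(\chi)}|Q_{2,A}|\ll 1/\log A$ directly via Step 1 and Cauchy--Schwarz.
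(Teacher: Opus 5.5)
There is a genuine gap in Step~2, which is the heart of the lemma. Your target $\sum_\chi\int_{\V(\chi)}|Q_{j,D}(1+it,\chi)|^2\,dt\ll(\log D)^{-2}$ is the right one, and Steps~1 and~3 are fine. In Step~1, take $k$ minimal with $A^k\ge qX$, not $A^k\approx X^{\sqrt\nu}$; that choice is what gives $k\ll\nu^{-1/8}$ and $\sum_\chi|\V(\chi)|\le X^{O(\nu^{7/8})}$. But none of your routes to the Step~2 target works.

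\emph{High moments.} The factor $1/D$ in your bound is spurious. For $D^\ell\ge qX$ the hybrid mean value theorem gives $\sum_\chi\int_{-X}^{X}|Q_{j,D}(1+it,\chi)|^{2\ell}\,dt\ll \ell!\,O(1)^\ell(\log D)^{-\ell}$. H\"older then yields only $|\V|^{1-1/\ell}(\ell!)^{1/\ell}/\log D$, which is hopelessly larger than $(\log D)^{-2}$ when $|\V|$ is as large as Step~1 permits. Moment bounds are consistent with $|Q_{j,D}|$ having size about $(\ell/\log D)^{1/2}|\V|^{-1/(2\ell)}$ at every point of $\V$, so no rearrangement of them helps.

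\emph{Hal\'asz--Montgomery.} Even where it applies, it gives only $1/\log D$, since it ignores the sparsity of the primes, and then $\sum_A(\log A)^{-1/2}$ is too large.

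\emph{The fallback.} Take $c_p\equiv 1$ and $\chi$ principal. The prime number theorem gives $|Q_{2,A}(1+it,\chi)|\gg 1/(|t|\log A)$ for a positive proportion of $t$ in a range $1\le|t|\le T_0$ with $T_0\to\infty$, and all such $t$ lie in $\V(\chi)$. Hence $\int_{\V(\chi)}|Q_{2,A}|\gg(\log T_0)/\log A$. So you cannot bound $Q_{3,B}$ trivially: the decay of both factors, paired by Cauchy--Schwarz in $t$, is essential.

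What is missing is an arithmetic large-values input. The off-diagonal sums $\sum_{p\sim D}\chi_1\overline{\chi_2}(p)p^{i(t_1-t_2)}$ must carry a power saving. This comes, via duality and a contour shift, from Vinogradov--Korobov-type bounds for $L(\sigma+it,\chi)$ just left of $\sigma=1$ (\cref{lem:fordVK}), combined with a linear-sieve majorant to capture the sparsity of the primes. This is \cite[Lemma~6.6]{MRTTZ}, the sieved $k=1$ analogue of \cref{lem:largevalues}, and it is what the paper uses. For $1$-separated $S_\chi\subset\V(\chi)$ and $M_{j,D}:=\sum_\chi\sum_{t\in S_\chi}|Q_{j,D}(1+it,\chi)|^2$, it gives
\[
M_{j,D}\ll\frac{1}{(\log D)^2}+\frac{q^{\eta}D^{-\eta/2}}{\log D}\Big(X^{5\eta^{3/2}}\log X+\eta^{-1}\Big)\sum_{\chi}|S_\chi| .
\]
Take $\eta=\nu^{1/2}$, and use $q\le X^\nu$ and $D\ge P_2\ge X^{\nu^{1/8}}$; this is where the hypothesis on $P_2$ is really needed. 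Then the coefficient of $\sum_\chi|S_\chi|$ is a negative power of $X$ of order $\nu^{5/8}$, which beats your Step~1 bound $X^{O(\nu^{7/8})}$. Hence $M_{j,D}\ll(\log D)^{-2}$, and your Step~3 finishes the proof.

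The paper does not need your Step~1 at all. Since $\sum_A M_{2,A}\ge X^{-2\nu}\sum_\chi|S_\chi|$ by definition of $\V(\chi)$, the second term is absorbed directly. The paper also applies Cauchy--Schwarz with weights $\log A,\log B$ across the dyadic ranges rather than pair by pair, but that difference is cosmetic.
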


    The proof of \cref{lem:boundgonaverage} follows \cite[p.92]{MRTTZ}, which itself generalises some ideas from the proof of \mbox{\cite[Proposition~8.3]{MRII}} to the case $q>1$.

    \begin{proof}
        It suffices to prove that
        \begin{equation}
            \label{eq:discreteschi}
            \sum_{\chi \spmod{q}} \sum_{A,B} \sum_{t\in S_{\chi}} \abs{Q_{2,A}(1+it, \chi) Q_{3,B}(1+it, \chi)} \ll \left(1+\log \frac{\log Q_3}{\log P_2}\right)^2
        \end{equation}
        where, for every $\chi \spmod q$, $S_{\chi}$ is a $1$-separated subset of $\V(\chi)$. Define
        \begin{equation*}
            M_{j,D} := \sum_{\chi \spmod q} \sum_{t\in S_{\chi}} \abs{Q_{j,D}(1+it, \chi)}^2
        \end{equation*}
        for $j=2,3$ and $D>10$. By Cauchy-Scwharz, the left-hand side of \cref{eq:discreteschi} is
        \begin{equation*}
            \leq \bigg(\sum_A \frac{1}{\log A}\bigg)^{1/2} \bigg(\sum_A  (\log A) M_{2, A} \bigg)^{1/2} \bigg(\sum_B \frac{1}{\log B}\bigg)^{1/2} \bigg(\sum_B (\log B) M_{3,B}\bigg)^{1/2}.
        \end{equation*}
        We will show that each of these terms is $\ll (1+\log \frac{\log Q_3}{\log P_2})^{1/2}$. This is clear for the first and third terms.

        To bound the other terms, we use \cite[Lemma~6.6]{MRTTZ}\footnote{The inequality \cite[Lemma~6.6]{MRTTZ} corresponds to the case $k=1$ of \cref{lem:largevalues}, except that it accounts for the sparsity of the primes. It can be obtained by inserting a linear sieve upper bound in the proof of \cref{lem:largevalues}, as in \cite[Proof~of~Lemma~4.4]{MRII}.} which implies that, uniformly for $\eta\in (0, 1/2)$,
        \begin{equation*}
            M_{j,D}  \ll \frac{1}{(\log D)^2} + \frac{q^{\eta} D^{-\eta/2}}{\log D}\left(X^{5\eta^{3/2}} \log X + \eta^{-1}\right) \sum_{\chi \spmod q} |S_{\chi}|
        \end{equation*}
        for $j=2, 3$ and $D\geq 10$. In particular,
        \begin{align*}
            \max \bigg(\sum_{A} (\log A) & M_{2, A},\  \sum_B (\log B) M_{3, B} \bigg)                                                                                         \\
                                         & \ll 1+ \log \frac{\log Q_3}{\log P_2} + q^{\eta} P_2^{-\eta/2} X^{5\eta^{3/2}} (\log X)^2 \eta^{-1}\sum_{\chi \spmod q} |S_{\chi}|.
        \end{align*}
        On the other hand, by definition of $\V(\chi)$, we have
        \begin{equation*}
            \sum_{A} (\log A) M_{2, A}\geq \sum_{A} M_{2,A} \geq  X^{-2\nu} \sum_{\chi \spmod q} |S_{\chi}|.
        \end{equation*}
        We choose $\eta := \nu^{1/2}$. Since $(\log X)^{-1} \leq \nu \leq 10^{-8}$, this ensures that
        \begin{equation*}
            X^{-2\nu} > q^{\eta} P_2^{-\eta/2} X^{5\eta^{3/2}} (\log X)^3 \eta^{-1}
        \end{equation*}
        if $X$ is sufficiently large, using the bounds for $P_2$ and $q$ in the statement (the term $P_2^{\eta}\geq X^{\nu^{5/8}}$ dominates). Thus, with this choice of $\eta$, we must have
        \begin{equation*}
            \max \bigg(\sum_{A} (\log A) M_{2, A},\  \sum_B (\log B) M_{3, B} \bigg) \ll 1+ \log \frac{\log Q_3}{\log P_2}
        \end{equation*}
        as desired.
    \end{proof}

    \subsection{Rewriting correlations in terms of characters} Suppose that a multiplicative function $f(n)$ correlates with a linear phase $e(n\alpha_x)$ over many short intervals $[x, x+H)$, where $\alpha_x \approx \frac{a}{q} + \frac{T}{x}$. We show that this forces $f$ to correlate with $\chi(n)n^{2\pi i T}$ for many Dirichlet characters $\chi\spmod{q}$, over many slightly shorter intervals. This is done in two steps, \cref{lem:taylor,lem:coprimality}, which are heavily inspired by \cite[p.40]{MRTTZ} and \cite[p.90-91]{MRTTZ} respectively.

    \begin{lemma}
        \label{lem:taylor}
        Let $1\leq H'\leq H\leq X$, $\delta>0$ and $\kappa \geq 1$. Assume that $H'\leq c_0 \delta H/\kappa$ for some sufficiently small absolute constant $c_0>0$.

        Let $(b_n)$ be a $1$-bounded sequence of complex numbers. Suppose that there exists an $H$-separated set $S \subset [X, 2X]$ such that, for every $x\in S$,
        \begin{equation}
            \label{eq:boundE11}
            \bigg\lvert \sum_{x\leq n < x+H} b_n e(n\beta_x) \bigg\rvert \geq \delta H,
        \end{equation}
        where the frequencies $\beta_x$ satisfy $\norm{\beta_x - \frac{T}{x}} \leq \frac{\kappa}{H}$ for some $T\in \R$ with $|T|\leq \kappa X^2/H^2$.

        Then, there is an $H'$-separated set $S' \subset [X, 2X]$ of size $\gg \delta \frac{H}{H'} |S|$ such that, for every $x\in S'$,
        \begin{equation*}
            \bigg\lvert \sum_{x\leq n < x+H'} b_n n^{2\pi i T} \bigg\rvert \gg \delta H'.
        \end{equation*}
    \end{lemma}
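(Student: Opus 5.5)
The plan is to localise: split each interval $[x,x+H)$ into shorter intervals of length $H'$, and on each of them replace the phase $e(n\beta_x)$ by a constant times $n^{2\pi i T}$, up to a small error. Fix $x\in S$ and write $\beta_x = T/x + \gamma_x$ with $|\gamma_x|\le \kappa/H$ (choosing the representative modulo $1$). Partition $[x,x+H)$ into $\asymp H/H'$ consecutive intervals $[w,w+H')$, with $w$ ranging over $x+H'\Z$.

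For $n\in[w,w+H')$ we compare the two phases by Taylor expansion around $w$. Writing $n=w+h$ with $0\le h<H'$, we have
\begin{equation*}
    T\log n = T\log w + \frac{Th}{w} - \frac{Th^2}{2w^2}+O\Big(\frac{|T|h^3}{X^3}\Big).
\end{equation*}
The quadratic term is $\ll \kappa (X^2/H^2)(H'^2/X^2)=\kappa H'^2/H^2 \le c_0\delta$, which is small. The cubic term is even smaller. On the other side, $n\beta_x = w\beta_x + h T/x + h\gamma_x$. Here
\begin{equation*}
    |h\gamma_x|\le \frac{\kappa H'}{H}\le c_0\delta, \qquad \Big|hT\Big(\frac1x-\frac1w\Big)\Big|\le \frac{H' |T| H}{X^2}\le \frac{\kappa H'}{H}\le c_0\delta.
\end{equation*}
Therefore $e(n\beta_x) = c_w\, e(Th/w)\,(1+O(c_0\delta))$ and $n^{2\pi iT} = c_w'\, e(Th/w)\,(1+O(c_0\delta))$ for unimodular constants $c_w,c_w'$ depending only on $w$ and $x$. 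Consequently,
\begin{equation*}
    \Big|\sum_{w\le n<w+H'} b_n e(n\beta_x)\Big| = \Big|\sum_{w\le n<w+H'} b_n n^{2\pi iT}\Big| + O(c_0\delta H').
\end{equation*}

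Summing over the $\asymp H/H'$ subintervals and using the hypothesis \cref{eq:boundE11} together with the triangle inequality, we get
\begin{equation*}
    \sum_{w}\Big|\sum_{w\le n<w+H'} b_n n^{2\pi iT}\Big|\ge \delta H - O(c_0\delta H)\ge \tfrac{\delta}{2}H.
\end{equation*}
Each term in this sum is at most $H'$. A standard popularity argument then shows that at least $\gg \delta H/H'$ of the points $w$ satisfy $|\sum b_n n^{2\pi iT}|\ge \tfrac{\delta}{4}H'$. We take $S'$ to be the union over $x\in S$ of these good $w$. The intervals $[x,x+H)$ are disjoint because $S$ is $H$-separated, and within each interval the points $w$ are $H'$-spaced, so $S'$ is $H'$-separated. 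Its size is $\gg \delta (H/H')|S|$. Points lying slightly outside $[X,2X]$ (at most one $H$-block near $2X$) can be discarded or handled by a trivial adjustment.

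The only delicate step is the Taylor comparison, specifically checking that all error terms are $O(c_0\delta)$ uniformly. This is exactly where the hypotheses $H'\le c_0\delta H/\kappa$ and $|T|\le\kappa X^2/H^2$ are used. The rest of the argument is bookkeeping.
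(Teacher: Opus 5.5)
Your proof is correct and is essentially the paper's argument. You localise to windows of length $H'$, use $H'\le c_0\delta H/\kappa$ and $|T|\le \kappa X^2/H^2$ to write both $e(n\beta_x)$ and $n^{2\pi i T}$ as a unimodular constant times $e(T(n-w)/w)$ up to a factor $1+O(c_0\delta)$, and then pigeonhole.

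The differences are cosmetic. The paper averages over all shifts $0\le h<H'$ to get $\gg\delta H$ good starting integers in $[x,x+H)$, extracts an $H'$-separated subset, and only then does the Taylor comparison; you use a fixed partition, where a partial last block costs only $O(H')=O(c_0\delta H)$, and compare first. Your proposal to discard the points beyond $2X$ does not work when $|S|$ is tiny, since all good blocks could lie past $2X$. The paper's proof has the same defect: it only gives $S'\subset[X,2X+H)$, which is harmless for the application.
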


    \begin{proof}
        Fix $x\in S$. For any integer $0\le h < H'$, we have
        \begin{equation*}
            \sum_{x\le n < x+H} b_{n+h} e((n+h)\beta_x)
            = \sum_{x\le n < x+H} b_n e(n\beta_x) + O(H').
        \end{equation*}
        Averaging over $h$ and applying the triangle inequality gives
        \begin{equation*}
            \frac{1}{H'} \sum_{x\le n < x+H}
            \bigg|\sum_{0\le h < H'} b_{n+h} e((n+h)\beta_x)\bigg|
            \ge \delta H - O(H') \ge \tfrac12 \delta H,
        \end{equation*}
        provided $c_0$ is chosen sufficiently small. It follows that there are $\gg \delta H$ integers $n\in[x,x+H)$ such that
        \begin{equation}
            \label{eq:xinsprime}
            \bigg|\sum_{0\le h < H'} b_{n+h} e((n+h)\beta_x)\bigg|
            \ge \tfrac14 \delta H'.
        \end{equation}
        From these integers, one may select an $H'$-separated subset of $[x,x+H)$ of size $\gg \delta H/H'$ on which the above inequality holds. Since this is true for all $x\in S$ and $S$ is $H$-separated, we deduce that there exists an $H'$-separated set $S' \subset [X, 2X]$ of size $\gg \delta \frac{H}{H'} |S|$ such that \cref{eq:xinsprime} holds for every $n\in S'$.

        Note that for $0 \le h < H'$ and $n\in [x, x+H)$, we have
        \begin{equation*}
            e(h\beta_x)
            = e\Big(h\Big(\beta_x - \frac{T}{x}\Big)\Big)
            e\Big(h\Big(\frac{T}{x}-\frac{T}{n}\Big)\Big)
            e\Big(\frac{hT}{n}\Big)
            = \Big(1+O\Big(\frac{\kappa H'}{H}\Big)\Big)
            e\Big(\frac{hT}{n}\Big)
        \end{equation*}
        using that $|T| \leq \kappa X^2/H^2$. Since $\kappa H' \le c_0 \delta H$ for sufficiently small $c_0>0$,
        we deduce from \cref{eq:xinsprime} that
        \begin{equation*}
            \bigg|\sum_{0\le h < H'} b_{n+h} e\Big(\frac{hT}{n}\Big)\bigg|
            \ge \tfrac18 \delta H'
        \end{equation*}
        for every $n \in S'$. By Taylor expansion,
        \begin{equation*}
            (n+h)^{2\pi iT}
            = n^{2\pi iT}\Big(1+\frac{h}{n}\Big)^{2\pi iT}
            = n^{2\pi iT} e\Big(\frac{hT}{n}\Big)
            + O\Big(\frac{\kappa (H')^2}{H^2}\Big),
        \end{equation*}
        and since $\kappa (H')^2/H^2 \ll \delta$, it follows that
        \begin{equation*}
            \bigg|\sum_{0\le h < H'} b_{n+h} (n+h)^{2\pi iT}\bigg|
            \gg \delta H'
        \end{equation*}
        for every $n \in S'$, as required.
    \end{proof}

    \begin{lemma}
        \label{lem:coprimality}
        Let $1\leq H\leq X$. Let $q\in \N$ and $a\in \Z$ be such that $(a,q)=1$. Let $f:\N \to \C$ be a $1$-bounded function such that $f(mn)=f(m)f(n)$ whenever $(m,n)=1$ and $\rad(m)\mid q$.

        Let $\eps, \delta>0$ and assume that $C_0^{\delta^{-1}}q^2\leq H$ for some large enough constant $C_0=C_0(\eps)$. Suppose that
        \begin{equation}
            \label{eq:assumpE12}
            \bigg\lvert \sum_{x\leq n<x+H} f(n) e\Big(\frac{na}{q}\Big) \bigg\rvert \geq \delta H
        \end{equation}
        for all $x$ in an $H$-separated set $S \subset [X, 2X]$.

        Then, there exists an integer $1\leq d\ll_{\eps} \delta^{-2}q^{\eps}$ and an $H/d$-separated set $S' \subset [X/d, 2X/d]$ of size $|S'| \gg_{\eps} \delta |S|$ such that
        \begin{equation*}
            \sum_{\chi \spmod q}\bigg\lvert \sum_{\substack{x\leq n<x+H/d}} f(n) \chi(n) \bigg\rvert \gg_{\eps} \delta^3 q^{1/2-\eps} \frac{H}{d}
        \end{equation*}
        for all $x\in S'$.
    \end{lemma}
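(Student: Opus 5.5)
The plan is to expand $e(na/q)$ in Dirichlet characters after separating the common factor of $n$ and $q$, then pigeonhole over the resulting divisors. Write each $n$ as $n = dm$, where $d = \gcd(n, q^\infty)$ is the $q$-smooth part of $n$ (so $\rad(d)\mid q$). Since $(a,q)=1$ we have $e(na/q) = e(m a/(q/\gcd(d,q)))$, and the sum splits by $d$. For $d$ large, the integers $n \in [x,x+H)$ divisible by $d$ number about $H/d$, so their contribution is small. More precisely, $\sum_{d:\,\rad(d)\mid q,\, d>D} 1/d \ll_\eps q^{\eps} D^{-1/2}$ by Rankin's trick, using $\sum_{\rad(d)\mid q} d^{-1/2} \ll_\eps q^{\eps}$. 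Hence only $d \le D \asymp_\eps \delta^{-2}q^{\eps}$ matter, up to an error $\delta H/4$. The sizes $H/d \ge H/D$ stay large because $H \ge C_0^{\delta^{-1}}q^2$, which also absorbs the $O(d)$ boundary errors.

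For fixed $d$, put $q_d := q/\gcd(d,q)$. Multiplicativity (with $\rad(d)\mid q$ and $(m,q)=1$) gives $f(dm)=f(d)f(m)$. The inner sum is therefore $f(d)\sum_{m\in [x/d,(x+H)/d),\,(m,q)=1} f(m)e(ma'/q_d)$ with $a'$ coprime to $q_d$. I will expand the additive character via Gauss sums: for $(m,q_d)=1$,
\[
e(ma'/q_d) = \frac{1}{\phi(q_d)}\sum_{\chi \spmod{q_d}} \overline{\chi}(a')\,\tau(\overline{\chi})\,\chi(m),
\]
and $|\tau(\overline\chi)| \le q_d^{1/2}$. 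Each character mod $q_d$ lifts to a character mod $q$, and since $(m,q)=1$ the values agree, so the condition $(m,q)=1$ is absorbed. This yields
\[
\Big|\sum_{m} f(m) e(ma'/q_d)\Big| \le \frac{q_d^{1/2}}{\phi(q_d)} \sum_{\chi \spmod q}\Big|\sum_{m\in[x/d,(x+H)/d)} f(m)\chi(m)\Big|.
\]

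Now combine the pieces. For each $x\in S$, the pigeonhole principle over $d\le D$, weighted by $1/d$, gives some $d\le D$ with
\[
\sum_\chi \Big|\sum_{m\in[x/d,(x+H)/d)} f(m)\chi(m)\Big| \gg_\eps \delta \,\frac{\phi(q_d)}{q_d^{1/2}}\cdot \frac{H}{d}\cdot \frac{1}{\log D}.
\]
Since $\phi(q_d)\gg q_d^{1-\eps}$ and $q_d \ge q/d$, the right-hand side is at least $\gg_\eps \delta^{2} q^{1/2-\eps} H/d$ after adjusting $\eps$; the loss from $d$ is harmless because $d\ll \delta^{-2}q^\eps$. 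A second pigeonhole over $x\in S$ fixes a single $d$ for $\gg_\eps \delta|S|/\log D$ of the points; I would weight the first pigeonhole by $d^{-1/2}$ to avoid losing the log. The resulting points $x/d$ are $H/d$-separated and lie in $[X/d,2X/d]$.

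The main obstacle is bookkeeping the powers of $\delta$ so that the stated $\delta^3$ and $|S'|\gg\delta|S|$ both hold. The plan is to choose the dyadic weights in the double pigeonhole so that we lose exactly one factor of $\delta$ in the count and two in the magnitude. The Rankin tail bound and the Gauss sum step are routine.
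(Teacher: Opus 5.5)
Your ingredients are the same as the paper's: the splitting $n=dm$ by $q$-part, the Rankin bound $\sum_{\rad(d)\mid q}d^{-1/2}\ll_\eps q^{\eps}$, and the Gauss-sum bound $|I_x(d)|\le\frac{\sqrt{q_d}}{\phi(q_d)}\Sigma_x(d)$ after lifting characters from $q_d$ to $q$. The last is equivalent to the paper's $|c_{\overline\chi}(ad)|\le\phi(q)\sqrt{q'}/\phi(q')$. Here $I_x(d):=\sum_{m\in[x/d,(x+H)/d),\,(m,q)=1}f(m)e(mad/q)$ and $\Sigma_x(d):=\sum_{\chi\spmod q}\big|\sum_{m\in[x/d,(x+H)/d)}f(m)\chi(m)\big|$.

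\textbf{Main gap: the choice of $d$.} The gap is the step you defer as bookkeeping, and as described it does not give $|S'|\gg_\eps\delta|S|$.
\begin{itemize}
\item If you choose one $d(x)$ per $x$ and then pigeonhole over $d$, you lose a factor $N_D:=\#\{d\le D:\rad(d)\mid q\}$. This is not bounded in terms of $\eps,\delta$: already $N_D\asymp\log D$ for $q=2^k$, and it is larger when $q$ has many small prime factors. Reweighting the first pigeonhole by $d^{-1/2}$ does not reduce the number of pigeonholes in the second.
\item You also pigeonhole on $\Sigma_x(d)$ against the normalisation $\frac{\phi(q_d)}{\sqrt{q_d}}\frac Hd$. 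This is not an a priori upper bound for $\Sigma_x(d)$, so no first-moment argument in $x$ can be run on it.
\end{itemize}

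The missing idea is to choose $d$ jointly in $(x,d)$, before expanding into characters. Use the trivial bound $|I_x(d)|\ll\frac{\phi(q)}{q}\frac Hd$, valid since $H/d\ge q$, together with the exact identity $\sum_{\rad(d)\mid q}1/d=q/\phi(q)$.
\begin{itemize}
\item Set $w_d:=\frac{\phi(q)}{qd}$, so $\sum_d w_d\le 1$, and $r_x(d):=|I_x(d)|/(\frac{\phi(q)}{q}\frac Hd)=O(1)$.
\item Then $\sum_d w_d r_x(d)\gg\delta$ for every $x\in S$.
\item Because $r_x(d)=O(1)$, this forces $\sum_d w_d\,\#\{x\in S: r_x(d)\ge c\delta\}\gg\delta|S|$.
\item Since $\sum_d w_d\le1$, a single $d$ has $r_x(d)\ge c\delta$ for $\gg\delta|S|$ points $x$.
\end{itemize}
Only then apply the Gauss-sum bound for that one $d$. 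This gives $\Sigma_x(d)\gg\delta\frac{\phi(q)}{q}\frac{\phi(q_d)}{\sqrt{q_d}}\frac Hd$, which is more than the required $\delta^3q^{1/2-\eps}\frac Hd$ since $q_d\ge q/d\gg\delta^2q^{1-\eps}$.

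\textbf{Secondary gap: the tail over large $d$.} Your Rankin estimate controls $\sum_{d>D}H/d$. But each $d$ contributes $H/d+O(1)$ integers $n$, and the $O(1)$ terms summed over all $d>D$ with $\rad(d)\mid q$ up to $3X$ are not bounded in terms of $H$. Your remark about absorbing boundary errors only covers $d\le D$. There are two ways to fix this:
\begin{itemize}
\item The paper first discards the set $E$ of $n$ divisible by some $p^k\ge A^2$ with $p\mid q$, $k\ge2$, $A=C(\eps)\delta^{-1}$. This costs $\ll q+H/A$ per interval and forces every remaining $q$-part to be $\le qe^{O(A)}\le H$.
\item Alternatively, an $n$ whose $q$-part exceeds $D$ has a divisor $d'\in(D,qD]$ with $\rad(d')\mid q$ (strip primes off one at a time). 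So the $O(1)$ terms total $\ll_\eps (qD)^{1/2}q^{\eps}$, which is $\le\delta H/8$ under $H\ge C_0^{\delta^{-1}}q^2$.
\end{itemize}
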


    \begin{proof}
        We begin by excluding integers with unusually large prime power divisors coming from primes dividing $q$.
        Let $A := C(\eps)\delta^{-1}$, where $C(\eps)\ge 1$ is a sufficiently large constant to be chosen later.

        Let $E$ be the set of integers divisible by $p^k$ for some prime $p\mid q$ and exponent $k\ge 2$ such that
        $p^k \ge A^2$. For any $x\in[X,2X]$, we have
        \begin{equation*}
            |E\cap[x,x+H)|
            \le \sum_{p\mid q}\;\min_{\substack{k\ge 2\\ p^k\ge A^2}}
            \left(\frac{H}{p^k}+1\right)
            \ll q + H\left(\sum_{p\le A}\frac{1}{A^2} + \sum_{p>A}\frac{1}{p^2}\right)
            \ll q+\frac{H}{A}.
        \end{equation*}
        If $C(\eps)$ is chosen sufficiently large, then for every $x\in S$, the hypothesis
        \cref{eq:assumpE12} implies
        \begin{equation}
            \label{eq:E1E2cond}
            \Bigg|\sum_{\substack{x\le n<x+H\\ n\notin E}} f(n)e\Big(\frac{na}{q}\Big)\Bigg|
            \gg \delta H.
        \end{equation}

        Every $n\ge 1$ can be written uniquely as $n=dm$ with $(m,q)=1$ and $\rad(d)\mid q$.
        Using the multiplicativity assumption on $f$, we may rewrite
        \cref{eq:E1E2cond} as
        \begin{equation}
            \label{eq:mdseparated}
            \Bigg|
            \sum_{\substack{d\notin E\\ \rad(d)\mid q}}
            f(d)\!\!\!\sum_{\substack{x/d\le m<(x+H)/d\\ (m,q)=1}}
            f(m)e\Big(\frac{mad}{q}\Big)
            \Bigg|
            \gg \delta H
        \end{equation}
        for every $x\in S$.

        We first bound the contribution of large $d$. By definition of $E$, for every $d\in \N\setminus E$ with $\rad(d)\mid q$, we have the upper bound
        \begin{equation*}
            d \leq q \prod_{p\leq A} A^2 \leq q e^{O(A)},
        \end{equation*}
        and thus $d \leq H$ by the assumption $C_0^{\delta^{-1}}q^2\leq H$ in the statement. Hence, the contribution of those $d$ with $d \geq A^2q^{\eps}$ to the left-hand side of \cref{eq:mdseparated} is trivially bounded by
        \begin{equation*}
            \sum_{\substack{A^2q^{\eps}\leq d\leq H\\ \rad(d)\mid q}} \frac{H}{d} \leq \frac{H}{Aq^{\eps/2}} \sum_{\substack{d\geq 1\\ \rad(d) \mid q}} \frac{1}{d^{1/2}} = \frac{H}{Aq^{\eps/2}} \prod_{p\mid q}\frac{1}{1-p^{-1/2}}\ll_{\eps} \frac{H}{A}.
        \end{equation*}
        This is negligible by our choice of $A$ if $C(\eps)$ is sufficiently large.

        Therefore, for every $x\in S$,
        \begin{equation*}
            \Bigg\lvert \sum_{\substack{1\leq d< A^2 q^{\eps}\\ \rad(d)\mid q\\ d\notin E}} f(d) \sum_{\substack{x/d\leq m< (x+H)/d\\ (m,q)=1}} f(m) e\Big(\frac{mad}{q}\Big) \Bigg\rvert \gg \delta H.
        \end{equation*}
        Since the inner sum is trivially $\ll \frac{\phi(q)}{q} \frac{H}{d}$ and
        \begin{equation*}
            \sum_{\substack{d\geq 1\\ \rad(d)\mid q}} \frac{1}{d} = \prod_{p\mid q} \frac{p}{p-1} = \frac{q}{\phi(q)},
        \end{equation*}
        an averaging argument shows that there exists an integer $d$ (with $1\leq d<A^2q^{\eps}$) such that
        \begin{equation}
            \label{eq:dnolonger}
            \Bigg\lvert \sum_{\substack{x/d\leq m< (x+H)/d\\ (m,q)=1}} f(m) e\Big(\frac{mad}{q}\Big) \Bigg\rvert \gg \delta \frac{\phi(q)}{q} \frac{H}{d}
        \end{equation}
        for $\gg \delta |S|$ values of $x\in S$. Since $S$ is $H$-separated, the corresponding values of $x/d$ form an $H/d$-separated set $S'\subset[X/d, 2X/d]$ with $|S'|\gg \delta |S|$.

        Expanding the sum into congruence classes modulo $q$ and then into Dirichlet characters, we have
        \begin{equation*}
            \sum_{\substack{y\leq m< y+H/d                                                                       \\ (m,q)=1}} f(m) e\Big(\frac{mad}{q}\Big) = \frac{1}{\phi(q)}\sum_{\chi \spmod q} c_{\overline{\chi}}(ad) \sum_{y\leq m< y+H/d}f(m)\chi(m),
        \end{equation*}
        where
        \begin{equation*}
            c_{{\chi}}(x) := \sum_{u\in (\Z/q\Z)^{\times}}{\chi(u)} e\Big(\frac{ux}{q}\Big).
        \end{equation*}
        By \cite[Theorem~9.12]{MV}, writing $q' := q/(q,d)$, we have
        \begin{equation*}
            \abs{c_{\overline{\chi}}(ad)} \leq \frac{\phi(q)\sqrt{q'}}{\phi(q')}.
        \end{equation*}
        Combining this with \cref{eq:dnolonger}, we obtain, for all $y\in S'$,
        \begin{equation*}
            \sum_{\chi \spmod q}\bigg\lvert \sum_{y\leq m< y+H/d}f(m)\chi(m) \bigg\rvert \gg \delta \frac{\phi(q)}{q}\frac{\phi(q')}{\sqrt{q'}} \frac{H}{d}.
        \end{equation*}
        Recalling that $d\ll_{\eps} \delta^{-2} q^{\eps}$, we see that $q'\gg_{\eps} \delta^2 q^{1-\eps}$. Since $\phi(n)\gg n/\log \log 5n$ for $n\geq1$, we deduce that
        \begin{equation*}
            \sum_{\chi \spmod q}\bigg\lvert \sum_{y\leq m< y+H/d}f(m)\chi(m) \bigg\rvert \gg_{\eps} \delta^3 \frac{q^{(1-\eps)/2}}{(\log \log 5q)^2} \frac{H}{d}
        \end{equation*}
        for all $y\in S'$, giving the result.
    \end{proof}

    \subsection{From correlations in short intervals to pretentiousness}
    Finally, we bring together the results from the previous sections to prove \cref{thm:MR}. We emphasize that all proof ideas originate from \cite{MRII} and \cite{MRTTZ}; our objective here is to provide a comprehensive derivation, explicitly stating the quantitative aspects that were previously scattered or left to the reader to adapt.

    \begin{theorem}
        \label{thm:MR}
        Let $f : \N \to \C$ be a $1$-bounded multiplicative function. Let $10\leq H\leq X$. Let~$a, q\in \N$ be coprime integers. Let $T\in \R$ with $|T|\leq \kappa X^2/H^2$ for some $\kappa\geq 1$.

        Let $0<\delta<1/2$. Suppose that
        \begin{equation*}
            \bigg\lvert \sum_{x\leq n < x+H} f(n) e(n\alpha_x) \bigg\rvert \geq \delta H
        \end{equation*}
        for all $x$ in an $H$-separated set $S \subset [X, 2X]$, where $\alpha_x\in \R/\Z$ satisfy the approximate formula
        \begin{equation*}
            \alpha_x = \frac{a}{q} + \frac{T}{x} + O \!\left(\frac{\kappa}{H}\right) \pmod 1.
        \end{equation*}

        Assume that
        \begin{equation}
            \label{eq:paramassumpt}
            \kappa^{C}\left(q \frac{|S|}{X/H}\log X\right)^{\delta^{-C}} \leq H \leq X^{\delta^{C}}
        \end{equation}
        where $C$ is a sufficiently large absolute constant.

        Then, $q\ll \delta^{-10}$ and $f$ is pretentious in the sense that
        \begin{equation*}
            \sum_{p\leq X} \frac{1-\Re(f(p)\chi(p)p^{it})}{p} \ll \log (\delta^{-1})
        \end{equation*}
        for some Dirichlet character $\chi \spmod q$ and some real number $t = O(\kappa X^2/H^2)$.
    \end{theorem}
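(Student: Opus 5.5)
We first strip the linear phase. The frequencies satisfy $\alpha_x = \frac{a}{q} + \frac{T}{x} + O(\kappa/H) \pmod 1$, so we apply \cref{lem:taylor} with $b_n := f(n)e(na/q)$, $\beta_x := \alpha_x - a/q$ and a shorter length $H_1 := c_0\delta H/\kappa$. This gives an $H_1$-separated set $S_1$ of size $\gg \delta \frac{H}{H_1}|S|$. On each interval of $S_1$ the function $f(n)n^{2\pi iT}e(na/q)$ has mean $\gg\delta$.

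Since $n\mapsto f(n)n^{2\pi i T}$ is again $1$-bounded multiplicative, we then apply \cref{lem:coprimality}; its hypothesis $C_0^{\delta^{-1}}q^2\le H_1$ follows from \cref{eq:paramassumpt}. This produces an integer $d\ll_\eps \delta^{-2}q^{\eps}$ and an $H_2$-separated set $S_2\subset[X/d,2X/d]$, where $H_2 := H_1/d$, such that $|S_2|\gg\delta|S_1|$ and
\begin{equation*}
\sum_{\chi \spmod q}\bigg\lvert\sum_{x\le n<x+H_2} f(n)n^{2\pi iT}\chi(n)\bigg\rvert \gg \delta^3 q^{1/2-\eps}H_2 \qquad (x\in S_2).
\end{equation*}
Squaring and summing over $x\in S_2$, Cauchy--Schwarz gives
\begin{equation*}
\sum_\chi\sum_{x\in S_2}\Big|\sum_{x\le n<x+H_2} f\chi n^{2\pi iT}\Big|^2 \gg \delta^6 q^{-2\eps}\,|S_2|H_2^2 .
\end{equation*}
Note the $q^{1/2}$ saving: the right side is large compared with the number $\phi(q)$ of characters times the trivial per-character bound only when $q$ is small. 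This is the source of $q\ll\delta^{-10}$ at the end.

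Next we run the Matomäki--Radziwiłł machinery for each $\chi$. We restrict to $n$ with prime factors in the three ranges $(P_j,Q_j]$, chosen with $P_1=(\log X)^{1/\nu}$ and $P_2\ge X^{\nu^{1/8}}$; by Tur\'an--Kubilius or a sieve, the discarded $n$ cost only $o(\delta^{O(1)})$ of the mass. Then $a_n=f(n)\chi(n)n^{2\pi iT}$ restricted this way is factorable in the sense of \cref{def:factorable}. Applying \cref{prop:MRpart2} with $y=H_2$ decomposes each short sum as $g_\chi(x)$ plus an $L^2$-small error of size $\ll H_2^2 P_1^{-2\nu}(\log X)^{11}$. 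That error is negligible against $\delta^{6}H_2^2$ once $P_1^{\nu}$ beats $\delta^{-O(1)}\log X$, which we arrange through the lower bound in \cref{eq:paramassumpt}. The surviving terms are controlled by $\sum_\chi\|g_\chi\|_\infty$. By \cref{eq:boundforg} and \cref{lem:boundgonaverage}, this sum is at most $H_2\sum_\chi\int_{\V(\chi)}|Q_{2,A}Q_{3,B}|\,dt$, and hence $\ll H_2(\log\frac{\log Q_3}{\log P_2})^2$ on average.

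The main obstacle is converting a large contribution from the sets $\V(\chi)$ into pretentiousness with a single $\chi$ and a single $t$. The plan is to show that the $\V(\chi)$ contributing $\gg\delta^{O(1)}$ must be concentrated. If the mass were spread over many $(\chi,t)$, the large-values estimate \cref{lem:largevalues} (the character-averaged version used in \cref{lem:boundgonaverage}) would force the total to be small. So some $(\chi,t_0)$ has $|Q_{2,A}(1+it_0,\chi)|\gg\delta^{O(1)}/\log A$ for many dyadic $A$. A Hal\'asz-type argument, together with the lower bound on the mean of $f\chi n^{2\pi iT}$, then upgrades this to $\sum_{p\le X}(1-\Re f(p)\chi(p)p^{i(t_0+2\pi T)})/p\ll\log\delta^{-1}$, with $|t_0|\le X$ and the combined frequency $t=O(\kappa X^2/H^2)$. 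For the bound $q\ll\delta^{-10}$, we compare the count of contributing characters with the $q^{1/2-\eps}$ lower bound above: pretentiousness to two distinct characters modulo $q$ is impossible up to $O(\log\delta^{-1})$, so only $O(1)$ characters contribute. Each contributes at most $H_2$, so $\delta^3q^{1/2-\eps}\ll1$, which gives the bound on $q$ once $\eps$ is taken small.
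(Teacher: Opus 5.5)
The bound on $q$ does not go through as you argue it. After your Cauchy--Schwarz squaring, the per-$x$ lower bound is $\delta^6q^{-2\eps}H_2^2$. This decreases in $q$ and lies below any upper bound available, so the $q^{1/2}$ gain from \cref{lem:coprimality} is lost at that step. Your later inference that ``only $O(1)$ characters contribute'' does not follow from $f$ being unable to pretend to be two distinct characters. Suppose every non-pretentious $\chi$ had short sums $\le\eta H_2$ for most $x$. The total over $\phi(q)$ characters is still $\phi(q)\eta H_2$, and no per-character Matom\"aki--Radziwi{\l\l} bound gives $\eta\ll1/q$ or controls the union of $\phi(q)$ exceptional sets. (For $q$ as large as \cref{eq:paramassumpt} permits, the mutual non-pretentiousness claim would itself need an $L$-function argument.)

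What is needed is the character-averaged estimate you already quote, applied to the unsquared sum. Sum \cref{prop:MRpart2} over $\chi$; this costs a factor $q$, which $P_1^{-\nu}$ absorbs. Combined with $\sum_\chi\|g_\chi\|_\infty\ll(\log\delta^{-1})^2H_2$ from \cref{lem:boundgonaverage}, this gives $\sum_\chi\bigl|\sum_{x<n\le x+H_2}\cdots\bigr|\ll(\log\delta^{-1})^2H_2$ for all but $O(P_1^{-\nu/2}X/d)$ integers $x$. Thickening $S_2$ by $c\delta^3q^{-1/2-\eps}H_2$ keeps the lower bound $\gg\delta^3q^{1/2-\eps}H_2$ on $\gg\delta^{O(1)}q^{-O(1)}\frac{|S|}{X/H}\frac Xd$ integers. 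Comparing the two forces $q\ll\delta^{-10}$, with no pretentiousness input. Only then does the paper pigeonhole a single $\chi$, losing a now harmless factor $q$. It passes from short sums to a long sum via the power-saving theorem \cite[Theorem~1.7]{MRII} and then applies Hal\'asz. Your substitute for this step uses large values of $Q_{2,A}(1+it_0,\chi)$, which only carry information about primes in $(P_2,Q_2]$. Your ``Hal\'asz-type upgrade'' is exactly where the short-to-long transfer must happen, and as written it is not an argument.

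There is also an ordering problem. You restrict to $\NN$ after \cref{lem:coprimality}. The discarded integers then enter a sum over $\phi(q)$ characters that is compared with a main term $\delta^3q^{1/2-\eps}H_2$. A sieve saving of size $\log P_1/\log Q_1\gg1/\log X$ cannot offset a loss polynomial in $q$, even with the large sieve over characters, once $q$ is as large as \cref{eq:paramassumpt} allows. The restriction must be made on the original intervals, before \cref{lem:taylor,lem:coprimality}, as the paper does. One then needs $q<P_1$ so that $\ind{n\in\NN}f(n)n^{2\pi iT}$ still satisfies the multiplicativity hypothesis of \cref{lem:coprimality}. With your choice $P_1=(\log X)^{1/\nu}$, neither this nor the absorption of the factor $q$ in the $L^2$ step is available; this is why the paper takes $P_1$ to be a power of $H$.
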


    \Cref{thm:MR} mainly relies on \cref{prop:MRpart2}, which requires restricting to integers $n$ with prime factors in certain ranges. To do this, we use the following result from \cite{MRII}.

    \begin{lemma}
        \label{lem:sieve}
        Let $X>0$ and $10\leq P<Q\leq X^{3/4}$. Let $2\leq H\leq \tfrac12 X^{1/6}$. Let $S \subset [X, 2X]$ be an $H$-separated set. Then, for all but
        \begin{equation*}
            \ll \frac{\log Q}{\log P} \cdot \frac{X(\log H)^2}{H^2}
        \end{equation*}
        elements $x\in S$, the number of integers $n\in [x, x+H)$ without any prime factor in $(P, Q]$ is
        \begin{equation*}
            \ll \frac{\log P}{\log Q} H.
        \end{equation*}
    \end{lemma}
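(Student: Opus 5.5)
The plan is to reduce \cref{lem:sieve} to a second moment estimate for short sums of a sieve-weighted divisor function, in the spirit of \cite{MRII}. Write $\omega(n) := \sum_{P<p\leq Q} \ind{p\mid n}$ for the number of prime factors of $n$ in $(P,Q]$, and set $\mu := \sum_{P<p\leq Q} \frac1p = \log\frac{\log Q}{\log P} + O(1)$. Integers with no prime factor in $(P,Q]$ are exactly those with $\omega(n)=0$. The target density $\frac{\log P}{\log Q}$ is the sieve density $\prod_{P<p\leq Q}(1-1/p)$. So the aim is to show that for most $x$ the short interval $[x,x+H)$ contains the expected number of $n$ with $\omega(n)=0$, up to a constant factor.

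\textbf{Step 1: the long-interval bound.} First reduce the claim to an upper bound on counts of sifted integers. By a standard upper-bound sieve (Selberg, or the fundamental lemma), for any interval $I$ of length $H$,
\[
\#\{n\in I : \omega(n)=0\} \ll \frac{\log P}{\log Q} H
\]
provided the sieve level $D$ satisfies $D \leq H^{1/2}$ and $Q\leq D$. This fails when $Q$ is much larger than $H$. In that regime I would cap the sieve at the primes $p\in(P,\min(Q,H^{1/2})]$. That leaves the problem of the range $(H^{1/2}, Q]$, where short intervals carry no sieve information. This is where the exceptional set and the factor $\frac{\log Q}{\log P}$ in its size must come from.

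\textbf{Step 2: the variance argument.} I would handle the general case through the variance of
\[
\sum_{x\leq n<x+H} \omega'(n), \qquad \omega'(n) := \sum_{p\in(P',Q]} \ind{p\mid n},
\]
following \cite[Lemma~2.1-type]{MRII}. Here $(P',Q]$ is a dyadic-in-$\log$ subrange. If $n$ has no prime factor in $(P,Q]$, then certainly $\omega'(n)=0$. Compare this with the expected value $H\sum_{p}1/p \asymp H$ per scale; a Turán–Kubilius type inequality shows $\omega'(n)$ is rarely $0$. Concretely, the quantity
\[
\sum_{x\in S}\Bigl(\sum_{x\leq n<x+H}\bigl(\omega'(n)-\mu'\bigr)\Bigr)^2
\]
can be computed by expanding the square into pairs of primes $p_1,p_2$. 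For $p\leq H$, the count of multiples of $p$ in $[x,x+H)$ is $H/p+O(1)$. For primes $p>H$ I use the $H$-separation of $S$ and the bound $p\leq X^{3/4}$. With these, the variance is $\ll (\log Q/\log P)\, X(\log H)^2/H^2 \cdot H^2/\ldots$, i.e. bounded after normalisation. Chebyshev's inequality then gives the exceptional-set bound, with the factor $(\log H)^2/H^2$ arising from $H$-separation.

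\textbf{Step 3: from variance to the count.} The final passage from the variance to the count of $n$ with $\omega(n)=0$ I would do by the Rankin-type trick of weighting by $\prod_{p\mid n,\, p\in(P,Q]}$, i.e. comparing with $\sum_n 2^{-\omega(n)}$-style moments.

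\textbf{Main obstacle.} I expect the main obstacle to be the off-diagonal terms with $p>H$, where divisibility in short intervals is not equidistributed. I would try first to bound them by the $H$-separation of $S$, which limits how many $x\in S$ can have a multiple of a given large $p$ in $[x,x+H)$.
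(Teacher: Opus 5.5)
Your Step 1 is fine as far as it goes: for $Q\le H^{1/2}$ an upper-bound sieve gives the bound for every $x$. The trouble is the mechanism you propose for the range $(H^{1/2},Q]$, which cannot produce the claimed density.

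\textbf{Step 2 bounds the wrong quantity.} You control the variance of $\sum_{x\le n<x+H}\omega'(n)$. Knowing this sum is close to its mean says nothing about how many $n$ in the interval have $\omega'(n)=0$, because the sum can be carried by a few $n$ with many prime factors.

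\textbf{Second moments of $\omega$ never reach sieve strength.} Suppose you had the correct local quantity, a short-interval Tur\'an--Kubilius bound $\sum_{x\le n<x+H}(\omega(n)-\mu)^2\ll \mu H$. Chebyshev then only gives $\#\{n\in[x,x+H):\omega(n)=0\}\ll H/\mu$, with $\mu=\log\frac{\log Q}{\log P}+O(1)$. The lemma asks for $\frac{\log P}{\log Q}H\asymp e^{-\mu}H$, which is exponentially smaller in $\mu$.

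\textbf{Step 3 does not close the gap.} The bound $\ind{\omega(n)=0}\le 2^{-\omega(n)}$ only gives density $\asymp(\log P/\log Q)^{1/2}$. For any weight of this kind you would still need an upper bound for its short sums in all but a proportion $\ll \frac{\log Q}{\log P}\cdot\frac{(\log H)^2}{H}$ of the intervals. That is exactly the non-trivial content of the lemma, and the proposal leaves it unaddressed. (Also, only one of the two factors $1/H$ in the exceptional set comes from $H$-separation.)

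\textbf{How the paper proves it.} The paper takes the missing input from \cite[Proposition~10.4]{MRII}, applied with $f\equiv 1$, interval length $2H$ and $\Delta=\log P/\log Q$. Writing $E$ for the integers with no prime factor in $(P,Q]$, this gives $|[y,y+2H)\cap E|\ll \Delta H$ for all but $\ll X(\log H)^2/(\Delta H)$ integers $y\in[X,2X]$. If $x\in S$ violates the conclusion of the lemma, then every integer $y\in[x-H,x]$ violates this bound, since $[x,x+H)\subset[y,y+2H)$. Because $S$ is $H$-separated, each $y$ is charged to $O(1)$ points $x$, and this supplies the second factor $1/H$.

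If you want a self-contained argument instead, one plausible route is to take the second moment, in $x$, of an upper-bound sieve majorant for $\ind{\omega(n)=0}$ rather than of $\omega$ itself. The majorant would need level well beyond $H$ (say around $X^{1/2}$), with its short sums controlled in mean square over $x$ by the large sieve. You would then apply Chebyshev and the same transfer to $H$-separated points.
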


    \begin{proof}
        Let $E\subset \N$ be the set of integers without any prime factor in $(P, Q]$.

        We apply \cite[Proposition~10.4]{MRII} with interval length $h := 2H$, taking $f$ to be the constant function $1$ and $\Delta := \log P/\log Q$. Note that $h_1 = H(f; X) = 1$ (see \cite[p.7]{MRII} for the definition of $H(f;X)$). We obtain that, for all but
        \begin{equation*}
            \ll \frac{X(\log H)^2}{\Delta H}
        \end{equation*}
        integers $y\in [X, 2X]$, it holds that $\abs{[y, y+2H)\cap E} \ll \Delta H$.

        Noting that $[x, x+H) \cap E \subset [y, y+2H)\cap E$ for any $x\in S$ and $y\in [x-H, x]$, and recalling that $S$ is $H$-separated, the conclusion follows.
    \end{proof}

    \begin{proof}[Proof of \cref{thm:MR}]
        We first restrict to integers $n$ with suitably sized prime factors. Let $c>0$ be a small absolute constant to be chosen later. The constant $C$ in the statement of the theorem is assumed to be sufficiently large in terms of $c$. Define $Q_1 = H^{1/2}$, $Q_2 = X^{c\delta/3}$, $Q_3 = X^{1/3}$ and, for ${j\in \{1,2,3\}}$, let $P_j := Q_j^{c\delta}$. Let $\NN$ be the set of positive integers $n$ having at least one prime factor in $(P_j, Q_j]$ for each $j\in \{1,2,3\}$, and no repeated prime factor in these ranges.

        By \cref{lem:sieve}, for each $j\in \{1,2,3\}$, there are $\ll X(\log X)^3/H^2$ elements $x\in S$ such that
        \begin{equation*}
            \sum_{x\leq n < x+H} \ind{p\mid n\Rightarrow p\notin (P_j, Q_j]} \geq \tfrac18 \delta H,
        \end{equation*}
        provided that $c$ is sufficiently small.

        Furthermore, the number of integer $n\in [X, 3X]$ divisible by $p^2$ for some $P_1<p\leq Q_3$ is $\ll X/P_1$. Hence, at most $O(\delta^{-1} P_1^{-1} X/H)$ elements $x\in S$ have the property that $[x, x+H)$ contains $\geq \tfrac18 \delta H$ such integers $n$, since $S$ is $H$-separated.

        We have thus shown the existence of a subset $S_1 \subset S$ of size
        \begin{equation*}
            |S_1| \geq |S| - O\bigg(\frac{X(\log X)^3}{H^2} + \frac{X}{\delta P_1 H}\bigg) \gg |S|,
        \end{equation*}
        (using \cref{eq:paramassumpt}) such that all $x\in S_1$ satisfy
        \begin{equation*}
            \Bigg\lvert \sum_{x\leq n < x+H} \ind{n\in \NN}f(n) e(n\alpha_x) \Bigg\rvert \geq \tfrac12 \delta H.
        \end{equation*}

        Let $H' := \lfloor c\delta \kappa^{-1} H \rfloor$. By \cref{lem:taylor} applied with $b_n := \ind{n\in \NN}f(n)e(na/q)$ and $\beta_x := \alpha_x - \frac{a}{q}$, there is a subset $S_2 \subset S_1$ of size $|S_2| \gg \delta \frac{H}{H'} |S_1|$ such that, for every $x\in S_2$,
        \begin{equation*}
            \bigg\lvert \sum_{x\leq n < x+H'} \ind{n\in \NN}f(n) e\Big(\frac{na}{q}\Big) n^{2\pi i T} \bigg\rvert \gg \delta H'.
        \end{equation*}

        We apply \cref{lem:coprimality} with $f(n)$ replaced by $\ind{n\in \NN}f(n)n^{2\pi i T}$, $H$ replaced by $H'$ and $\eps := 1/6$. Note that the weak multiplicativity assumption of \cref{lem:coprimality} holds since $q< P_1$ (by \cref{eq:paramassumpt}) and the definition of $\NN$ only involves primes larger than $P_1$. Thus, there is an integer $1\leq d\ll \delta^{-2}q^{1/6}$ and an $H'/d$-separated set $S_3 \subset [X/d, 2X/d]$ of size $|S_3| \gg \delta |S_2|$ such that
        \begin{equation}
            \label{eq:outcomeexpchi}
            \sum_{\chi \spmod q}\Bigg\lvert \sum_{\substack{x\leq n<x+H'/d}} \ind{n\in \NN} f(n) \chi(n)n^{2\pi i T} \Bigg\rvert \gg \delta^3 q^{1/3} \frac{H'}{d}
        \end{equation}
        for all $x\in S_3$.

        Let $\widetilde{X} := X/d$ and $\widetilde{H} := H'/d$. Define $\nu := (c\delta/3)^{16}$. For every $\chi \spmod{q}$, the sequence
        \begin{equation*}
            (a_n) := \big(\ind{n\in \NN} f(n) \chi(n) n^{2\pi i T}\big)_{\widetilde{X}/4 < n \leq 8\widetilde{X}},
        \end{equation*}
        satisfies the properties in \cref{def:factorable}, by definition of $\NN$ and multiplicativity of $f(n) \chi(n) n^{2\pi i T}$ (with $c_p=f(p) \chi(p) p^{2\pi i T}$). We can thus apply \cref{prop:MRpart2,lem:boundgonaverage} to obtain a family of functions $g_{\chi} : [\widetilde{X}, 2\widetilde{X}]\to \C$ such that
        \begin{equation}
            \label{eq:outcomeMRchi}
            \frac{1}{\widetilde{X}} \int_{\widetilde{X}}^{2\widetilde{X}} \Bigg\lvert{g_{\chi}(x)-\sum_{x<n\leq x+\widetilde{H}}\ind{n\in \NN} f(n) \chi(n)n^{2\pi i T}} \Bigg\rvert ^2 dx \ll \frac{\widetilde{H}(\widetilde{H}+Q_1)(\log X)^{11}}{P_1^{2\nu }} + (\log X)^2
        \end{equation}
        and
        \begin{equation}
            \label{eq:sumchibd}
            \sum_{\chi \spmod{q}} \norm{g_{\chi}}_{\infty} \ll \left(1+\log \frac{\log Q_3}{\log P_2}\right)^2 \widetilde{H}  \ll \big(\!\log \delta^{-1}\big)^{2} \widetilde{H}.
        \end{equation}
        Observe that the conditions $P_1 \geq (\log \widetilde{X})^{1/\nu}$, $P_2 \geq \widetilde{X}^{\nu^{1/8}}$, $q \leq \widetilde{X}^{\nu}$ and $\nu \geq 1/\log \widetilde{X}$ of \cref{prop:MRpart2,lem:boundgonaverage} are all satisfied by the assumption \cref{eq:paramassumpt} for our choice $\nu = (c\delta/3)^{16}$, if $C$ is sufficiently large in terms of $c$.

        Noting that $\widetilde{H} \gg c\delta^3q^{-1/6}\kappa^{-1}H \gg Q_1$ by \cref{eq:paramassumpt}, the right-hand side of \cref{eq:outcomeMRchi} is bounded by $O({\widetilde{H}}^2 (\log X)^{11}/P_1^{2\nu})$. By Cauchy-Schwarz, this implies that
        \begin{equation*}
            \frac{1}{\widetilde{X}} \int_{\widetilde{X}}^{2\widetilde{X}} \sum_{\chi \spmod q}\Bigg\lvert{g_{\chi}(x) - \sum_{x<n\leq x+\widetilde{H}}\ind{n\in \NN} f(n) \chi(n)n^{2\pi i T}} \Bigg\rvert dx \ll q \frac{\widetilde{H}(\log X)^{11/2}}{P_1^{\nu}} \ll \frac{\widetilde{H}}{P_1^{\nu/2}}.
        \end{equation*}
        Consequently, we have
        \begin{equation*}
            \sum_{\chi \spmod q}\Bigg\lvert{g_{\chi}(x) - \sum_{x<n\leq x+\widetilde{H}}\ind{n\in \NN} f(n) \chi(n)n^{2\pi i T}} \Bigg\rvert \leq \widetilde{H}
        \end{equation*}
        for all but $\ll P_1^{-\nu/2} \widetilde{X}$ integers $x\in [\widetilde{X}, 2\widetilde{X}]$. By \cref{eq:sumchibd}, we conclude that
        \begin{equation}
            \label{eq:conclusionremoveg}
            \sum_{\chi \spmod q}\Bigg\lvert{\sum_{x<n\leq x+\widetilde{H}}\ind{n\in \NN} f(n) \chi(n)n^{2\pi i T}} \Bigg\rvert \ll \big(\!\log \delta^{-1}\big)^{2} \widetilde{H}
        \end{equation}
        for all but $\ll P_1^{-\nu/2} \widetilde{X}$ integers $x\in [\widetilde{X}, 2\widetilde{X}]$.

        On the other hand, by the triangle inequality, the lower bound \cref{eq:outcomeexpchi} continues to hold whenever $x$ is at distance at most $c\delta^3 q^{-2/3} \widetilde{H}$ from an element of $S_3$, if $c$ is sufficiently small. Since $S_3$ is $\widetilde{H}$-separated, we conclude that~\cref{eq:outcomeexpchi} holds for
        \begin{equation*}
            \gg |S_3| \delta^3 q^{-2/3} \widetilde{H} \gg \delta^5 q^{-2/3} \frac{|S|}{X/H} \widetilde{X}
        \end{equation*}
        integers $x\in [\widetilde{X}, 2\widetilde{X}]$. By our assumption \cref{eq:paramassumpt}, this contradicts \cref{eq:conclusionremoveg} if $C$ is sufficiently large, unless $\delta^3 q^{1/3} \ll (\log \delta^{-1})^{2}$. We have thus proved the key estimate $q\ll \delta^{-10}$.

        Applying \cref{lem:taylor} and \cref{lem:coprimality} as in the beginning of this proof without first restricting to~$\NN$, we obtain the following analogue of~\cref{eq:outcomeexpchi}: for some integer $1\leq d'\ll \delta^{-2}q^{1/6} \ll \delta^{-4}$, we have
        \begin{equation*}
            \sum_{\chi \spmod q}\Bigg\lvert \sum_{\substack{x\leq n<x+H'/d'}} f(n) \chi(n)n^{2\pi i T} \Bigg\rvert \gg \delta^3 q^{1/3} \frac{H'}{d'}
        \end{equation*}
        for all $x$ in an $H'/d'$-separated set $S' \subset [X/d', 2X/d']$ of size $|S'| \gg \delta^2 \frac{H}{H'} |S|$. By the pigeonhole principle, there is a character $\chi \spmod{q}$ such that
        \begin{equation}
            \label{eq:shortcorl}
            \frac{1}{H'/d'}\Bigg\lvert \sum_{\substack{x\leq n<x+H'/d'}}  f(n) \chi(n)n^{2\pi i T} \Bigg\rvert \gg \frac{\delta^3q^{1/3}}{q} \gg\delta^{10}
        \end{equation}
        for $\gg |S'|/q\gg \delta^{12} \frac{H}{H'} |S|$ elements $x\in S'$. By the triangle inequality and the $H'/d'$-separation of $S'$, the estimate \cref{eq:shortcorl} holds for
        \begin{equation*}
            \gg \delta^{10}\frac{H'}{d'} \cdot  \delta^{12} \frac{H}{H'} |S| \gg \delta^{22} \frac{|S|}{X/H} \frac{X}{d'}
        \end{equation*}
        integers $x\in [X/d', 2X/d']$.

        We can now conclude the proof using the complex-valued Matomäki-Radziwiłł theorem with power savings and Halasz's theorem. By the Matomäki-Radziwiłł theorem \cite[Theorem~1.7]{MRII}, there is an absolute constant $C_1\geq 1$ and a real number $t_0\in [-X/d', X/d']$ such that the following holds: for any $0<\delta_1<1/1000$, we have
        \begin{equation}
            \label{eq:lblongsum}
            \frac{1}{X/d'}\Bigg\lvert\sum_{X/d'< n\leq 2X/d'} f(n) \chi(n)n^{2\pi i (T-t_0)} \Bigg\rvert \gg \delta^{10} - O\!\left(\delta_1 + \frac{\log \log (H'/d')}{\log (H'/d')} + \frac{1}{(\log (X/d'))^{3/1000}}\right)
        \end{equation}
        provided that
        \begin{equation}
            \label{eq:condbigDelta}
            C_1 \bigg(\frac{1}{(H'/d')^{\delta_1/15}} + \frac{1}{(X/d')^{\delta_1^4/10^{16}}}\bigg) < \delta^{22} \frac{|S|}{X/H}.
        \end{equation}
        Using the assumption \cref{eq:paramassumpt}, we see that the condition \cref{eq:condbigDelta} is satisfied if $\delta_1$ is chosen to be a sufficiently small multiple of $\delta^{10}$, and \cref{eq:lblongsum} simplifies to
        \begin{equation}
            \label{eq:lblongsumsimplified}
            \frac{1}{X/d'}\Bigg\lvert\sum_{X/d'< n\leq 2X/d'} f(n) \chi(n)n^{2\pi i (T-t_0)} \Bigg\rvert \gg \delta^{10}.
        \end{equation}

        For $A,B>0$, define
        \begin{equation*}
            m(A,B) := \min_{|\tau| \leq B} \sum_{p\leq A} \frac{1-\Re(f(p)\chi(p)p^{2\pi i (T-t_0)}p^{-i\tau})}{p}.
        \end{equation*}
        By Halasz's theorem \cite[Corollary~4.12,~p.494]{tenenbaum}, for every $B\geq 2$ we have
        \begin{equation}
            \label{eq:uplongsum}
            \frac{1}{X/d'}\Bigg\lvert\sum_{X/d'< n\leq 2X/d'} f(n) \chi(n)n^{2\pi i (T-t_0)} \Bigg\rvert \ll \frac{1+m(X/d', B)}{e^{m(X/d', B)}}+\frac{1}{B}.
        \end{equation}
        Setting $B := X$, we deduce from \cref{eq:lblongsumsimplified,eq:uplongsum} that $m(X/d',X) \ll \log (\delta^{-1})$. Since
        \begin{equation*}
            \sum_{X/d'\leq p\leq X} \frac{1}{p} \ll 1,
        \end{equation*}
        we get $m(X,X) \ll \log (\delta^{-1})$, which concludes the proof of \cref{thm:MR}.
    \end{proof}
\end{appendices}

\providecommand{\bysame}{\leavevmode\hbox to3em{\hrulefill}\thinspace}
\providecommand{\MR}{\relax\ifhmode\unskip\space\fi MR }
\providecommand{\MRhref}[2]{\href{http://www.ams.org/mathscinet-getitem?mr=#1}{#2}
}
\providecommand{\href}[2]{#2}

\bibliographystyle{amsplain}

\end{document}